\newtheorem{theorem}[equation]{Theorem} 
\newtheorem{cor}[equation]{Corollary}
\newtheorem{prop}[equation]{Proposition}
\newcommand{\mN}{{\mathbb N}}
\newcommand{\mP}{{\mathbb P}}
\newcommand{\mR}{{\mathbb R}}
\newcommand{\mS}{{\mathbb S}}
\newcommand{\mZ}{{\mathbb Z}}
\newcommand{\Ac}{{\mathcal A}}
\newcommand{\Fin}{{\mathcal F}in}
\newcommand{\Ab}{{\mathcal A}b}
\newcommand{\Bc}{{\mathcal B}}
\newcommand{\Cc}{{\mathcal C}}
\newcommand{\Dc}{{\mathcal D}}
\newcommand{\Ec}{{\mathcal E}}
\newcommand{\Fc}{{\mathcal F}}
\newcommand{\Gc}{{\mathcal G}}
\newcommand{\Ic}{{\mathcal I}}
\newcommand{\Jc}{{\mathcal J}}
\newcommand{\M}{{\mathcal M}}
\newcommand{\Oc}{{\mathcal O}}
\newcommand{\Pc}{{\mathcal P}}
\newcommand{\Uc}{{\mathcal U}}
\newcommand{\Vc}{{\mathcal V}}
\newcommand{\bA}{{\mathbf A}}
\newcommand{\bB}{{\mathbf B}}
\newcommand{\bI}{{\mathbf I}}
\newcommand{\bK}{{\mathbf K}}
\newcommand{\bL}{{\mathbf L}}
\newcommand{\bT}{{\mathbf T}}
\newcommand{\bSigma}{{\mathbf{\Sigma}}}
\newcommand{\bset}{\mathbf{set}}
\newcommand{\spec}{{\mathcal S}p}
\newcommand{\iso}{\cong}
\newcommand{\sm}{\wedge}
\newcommand{\tensor}{\otimes}
\newcommand{\xra}{\xrightarrow}
\newcommand{\xla}{\xleftarrow}
\newcommand{\un}{\underline}
\newcommand{\upi}{{\underline \pi}}
\newcommand{\td}[1]{\langle #1\rangle}
\renewcommand{\to}{\longrightarrow}
\newcommand{\parsumcat}{{\bf{ParSumCat}}}
\providecommand{\colim}{\mathop{\rm colim}\nolimits}
\providecommand{\gl}{\mathop{\rm gl}\nolimits}
\providecommand{\Hom}{\mathop{\rm Hom}\nolimits}
\providecommand{\Wirth}{\mathop{\rm Wirth}\nolimits}
\providecommand{\map}{\mathop{\rm map}\nolimits}
\providecommand{\free}{\mathop{\rm free}\nolimits}
\providecommand{\supp}{\mathop{\rm supp}\nolimits}
\providecommand{\Id}{\mathop{\rm Id}\nolimits}
\providecommand{\op}{\mathop{\rm op}\nolimits}
\providecommand{\cat}{\mathop{\rm \mathbf{cat}}\nolimits}
\providecommand{\sat}{\mathop{\rm sat}\nolimits}
\providecommand{\tr}{\mathop{\rm tr}\nolimits}
\providecommand{\triv}{\mathop{\rm triv}\nolimits}
\providecommand{\Sym}{\mathop{\rm Sym}\nolimits}
\providecommand{\res}{\mathop{\rm res}\nolimits}
\providecommand{\sat}{\mathop{\rm sat}\nolimits}
\title[Global algebraic K-theory]%
{Global algebraic K-theory}
\author{Stefan Schwede}
\begin{document}
\maketitle

\begin{abstract}
  We introduce a global equivariant refinement of algebraic K-theory;
  here `global equivariant' refers to simultaneous and compatible actions
  of all finite groups. 
  Our construction turns a specific kind of categorical input data
  into a global $\Omega$-spectrum that keeps track of genuine $G$-equivariant infinite loop spaces,
  for all finite groups $G$.
  The resulting global algebraic K-theory spectrum is a rigid
  way of packaging the representation K-theory, or `Swan K-theory'
  into one highly structured object.
\end{abstract}

\tableofcontents

\section*{Introduction}

In this paper we propose a global equivariant refinement of algebraic K-theory;
here `global equivariant' refers to simultaneous and compatible actions
of all finite groups. 
Our construction turns a specific kind of categorical input data
into a global $\Omega$-spectrum that keeps track of genuine $G$-equivariant infinite loop spaces,
for all finite groups $G$.
The resulting global algebraic K-theory spectrum is an extremely rigid
way of packaging the representation K-theory, or `Swan K-theory'
into one highly structured object.
We give a more precise summary of our main results below, after a quick review
of some aspects of algebraic K-theory.

Algebraic K-theory started out as the study of the K-group
(Grothendieck group) of projective modules over a ring,
or of vector bundles over a scheme.
After some attempts to extend the Grothendieck group to
higher K-groups by purely algebraic means,
several people proposed 
definitions of higher K-groups as homotopy groups of certain K-theory spaces
at the end of the 60's;
in retrospect, the most successful definition is due to Quillen,
originally introduced via the so called `plus construction' \cite{quillen:cohomology_groups}.
Several excellent survey papers on the early days of K-theory are available
\cite{grayson:quillens_work,weibel:early_K},
and we refer to these references for more details and proper credit.
Higher algebraic K-groups are powerful invariants
(albeit notoriously difficult to compute)
that contain arithmetic information about rings,
and geometric information about symmetries of high-dimensional manifolds.

By its very nature as homotopy groups of certain spaces,
algebraic K-theory is intimately related to homotopy theory.
A key conceptual insight was that Quillen's K-theory spaces
are naturally {\em infinite loop spaces}; as far as I know, the first
reference that advocates this perspective is Segal's paper
\cite{segal:cat coho} who acknowledges that it `(...) is one possible formulation
of Quillen's ideas about algebraic $K$-theory.'
On a conceptual level, and from a revisionist's perspective,
K-theory is the composite of two constructions:
one step turns specific kinds of categorical input data
(such as for example symmetric monoidal categories, permutative categories, exact categories,
or categories with cofibrations and weak equivalences)
into a kind of coherently homotopy-commutative and homotopy-associative H-spaces
(such as for example $E_\infty$-spaces or special $\Gamma$-spaces).
The second step is a delooping construction for $E_\infty$-spaces,
which implements the equivalence between the homotopy theories of
group-like $E_\infty$-spaces and connective spectra.

In this paper we propose a global equivariant extension of algebraic K-theory;
here `global equivariant' refers to simultaneous and compatible actions of all finite groups. 
In Definition \ref{def:parsummable_category} we introduce
a specific kind of extra structure on a category
that we call a {\em parsummable category}; 
the adjective `parsummable' stands for `partially summable'.
This structure can be interpreted in at least two ways:
on the one hand, parsummable categories are to symmetric monoidal categories
what partial abelian monoids are to $E_\infty$-spaces:
the monoidal operation of a parsummable category is only partially defined,
but it is strictly associative and commutative on its domain of definition.
On the other hand, parsummable categories are also algebras over
the {\em injection operad}, a specific categorical operad made
from injections between countably infinite sets, see Remark \ref{rk:I as global E_infty}.
The categories in this operad have contractible nerves, so
the injection operad is in particular an $E_\infty$-operad.
Our results suggest to view the injection operad as a `global $E_\infty$-operad',
because it encodes $E_\infty$-$G$-operads for all finite groups $G$.
From this perspective, our K-theory construction is a global analog of the
equivariant delooping machine of Guillou and May \cite{guillou-may}
that applies to a fixed finite group $G$.\smallskip

Our main construction assigns to a parsummable category $\Cc$ its
{\em global K-theory spectrum}  $\bK_{\gl}\Cc$, see Definition \ref{def:define K_gl}.
This is a symmetric spectrum, and as such it represents a global stable homotopy type,
based on finite groups, by the work of Hausmann \cite{hausmann:global_finite}.
Our main results about the global K-theory construction are the following.

\begin{theorem*}[(Global deloopings)]
For every parsummable category $\Cc$, 
the symmetric spectrum $\bK_{\gl}\Cc$ is a
restricted global $\Omega$-spectrum.  
\end{theorem*}

The global delooping theorem  will be proved as part of Theorem \ref{thm:K_gl C is global Omega}.
Roughly speaking, it means that evaluating  $\bK_{\gl}\Cc$ at a transitive free $G$-set yields
a genuine $G$-equivariant infinite loop space, for every finite group $G$.
Since these deloopings arise from a single symmetric spectrum,
they are automatically consistent for varying groups $H$.
We refer to Remark \ref{rk:X<G>} for a more detailed explanation.
The upshot is that $\bK_{\gl}\Cc$ provides a `global delooping' of the parsummable category $\Cc$.

As we explain in Proposition \ref{prop:symmon_from_parsumcat},
the underlying category of every parsummable category $\Cc$ 
affords a symmetric monoidal structure $\varphi^*(\Cc)$,
depending on a choice of injection $\varphi:\{1,2\}\times\omega\to\omega$,
where $\omega=\{0,1,2,\dots\}$.
The symmetric monoidal structure is independent of the injection
up to contractible choice, see Remark \ref{rk:constractible choice}.
The following consistency result is part of Theorem \ref{thm:compare 2K}.

\begin{theorem*}[(Consistency)]
  For every parsummable category $\Cc$, the underlying non-equivariant stable homotopy type
of $\bK_{\gl}\Cc$ is that of the K-theory of the symmetric monoidal category $\varphi^*(\Cc)$.
\end{theorem*}

Corollary \ref{cor:saturated implies global} shows that for every finite group $G$,
the genuine $G$-fixed point spectrum of $\bK_{\gl}\Cc$ 
is equivalent to the K-theory spectrum of the category of $G$-objects in $\Cc$,
relative to a certain symmetric monoidal structure arising from the
structure as parsummable category, at least under a mild technical hypothesis that we call {\em saturation},
see Definition \ref{def:saturated}.
So the symmetric spectrum $\bK_{\gl}\Cc$ is a compact and very rigid way 
of packaging the information that is contained in the `representation K-theory'
(or the `Swan K-theory') of the input category $\Cc$.
A related result is Theorem \ref{thm:upi_0 tilde beta}, showing that  
the global Mackey functor made from the 0-th equivariant homotopy groups
of the spectrum $\bK_{\gl}\Cc$ is isomorphic to
the `Swan K-theory' functor arising from the parsummable category $\Cc$:

\begin{theorem*}[(Rigidification of representation K-theory)]
    For every parsummable category $\Cc$, the homotopy group global functor
  $\upi_0(\bK_{\gl}\Cc)$ is isomorphic to the K-group global functor of the input category $\Cc$.
\end{theorem*}

By definition, parsummable categories are $\M$-categories with additional structure,
where $\M$ is the contractible groupoid with objects all self-injections of
the set $\omega=\{0,1,2,\dots\}$.
The {\em free parsummable category} $\mP\Bc$ generated by an $\M$-category $\Bc$
is the initial example of a parsummable category equipped with a morphism
of $\M$-categories from $\Bc$, compare Example \ref{eg:free_parsummable}.
As we explain in Construction \ref{con:M2bI}, every $\M$-category $\Bc$
gives rise to an {\em $\bI$-space} $\rho(\Bc)$, i.e., a functor from the
category $\bI$ of finite sets and injections to the category of spaces.
The $\bI$-space $\rho(\Bc)$ represents the unstable global homotopy type
corresponding to $\Bc$.

\begin{theorem*}[(Global Barratt-Priddy-Quillen theorem)]
  Let $\Bc$ be a tame $\M$-category without objects with empty support.
  Then the symmetric spectrum $\bK_{\gl}(\mP\Bc)$ is globally equivalent
  to the unreduced suspension spectrum of the $\bI$-space $\rho(\Bc)$ associated with $\Bc$. 
\end{theorem*}

We state the global Barratt-Priddy-Quillen theorem as Theorem \ref{thm:global BPQ}.

In addition to a satisfying theory, our formalism also incorporates many examples.
Specific parsummable categories that we discuss are:
the parsummable category of an abelian monoid (Example \ref{eg:abelian monoid}),
the parsummable category of finite sets (Example \ref{eg:Fc}),
free parsummable categories (Example \ref{eg:free_parsummable}),
the parsummable category of finite $G$-sets for a group $G$,
possibly infinite (Example \ref{eg:G Fc}),
the parsummable category of finitely generated projective modules over a ring (Construction \ref{con:P(R)}),
and the parsummable category associated with a permutative category
(Construction \ref{con:permutative2parsummable}).
The category of parsummable categories also has all limits and colimits,
and is closed under passage to $G$-objects (Example \ref{eg:objects with action}),
$G$-fixed points (Construction \ref{con:F^G C}),
and $G$-homotopy fixed points (Construction \ref{con:I structure on F^hG C}),
for every finite group $G$.
This provides many ways to build new examples of parsummable categories from existing ones.
\smallskip

{\bf Organization.}
As explained in Hausmann's paper \cite{hausmann:global_finite},
the category of symmetric spectra in the sense of Hovey, Shipley and Smith \cite{HSS}
is a model for the global stable homotopy theory based on finite groups,
relative to a certain class of `global equivalences'.
Since this is the model we use in the present paper, we start in Section \ref{sec:symspec}
by reviewing symmetric spectra, $G$-symmetric spectra and global equivalences.
We also discuss `restricted global $\Omega$-spectra',
a type of symmetric spectra that encodes compatible
equivariant infinite loop spaces for all finite groups, see Definition \ref{def:global Omega}.

In Section \ref{sec:Mcat} we introduce and study {\em $\M$-categories}.
Here $\M$ is the contractible groupoid with objects all self-injections of
the set $\omega=\{0,1,2,\dots\}$.
The category $\M$ is a strict monoidal category under composition of injections;
an {\em $\M$-category} is a category equipped with a strict action
of the monoidal category $\M$.
An $\M$-action on a category $\Cc$ gives rise to a notion of `support'
for objects of $\Cc$, see Definition \ref{def:support}.
The support is a subset of the set $\omega$,
and of fundamental importance for everything else we do in this paper.
An $\M$-category is {\em tame} if the support of every object is finite.

In Section \ref{sec:Gamma-M_to_sym} we introduce a global equivariant variation
of Segal's formalism \cite{segal:cat coho} to produce spectra from $\Gamma$-categories,
see Construction \ref{con:spectrum from Gamma-M}.
Our input is a $\Gamma$-$\M$-category,
i.e., a functor from $\Gamma$ to the category of $\M$-categories;
or, equivalently, a $\Gamma$-category equipped with a strict action
of the monoidal category $\M$.
While $\Gamma$-categories give rise to non-equivariant stable homotopy types,
and $\Gamma$-$G$-categories yield $G$-equivariant stable homotopy types,
our $\Gamma$-$\M$-categories produce global stable homotopy types.
In Definition \ref{def:globally special Gamma-cat}
we introduce the notion of `global specialness' for $\Gamma$-$\M$-categories;
our main results of Section \ref{sec:Gamma-M_to_sym}
are the facts that globally special $\Gamma$-$\M$-categories
give rise to restricted global $\Omega$-spectra (Theorem \ref{thm:Y(S) is global Omega}),
and that the underlying $G$-homotopy type agrees with that
of a naturally associated $\Gamma$-$G$-category (Theorem \ref{thm:a-b-maps equivalence}).

Section \ref{sec:K of I} contains the central construction of this paper,
the global K-theory spectrum of a parsummable category, see Definition \ref{def:define K_gl}.
A parsummable category is a tame $\M$-category equipped with
a sum operation that is partially defined on disjointly supported objects and morphisms,
see Definition \ref{def:parsummable_category}.
In a sense, parsummable categories are to symmetric monoidal categories
what partial abelian monoids are to $E_\infty$-spaces:
in a parsummable category, the `addition' is not defined everywhere,
but it is strictly unital, associative and commutative whenever it is defined.
In a symmetric monoidal category, the addition is defined everywhere,
but it is only unital, associative and commutative up to specified coherence data.
Some examples of parsummable categories are:
the parsummable category of an abelian monoid (Example \ref{eg:abelian monoid}),
the parsummable category of finite sets (Example \ref{eg:Fc}),
free parsummable categories (Example \ref{eg:free_parsummable}),
the parsummable category of finite $G$-sets for a group $G$,
possibly infinite (Example \ref{eg:G Fc}),
the parsummable category of finitely generated projective modules over a ring (Construction \ref{con:P(R)}),
and the parsummable category associated with a permutative category
(Construction \ref{con:permutative2parsummable}).

The definition of the global K-theory spectrum of a parsummable category proceeds in two steps:
the parsummable category $\Cc$ gives rise to a globally special $\Gamma$-$\M$-category $\gamma(\Cc)$
by taking powers under the $\boxtimes$-product of tame $\M$-categories,
see Construction \ref{con:Gamma from I}.
The global K-theory spectrum $\bK_{\gl}\Cc$ is
the symmetric spectrum associated to the $\Gamma$-$\M$-category $\gamma(\Cc)$
as in Section \ref{sec:Gamma-M_to_sym}.
The $\Gamma$-$\M$-category $\gamma(\Cc)$ is globally special
by Theorem \ref{thm:special G Gamma}, so the global K-theory spectrum is
a restricted global $\Omega$-spectrum, see Theorem \ref{thm:K_gl C is global Omega}.
We also establish some basic invariance properties of global K-theory:
by Theorem \ref{thm:equivalence2equivalence}, 
global equivalences of parsummable categories
induce global equivalences of K-theory spectra;
by Theorem \ref{thm:K_gl additive}, global K-theory takes box products
and products of parsummable categories to products of K-theory spectra,
up to global equivalence.
Corollary \ref{cor:G-fixed} identifies the $G$-fixed point spectrum
of the global homotopy type of $\bK_{\gl}\Cc$, for a finite group $G$:
the $G$-fixed category $F^G\Cc$ is naturally again a parsummable category,
and the genuine $G$-fixed point spectrum $F^G(\bK_{\gl}\Cc)$
receives a natural equivalence from the K-theory spectrum of $F^G\Cc$.

Section~\ref{sec:parsumcat_versus_symmetric} contains a reality check: every parsummable category
$\Cc$ gives rise to a symmetric monoidal category~$\varphi^*(\Cc)$,
see Proposition \ref{prop:symmon_from_parsumcat}. 
We show in Theorem \ref{thm:compare 2K}
that the underlying non-equivariant homotopy type of $\bK_{\gl}\Cc$
is that of the algebraic K-theory spectrum of the
symmetric monoidal category $\varphi^*(\Cc)$, i.e., the canonical
infinite delooping of the group completion of the classifying space of~$\varphi^*(\Cc)$.

In Section \ref{sec:Swan} we identify the collection of the 0-th equivariant homotopy
groups of the global K-theory spectrum $\bK_{\gl}\Cc$
in terms of the parsummable category $\Cc$ that serves as the input.
The main result is Theorem \ref{thm:upi_0 tilde beta} that establishes an
isomorphism of global functors between $\upi_0(\bK_{\gl}\Cc)$
and the global functor of Swan K-groups of $\Cc$.
More concretely this means that the group $\pi_0^G(\bK_{\gl}\Cc)$
is a group completion of the abelian monoid $\pi_0(F^G\Cc)$,
and the homotopy theoretic restriction and transfer maps
have an explicit categorical description.

Section \ref{sec:saturation} is devoted to the phenomenon of {\em saturation},
which roughly means that there are `enough fixed objects'.
For every finite group $G$ and parsummable category $\Cc$, the $G$-fixed category $F^G\Cc$
embeds fully faithfully into the category $G\Cc$ of $G$-objects in $\Cc$.
This embedding need not be essentially surjective, so it is not necessarily an equivalence.
The parsummable category $\Cc$ is {\em saturated} if the comparison functor
$F^G\Cc\to G\Cc$ is an equivalence of categories for every finite group $G$,
see Corollary \ref{cor:saturation characterizations}.
Whenever this happens, the $G$-fixed point spectrum $F^G(\bK_{\gl}\Cc)$
of the global K-theory spectrum is equivalent to the K-theory
spectrum of $G$-objects in $\Cc$, see Corollary \ref{cor:saturated implies global}.
Saturation can always be arranged in the following sense:
there is a saturation functor for parsummable categories
and a natural morphism of parsummable categories $s:\Cc\to C^{\sat}$
that is an equivalence of underlying categories, see Theorem \ref{thm:saturation M-version}.

The last four sections are devoted to examples.
In Section \ref{sec:K of free I} we establish a global equivariant generalization
of the Barratt-Priddy-Quillen theorem:
Theorem \ref{thm:global BPQ} identifies the global K-theory spectrum
of a free parsummable category as a suspension spectrum.
In Theorem \ref{thm:global F} we use the global Barratt-Priddy-Quillen theorem
to recognize the global K-theory of finite sets as the global sphere spectrum.
In Section~\ref{sec:G-sets} we discuss the global K-theory of finite $G$-sets,
where $G$ is a group, possibly infinite.
The upshot is Corollary \ref{cor:K(GF) final},
where we identify the global K-theory spectrum of finite $G$-sets
with the wedge, indexed by conjugacy classes of finite index subgroups,
of the suspension spectra of the global classifying spaces of
the Weyl groups.
In Section~\ref{sec:K(R)} we discuss the global K-theory of rings,
and in Section \ref{sec:permutative category}
we define and study the global K-theory of permutative categories.\smallskip

{\bf Relation to equivariant algebraic K-theory.}
While our paper provides the first global equivariant approach to algebraic K-theory,
several authors have introduced equivariant K-theory constructions for
individual finite groups that produce genuine equivariant $G$-spectra;
we comment specifically on the relation between our construction and
the work of Shimakawa \cite{shimakawa}, Guillou-May \cite{guillou-may},
Merling~\cite{merling} and Barwick
and Barwick-Glasman-Shah \cite{barwick:spectral_mackey_I,barwick-glasman-shah:spectral_mackey_II}.
These constructions use different categorical input data and produce different kinds of output,
but I expect certain connections that I sketch now.

Shimakawa \cite{shimakawa} uses {\em $G$-graded monoidal categories},
a relative version of symmetric monoidal categories where all the structure
is over the category with one object and $G$ as endomorphisms.
Most of his examples arise from {\em symmetric monoidal $G$-categories},
i.e., symmetric monoidal categories equipped with a strict $G$-action through strict
symmetric monoidal functors.
From this data, Shimakawa produces an equivariant K-theory spectrum
that deloops the underlying $E_\infty$-$G$-space, has the desired
behavior on fixed points and the expected equivariant homotopy groups
in dimension 0, see Theorem~A and the Proposition on page 242 of \cite{shimakawa}.
Shimakawa's equivariant K-theory ought to be related to our global
K-theory as follows. For a parsummable category $\Cc$ and a finite group $G$,
the category $\Cc[\omega^G]$ obtained by reparameterization
is naturally a $G$-parsummable category,
i.e., a parsummable category equipped with a
strict $G$-action through morphisms of parsummable categories;
we refer to Remark \ref{rk:parsumcat and GSymMonCat}
for more details on the construction.
The passage from parsummable categories to symmetric monoidal categories
described in Proposition \ref{prop:symmon_from_parsumcat} is functorial,
so it turns the $G$-parsummable category $\Cc[\omega^G]$
into a symmetric monoidal $G$-category $\varphi^*(\Cc[\omega^G])$.
I expect that the underlying genuine $G$-homotopy type of $\bK_{\gl}\Cc$
agrees with the $G$-homotopy type obtained by applying
Shimakawa's construction to the symmetric monoidal $G$-category $\varphi^*(\Cc[\omega^G])$.
Our Theorem \ref{thm:compare 2K} verifies that this is indeed the
case when the group $G$ is trivial, and moreover,
the two constructions have equivalent fixed point spectra for all subgroups of $G$.

Guillou and May \cite{guillou-may} start from an $E_\infty$-$G$-category.
This concept is based on the notion of an $E_\infty$-operad of $G$-categories,
defined as an operad $\Oc$ in the cartesian closed category of small $G$-categories,
such that the geometric realization $|\Oc(n)|$ of the category
of $n$-ary operations is a universal $(G\times\Sigma_n)$-space
for principal $\Sigma_n$-bundles over $G$-spaces, compare \cite[Definition 3.11]{guillou-may}.
An {\em $E_\infty$-$G$-category} is then a small $G$-category
equipped with the action of an  $E_\infty$-operad of $G$-categories.
In \cite[Definition 4.12]{guillou-may}, Guillou and May associate to an
$E_\infty$-$G$-category $\Ac$ an equivariant K-theory spectrum $\mathbb K_G(\Ac)$.
The paper \cite{may-merling-osorno} by May, Merling and Osorno is
devoted to comparing the operadic approach of \cite{guillou-may}
to the Segal-Shimakawa approach of \cite{shimakawa}.
I expect that for a parsummable category $\Cc$,
the underlying genuine $G$-homotopy type of $\bK_{\gl}\Cc$
agrees with the $G$-homotopy type obtained by applying
the Guillou-May K-theory construction \cite[Definition 4.12]{guillou-may}
to the  $E_\infty$-$G$-category $\Cc[\omega^G]$;
I refer to Remark \ref{rk:I as global E_infty} for more details.

As far as I know, equivariant algebraic K-theory for rings was first considered
by Fiedorowicz, Hauschild and May in \cite{fiedorowicz-hauschild-may},
and for exact categories by Dress and Kuku \cite{dress-kuku};
both papers produce $G$-equivariant K-theory {\em spaces}.
In the ring case, Merling \cite{merling} offers a spectrum level extension and
allows for non-trivial group actions:
in \cite[Definition 5.23]{merling}, she associates to a ring with an action
of a finite group $G$ an orthogonal $G$-spectrum $\bK_G(R)$,
the {\em equivariant algebraic K-theory spectrum} of the $G$-ring $R$;
the construction is an application of the Guillou-May operadic delooping machine
outlined in the previous paragraph. 
Another innovation of Merling's paper is that she can process actions
and functors that are only `pseudo equivariant' (as opposed to strictly equivariant).
Every ring $R$ can be endowed with the trivial $G$-action;
I expect that the orthogonal $G$-spectrum $\bK_G(R^{\triv})$ 
agrees with the underlying $G$-homotopy type of
our $\bK_{\gl}R$ of Definition \ref{def:K(R)}.
More precisely, the underlying $G$-symmetric spectra of
$\bK_G(R^{\triv})$ and $\bK_{\gl}R$ ought to be $G$-stably equivalent.

In \cite{barwick:spectral_mackey_I}, Barwick introduces
a very general framework of {\em spectral Mackey functors}.
He works in the setting of quasi-categories
and defines spectral Mackey functors as additive functors from suitable
$\infty$-categories of spans to the $\infty$-category of spectra.
When applied to finite $G$-sets for a finite group $G$, 
this yields a model for genuine $G$-equivariant stable homotopy theory.
Replacing finite $G$-sets by finite groupoids ought
to give a model for $\Fin$-global stable homotopy theory
as spectral Mackey functors; here the understanding is that we consider spans
with arbitrary functors in the restriction direction,
but with only {\em faithful} functors in the transfer direction.
However, I am not aware of a place where the details of such
`global spectral Mackey functors' have been worked out,
much less compared to the symmetric spectrum model of $\Fin$-global stable homotopy theory.

Under the expected equivalence between the stable $\infty$-category
underlying the global model structure on symmetric spectra and
global spectral Mackey functors, our global K-theory construction ought
to relate to Barwick's approach as follows.
As we explain in Proposition \ref{prop:symmon_from_parsumcat},
every parsummable category $\Cc$ gives rise to a symmetric monoidal structure
on the underlying category. The symmetric monoidal structure
depends on a choice, but the choices can be parameterized by a contractible category,
see Remark \ref{rk:constractible choice}.
In symmetric monoidal categories, one can restrict actions along
arbitrary group homomorphisms, and one can induce actions along inclusions
between finite groups.
So as $\Gc$ varies over all finite groupoids,
the nerves of the functor categories $\cat(\Gc,\Cc)$ ought to extend
to an additive functor from the span  $\infty$-category of finite groupoids
to symmetric monoidal $\infty$-categories.
If the parsummable category $\Cc$ is saturated,
then postcomposing with any $\infty$-categorical delooping functor
ought to give the spectral Mackey functor counterpart of $\bK_{\gl}\Cc$.\smallskip

{\bf Outlook: parsummable categories model connective global homotopy theory.}
Thomason showed in \cite{thomason:symmetric_monoidal}
that every connective spectrum is stably equivalent
to the K-theory spectrum of a symmetric monoidal category.
Equivalently, every infinite loop space is the group completion, as an $E_\infty$-space,
of the nerve of a permutative category. Even more is true: Theorem 5.1 of
\cite{thomason:symmetric_monoidal} shows that the K-theory functor
induces an equivalence from the homotopy category of the category of
symmetric monoidal categories and lax symmetric monoidal functors,
localized at the class of K-theory equivalences,
to the homotopy category of connective spectra.
A different proof of Thomason's theorem was later given by Mandell \cite{mandell:inverse},
who also provided an `un-group-complete' version.

In the first version of this paper, posted on the \verb!arXiv! in 2019,
I had speculated about a global refinement of Thomason's and Mandell's results.
While this paper was being refereed, Tobias Lenz proved this
conjecture as part of his PhD thesis; he showed in \cite[Theorem B]{lenz:G-global}
that the global K-theory functor
\[ \bK_{\gl} \ : \ \parsumcat \ \to \ \spec \]
induces an equivalence of $\infty$-categories between the 
quasi-categorical localization of parsummable categories
at the $\bK_{\gl}$-equivalences,
and globally connective symmetric spectra at the global equivalences.
In particular, the functor
descends to an equivalence between the respective homotopy categories,
and every connective $\Fin$-global stable homotopy type
arises as the global K-theory spectrum of some parsummable category.
Lenz' results are in fact a lot more general:
he shows in \cite[Theorem A]{lenz:G-global} that for every discrete group $G$,
the $\infty$-category of $G$-global connective spectra is a quasi-categorical
localization of the category of parsummable categories with $G$-action;
and he also provides an un-group-complete version of this statement.

\section{A review of symmetric spectra}\label{sec:symspec}

Our global K-theory construction produces symmetric spectra.
Hence we start by recalling the definition of symmetric spectra, due to Jeff Smith,
and first published in the paper \cite{HSS} by Hovey, Shipley and Smith. 
Symmetric spectra were originally introduced as a convenient model
for the non-equivariant stable homotopy category with a compatible smash product.
Later, Mandell \cite{mandell:equivariant symmetric} and Hausmann \cite{hausmann:G-symmetric}
extended the work of Hovey, Shipley and Smith and
introduced different symmetric spectrum models for genuine $G$-spectra, where $G$ is a finite group.
Hausmann's model is particularly convenient for global purposes:
he endows $G$-objects internal to symmetric spectra with a suitable equivariant stable model structure;
hence non-equivariant symmetric spectra, endowed with trivial actions,
model genuine $G$-homotopy types for all finite groups $G$.
Hausmann subsequently showed in \cite{hausmann:global_finite}
that with respect to a specific notion of {\em global equivalence},
symmetric spectra model global stable homotopy theory based on finite groups;
we review the relevant definitions in this section.

We work with symmetric spectra in spaces 
(as opposed to simplicial sets as in~\cite{HSS}),
where we use the convention that a {\em space} is 
a compactly generated space in the sense of~\cite{mccord},
i.e., a $k$-space (also called {\em Kelley space})
that satisfies the weak Hausdorff condition.
We write $\bT$ for the category of compactly generated spaces and continuous maps.
Moreover, we use a slightly more invariant version of symmetric spectra,
where the terms are indexed by finite sets (as opposed to natural numbers).
So our definition below is not identical with the one given in~\cite{HSS},
but it defines an equivalent category.

For a finite set~$A$ we denote by $\mR[A]$ the $\mR$-vector space of functions from
$A$ to $\mR$, and by  $S^A$ the one-point compactification of $\mR[A]$,
based at infinity. We write $A+B$ for the disjoint union of two sets $A$ and $B$.
The canonical linear isomorphism 
\[ \mR[A]\oplus\mR[B]\ \iso \ \mR[A+B] \]
induced by the inclusions of $A$ and $B$ into $A+B$
compactifies to a homeomorphism $S^A\sm S^B\iso S^{A+B}$
that we will often use without explicit mentioning to identify $S^A\sm S^B$ and $S^{A+B}$.

\begin{defn}
A {\em symmetric spectrum} $X$ consists of a based space $X(A)$ for every finite set $A$,
equipped with continuous based maps 
\[  i_* \ : \ X(A)\sm S^{B\setminus i(A)}\ \to \ S^B\]
for every injective map $i:A\to B$ between finite sets.
This data is required to satisfy the following conditions:
\begin{enumerate}[(a)]
\item For every set $A$, the following composite is the identity:
\[ X(A)\ \iso \ X(A)\sm S^\emptyset\ \xrightarrow{(\Id_A)_*} \ X(A)\]
\item If $i:A\to B$ and $j:B\to C$ are composable injections between finite sets,
then the following diagram commutes:
\[\xymatrix@C=18mm{ 
X(A)\sm S^{ B\setminus i(A) }\sm S^{ C\setminus j(B) }\ar[r]^-{i_*\sm S^{C\setminus j(B)} }\ar[d]_{X(A)\sm \iso} &
X(B)\sm S^{C\setminus j(B)}\ar[d]^{j_*}\\
X(A)\sm S^{C\setminus j(i(A))}\ar[r]_{(j i)_*} &X(C) }  \]
\end{enumerate}
The unnamed isomorphism between
$S^{ B\setminus i(A) }\sm S^{ C\setminus j(B) }$ and $S^{ C\setminus j(i(A))}$
is given by $j$ on the coordinates in $B\setminus i(A)$,
and by the inclusion  $C\setminus j(B)\to C\setminus j(i(A))$
on the remaining coordinates.

A {\em morphism} $f:X\to Y$ of symmetric spectra consists of
based continuous maps $f(A):X(A)\to Y(A)$ for every finite set $A$ such that
for every injection $i:A\to B$ between finite sets the following square commutes:
\[\xymatrix{ 
X(A)\sm S^{ B\setminus i(A) }\ar[r]^-{i_*}\ar[d]_{f(A)\sm S^{ B\setminus i(A) }} &X(B)\ar[d]^{f(B)}\\
Y(A)\sm S^{B\setminus i(A)}\ar[r]_-{i_*} & Y(B) }  \]
\end{defn}

The data of a symmetric spectrum $X$ in particular provides an action
of the symmetric group $\Sigma_A$ of self bijections of $A$ in the
space $X(A)$, where we let a bijection $\sigma:A\to A$ act by the composite
\[ X(A)\ \iso \ X(A)\sm S^\emptyset\ \xra{\sigma_*} \ X(A)\ .\]
Also, the inclusion $A\to A+B$ of a summand into the disjoint union
of two finite sets produces a structure map
$\sigma_{A,B}:X(A)\sm S^B\to X(A+B)$, 
where we implicitly identify $B$ with the complement of the embedding $A\to A+B$.

In \cite[Definition 3.1.3]{HSS}, Hovey, Shipley and Smith define stable equivalences of symmetric spectra;
they show in \cite[Theorem 3.4.4]{HSS} that the stable equivalences participate in a model structure
on the category of symmetric spectra whose homotopy category is equivalent to the traditional stable
homotopy category. 
It is an unfortunate fact of life that stable equivalences of symmetric spectra {\em cannot}
be defined as the morphisms that induce isomorphisms of the naively defined stable homotopy groups,
see \cite[Definition 3.1.9]{HSS}.
By \cite[Theorem 3.1.11]{HSS}, every $\pi_*$-isomorphism of symmetric spectra is a stable
equivalence, but the converse is not true generally.
In  \cite[Definition 5.6.1]{HSS}, Hovey, Shipley and Smith introduce the notion of {\em semistability}
and show in \cite[Proposition 5.6.5]{HSS} that stable equivalences between semistable symmetric spectra
are already $\pi_*$-isomorphisms.

In \cite{hausmann:G-symmetric},
Hausmann generalizes the work of Hovey, Shipley and Smith to $G$-symmetric spectra, i.e.,
symmetric spectra equipped with an action of a finite group $G$.
Hausmann works relative to a $G$-set universe $\Uc$, i.e., a countably infinite $G$-set that
is isomorphic to the disjoint union of two copies of itself. We restrict our attention
the special case when $\Uc=\Uc_G$ is a universal $G$-set,
i.e., a $G$-set universe that contains an isomorphic copy of every finite $G$-set;
we often omit the universal $G$-set from part of the notation.
Hausmann introduces a notion of $G$-stable equivalences \cite[Definition 2.3.5]{hausmann:G-symmetric}
and complements them into a stable model structure
on the category of $G$-symmetric spectra
\cite[Theorem 4.8]{hausmann:G-symmetric}.
By \cite[Theorem 7.4, 7.5]{hausmann:G-symmetric},
Hausmann's model structure is Quillen equivalent
to the category of orthogonal $G$-spectra with either
the stable model structure of Mandell and May \cite[Theorem 4.2]{mandell-may},
Stolz \cite{stolz-thesis} or Hill, Hopkins and Ravenel \cite[Proposition B.63]{HHR-Kervaire}.

In \cite[Definition 3.1]{hausmann:G-symmetric} Hausmann introduces the
naive $G$-equivariant homotopy group  $\pi_k^{G,\Uc_G}(X)$ of a $G$-symmetric spectrum as
\begin{equation}\label{eq:define pi^G}
\pi_k^{G,\Uc_G}(X) \ = \ \colim_{A\subset \Uc_G}[S^{k+ A}, X(A)]^G\ ;
\end{equation}
here the colimit is taken over the filtered poset of finite $G$-subsets of $\Uc_G$, formed along
specific structure maps.
Strictly speaking, the definition above only makes sense for $k\geq 0$;
for negative $k$, the precise interpretation is explained
in \cite[3.1]{hausmann:G-symmetric}.
A morphism of $G$-symmetric spectra is a
{\em $\upi_*^{\Uc}$-isomorphism} if the induced map on $\pi_k^{H,\Uc_G}$
is an isomorphism for every integer $k$ and every subgroup $H$ of $G$.
By \cite[Theorem 3.36]{hausmann:G-symmetric}, every $\upi_*^{\Uc}$-isomorphism
of $G$-symmetric spectra is a $G$-stable equivalence.
Hausmann defines a notion of $G$-semistability in \cite[Definition 3.22]{hausmann:G-symmetric},
and shows in \cite[Corollary 3.37]{hausmann:G-symmetric}
that $G$-stable equivalences between $G$-semistable symmetric spectra
are already $\upi_*^{\Uc}$-isomorphisms.

Symmetric spectra (without any additional actions)
can be viewed as encoding `global stable homotopy types'; 
loosely speaking, one can think of this as a collection of 
compatible $G$-equivariant stable homotopy types for every finite group $G$. 
We briefly sketch how to formalize this approach to global stable homotopy theory;
the details of the theory are developed in Hausmann's paper \cite{hausmann:global_finite}.
There is a version with orthogonal spectra instead of symmetric spectra, 
where finite groups are generalized to compact Lie groups, compare~\cite{schwede:global}.

Every symmetric spectrum can be considered
as a $G$-symmetric spectrum by letting $G$ act trivially;
we call this the {\em underlying $G$-spectrum}
of a symmetric spectrum. A morphism of symmetric spectra is a {\em global equivalence}
\cite[Definition 2.10]{hausmann:global_finite} if it is a $G$-stable equivalence of underlying $G$-spectra
for every finite group $G$.
There is an alternative way to define global equivalences of symmetric spectra,
as follows. We call a morphism $f:Y\to Z$ of symmetric spectra
a {\em global level equivalence} if for every $n\geq 0$ the map
$f_n:Y_n\to Z_n$ is a $\Sigma_n$-weak equivalence (i.e., restricts to a
weak equivalence on $H$-fixed points for all subgroups $H\leq \Sigma_n$),
compare \cite[Definition 2.3]{hausmann:global_finite}.
The global level equivalences and the $S$-cofibrations of~\cite[Definition 5.3.6]{HSS}
(or rather the analog for symmetric spectra of spaces, which are called
{\em flat cofibrations} in \cite[Definition 2.3]{hausmann:global_finite})
determine a {\em global level model structure} on the category of symmetric spectra,
see \cite[Proposition 2.6]{hausmann:global_finite}.
A morphism  $f:Y\to Z$ of symmetric spectra is a global equivalence
if and only if for every global $\Omega$-spectrum~$X$ the induced map 
$[f,X]^{\rm str}:[Z,X]^{\rm str}\to[Y,X]^{\rm str}$
is bijective, where $[-,-]^{\rm str}$ is the set of morphisms 
in the global level homotopy category (the localization of symmetric spectra
with respect to the class of global level equivalences),
see the remark at the end of Section 2.2 of \cite{hausmann:global_finite}.

A morphism of symmetric spectra is a {\em global $\upi_*$-isomorphism}
\cite[Definition 4.2]{hausmann:global_finite} if it is a $\upi_*^{\Uc}$-isomorphism
of underlying $G$-spectra for every finite group $G$.
A symmetric spectrum is {\em globally semistable} in the sense of \cite[Definition 4.11]{hausmann:global_finite}
precisely when the underlying $G$-spectrum is $G$-semistable for every finite group $G$,
by \cite[Proposition 4.13 (i)]{hausmann:global_finite}.
So every global $\upi_*$-isomorphism
is a global equivalence \cite[Proposition 4.5]{hausmann:global_finite},
and every global equivalence between globally semistable symmetric spectra is 
a global $\upi_*$-isomorphism \cite[Proposition 4.13 (vi)]{hausmann:global_finite}.\medskip

In Section~\ref{sec:K of I} we will explain how to
turn a parsummable category $\Cc$ into a symmetric spectrum $\bK_{\gl}\Cc$,
the {\em global K-theory spectrum} of~$\Cc$; the symmetric spectrum $\bK_{\gl}\Cc$
has the remarkable property of being a `restricted global $\Omega$-spectrum',
a slight generalization of the
notion of global $\Omega$-spectrum of \cite[Definition 2.13]{hausmann:global_finite}.
We introduce this concept now and explain how a restricted global $\Omega$-spectrum
encodes compatible equivariant infinite loop spaces for all finite groups.

We let~$X$ be a symmetric spectrum and~$G$ a group.
Then for all finite $G$-sets $A$ and $B$, the spaces
$X(A)$, $S^B$ and $X(A+ B)$ inherit a $G$-action, and  
the structure map $\sigma_{A,B}:X(A)\sm S^B\to X(A+B)$ is $G$-equivariant.
The adjoint
\[ \tilde\sigma_{A,B} \ : \ X(A)\ \to \ \map_*(S^B,X(A+ B))  \]
is then $G$-equivariant for the conjugation action on the target,
the space of based continuous maps.
We will say that a $G$-set {\em has a free orbit} if it contains an element with trivial isotropy group.
We recall that a continuous $G$-equivariant map $f:K\to L$ 
between $G$-spaces is a {\em $G$-weak equivalence}
if the induced map $f^H:K^H\to L^H$ on $H$-fixed point spaces
is a weak homotopy equivalence for every subgroup $H$ of $G$.

\begin{defn}\label{def:global Omega} 
A symmetric spectrum $X$ is a {\em restricted global $\Omega$-spectrum}
if for every finite group $G$, every finite $G$-set $A$ with a free orbit,
and all finite $G$-sets $B$, the adjoint structure map 
$\tilde\sigma_{A,B}: X(A)\to\map_*(S^B,X(A+ B))$
is a $G$-weak equivalence.
\end{defn}

A symmetric spectrum is a {\em global $\Omega$-spectrum}
if the adjoint structure map $\tilde\sigma_{A,B}$
is a $G$-weak equivalence for all finite $G$-sets $A$ and $B$ 
such that $G$ acts faithfully on $A$, see \cite[Definition 2.13]{hausmann:global_finite}.
If $A$ contains an element with trivial isotropy group, then $G$ must in
particular act faithfully. So every global $\Omega$-spectrum
is in particular a restricted global $\Omega$-spectrum;
the `restricted' objects are somewhat analogous to positive $\Omega$-spectra in
the non-equivariant context.
The global $\Omega$-spectra are the fibrant objects in the global model structure
on the category of symmetric spectra, see \cite[Theorem 2.18]{hausmann:global_finite}.

For restricted global $\Omega$-spectra $X$,
all maps in the colimit system defining $\pi_0^G(X)$
that start at a $G$-set with a free orbit are isomorphisms.
Since $G$-sets with a free orbit are cofinal in the poset of
finite $G$-subsets of $\Uc_G$, the canonical map
\[  [S^G, X(G)]^G\ \to \ \pi_0^G(X)  \]
is bijective.

\begin{rk}\label{rk:X<G>} 
A restricted global $\Omega$-spectrum $X$ is a very rich kind of structure: 
for every finite group~$G$, the $G$-space 
\[  X\td{G}\ =\ \map_*(S^G,X(G))    \]
is an equivariant infinite loop space, indexed on a complete $G$-universe;
here $G$ acts on itself by left translation, and by conjugation
on the mapping space.
In other words, for every $G$-representation $V$, there is another
$G$-space $X\td{G,V}$ and a $G$-weak equivalence
\[ X\td{G}\ \to \ \map_*(S^V,X\td{G,V})\ . \]
To construct such an equivalence,
 we observe that $\mR[G\times\mathbf{m}]$ is isomorphic to
a sum of $m$ copies of the regular representation of $G$. 
Here we write $\mathbf{m}=\{1,2,\dots,m\}$,
and $G$ acts on $G\times\mathbf{m}$ by left translation on the first factor.
So we can choose a $G$-equivariant linear isometric embedding $i:V\to \mR[G\times\mathbf m]$ 
for some $m\geq 1$.
Then we let $V^\perp$ be the orthogonal complement of the image of $i$, and we set
\[ X\td{G,V}\ = \ \map_*(S^{V^\perp}, X(G\times\mathbf{m})) \ ,\]
with $G$ acting by conjugation on the mapping space.
Because $X$ is a restricted global $\Omega$-spectrum, the adjoint
structure map 
\[ \tilde\sigma_{G,G\times(\mathbf{m-1})} \ : \ 
X(G)\ \to \ \map_*(S^{G\times(\mathbf{m-1})}, X(G\times\mathbf{m}))\]
is a $G$-weak equivalence.
Applying $\map_*(S^G,-)$ gives the desired $G$-weak equivalence from
$X\td{G}=\map_*(S^G,X(G))$ to 
\begin{align*}
  \map_*(S^G,\map_*(S^{G\times(\mathbf{m-1})},\
  &X(G\times\mathbf{m}))) \ \iso \ 
  \map_*(S^G\sm S^{G\times(\mathbf{m-1})}, X(G\times\mathbf{m})) \\
  &\iso \ \map_*(S^{G\times\mathbf{m}}, X(G\times\mathbf{m})) \
  \iso \ \map_*(S^V\sm S^{V^\perp}, X(G\times\mathbf{m})) \\
&\iso \ 
\map_*(S^V,\map_*(S^{V^\perp}, X(G\times\mathbf{m})))\ = \
\map_*(S^V,X\td{G,V})\ .
\end{align*}
As $G$ varies, the equivariant infinite loop spaces $X\td{G}$ are 
closely related to each other. For example, if $H$ is a subgroup of $G$, then
$X\td{H}$ is $H$-weakly equivalent to the restriction of the 
$G$-equivariant infinite loop space $X\td{G}$.
Indeed, the underlying $H$-set of $G$ decomposes as the internal disjoint union
\[ \res^G_H(G)\ = \ H\, + \, (G\setminus H) \ .\]
Because $X$ is a restricted global $\Omega$-spectrum, the adjoint structure map 
\[ \tilde\sigma_{H,G\setminus H} \ : \  X(H)\ \to \ \map_*(S^{G\setminus H}, X(G))\]
is an $H$-weak equivalence. Applying $\map_*(S^H,-)$
gives an $H$-weak equivalence
\[ X\td{H}\ =\ \map_*(S^H,X(H)) \ \to \ 
\map_*(S^H,\map_*(S^{G\setminus H}, X(G)))\ \iso\ \res^G_H(X\td{G}) \ .\]
\end{rk}

\begin{con}[(Fixed point symmetric spectrum)]\label{con:G fixed point}
  Given a symmetric spectrum $X$ and a finite group  $G$,
  we construct  another symmetric spectrum $F^G X$, the {\em $G$-fixed point spectrum}.
  We let 
  \[ \bar\rho_G\ = \ \{ \sum \lambda_g \cdot g \in \rho_G \ : \ \sum \lambda_g =0 \} \]
  denote the reduced regular representation of $G$, i.e.,
  the kernel of the augmentation $\rho_G=\mR[G]\to \mR$.
  Several times below -- and sometimes without explicit mentioning --
  we shall use the $G$-equivariant isometry
  \begin{equation}\label{eq:fix_of_rho}
  \mR\oplus\bar\rho_G \ \iso \ \rho_G \ ,\quad
    (\lambda,x)\ \to \ (\frac{\lambda}{\sqrt{|G|}}{\sum}_{g\in G}\ g) + x \ .    
  \end{equation}
  For a finite set~$A$ we set
  \[ (F^G X)(A) \ = \ \map_*^G(S^{\bar\rho_G\tensor \mR[A]}, X(G\times A))\ . \]
  As before, $S^{\bar\rho_G\tensor\mR[A]}$ is the one-point compactification of the
  tensor product of $\bar\rho_G$ with $\mR[A]$,
  and $\map_*^G(-,-)$ is the space of $G$-equivariant continuous based maps.
  To define the structure maps we let $i:A\to B$ be an injection between finite sets.
  We define the structure map $i_*:(F^G X)(A)\sm S^{B\setminus i(A)}\to (F^G X)(B)$ as the composite
  \begin{align*}
    \map_*^G(S^{\bar\rho_G\tensor\mR[A]},& X(G\times A))\sm S^{B\setminus i(A)}\
    \xra{\rm assembly} \  \map_*^G(S^{\bar\rho_G\tensor \mR[A]},
      X(G\times A)\sm S^{B\setminus i(A)})\\
    \to \
    &\map_*^G(S^{\bar\rho_G\tensor \mR[A]}\sm S^{\bar\rho_G\tensor \mR[B\setminus i(A)]},
      X(G\times A)\sm S^{B\setminus i(A)}\sm S^{\bar\rho_G\tensor \mR[B\setminus i(A)]}) \\
    \xra{\ \iso\ }    \  &\map_*^G(S^{\bar\rho_G\tensor \mR[B]},
                           X(G\times A)\sm S^{G\times(B\setminus i(A))})\\
    \to \  &\map_*^G(S^{\bar\rho_G\tensor \mR[B]}, X(G\times B))\ .
  \end{align*}
  The second map smashes from the right with the sphere
  $S^{\bar\rho_G\tensor\mR[B\setminus i(A)]}$.
  The third map is induced by the isometry
  \[ (\bar\rho_G\tensor \mR[A])\oplus (\bar\rho_G\tensor \mR[B\setminus i(A)])\ \iso    \
    \bar\rho_G\tensor  \mR[B]   \ , \]
  given by $i:A\to B$ on the first summand,
  and by the inclusion $i:B\setminus i(A)\to B$ on the second summand,
  and by the isometry
  \[ \mR[B\setminus i(A)]\oplus (\bar\rho_G\tensor \mR[B\setminus i(A)])\ \iso\
    (\mR\oplus \bar\rho_G)\tensor \mR[B\setminus i(A)] \ \iso_{\eqref{eq:fix_of_rho}}\
    \rho_G\tensor \mR[B\setminus i(A)]\ . \]
  The fourth map is induced by the structure map 
  \[ (G\times i)_*\ : \  X(G\times A)\sm S^{G\times(B\setminus i(A))}\ \to\  X(G\times B) \]
  of the symmetric spectrum $X$.
  We note that when $G=e$ is a trivial group, the fixed point spectrum
  $F^G X$ is naturally isomorphic to~$X$.
\end{con}

We warn the reader that the above fixed point construction is
{\em not} fully homotopical. In other words, if $f:X\to Y$ is a $G$-stable
equivalence of $G$-symmetric spectra, then $F^G f:F^G X\to F^G Y$
need not be a non-equivariant stable equivalence without further hypotheses on $X$ and~$Y$.

\section{\texorpdfstring{$\M$}{M}-categories}\label{sec:Mcat}

In this section we introduce and study {\em $\M$-categories},
i.e., categories equipped with a strict action of a particular strict monoidal category
made from self-injections of a countably infinite set, see Definition \ref{def:M-cat}.
These $\M$-categories underlie the more highly structured parsummable categories,
the input data for our global K-theory.
An action of the monoidal category $\M$ on a category $\Cc$ gives rise to a notion of `support'
for objects of $\Cc$, see Definition \ref{def:support}.
The support is a countable set, possibly infinite,
and of fundamental importance for everything in this paper.
An $\M$-category is {\em tame} if the support of every object is finite.

Construction \ref{con:F^G for M-cat} introduces the $G$-fixed $\M$-category
$F^G\Cc$ associated with an $\M$-category $\Cc$ and a finite group $G$.
By Proposition \ref{prop:lambda_sharp}, the $G$-fixed category $F^G\Cc$
embeds fully faithfully into the category of $G$-objects in $\Cc$.
A morphism of $\M$-categories is a {\em global equivalence}
if the induced functor on $G$-fixed categories is a weak equivalence for
all finite groups, see Definition \ref{def:global equiv Gamma-M}.

The final topic of this section is the {\em box product} of two $\M$-categories,
defined as the full subcategory of the product
spanned by the pairs of disjointly supported objects,
see Definition \ref{def:box}.
The inclusion of the box product into the product is always a global
equivalence by Theorem \ref{thm:box2times},
and the box product restricts
to a symmetric monoidal product on the category of tame $\M$-categories,
see Proposition \ref{prop:box symmetric monoidal}.

\begin{con}[(The monoidal category of injections)]
We let $M$ denote the monoid, under composition,
of injective self maps of the set $\omega=\{0,1,2,\dots\}$.

Given a set $X$, the category $E X$ has object set $X$
and a unique morphism between any pair of objects.
More formally, the morphism set of the category~$E X$ is $X\times X$,
and the source and target maps are the projections to the second respectively 
the first factor. In other words, the pair~$(y,x)$ is the unique morphism from~$x$ to~$y$.
Composition is then forced to be $(z,y)\circ(y,x)=(z,x)$.
The category $E X$ is sometimes called the {\em chaotic category},
or the {\em indiscrete category} with object set $X$.

The functor $E$ is in fact right adjoint to the object functor
(as a functor from small categories to sets); 
hence $E$ preserves limits, in particular products,
and so it takes monoids to strict monoidal categories.
So we obtain a strict monoidal category $\M=E M$ 
whose monoid of objects is the injection monoid $M$.
\end{con}

\begin{defn}\label{def:M-cat}
An {\em $\M$-category} is a small category
equipped with a strict action of the strict monoidal category $\M$.
A {\em morphism} of $\M$-categories is a functor $F:\Cc\to \Dc$
such that the square of categories and functors
  \[ \xymatrix@C=15mm{
      \M\times \Cc
      \ar[r]^-{\M\times F}\ar[d]_{\text{act}} &
      \M\times \Dc\ar[d]^{\text{act}} \\
      \Cc\ar[r]_-F & \Dc} \]
  commutes on the nose.
\end{defn}

We emphasize that $\M$ is a {\em strict} monoidal category, i.e.,
the associativity and unit diagrams of categories and functors
commute strictly (and not just up to natural isomorphism).
Correspondingly we are looking at {\em strict} actions of $\M$ on a category,
and {\em strict} morphisms of $\M$-categories.

\begin{rk}[(Explicit structure)]
  We make the structure provided by an $\M$-action on a category $\Cc$ more explicit;
  at the same time, we also fix notation and spell out some specific relations
  that we will later use.

  Given two injections $u,v\in M$, the pair $(v,u):u\to v$
  is a morphism in the category $\M=E M$.
  Given an object $x$ in an $\M$-category $\Cc$, we write
  \[ [v,u]^x \ = \ (v,u)\diamond 1_x\ : \   u_*(x) \ \to \ v_*(x)\ .\]
  Here $\diamond:\M\times \Cc\to\Cc$ is the
  structure functor of the $\M$-action.
  As the object $x$ varies, these morphisms form a natural transformation
  from the functor $u_*:\Cc\to\Cc$ to the functor $v_*$.
  This transformation enjoys another level of naturality: if $F:\Cc\to\Dc$ is
  a morphism of $\M$-categories, then
  \[ [v,u]^{F(x)} \ = \ F\left([v,u]^x\right)\ : \ u_*(F(x)) = F(u_*(x)) \ \to \ v_*(F(x))= F(v_*(x)) \ . \]
  
  The transformations $[v,u]$ enjoy a number of properties, some of which we spell out now.
  Firstly, the relations $(u,u)=1_u$ and
  $(w,v)\circ(v,u)=(w,u)$ in the category $\M$ imply
  \[ [u,u]^x\ = \ 1_{u_*(x)}\text{\qquad and\qquad}  [w,v]^x\circ [v,u]^x\ = \  [w,u]^x\ . \]
  For $\lambda\in M$, the associativity property of the $\M$-action provides the relations
  \begin{equation} \label{eq:assoc_2}
    [v,u]^{\lambda_*(x)} \ = \  [v\lambda,u\lambda]^x  \text{\qquad and\qquad}
    \lambda_*\left( [v,u]^x\right)\ = \ [\lambda v,\lambda u]^x \ .   
  \end{equation}
\end{rk}

We claim that an $\M$-action on a category $\Cc$ is determined
by part of its structure, namely:
\begin{enumerate}[(a)]
\item the action of the injection monoid $M$ on the set of objects of $\Cc$, and
\item the isomorphisms
  \[ u_\circ^x\ =\ [u,1]^x\ :\ x\to u_*(x) \]
\end{enumerate}
for all $u\in M$ and all objects $x$ of $\Cc$.
Moreover, these pieces of data satisfy the relation
\begin{equation}\label{eq:basic_relation}
  v_\circ^{u_*(x)}  \circ  u_\circ^x  \ =\   (v u)_\circ^x
\end{equation}
for all $u$ and $v$ in $M$ and all objects $x$ of $\Cc.$
The next proposition simplifies the construction of $\M$-actions
in concrete examples, as it frees us from having to specify redundant data.

\begin{prop}\label{prop:minimal M-axioms}
  Let $\Cc$ be a category equipped with an action of the injection monoid $M$
  on the set of objects of $\Cc$, and with isomorphisms
  $u_\circ^x:x\to u_*(x)$ for all $u\in M$ and all objects $x$ of $\Cc$.
  Suppose moreover that the relation \eqref{eq:basic_relation} holds
  for all $u$ and $v$ in $M$ and all objects $x$ of $\Cc$.
  Then there is a unique extension to an $\M$-action on $\Cc$
  such that $u_\circ^x=[u,1]^x$ for all $u\in M$ and all objects $x$ of $\Cc$.
\end{prop}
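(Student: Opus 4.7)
The strategy is to observe that once we demand $u_\circ^x = [u,1]^x$, the functoriality of the prospective action $\diamond:\M\times\Cc\to\Cc$ forces both the transformations $[v,u]^x$ and the functors $u_*:\Cc\to\Cc$ entirely in terms of the data we are given. So the proof has two halves: derive the forced formulas (yielding uniqueness) and then check that these formulas actually define a strict $\M$-action.

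First I would establish three preliminary identities from the hypothesis \eqref{eq:basic_relation}. Setting $u=v=1$ in the relation gives $1_\circ^x\circ 1_\circ^x=1_\circ^x$; since $1_\circ^x$ is assumed to be an isomorphism, this forces $1_\circ^x=1_x$. Next, in $\M$ we have the factorization $(v,u)=(v,1)\circ(1,u)$, so by functoriality the required identities $[v,u]^x=[v,1]^x\circ[1,u]^x$ and $[1,u]^x\circ[u,1]^x=[1,1]^x=1_x$ together dictate
\[
[v,u]^x \ = \ v_\circ^x\circ(u_\circ^x)^{-1}\ .
\]
Similarly, naturality of $[u,1]=u_\circ$ with respect to an arbitrary morphism $f:x\to y$ in $\Cc$ forces the definition
\[
u_*(f)\ =\ u_\circ^y\circ f\circ (u_\circ^x)^{-1}\ ,
\]
and then for a general morphism $(v,u):u\to v$ in $\M$ the functoriality of $\diamond$ forces
\[
(v,u)\diamond f \ :=\ v_\circ^y\circ f\circ (u_\circ^x)^{-1}\ .
\]
This proves uniqueness: any extension with $u_\circ^x=[u,1]^x$ must be given by the displayed formula.

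For existence I would take the displayed formula as the \emph{definition} of $\diamond$ on morphisms (the action on objects is given). It is then a routine bookkeeping check that $\diamond$ is a functor: the identity morphism $((u,u),1_x)$ maps to $u_\circ^x\circ 1_x\circ(u_\circ^x)^{-1}=1_{u_*(x)}$, and for composable pairs $((w,v),g)$ and $((v,u),f)$ the middle terms $(v_\circ^y)^{-1}$ and $v_\circ^y$ cancel, giving the formula for $((w,u),g\circ f)$. The unit axiom $1\diamond f=f$ is immediate from $1_\circ^x=1_x$. For the strict associativity axiom one must check that
\[
(\lambda'\mu',\lambda\mu)\diamond f \ = \ (\lambda',\lambda)\diamond\bigl((\mu',\mu)\diamond f\bigr)\ ;
\]
here the right-hand side expands to $\lambda'_\circ^{\mu'_*(y)}\circ\mu'_\circ^y\circ f\circ(\mu_\circ^x)^{-1}\circ(\lambda_\circ^{\mu_*(x)})^{-1}$, and two applications of the cocycle hypothesis \eqref{eq:basic_relation} collapse the two outer pairs into $(\lambda'\mu')_\circ^y$ and $((\lambda\mu)_\circ^x)^{-1}$, matching the left-hand side. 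Finally, $[u,1]^x=u_\circ^x\circ(1_\circ^x)^{-1}=u_\circ^x$, so the compatibility with the given data holds.

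No step is really a conceptual obstacle; the only thing to be careful about is bookkeeping. The one place where it is easy to go astray is the derivation of $[1,u]^x=(u_\circ^x)^{-1}$, which tacitly uses $1_\circ^x=1_x$ together with $[1,1]^x=1_x$; treating the $u=v=1$ special case of \eqref{eq:basic_relation} at the very beginning makes the rest of the argument go through cleanly.
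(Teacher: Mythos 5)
Your proposal is correct and takes essentially the same route as the paper: derive the forced formulas for $u_*(f)$ and $[v,u]^x$ from the demand that $u_\circ=[u,1]$, then verify they assemble to a strict $\M$-action using the cocycle relation \eqref{eq:basic_relation} together with its $u=v=1$ consequence $1_\circ^x=1_x$. The only presentational difference is that the paper first builds the underlying action of the monoid $M$ on the category $\Cc$ (functoriality of $u_*$, $1_*=\Id$, $(vu)_*=v_*u_*$ on morphisms) and then reduces the remaining monoidal-action axioms to the two identities \eqref{eq:assoc_2}, whereas you verify functoriality and strict associativity of $\diamond:\M\times\Cc\to\Cc$ directly on arbitrary morphisms of $\M\times\Cc$; the computations are parallel.
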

\begin{proof}
  We start with the uniqueness.
  The naturality of $u_\circ=[u,1]$ is equivalent to the relation
  \begin{equation}\label{eq:U_*_on_morphisms}
    u_*(f)\ = \ u_\circ^y \circ f\circ (u_\circ^x)^{-1}    
  \end{equation}
  for every $\Cc$-morphism $f:x\to y$.
  So the effect of $u_*$ on morphisms is determined by the data (a) and (b).
  Also, the relation
  \[ [v,u]\ = \ [v,1] \circ [1,u]\ = \ v_\circ\circ \left( u_\circ\right)^{-1}\]
  shows that the natural isomorphism $[v,u]:u_*\Longrightarrow v_*$ is determined by (a) and (b).
  This completes the proof of uniqueness.

  For the existence, we define the rest of the structure as required by the
  uniqueness argument above. So we define the functor $u_*$
  on morphisms by the relation \eqref{eq:U_*_on_morphisms},
  and this visibly ensures that $u_*$ preserves identities and composition,
  so it is indeed a functor. 
  
  The relation $1_*=\Id_\Cc$ holds on objects by hypothesis.
  For $u=v=1$, the relation \eqref{eq:basic_relation} specializes to
  $1_\circ^x\circ  1_\circ^x =1_\circ^x$. Because $1_\circ^x$ is an isomorphism,
  we can conclude that $1_\circ^x$ is the identity of $x$.
  This yields that $1_*(f)=f$, i.e., $1_*$ is the identity functor.
  For $u$ and $v$ in $M$, the relation $v_*\circ u_*=(v u)_*$
  holds on objects by hypothesis,
  and on morphisms by \eqref{eq:basic_relation}.
  This shows that the action of $M$ on objects of $\Cc$
  extends uniquely to an action of $M$ on the whole category $\Cc$.
  
  Now we define the isomorphisms $[v,u]^x$ by $v_\circ^x\circ \left( u_\circ^x\right)^{-1}$,
  as we must. The naturality relation $v_*(f)\circ[v,u]^x=[v,u]^y\circ u_*(f)$
  for a morphism $f:x\to y$ then holds by definition, so there is a unique
  extension of the action of $M$ on $\Cc$ to a functor $\diamond:\M\times\Cc\to\Cc$.
  
  The final check is to verify that this action functor is associative and unital,
  where we already know this for the monoid $M$ of objects of $\M$.
  The remaining relations \eqref{eq:assoc_2} hold because
  \begin{align*}
    [v,u]^{\lambda_*(x)} \
    &= \  v_\circ^{\lambda_*(x)}\circ \left( u_\circ^{\lambda_*(x)}\right)^{-1}\\
    _\eqref{eq:basic_relation}
    &= \  (v\lambda)_\circ^x\circ \left(\lambda_\circ^x \right)^{-1}\circ
      \left( (u\lambda)_\circ^x\circ\left( \lambda_\circ^x\right)^{-1}\right)^{-1}\\
    &= \  (v\lambda)_\circ^x \circ \left( ( u\lambda)_\circ^x\right)^{-1}\
      = \     [v\lambda,u\lambda]^x
  \end{align*}
  and
  \begin{align*}
    \lambda_*\left( [v,u]^x\right)\
    &= \ \lambda_*\left(v_\circ^x\circ \left( u_\circ^x\right)^{-1} \right)\
      = \ \lambda_*\left(v_\circ^x\right)\circ \left(\lambda_*( u_\circ^x)\right)^{-1}\\
    &= \  (\lambda_\circ^{v_*(x)}\circ v_\circ^x\circ (\lambda_\circ^x)^{-1}) \circ
      (\lambda_\circ^{u_*(x)}\circ u_\circ^x\circ (\lambda_\circ^x)^{-1})^{-1}     \\
    &= \ (\lambda_\circ^{v_*(x)}\circ v_\circ^x)\circ 
      (\lambda_\circ^{u_*(x)}\circ u_\circ^x)^{-1}     \\
    _\eqref{eq:basic_relation}
    &= \  (\lambda v)_\circ^x\circ ((\lambda u_\circ)^x)^{-1} \
      =  \ [\lambda v,\lambda u]^x \ .    
  \end{align*}
\end{proof}

Now we note that $\M$-categories are closed under various kinds of constructions.

\begin{rk}[(Full subcategories)]
Let $\bar\Cc$ be a full subcategory of an $\M$-category 
closed under the action of the injection monoid $M$. More precisely, we
suppose that for every injection $u\in M$ the composite functor
\[ \bar\Cc \ \xra{\text{incl}}\ \Cc \ \xra{\ u_*\ } \ \Cc \]
has image in the full subcategory $\bar\Cc$. Then $\bar\Cc$ is an
$\M$-category in its own right, by restriction of structure.
The inclusion $\bar\Cc\to\Cc$ is a morphism of $\M$-categories.
\end{rk}

\begin{eg}[(Opposite $\M$-categories)]\label{eg:opposite M-version}
If $\Cc$ is an $\M$-category, then the opposite category $\Cc^{\op}$
inherits a canonical structure of $\M$-category.
Indeed, since the category $\M$ is a groupoid, it has an anti-automorphism
$(-)^{-1}:\M\to \M^{\op}$ that is the identity on objects
and sends every morphism to its inverse.
If $\diamond:\M\times \Cc\to\Cc$ is the action of $\M$ on $\Cc$,
then the action of $\M$ on $\Cc^{\op}$ is the composite
\[ \M\times \Cc^{\op}\ \xra{(-)^{-1}\times \Id}\
\M^{\op}\times \Cc^{\op}\ = \
(\M\times \Cc)^{\op} \  \xra{\diamond^{\op}} \ \Cc^{\op}\ . \]
More explicitly, this means that the structure functor of $\Cc^{\op}$
associated with an injection~$u\in M$ is the functor
\[ u^{\op}\ : \ \Cc^{\op}\ \to\  \Cc^{\op} \ ,\]
and the value of the natural isomorphism $[v,u]$ in $\Cc^{\op}$
at an object~$x$ is the {\em inverse} of $[v,u]^x$ in $\Cc$.
\end{eg}

\begin{eg}[(Limits and colimits of $\M$-categories)]\label{eg:(co)limits Mcat}
  The category $\cat$ of small categories is complete and cocomplete.
  Limits of small categories
  are calculated as limits of objects and limits of morphisms;
  colimits of categories are typically more difficult to describe.

  The forgetful functor $\M\cat \to \cat$
  has both a left adjoint and a right adjoint.
  The left adjoint $\cat\to \M\cat$ takes a small category $X$
  to the category $\M\times X$, with $\M$ action by multiplication on the first factor.
  The right adjoint $\cat\to \M\cat$ takes $X$
  to the category $\cat(\M,X)$ of functors from $\M$ to $X$, with natural
  transformation as morphisms.
  The $\M$-action on $\cat(\M,X)$ is adjoint to the composite
  \begin{align*}
    \M\times \cat(\M,X) \times \M\ \xra{\M\times\text{twist}} \
    &\M\times\M\times \cat(\M,X)\\
    \xra{\diamond\times\cat(\M,\Cc)}\
    &\M\times \cat(\M,X)\ \xra{\text{evaluation}}\ X \ .
  \end{align*}
  Since the forgetful functor is a left and a right adjoint,
  it creates limits and colimits.
  So the category $\M\cat$ of $\M$-categories is complete and cocomplete,
  and limits and colimits can be calculated on underlying categories.

  Similarly, for every $\M$-category $\Cc$ and every small category $J$,
  the category $\cat(J,\Cc)$ of functors from $J$ to~$\Cc$
  has a preferred structure of $\M$-category: the action functor
  is the composite
  \[     \M\times \cat(J,\Cc)\ \to \  \cat(J,\M\times \Cc)\ \xra{\ \cat(J,\diamond)} \  \cat(J,\Cc)\ . \]
  The first functor is of `assembly type' and sends an object
  $(u,F)$ of $\M\times \cat(J,\Cc)$ to the functor $(1_u,F):J\to\M\times\Cc$.
\end{eg}

The structure of an $\M$-category gives rise to an intrinsic finiteness
condition for objects, as well as an intrinsic notion of `disjointness' 
(or `orthogonality') for pairs of objects.
The finiteness and disjointness conditions both rely on the concept
of `support' of an object in an $\M$-category that we discuss now.

\begin{defn}
  Let $\Cc$ be an $\M$-category.
  An object $x$ of $\Cc$ is {\em supported} on a subset $A$ of $\omega$
  if the following condition holds: for every injection $u\in M$
  that is the identity on $A$, the relation $u_*(x)=x$ holds.
  An object $x$ is {\em finitely supported} if it is supported
  on some finite subset of $\omega$.
  The $\M$-category $\Cc$ is {\em tame} if all its objects are finitely supported.
  We write $\M\cat^\tau$ for the category of tame $\M$-categories and
  $\M$-equivariant functors.
\end{defn}

Clearly, if $x$ is supported on $A$ and $A\subseteq B\subseteq \omega$,
then $x$ is supported on $B$. Every object is supported on all of~$\omega$.
An object $x$ is supported on the empty set if and only if $u_*(x)=x$ for all $u\in M$.
So the objects supported on the empty set are precisely the $M$-fixed objects.

\begin{defn}\label{def:support} 
Let $x$ be an object of an $\M$-category.
The {\em support} of $x$ is the intersection of all finite subsets of $\omega$
on which $x$ is supported.  
\end{defn}

We write $\supp(x)$ for the support of an object $x$.
If $x$ is not finitely supported, then we agree that $\supp(x)=\omega$. 
It is important that in Definition~\ref{def:support} 
the intersection is only over {\em finite}
supporting subsets. Indeed every object is supported on the
set $\omega-\{j\}$ for every $j\in\omega$, 
because the only injection that fixes $\omega-\{j\}$ elementwise
is the identity. So without the finiteness condition, the intersection 
in Definition~\ref{def:support} would always be empty.

\begin{prop}\label{prop:finite support} 
Let $x$ be an object of an $\M$-category $\Cc$.
  \begin{enumerate}[\em (i)]
  \item The object $x$ is supported on its support $\supp(x)$.
  \item If two injections $v,\bar v\in M$ agree on $\supp(x)$, 
    then $v_*(x)=\bar v_*(x)$, $[v,u]^x=[\bar v,u]^x$  and $[u,v]^x=[u,\bar v]^x$ for all $u\in M$.
  \item Suppose that  $x$ is supported on a subset $A$ of $\omega$.
    Then for every injection $v\in M$, the object $v_*(x)$ is supported on the set $v(A)$.
    Moreover, if $x$ is finitely supported, then
    $\supp(v_*(x))=v(\supp(x))$, and $v_*(x)$ is also finitely supported.
  \item Let $f:x\to y$ be a $\Cc$-morphism,
    and suppose that $u,v\in M$ agree on $\supp(x)\cup \supp(y)$.
    Then $u_*(f)=v_*(f)$.
  \item For every morphism of $\M$-categories $F:\Cc\to\Dc$, the relation
    \[ \supp(F(x))\ \subseteq \ \supp(x) \]
    holds.
  \end{enumerate}
\end{prop}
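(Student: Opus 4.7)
The plan is to isolate a single combinatorial device -- a factorization lemma that exploits the infinitude of $\omega$ -- and then to deduce all five claims from it by essentially formal manipulations. The key fact is that every $u\in M$ admits, for any prescribed finite set $A\subseteq\omega$, a factorization $u=\pi\tau$ in which $\pi\in M$ is a bijection of $\omega$ (hence a unit) with $\pi|_A=u|_A$, and $\tau=\pi^{-1}u\in M$ fixes $A$ pointwise. Such $\pi$ exists because the injection $u|_A$ of the finite set $A$ into $u(A)$ extends to a bijection between the complements $\omega\setminus A$ and $\omega\setminus u(A)$, both of which are countably infinite.

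From this I would first deduce closure of supporting sets under finite intersection: if $x$ is supported on finite sets $A$ and $B$, then $x$ is supported on $A\cap B$. Given $u\in M$ with $u|_{A\cap B}=1$, one applies the factorization lemma iteratively, interposing auxiliary bijections that displace conflicting elements of $A\setminus B$ into the safe region $\omega\setminus(A\cup B)$, and then concludes $u_*(x)=x$. Part (i) follows at once: for any fixed finite supporting set $A_0$, only finitely many distinct subsets $A_0\cap B$ arise as $B$ ranges over finite supporting sets of $x$, so $\supp(x)$ equals their common intersection and is itself a supporting set by iterated closure.

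Parts (ii), (iii) and (iv) are then obtained by the same factorization device. For (ii), given $v,\bar v$ agreeing on $S=\supp(x)$, I would pick one bijection $\pi\in M$ with $\pi|_S=v|_S=\bar v|_S$ and write $v=\pi\rho$ and $\bar v=\pi\bar\rho$ with $\rho,\bar\rho$ fixing $S$; since $x$ is supported on $S$ by (i), the computation $v_*(x)=\pi_*(\rho_*(x))=\pi_*(x)=\pi_*(\bar\rho_*(x))=\bar v_*(x)$ settles the object equality, and the two equalities of natural isomorphisms follow after expanding via $u_\circ^x=[u,1]^x$ and \eqref{eq:basic_relation}, since the same factorization forces $v_\circ^x=\bar v_\circ^x$. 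Claim (iii) reduces to (ii): the injections $uw$ and $w$ agree on $A\supseteq\supp(x)$ whenever $u$ fixes $w(A)$, so (ii) yields $u_*(w_*(x))=(uw)_*(x)=w_*(x)$, proving that $w_*(x)$ is supported on $w(A)$; the equality $\supp(w_*(x))=w(\supp(x))$ follows from minimality of the support together with injectivity of $w$. For (iv), the formula \eqref{eq:U_*_on_morphisms} expresses $u_*(f)$ in terms of $u_\circ^x$ and $u_\circ^y$, so the morphism-level part of (ii) applied at both $x$ and $y$ gives $u_*(f)=v_*(f)$. Finally, (v) is formal: strict $\M$-equivariance of $F$ forces $u_*(F(x))=F(u_*(x))=F(x)$ whenever $u_*(x)=x$, so every supporting set of $x$ is also one for $F(x)$, which gives $\supp(F(x))\subseteq\supp(x)$.

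The main obstacle is the combinatorial step establishing closure under intersection. The naive hope of factoring $u$ as a single product of one injection fixing $A$ and one fixing $B$ already fails for the transposition $(0\ 2)$ with $A=\{0,1\}$ and $B=\{1,2\}$, since the conflict $u(A\setminus B)=\{2\}=B\setminus A$ cannot be accommodated in a two-factor decomposition. One must genuinely exploit the infinitude of $\omega$ by detouring through auxiliary bijections that temporarily move conflicting elements into the safe region $\omega\setminus(A\cup B)$; once this combinatorial core is in place, the rest of the proof unfolds mechanically.
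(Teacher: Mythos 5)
Your overall strategy matches the paper's quite closely: establish closure of finite supporting sets under intersection by a factorization that exploits the infinitude of $\omega$ (your three-factor detour through auxiliary bijections is the same device as the paper's $u=\sigma(\sigma\gamma\sigma)(\sigma\gamma^{-1}u)$), deduce (i), then read off (iii), (iv), (v). Your counterexample showing a two-factor decomposition cannot work and your treatment of (v) are both correct, and your routing of (iii) through (ii) is exactly the paper's.

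There is, however, a genuine gap in (ii), and it propagates to (iv). Writing $v=\pi\rho$ and $\bar v=\pi\bar\rho$ with $\pi$ a bijection agreeing with $v,\bar v$ on $S=\supp(x)$ and $\rho,\bar\rho$ fixing $S$ pointwise gives $v_\circ^x=\pi_\circ^x\circ\rho_\circ^x$ and $\bar v_\circ^x=\pi_\circ^x\circ\bar\rho_\circ^x$ via \eqref{eq:basic_relation}. But "the same factorization forces $v_\circ^x=\bar v_\circ^x$" is not justified: you still need $\rho_\circ^x=\bar\rho_\circ^x$, and the definition of \emph{supported on $S$} gives only $\rho_*(x)=x$, saying nothing about the automorphism $\rho_\circ^x$ of $x$. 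A priori the assignment $\rho\mapsto\rho_\circ^x$ is merely a monoid homomorphism from $\{\rho\in M:\rho|_S=\mathrm{id}\}$ to $\Aut_\Cc(x)$, and it takes a real argument to see it is trivial. The paper isolates this as the crux of (ii): choose $s,t\in M$ that are the identity on $S$ and whose images meet only in $S$, set $u$ to be the conjugate of $v$ by $t$ extended by the identity off $\mathrm{im}(t)$, observe the monoid identities $us=s$ and $ut=tv$, and cancel the isomorphisms $s_\circ^x$ and then $t_\circ^x$ to force $v_\circ^x=1_x$. Without some such argument, the natural-isomorphism half of (ii) is unproven, and since your (iv) rests on it via \eqref{eq:U_*_on_morphisms}, that part is also incomplete. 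Everything else is essentially the paper's proof sketched at a higher level.
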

\begin{proof}
  (i) There is nothing to show if $x$ is not finitely supported.
  Otherwise, there is a finite subset of $\omega$ on which $x$ is supported,
  and hence $\supp(x)$ is itself finite.
  While the support is defined as the intersection of infinitely many sets,
  it being finite means that we can express it as a finite intersection
  \[ \supp(x)\ = \ B_1\cap \dots\cap B_k \]
  of finite subset $B_1,\dots,B_k$ of $\omega$
  such that $x$ is supported on each $B_i$.
  By induction on $k$, it thus suffices to show the following claim:
  if $x$ is supported on two finite subsets $A$ and $B$ 
  of $\omega$, then $x$ is supported on the intersection $A\cap B$.

  We let $u\in M$ be an injection that fixes $A\cap B$ elementwise.
  We let $m$ be the maximum of the finite set $A\cup B\cup u(A)$ 
  and define $\sigma\in M$ as the involution that interchanges 
  $j$ with $j+m$ for all $j\in B\setminus A$, i.e.,
  \[ \sigma(j)\ = \
    \begin{cases}
      j+m & \text{\ for $j\in B\setminus A$, }\\
      j-m & \text{\ for $j\in (B\setminus A)+m$, and}\\
      j  & \text{\ for $j\not\in (B\setminus A)\cup ((B\setminus A)+m)$.}
    \end{cases}\]
  In particular, the map $\sigma$ fixes the set $A$ elementwise.
  Since $A$ and $u(A)$ are both disjoint from $B+m$, 
  we can choose a bijection $\gamma\in M$ such that
  \[ \gamma(j)\ = \
    \begin{cases}
      u(j) & \text{\ if $j\in A$, and}\\
      \ j  & \text{\ for $j\in B+m$.}
    \end{cases}\]
  Then $u$ can be written as the composition
  \[ u \ = \  \sigma(\sigma \gamma\sigma)(\sigma \gamma^{-1} u)\ .\]
  In this decomposition the factors $\sigma$ and $\sigma \gamma^{-1} u$
  fix $A$ pointwise, and the factor $\sigma \gamma\sigma$
  fixes $B$ pointwise. So 
  \[\sigma_*(x)\ = \ (\sigma \gamma\sigma)_*(x) \ = \ (\sigma \gamma^{-1} u)_*(x)
    \ = \ x \]
  because $x$ is supported on $A$ and on $B$.
  This gives
  \[ u_*(x) \ = \ \sigma_*( (\sigma \gamma \sigma)_*( (\sigma \gamma^{-1} u)_*(x))) \ =\  x\  .  \]
  Since $u$ was any injection fixing $A\cap B$ elementwise,
  the object $x$ is supported on $A\cap B$.
  
  (ii)
  If $x$ is not finitely supported, then $\supp(x)=\omega$,
  so $v=\bar v$, and there is nothing to show.
  So we can assume that the support of $x$ is finite.
  We start with a special case: we consider an injection $v\in M$ that is
  the identity on $\supp(x)$; we show that then the automorphism $v_\circ^x=[v,1]^x$
  of $x=v_*(x)$ is the identity. 

  We choose two injections $s,t\in M$ that are the identity on $\supp(x)$
  and whose images intersect only in $\supp(x)$.
  We define another injection $u\in M$ by
  \[ u(i) \ =\ \begin{cases}
      t(v(t^{-1}(i))) & \text{\ if $i$ is in the image of $t$, and}\\
      \quad i & \text{\ if $i$ is not in the image of $t$.}
    \end{cases} \]
  Then the relations 
  \[
    u s\ = \ s    \text{\quad and\quad} u t\ =\ t v
  \]
  hold in the monoid $M$. Because $s,t,u$ and $v$ are the identity on $\supp(x)$,
  we have $s_*(x)=t_*(x)=u_*(x)=v_*(x)=x$.
  The relation \eqref{eq:basic_relation} yields
  \[ u_\circ^x\circ s_\circ^x \ = \ (u s)_\circ ^x\ = \ s_\circ^x\ .  \]
  Because $s_\circ^x$ is invertible, we deduce that $u_\circ^x=\Id_x$.
  Moreover,
  \[ t_\circ^x\ = \ u_\circ^x\circ t_\circ^x\ =_\eqref{eq:basic_relation} \ (u t)_\circ^x\ = \
   (t v)_\circ^x\ =_\eqref{eq:basic_relation} \ \ t_\circ^x\circ v_\circ^x\ ;\]
  canceling the isomorphism $t_\circ^x$ yields $v_\circ^x=1_x$.

  Now we treat the general case. We let $v,\bar v\in M$ agree on $\supp(x)$.
  We choose a bijection $\gamma\in M$
  such that $\gamma v$ and $\gamma \bar v$ are the identity on $\supp(x)$.
  Then
  \[ v_*(x)\ = \ \gamma^{-1}_*((\gamma v)_*(x))\ = \ \gamma^{-1}_*(x)\ = \
    \gamma^{-1}_*((\gamma \bar v)_*(x))\ = \ \bar v_*(x)\ .  \]
  Moreover, $(\gamma v)_\circ^x=(\gamma\bar v)_\circ^x=1_x$ by the special case above.
  Now we let $u\in M$ be any injection. Then
  \begin{align*}
    [v,u]^x\
    &=\ [v,\bar v]^x\circ [\bar v,u]^x \
      =\ \gamma^{-1}_*([\gamma v,\gamma \bar v]^x)\circ [\bar v,u]^x \\
    &=\ \gamma^{-1}_*( (\gamma v)_\circ^x\circ ( (\gamma\bar v)_\circ^x)^{-1})\circ [\bar v,u]^x \
      = \ [\bar v, u]^x  \ .
  \end{align*}
  The proof of the relation  $[u,v]^x=[u,\bar v]^x$ is analogous.
  
  (iii) 
  We let $u\in M$ be the identity on $v(A)$.
  Then $u v$ and $v$ agree on $A$, and hence
  \[ u_*(v_*(x))\ = \ (u v)_*(x)\ = \ v_*(x) \]
  by part (ii), because $x$ is supported on $A$. 
  This shows that $v_*(x)$ is supported on $v(A)$.
  
  If $x$ is finitely supported, then $v_*(x)$ is supported on the finite set $v(\supp(x))$,
  so $v_*(x)$ is finitely supported and $\supp(v_*(x))\subseteq v(\supp(x))$.
  For the reverse inclusion we choose $h\in M$
  such that $h v$ fixes $\supp(x)$ elementwise; then $(h v)_*(x)=x$. 
  Applying the argument to $h$ and $v_*(x)$ (instead of $v$ and $x$) gives
  \[ \supp(x)\ = \ \supp(h_*(v_*(x)))\ \subseteq\ h(\supp(v_*(x))) \ ,\]
  and thus
  \[ v(\supp(x))\ \subseteq\ v(h(\supp(v_*(x)))) \  = \ 
    (v h)(\supp(v_*(x))) \  = \ \supp(v_*(x))\ . \]
  The last equation uses that $v h$ is the identity of $v(\supp(x))$,
  hence also the identity on the subset $\supp(v_*(x))$. 
  This proves the desired relation when $x$ is finitely supported.
  
  (iv) By part (ii) we have $u_*(x)=v_*(x)$, as well as $[v,u]^x=[u,u]^x=\Id_{u_*(x)}$
  and $[v,u]^y=[u,u]^y=\Id_{u_*(y)}$. So
  \[ u_*(f)\ = \ [v,u]^y\circ u_*(f) \ = \ v_*(f)\circ [v,u]^x \ = \ v_*(f) \]
   by naturality of $[v,u]$.

   (v) We let $u\in M$ be the identity on $\supp(x)$.
   Then
   \[  u_*(F(x)) \ = \ F(u_*(x)) \ = \ F(x) \ .\]
   So the $\Dc$-object $F(x)$ is supported on the support of $x$,
   and hence $\supp(F(x))\subseteq\supp(x)$.
 \end{proof}

\begin{eg}[(The $\M$-category of finite sets)]\label{eg:Fc as M-category}
  We introduce the $\M$-category $\Fc$ of finite sets.
  The objects of the category $\Fc$ are all finite subsets of the
  countably infinite set $\omega$. Morphisms in $\Fc$ are all bijections of sets.
  The functor
  \[ u_*\ : \ \Fc\ \to \ \Fc \]
  associated to an injection $u:\omega\to \omega$ is given on objects by
  \[ u_*(P) \ = \  u(P)\ , \]
  the image under $u$ of the given set.
  This clearly defines an action of the injection monoid $M$ on the objects set of $\Fc$.
  We define a bijection by
  \[ u_\circ^P\ = \ u|_P \ : \ P \ \to \ u(P)\ , \]
  the restriction of the injection $u$ to the finite set $P$.
  Then the relation \eqref{eq:basic_relation} holds,
  so there is a unique extension of this data to an $\M$-action on the
  category $\Fc$, compare Proposition \ref{prop:minimal M-axioms}.
  
  If $u\in M$ is the identity on the set $P$, then $u_*(P)=P$.
  If $Q$ is a proper subset of $P$,
  then we can choose an injection $u\in M$ that is the identity on $Q$,
  but such that $u_*(P)=u(P)\ne P$. So the support of an object $P$ of $\Fc$ is
  the set $P$ itself.
  
  A minimal variation of the construction produces a non-tame $\M$-category.
  Indeed, if we drop the finiteness condition on the objects in the definition of $\Fc$,
  we still obtain an $\M$-action by exactly the same formulas.
  In this larger $\M$-category $\bar\Fc$,
  the infinite sets are not finitely supported, and hence $\bar\Fc$ is not tame.
  The $\M$-category $\Fc$ is precisely the full $\M$-subcategory
  of $\bar\Fc$ consisting of finitely supported objects,
  and hence it is the maximal tame $\M$-subcategory of $\bar\Fc$.
\end{eg}

 \begin{eg}[(Limits and colimits of tame $\M$-categories)]\label{eg:(co)limits tame Mcat}
  As we explained in Example \ref{eg:(co)limits Mcat}, the category
  of $\M$-categories is complete and cocomplete;
  the same is true for the full subcategory of tame $\M$-categories.
  Indeed, for every $\M$-category $\Cc$,
  the full subcategory $\Cc^\tau$ of finitely supported objects is
  closed under the $\M$-action by Proposition \ref{prop:finite support} (iii).
  So $\Cc^\tau$ is an $\M$-category in its own right.
  Moreover, every morphism of $\M$-categories $F:\Dc\to\Cc$ whose source
  is tame automatically takes values in the subcategory $\Cc^\tau$, by
  Proposition \ref{prop:finite support} (v).
  So the functor sending an $\M$-category $\Cc$ to
  its full $\M$-subcategory $\Cc^\tau$ is right adjoint to the inclusion 
  \[ \M\cat^\tau \ \to \ \M\cat \ .\]
  Said differently: $\M\cat^\tau$ is a coreflective subcategory of $\M\cat$,
  and so the inclusion $\M\cat^\tau\to\M\cat$ creates colimits.
  Since $\M\cat$ is cocomplete, so is $\M\cat^\tau$, and the inclusion
  preserves colimits.
  Since $\M\cat$ is complete, so is $\M\cat^\tau$; limits in
  $\M\cat^\tau$ can be calculated by forming limits in the ambient
  category $\M\cat$, and then taking the full subcategory of finitely supported objects.  
\end{eg}

\begin{defn}
  Let $G$ be a finite group. A {\em universal $G$-set} is a countable $G$-set
  such that every subgroup of $G$ occurs as the stabilizer of infinitely many elements.
\end{defn}

The proof of the following proposition is straightforward, and we omit it.

\begin{prop}
  Let $G$ be a finite group.
  \begin{enumerate}[\em (i)]
  \item A countable $G$-set $U$ is a universal $G$-set if and only if
    every finite $G$-set admits a $G$-equivariant injection into $U$.
  \item Any two universal $G$-sets are $G$-equivariantly isomorphic.
  \item If $U'$ is a countable $G$-set and $U\subset U'$ a $G$-subset that is
    a universal $G$-set, then $U'$ is a universal $G$-set.
  \item For every subgroup $H$ of $G$, the underlying $H$-set of every universal $G$-set
    is a universal $H$-set.
  \end{enumerate}
\end{prop}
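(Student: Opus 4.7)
The key observation that drives all four parts is that a $G$-equivariant map $G/H\to U$ is determined by the image $u$ of the coset $e H$, must satisfy $H\subseteq G_u$ (the $G$-stabilizer of $u$), and is injective precisely when $G_u=H$. Thus a single orbit $G/H$ embeds equivariantly into $U$ iff $U$ contains an element with stabilizer exactly $H$. I would keep this reformulation in mind throughout.

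For part (i), one direction is easy: if $U$ is universal, then any finite $G$-set, written as a disjoint union of transitive $G$-sets $G/H_1+\dots+G/H_n$, embeds into $U$ by choosing, orbit by orbit, an element of $U$ with the required stabilizer, using that infinitely many such elements exist to accommodate repetitions of orbit types. For the converse, given any subgroup $H\leq G$ and any integer $n\geq 1$, the disjoint union of $n$ copies of $G/H$ embeds into $U$, so $U$ contains at least $n$ disjoint orbits whose elements have stabilizer $H$; letting $n$ grow shows infinitely many elements of $U$ have stabilizer $H$.

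For part (ii), I would decompose each universal $G$-set into orbits. Within any transitive $G$-set of isomorphism type $G/H$, exactly $|N_G(H)/H|$ elements have stabilizer precisely $H$, a finite number. Hence having infinitely many elements with stabilizer $H$ forces the existence of infinitely many orbits isomorphic to $G/H$. Since both universal $G$-sets are countable, each conjugacy class of subgroups contributes exactly countably infinitely many orbits in each, and an orbit-wise bijection assembles into a $G$-equivariant isomorphism $U\iso U'$. Part (iii) is immediate: the inclusion $U\subseteq U'$ shows that every subgroup of $G$ is the stabilizer of infinitely many elements of $U'$, and countability is given.

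For part (iv), given a subgroup $H\leq G$ and any subgroup $K\leq H$, I would exploit that $K$ is also a subgroup of $G$, so the universal $G$-set $U$ contains infinitely many elements $u$ with $G$-stabilizer exactly $K$. The $H$-stabilizer of such a $u$ is $H\cap K=K$, giving infinitely many elements of $U$ with $H$-stabilizer $K$. Countability of $U$ as an $H$-set is automatic.

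The only mildly subtle point is the orbit-counting bookkeeping in part (ii)---specifically noting that $|N_G(H)/H|$ is finite, so that the hypothesis on stabilizers of elements translates into a hypothesis on orbits. Everything else is a matter of unwinding definitions.
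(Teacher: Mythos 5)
Your proof is correct, and the key reformulation (an equivariant map $G/H\to U$ is determined by the image of $eH$, is well-defined iff $H\subseteq G_u$, and is injective iff $G_u=H$) is exactly the right lever for all four parts, with the finiteness of $|N_G(H)/H|$ handling the orbit-counting in (ii). The paper explicitly omits its own proof of this proposition, so there is no authored argument to compare against.
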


\begin{eg}
  We let $G$ be a finite group. Then the $G$-set
  \[ U \ = \ \coprod_{H}\ \mN\times G/H \]
  is a universal $G$-set, where the disjoint union runs over all subgroups of $G$.
  We also get a universal $G$-set by letting the union run over representatives
  of the conjugacy classes of subgroups of $G$.
\end{eg}

We let $G$ be a group and $A$ a finite $G$-set.
Then the set $\omega^A$
of functions from $A$ to $\omega=\{0,1,2\dots,\}$ becomes a $G$-set via
\[ (g\cdot f)(a)\ =\  f(g^{-1} a) \]
for $(g,a)\in G\times A$ and $f:A\to \omega$.

\begin{prop}\label{prop:universal G-sets}
  Let $G$  be a finite group,
  and let $A$ be a finite $G$-set with a free orbit.
  Then $\omega^A$ is a universal $G$-set.
\end{prop}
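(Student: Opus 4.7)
The plan is to prove universality via the characterization from the preceding proposition: $\omega^A$ is a universal $G$-set if and only if every finite $G$-set admits a $G$-equivariant injection into it. Equivalently, and what I will verify directly, I will show that every subgroup $H\leq G$ arises as the stabilizer of infinitely many elements of $\omega^A$; this is the definition of universality, and it immediately implies injectability of every finite $G$-set (by choosing, for each orbit $G/H$ in a given finite $G$-set, an element of $\omega^A$ with stabilizer exactly $H$, and arranging that the different orbits use disjoint subsets of $\omega$ as values).

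Since $A$ has a free orbit by hypothesis, I first fix an element $a_0\in A$ whose $G$-stabilizer is trivial, so that the map $G\to A$, $g\mapsto g\cdot a_0$, is injective. Given any subgroup $H\leq G$ and any nonzero $n\in\omega$, I then define $f_{H,n}\colon A\to\omega$ to be the function of value $n$ on the $H$-orbit $H\cdot a_0$ and zero elsewhere. These functions $f_{H,n}$ for distinct pairs $(H,n)$ lie in pairwise disjoint $G$-orbits of $\omega^A$, because each $g\cdot f_{H,n}$ has image $\{0,n\}$ and support $gH\cdot a_0$, so both $n$ and the cardinality $|H|$ of the support can be read off from any element of the orbit.

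The key computation is the identification of $\mathrm{Stab}(f_{H,n})$ with $H$. Using the action formula $(g\cdot f)(a)=f(g^{-1}a)$, one checks
\[
g\cdot f_{H,n}\ =\ n\cdot \mathbf{1}_{gH\cdot a_0}\ ,
\]
so $g\cdot f_{H,n}=f_{H,n}$ if and only if $gH\cdot a_0=H\cdot a_0$ as subsets of $A$. Since $a_0$ has trivial $G$-stabilizer, the map $G\to G\cdot a_0$ is a bijection of $G$-sets, and the preceding set equality reduces to $gH=H$ in $G$, i.e.\ to $g\in H$. This is where the freeness hypothesis on $A$ is essential: without a free orbit, the indicator $\mathbf{1}_{H\cdot a_0}$ would only detect the stabilizer of $H\cdot a_0$ as a subset, which can be strictly larger than $H$ (e.g.\ its normal core if $a_0$ is fully fixed). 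Varying $n$ over $\omega\setminus\{0\}$ then produces infinitely many $f\in\omega^A$ with stabilizer exactly $H$, and varying $H$ over all subgroups of $G$ completes the verification of universality.

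The only mild subtlety is the bookkeeping in the last paragraph (disjointness of orbits for different $(H,n)$), which is straightforward once one observes that both the value $n$ and the size of the $H$-orbit are invariants of the $G$-orbit of $f_{H,n}$; the real content of the proof is the stabilizer computation, and that is entirely powered by the free-orbit assumption on $A$.
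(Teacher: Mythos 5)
Your proof is correct and takes essentially the same approach as the paper's: both verify the definition directly by producing, for each subgroup $H\leq G$, infinitely many functions in $\omega^A$ supported on the free orbit with stabilizer exactly $H$. Your witnesses are the scaled indicators $n\cdot\mathbf 1_{H\cdot a_0}$, whereas the paper first reduces to $A=G$ (via the ``extend by zero'' embedding $\omega^G\to\omega^A$) and then uses $f_i(g)=\alpha_i(g^{-1}H)$ for injections $\alpha_i\colon G/H\to\omega$ with disjoint images; either construction does the job, and your stabilizer computation using the bijection $G\cong G\cdot a_0$ is correct. One small slip in your closing (and superfluous) remark about orbit-disjointness: the cardinality $|H|$ of the support is not a complete invariant, since distinct subgroups can have the same order. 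The correct invariant is the subgroup $H$ itself: the support of any $g\cdot f_{H,n}$ is a coset $gH\cdot a_0$, and $gH\cdot a_0=H'\cdot a_0$ forces $gH=H'$ under the bijection, hence $g\in H'$ and $H=H'$. In any case this does not affect the argument, because the definition of universal $G$-set only asks that each subgroup be the stabilizer of infinitely many elements, and your $f_{H,n}$ for $n\geq 1$ already supply these without any appeal to orbit-disjointness.
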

\begin{proof}
  Since $A$ has a free $G$-orbit  we may assume that $A=G\cup B$
  for some finite $G$-set $B$. Then the map
  \[ \omega^G \ \to \ \omega^{G\cup B} \ = \ \omega^A \]
  that extends a map by sending all of $B$ to 0 is a $G$-equivariant injection.
  If we can show that $\omega^G$ is a  universal $G$-set,
  then so is $\omega^A$. So we can assume without loss of generality that $A=G$.

  Now we let $H$ be  any subgroup of $G$. We choose infinitely many
  injections $\alpha_i:G/H\to \omega$, $i=1,2,\dots,$ with disjoint images.
  We define $f_i:G\to \omega$ by
  \[ f_i(g)\ = \ \alpha_i(g^{-1}H)  \ .  \]
  These maps $f_i$ are infinitely many distinct elements of 
  the $G$-set $\omega^G$ whose stabilizer group is $H$.
\end{proof}

\begin{con}[(Reparameterization of $M$-objects and $\M$-actions)]\label{con:extend M-action}
  We recall that  $M$ is the monoid, under composition,
  of injective self maps of the set $\omega=\{0,1,2,\dots\}$.
  In the rest of the paper we will often need to extend an $M$-object in
  some category to a functor defined on the category $J$ of countably infinite sets and
  injections. We will often refer to this process as {\em reparameterization}.

  Since every countably infinite set bijects with the set
  $\omega$, the inclusion of the full subcategory with only object $\omega$
  into $J$ is an equivalence. Since $M$ is the endomorphism monoid
  of $\omega$ in $J$, every $M$-object can be extended to a 
  functor on $F:J\to\Cc$, and for any two extensions $F,F'$
  there is a unique natural isomorphism $F\iso F'$ that is the identity on $\omega$.
  It will be convenient later to extend $M$-objects $X$ in $\Cc$ to functors 
  $X[-]:J\to \Cc$ in a specific way that we now explain. 
  For each countably infinite set $U$ we choose, once and for all, a bijection
  $\kappa_U:\omega\to U$, subject only to the requirement that $\kappa_\omega$ be
  the identity of $\omega$. Then we define a functor $X[-]:J\to\Cc$ on objects by 
  $X[U]= X$, the underlying $\Cc$-object that we started with.
  The value of $X[j]$ on an injection $j:U\to V$ is defined as
  \[ X[j]\ = \  (\kappa_V^{-1}\circ j \circ \kappa_U) _* \ :  \ X \ \to \ X\ ,\]
  the action of the injection $\kappa_V^{-1}\circ j \circ \kappa_U\in M$.
  This is clearly a functor and $X[\omega]=X$ as $M$-objects.
  
  One advantage of this specific way of extending $M$-objects is
  that it commutes, on the nose, with functors. More precisely, if
  $X$ is an $M$-object in a category $\Cc$ and $F:\Cc\to\Dc$
  a functor, then we view $F(X)$ as an $M$-object in $\Dc$ through $F$,
  and get an {\em equality} 
  \[ F(X)[-] \ = \ F\circ X[-]\ : \ J \ \to \ \Dc \]
  (and not just a natural isomorphism) of functors.

  In much the same way we can also extend actions of the monoidal category $\M$.
  If $U$ and $V$ are countably infinite sets, we write $\Jc(U,V)=E J(U,V)$
  for the chaotic category with object set $J(V,U)$.
  Composition of injections extends uniquely to a functor
  \[ \circ \ : \ \Jc(V,W)\times \Jc(U,V)\ \to \ \Jc(U,W) \ . \]
  This data defines a 2-category whose underlying 1-category is
  the category $J$ of countably infinite sets and injections, and such that
  there is a unique 2-morphism, necessarily invertible,
  between any pair of parallel 1-morphisms.

  We can now mimic the above extension procedure one category level higher:
  we extend an $\M$-category $\Cc$ to a strict 2-functor
  $\Jc\to\cat$ as follows. On objects we set $\Cc[U]=\Cc$.
  For countably infinite sets $U$ and $V$, we define the action functor
  \[ \Jc(U,V)\times \Cc[U]\ \to \ \Cc[V] \]
  as the composite
  \[ \Jc(U,V)\times \Cc \ \xra{(\kappa_V^{-1}\circ -\circ \kappa_U)\times\Cc}\
    \Jc(\omega,\omega)\times \Cc \ =\ \M\times \Cc \ \xra{\ \text{act}\ }\ \Cc\ . \]
  The notions of `finitely supported objects' and of `support' generalize to this context,
  by replacing subset of $\omega$ by subsets of general countably infinite sets $U$. 
\end{con}

\begin{con}[(Fixed $\M$-categories)]\label{con:F^G for M-cat}
  Given an $\M$-category $\Cc$ and a finite group $G$,
  we define a new $\M$-category $F^G \Cc$ as follows. 
  We denote by $\omega^G$ the set of maps from $G$ to $\omega$,
  on which~$G$ acts by $(g\cdot f)(h)=f(g^{-1} h)$,
  for $g,h\in G$ and $f:G\to\omega$.
  Now we let $\Cc$ be an $\M$-category. The underlying category of $F^G \Cc$
  is then given by
  \[ F^G \Cc\ =  \ \Cc[\omega^G]^G\ , \]
  the $G$-fixed category of the $G$-category $\Cc[\omega^G]$.
  The $\M$-action on $\Cc$ induces a natural action on $F^G \Cc$
  as follows.
  The injection monoid $M$ acts on $\omega^G$ by postcomposition, i.e.,
    \[  (u\cdot f)(h)\ = \ u(f(h)) \]
  for $u\in M$, $f:G\to\omega$ and $h\in G$.
  Then 
  \[ (u\cdot -)_*\ : \ \Cc[\omega^G] \ \to \ \Cc[\omega^G] \]
  is a $G$-equivariant functor, so it restricts to
  a functor on $G$-fixed subcategories
  \[ (F^G u)_* \ = \  ((u\cdot -)_*)^G : \
    F^G \Cc\ = \ \Cc[\omega^G]^G \ \to \ \Cc[\omega^G]^G \ = \ F^G \Cc \ .\]
  Given another injection $v\in M$,
  we define the natural isomorphism $[v,u]$ at an object $x$ of $F^G \Cc$ as
  \[ [v,u]_{F^G \Cc}^x\ = \ [v\cdot -,u\cdot -]^x\ . \]
  More precisely, $v\cdot -, u\cdot -:\omega^G\to\omega^G$ are two injections,
  and hence objects of the category $\Jc(\omega^G,\omega^G)$; and $(v\cdot -,u\cdot -)$
  is the unique morphism in $\Jc(\omega^G,\omega^G)$ from $u\cdot -$ to $v\cdot -$.
  The strict monoidal category $\Jc(\omega^G,\omega^G)$ acts on $\Cc[\omega^G]=\Cc$
  via reparameterization of the given $\M$-action
  (i.e., via restriction along the monoidal functor $\Jc(\omega^G,\omega^G)\to\M$
  given by conjugation by the bijection $\kappa_{\omega^G}:\omega\to\omega^G)$,
  and $[v,u]_{F^G\Cc}$ is the restriction to $F^G\Cc=\Cc[\omega^G]^G$
  of the natural transformation specified by $(v\cdot -,u\cdot -)$.
  So if we were to fully expand all definitions, we would discover that
  $[v,u]^x_{F^G\Cc}=[\kappa_{\omega^G}^{-1}(v\cdot-)\kappa_{\omega^G},\kappa_{\omega^G}^{-1}(u\cdot-)\kappa_{\omega^G}]^x$. The above shorthand notation $[v\cdot-,u\cdot-]^x$ is a slight abuse, but more suggestive,
  and we'll use it in what follows.
  
  We must show that $[v,u]_{F^G \Cc}^x$ is a morphism in the category
  $F^G \Cc$, i.e., that it is $G$-fixed whenever the object $x$ is.
  For every group element $g\in G$ we let $l_g:\omega^G\to\omega^G$ denote left multiplication
  by $g$. Then
  \begin{align*}
    l^g_*\left([v,u]_{F^G \Cc}^x\right)\
  &= \ [l^g (v\cdot -),l^g(u\cdot -)]^x\
  = \ [(v\cdot -)l^g,(u\cdot -)l^g]^x\\
  &= \ [v\cdot -,u\cdot -]^{l^g_*(x)}\
  = \ [v\cdot -,u\cdot -]^x\ = \ [v,u]_{F^G \Cc}^x\ .
\end{align*}
  The second equation is the fact that the $M$-action on $\omega^G$ commutes with the $G$-action.
\end{con}

\begin{prop}\label{prop:F^G_preserves_tame}
  Let $\Cc$ be an $\M$-category and $G$ a finite group.
  If $\Cc$ is tame, then the fixed point $\M$-category $F^G\Cc$ is tame.
\end{prop}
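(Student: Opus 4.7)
The plan is to exhibit, for every object $x$ of $F^G\Cc$, an explicit finite subset of $\omega$ that supports $x$ under the $\M$-action on $F^G\Cc$ from Construction \ref{con:F^G for M-cat}. Since $F^G\Cc=\Cc[\omega^G]^G$ has the same object set as the underlying category of $\Cc$, tameness of $\Cc$ provides a finite set $A=\supp_\Cc(x)\subset\omega$. I claim that
\[ C\ =\ \bigcup_{a\in A}\ \kappa_{\omega^G}(a)(G)\ \subset\ \omega \]
supports $x$ in $F^G\Cc$. This set is finite because $A$ and $G$ are finite and each $\kappa_{\omega^G}(a)$ is a function $G\to\omega$ with image of size at most $|G|$.

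To verify the support condition, I would take an injection $u\in M$ that fixes $C$ elementwise and show that $(F^G u)_*(x)=x$. By construction, $(F^G u)_*$ is the restriction to the $G$-fixed subcategory of the functor $(u\cdot -)_*:\Cc[\omega^G]\to\Cc[\omega^G]$ arising from the injection $u\cdot -:\omega^G\to\omega^G$ given by postcomposition with $u$. Unwinding the reparameterization of Construction \ref{con:extend M-action} with $U=\omega^G$, this functor sends $x$ to $v_*(x)$, where
\[ v\ =\ \kappa_{\omega^G}^{-1}\circ (u\cdot -)\circ \kappa_{\omega^G}\ \in\ M \ .\]

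The key observation is that $v$ fixes $A$ pointwise: for $a\in A$, the function $\kappa_{\omega^G}(a):G\to\omega$ has image contained in $C$, so $u\circ \kappa_{\omega^G}(a)=\kappa_{\omega^G}(a)$, i.e., $(u\cdot -)(\kappa_{\omega^G}(a))=\kappa_{\omega^G}(a)$, and hence $v(a)=\kappa_{\omega^G}^{-1}(\kappa_{\omega^G}(a))=a$. Since $v$ and the identity injection of $\omega$ agree on $\supp_\Cc(x)=A$, Proposition \ref{prop:finite support}(ii) yields $v_*(x)=\Id_*(x)=x$. Thus $(F^G u)_*(x)=x$, so $x$ is supported on $C$ in $F^G\Cc$, and $F^G\Cc$ is tame. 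The only delicate point is correctly composing the bijection $\kappa_{\omega^G}$ with the postcomposition action to identify the element of $M$ that really acts on $x$; once this is unpacked, the rest is immediate from the finiteness of $A$ and $G$.
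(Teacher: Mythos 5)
Your proof is correct and follows essentially the same approach as the paper: you identify the supporting subset of $\omega$ for $x$ in $F^G\Cc$ as the union of the images of the functions $G\to\omega$ making up the support of $x$ in $\Cc[\omega^G]$ (your $C$ is the paper's $I(T)$ after unwinding the identification $T=\kappa_{\omega^G}(A)$), and then verify the support condition by the same computation. The only difference is cosmetic: you explicitly thread the reparameterization bijection $\kappa_{\omega^G}$ through the argument, whereas the paper works directly with the support of $x$ as a subset of $\omega^G$. One small slip in wording: $F^G\Cc=\Cc[\omega^G]^G$ does not have \emph{the same} object set as $\Cc$ — its objects are only the $G$-fixed ones — but since you only need that every object of $F^G\Cc$ is in particular an object of $\Cc$ (and hence has a support $\supp_\Cc(x)$), the argument is unaffected.
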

\begin{proof}
  We let $x$ be any object of $\Cc[\omega^G]$. Because the $\M$-category $\Cc$ is tame,
  the object $x$ is supported on some finite subset $T$ of $\omega^G$.
  We define
  \[ I(T)\ = \ \bigcup_{\alpha\in T}\text{image}(\alpha) \ , \]
  which is a finite subset of $\omega$.

  Now we suppose that $x$ is $G$-fixed, and hence belongs to $F^G\Cc=\Cc[\omega^G]^G$. 
  We claim that with respect to the $\M$-action on $F^G\Cc$, the object $x$ is supported on $I(T)$.
  To this end we let $u\in M$ be an injection that is the identity on $I(T)$.
  Then for all $(\alpha,g)\in T\times G$ we have
  \[ (u\cdot \alpha)(g) \ = \ u(\alpha(g))\ = \ \alpha(g)  \]
  because $\alpha(g)\in I(T)$. Hence $u\cdot\alpha=\alpha$,
  and so $u\cdot-:\omega^G\to\omega^G$ is the identity on the set $T$. Thus 
  $ u^{F^G\Cc}_*(x) = x$.
  This completes the proof that $x$ is supported on the finite set $I(T)$.  
  So the $\M$-category $F^G\Cc$ is tame.
\end{proof}

\begin{con}[($G$-fixed objects versus $G$-objects)]
  We let $\Cc$ be an $\M$-category and $G$ a finite group.
  A {\em $G$-object} in $\Cc$ is an object $x$ of $\Cc$ equipped
  with a $G$-action, i.e., a monoid homomorphism $\rho:G\to\Cc(x,x)$
  to the endomorphism monoid. 
  We denote by $G\Cc$ the category of $G$-objects in $\Cc$ 
  with $G$-equivariant $\Cc$-morphisms.

  We shall now explain that the $G$-fixed $\M$-category $F^G\Cc$,
  defined in Construction \ref{con:F^G for M-cat}, embeds fully faithfully
  into the category of $G$-objects in $\Cc$. This embedding is often -- but not always --
  essentially surjective (and hence an equivalence of categories).
  In Section \ref{sec:saturation} we look more closely at the {\em saturated} $\M$-categories,
  i.e., those for which the embeddings $F^G\Cc\to G\Cc$ are equivalences
  for all finite groups.

  We choose an injection $\lambda:\omega^G\to\omega$.
  Based on this choice, we define a functor 
  \begin{equation}\label{eq:lambda sharp}
    \lambda_\flat\ : \ F^G\Cc\ = \ \Cc[\omega^G]^G \ \to \ G\Cc\ ,
  \end{equation}
  a refinement of the functor $\lambda_*:\Cc[\omega^G]\to \Cc$.
  As we shall see in Proposition \ref{prop:lambda_sharp} below,
  different choices of injections yield canonically isomorphic functors.
  Given a $G$-fixed object $x$ of $\Cc[\omega^G]$ and an element $g\in G$, the morphism
  \[ [\lambda l^g,\lambda]^x \ : \ \lambda_*(x)\ \to \ \lambda_*(l^g_*(x))\ =\lambda_*(x)  \]
  is an endomorphism of $\lambda_*(x)$, where $l^g:\omega^G\to\omega^G$ is left translation by $g$.
  If $h\in G$ is another group element, then
  \begin{align*}
    [\lambda l^g,\lambda]^x\circ [\lambda l^h,\lambda]^x\
    &= \ [\lambda l^g,\lambda]^{l^h_*(x)}\circ [\lambda l^h,\lambda]^x \\
    &= \ [\lambda l^g l^h,\lambda l^h]^x\circ [\lambda l^h,\lambda]^x \
    = \ [\lambda l^g l^h,\lambda]^x \ = \ [\lambda l^{g h},\lambda]^x \ .
  \end{align*}
  So as $g$ varies, the morphisms $[\lambda l^g,\lambda]^x$ define a $G$-action on $\lambda_*(x)$,
  and we write $\lambda_\flat(x)$ for this $G$-object in $\Cc$.
  Now we let $f:x\to y$ be a $G$-fixed morphism between $G$-fixed objects of $\Cc[\omega^G]$.
  Then
  \begin{align*}
    [\lambda l^g,\lambda]^y \circ \lambda_*(f)  
    &= \      \lambda_*([l^g,1]^y)\circ \lambda_*(f) \
      = \     \lambda_*([l^g,1]^y\circ f) \
      = \     \lambda_*(l^g_*(f)\circ [l^g,1]^x) \\
    &= \     \lambda_*(f\circ [l^g,1]^x) \
      = \     \lambda_*(f)\circ \lambda_*([l^g,1]^x)\
      = \ \lambda_*(f)\circ [\lambda l^g,\lambda]^x \ .
  \end{align*}
  So $\lambda_*(f)$ is $G$-equivariant, and we set $\lambda_\flat(f)=\lambda_*(f)$
  on morphisms.
\end{con}

\begin{prop}\label{prop:lambda_sharp}
  Let $\Cc$ be an $\M$-category and $G$ a finite group.
  \begin{enumerate}[\em (i)]
  \item For every injection $\lambda:\omega^G\to\omega$, the functor
    $\lambda_\flat:F^G\Cc\to G\Cc$ is fully faithful.
  \item If $\mu:\omega^G\to\omega$ is another injection, then the morphisms
    $[\mu,\lambda]^x:\lambda_*(x)\to\mu_*(x)$ form a natural isomorphism
    from $\lambda_\flat$ to $\mu_\flat$.
  \end{enumerate}
\end{prop}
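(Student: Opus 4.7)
The plan is to reduce both parts to the following key observation: for any injection $\lambda\in J(\omega^G,\omega)$, the reparameterization functor $\lambda_*:\Cc\to\Cc$ is naturally isomorphic to the identity via the natural transformation $[\lambda,1]$, and is therefore fully faithful. This, together with the naturality and composition laws for the $\Jc$-generalizations of the $[v,u]$ transformations (which come for free from the 2-functor extension of Construction \ref{con:extend M-action}), will give everything.

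For part (i), I note first that $\lambda_\flat$ acts on morphisms exactly as $\lambda_*$ does, so faithfulness of $\lambda_\flat$ is immediate from faithfulness of $\lambda_*$. For fullness, given a $G$-equivariant morphism $h:\lambda_\flat(x)\to\lambda_\flat(y)$ in $G\Cc$, I would use fullness of $\lambda_*$ to pick the unique $f:x\to y$ in $\Cc$ with $\lambda_*(f)=h$, and then argue that $f$ must be $G$-fixed. The argument is: the $G$-equivariance $h\circ[\lambda l^g,\lambda]^x=[\lambda l^g,\lambda]^y\circ h$ combines with the naturality of $[\lambda l^g,\lambda]:\lambda_*\Rightarrow\lambda_*l^g_*$ (applied to $f$) to force $\lambda_*(f)=\lambda_*(l^g_*(f))$; faithfulness of $\lambda_*$ then gives $f=l^g_*(f)$.

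For part (ii), the components $[\mu,\lambda]^x:\lambda_*(x)\to\mu_*(x)$ are by construction the components of a natural isomorphism of endofunctors of $\Cc$, so the naturality square for morphisms in $F^G\Cc$ is already built in. The real content is checking that each component is a morphism in $G\Cc$, i.e., satisfies $[\mu,\lambda]^x\circ[\lambda l^g,\lambda]^x=[\mu l^g,\mu]^x\circ[\mu,\lambda]^x$. Using the composition law $[c,b]^x\circ[b,a]^x=[c,a]^x$ together with the identity $[\mu,\lambda]^x=[\mu l^g,\lambda l^g]^x$ (which follows from the $\Jc$-analogue of relation \eqref{eq:assoc_2} combined with $l^g_*(x)=x$), both sides collapse to the common value $[\mu l^g,\lambda]^x$. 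Componentwise invertibility is automatic because every morphism in $\Jc$ is an isomorphism.

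The only step beyond mechanical unwinding of definitions is the fullness portion of (i), where one must convert $G$-equivariance of $h$ into $G$-fixedness of its preimage $f$ via the naturality of $[\lambda l^g,\lambda]$; everything else is symbol pushing with the $[v,u]$-relations.
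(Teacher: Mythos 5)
Your argument is correct and essentially the same as the paper's: the fullness step in (i) combines $G$-equivariance of $h$ with naturality of $[\lambda l^g,\lambda]$ and faithfulness of $\lambda_*$ exactly as the paper does, and for (ii) the chain $[\mu,\lambda]^x=[\mu,\lambda]^{l^g_*(x)}=[\mu l^g,\lambda l^g]^x$ followed by the composition law collapsing both sides to $[\mu l^g,\lambda]^x$ is verbatim the paper's computation. The only minor imprecision is in your phrasing that $\lambda_*$ is naturally isomorphic to the identity via $[\lambda,1]$: since $\lambda:\omega^G\to\omega$ is an injection between different countably infinite sets rather than an element of $M$, the morphism $[\lambda,1]$ does not literally typecheck, but the upshot that $\lambda_*:\Cc[\omega^G]\to\Cc$ is an equivalence (hence fully faithful) is exactly what the paper also invokes, so nothing is affected.
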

\begin{proof}
  (i)  The functor $\lambda_*:\Cc[\omega^G]\to\Cc$ is an equivalence of categories, so
  for all objects $x$ and $y$ of $\Cc[\omega^G]$, the induced map of morphism sets
  \[ \lambda_* \ : \  \Cc[\omega^G](x,y)\ \to\ \Cc(\lambda_*(x),\lambda_*(y)) \]
  is bijective.
  
  Now we let $x$ and $y$ be $G$-fixed objects of $\Cc[\omega^G]$.
  As we already argued above, the relations
  \[     [\lambda l^g,\lambda]^y \circ \lambda_*(f)  
    = \     \lambda_*(l^g_*(f)\circ [l^g,1]^x) \text{\qquad and \qquad}
    \lambda_*(f)\circ [\lambda l^g,\lambda]^x \ = \  \lambda_*(f\circ [l^g,1]^x)  \]
  hold for every $\Cc[\omega^G]$-morphism $f:x\to y$.
  Because $\lambda_*$ is faithful and $[l^g,1]^x$ is an isomorphism,
  these two morphisms are equal if and only if $l^g_*(f)=f$.
  This shows that $f:x\to y$ is $G$-fixed if and only if the morphism
  $\lambda_*(f):\lambda_*(x)\to\lambda_*(y)$  is $G$-equivariant
  with respect to the $G$-actions above.
  So the functor $\lambda_\flat$ is fully faithful.

  (ii) We let $x$ be a $G$-fixed object of $\Cc[\omega^G]$ and $g\in G$. Then
  \begin{align*}
      [\mu,\lambda]^x\circ [\lambda l^g,\lambda]^x
    &= \ [\mu,\lambda]^{l^g_*(x)}\circ [\lambda l^g,\lambda]^x\\
    &= \ [\mu l^g,\lambda l^g]^x\circ [\lambda l^g,\lambda]^x\
    = \ [\mu l^g,\lambda]^x\
    = \ [\mu l^g,\mu]^x \circ [\mu,\lambda]^x\ .  
  \end{align*}
 So the natural $\Cc$-isomorphism $[\mu,\lambda]^x$ is also $G$-equivariant,
 and hence a natural isomorphism from $\lambda_\flat(x)$ to $\mu_\flat(x)$ in $G\Cc$.
\end{proof}

Now we come to another key definition, that of {\em global equivalences}
of $\M$-categories.
We call a functor $F:\Ac\to\Bc$ between small categories a {\em weak equivalence}
if the induced morphism of nerves $N F:N\Ac\to N\Bc$
is a weak equivalence of simplicial sets; equivalently, the
induced continuous map of geometric realizations $|N F|:|N\Ac|\to |N\Bc|$
must be a weak equivalence (or, equivalently, a homotopy equivalence) of spaces.
We recall that for a finite group $G$, any two universal $G$-sets are $G$-equivariantly
isomorphic, and moreover the $G$-set $\omega^G$ of functions from $G$ to $\omega=\{0,1,2,\dots\}$
is universal by Proposition \ref{prop:universal G-sets}.
This shows the equivalence of conditions (a), (b) and (c) in the following definition.

\begin{defn}\label{def:global equiv Gamma-M}
  A morphism of $\M$-categories $\Phi:\Cc\to \Dc$ is a {\em global equivalence}
  if for every finite group $G$, the following equivalent conditions hold:
  \begin{enumerate}[(a)]
  \item For every universal $G$-set $\Uc$, the functor
    $\Phi[\Uc]^G: \Cc[\Uc]^G\to \Dc[\Uc]^G$
    is a weak equivalence of categories.
  \item For some universal $G$-set $\Uc$, the functor
    $\Phi[\Uc]^G: \Cc[\Uc]^G\to \Dc[\Uc]^G$
    is a weak equivalence of categories.
  \item The functor
    $F^G\Phi:F^G\Cc\to F^G\Dc$  is a weak equivalence of categories.
  \end{enumerate}
\end{defn}

\begin{con}
  We consider two finite groups $G$ and $K$ and an $\M$-category $\Cc$.
  The $(K\times G)$-categories
  $\Cc[\omega^G][\omega^K]$ and $\Cc[(\omega^K)^G]$
  both have $\Cc$ as their underlying category, and they come with specific
  (and typically different) commuting actions of $K$ and $G$,
  through the reparameterization procedure of Construction \ref{con:extend M-action}.
  We will now specify  a $(K\times G)$-equivariant isomorphism of categories   
  from $\Cc[\omega^G][\omega^K]$ to $\Cc[(\omega^K)^G]$.
  This isomorphism, and its effect on various fixed subcategories, will show up several times
  in the remaining part of the paper; so we explain the construction in detail.
  
  The reparameterization procedure in Construction \ref{con:extend M-action}
  involves a choice of equivalence between the category $J$ of countably infinite sets
  and injections and its full subcategory spanned by the object $\omega$.
  This equivalence is controlled by chosen bijections
  $\kappa_U:\omega\to U$ for all countably infinite sets $U$, subject
  to the only requirement that $\kappa_\omega$ is the identity.
  In particular, we have independently made such choices for the countably infinite sets
  $\omega^G$, $\omega^K$ and $(\omega^K)^G$.
  We define the {\em intertwiner} $\Im:\omega\to\omega$  as the composite bijection
  \begin{equation}\label{eq:intertwiner} 
    \omega \ \xra{\kappa_{\omega^G}} \ \omega^G \ \xra{(\kappa_{\omega^K})^G}\ (\omega^K)^G \
        \xra{\kappa_{(\omega^K)^G}^{-1}} \ \omega \ .    
  \end{equation}
  The associated action functor $\Im_*:\Cc\to\Cc$ is then an isomorphism of categories.
\end{con}

\begin{prop}
  Let $\Cc$ be an $\M$-category, and let $K$ and $G$ be finite groups.
  Then the action of the intertwiner \eqref{eq:intertwiner} is 
  a $(K\times G)$-equivariant isomorphism    
  \begin{equation}\label{eq:reparametrize}
    \Im_* \ : \ \Cc[\omega^G][\omega^K] \  \iso \ \Cc[ (\omega^K)^G]
  \end{equation}      
  with respect to the reparameterized actions.
\end{prop}  
\begin{proof}
  Both $\Cc[\omega^G][\omega^K]$ and $\Cc[(\omega^K)^G]$ have $\Cc$ as their underlying category,
  and the $(K\times G)$-actions are by iterated and one-step reparameterization, respectively.
  The $(\M\times G)$-action on $\Cc[\omega^G]$ is obtained from the
  original $\M$-action by restriction along the strict monoidal functor
  \[ \M\times G\ \to \ \M  \]
  that sends an object $(u,g)\in M\times G$ to the composite injection
  \[ \omega \ \xra{\kappa_{\omega^G}} \ \omega^G \ \xra{(u,g)\cdot-}\ \omega^G \ \xra{\kappa_{\omega^G}^{-1}}\ \omega\ ,
  \]
  and is uniquely extended to morphisms.
  The second map is the action of the element $(u,g)$ on $\omega^G$, i.e.,
  \[ ((u,g)\cdot f)(h)\ = \ u(f(g^{-1} h)) \]
  for $f\in \omega^G$ and $h\in G$.
  Iterating this, the $(K\times G)$-action on $\Cc[\omega^G][\omega^K]$ is obtained from the
  original $\M$-action by restriction along the strict monoidal functor
  \[ K\times G\  \to \ \M  \]
  that sends $(k,g)\in K\times G$ to the composite injection
  \[ \omega \ \xra{\kappa_{\omega^G}} \ \omega^G \ \xra{(\kappa_{\omega^K})^G} \   (\omega^K)^G \
    \xra{ (k,g)\cdot-}\ (\omega^K)^G \ \xra{(\kappa_{\omega^K}^{-1})^G}\ \omega^G\ 
    \xra{\kappa_{\omega^G}^{-1}}\ \omega\ .  \]
  The two bijections
  \[  (\kappa_{\omega^K})^G\circ\kappa_{\omega^G}\ , \ \kappa_{(\omega^K)^G}\ : \
    \omega \ \to \ (\omega^K)^G \]
  need not be related in any way, which is why the two $(K\times G)$-actions will
  typically be different (unless one of the two groups is trivial).
  But by design, the intertwiner
  $\Im = \kappa_{(\omega^K)^G}^{-1}\circ (\kappa_{\omega^K})^G\circ\kappa_{\omega^G}$
  accounts for the difference between these two unrelated bijections.
  So the action of the induced isomorphism of categories
  $\Im_*:\Cc[\omega^G][\omega^K] \to\Cc[(\omega^K)^G]$
  mediates between the two actions, as claimed.  
\end{proof}

\begin{prop}\label{prop:F^G preserves global M}
  Let $\Phi:\Cc\to \Dc$ be a global equivalence of $\M$-categories.
  Then for every finite group $G$, the morphism
  $F^G \Phi:F^G \Cc\to F^G \Dc$ is a global equivalence of $\M$-categories.
\end{prop}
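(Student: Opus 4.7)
The strategy is to unwind the definition of global equivalence for the morphism $F^G\Phi$ and reduce it to the corresponding condition for $\Phi$ itself, using the natural isomorphism \eqref{eq:iterate F^G} that identifies iterated fixed points with fixed points for the product group.

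By Definition \ref{def:global equiv Gamma-M} applied to the $\M$-category $F^G\Cc$, the morphism $F^G\Phi$ is a global equivalence if and only if, for every finite group $K$, the functor
\[ F^K(F^G\Phi) \ : \ F^K(F^G\Cc) \ \to \ F^K(F^G\Dc) \]
is a weak equivalence of categories. So I would fix an arbitrary finite group $K$ and verify this condition.

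The key input is the isomorphism \eqref{eq:iterate F^G}, which provides an identification
\[ F^K(F^G\Cc) \ \iso \ F^{K\times G}\Cc \]
that is natural for morphisms of $\M$-categories in $\Cc$. Applying this naturality to $\Phi$ produces a commutative square identifying $F^K(F^G\Phi)$ with $F^{K\times G}\Phi$. Thus $F^K(F^G\Phi)$ is a weak equivalence of categories if and only if $F^{K\times G}\Phi$ is.

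Finally, since $K\times G$ is itself a finite group and $\Phi:\Cc\to\Dc$ is assumed to be a global equivalence, the functor $F^{K\times G}\Phi$ is a weak equivalence of categories by Definition \ref{def:global equiv Gamma-M} (c). This holds for every choice of finite group $K$, so $F^G\Phi$ is indeed a global equivalence. No real obstacle arises; the entire argument is bookkeeping around the natural isomorphism \eqref{eq:iterate F^G}, whose construction already packages the essential content that fixing points under $K$ and then under $G$ (after reparameterization) amounts to fixing points under $K\times G$.
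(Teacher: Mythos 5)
Your proof is correct and follows essentially the same approach as the paper: fix a finite group $K$, invoke the natural isomorphism \eqref{eq:iterate F^G} to identify $F^K(F^G\Phi)$ with $F^{K\times G}\Phi$ via a commutative square, and conclude from the hypothesis that $F^{K\times G}\Phi$ is a weak equivalence.
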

\begin{proof}
  We let $K$ be another finite group.
  The restriction
  of the $(K\times G)$-equivariant isomorphism $\Im_*:\Cc[\omega^G][\omega^K]\iso \Cc[(\omega^K)^G]$ from 
  \eqref{eq:reparametrize} to $(K\times G)$-fixed subcategories is an isomorphism
  \[  \Im_*^{K\times G}\ : \
    F^K(F^G\Cc)\ = \ ( \Cc[\omega^G][\omega^K])^{K\times G}\ \iso\ \Cc[(\omega^K)^G]^{K\times G}\ .  \]
  Since $\Im_*$ is natural for morphisms of $\M$-categories,
  the following square of categories and functors commutes:
  \[ \xymatrix@C=18mm{
      F^K(F^G \Cc)\ar[r]^-{\Im_*^{K\times G}}_\iso \ar[d]_{F^K(F^G\Phi)} &
      \Cc[(\omega^K)^G]^{K\times G}\ar[d]^{\Phi[(\omega^K)^G]^{K\times G}}\\
      F^K(F^G \Dc)\ar[r]_-{\Im_*^{K\times G}}^\iso  & \Dc[(\omega^K)^G]^{K\times G}
    } \]
  Since $(\omega^K)^G$ is a universal $(K\times G)$-set and $\Phi$ is a global equivalence,
  the right vertical functor is a weak equivalence of categories.
  The horizontal functors are isomorphisms, so the
  left vertical functor $F^K(F^G\Phi)$ is a weak equivalence of categories.
  Since $K$ was any finite group, this shows that $F^G\Phi$ is a global equivalence.
\end{proof}

The final topic of this  section is the {\em box product},
a certain symmetric monoidal product for tame $\M$-categories.
In Section \ref{sec:K of I} will then define {\em parsummable categories}
as the tame $\M$-categories equipped with a commutative multiplication
with respect to  the box product.

The diagonal $\M$-action makes the product $\Cc\times\Dc$
of two $\M$-categories $\Cc$ and $\Dc$ into an $\M$-category.
With respect to this diagonal $\M$-action,
$\Cc\times\Dc$ is moreover a product of $\Cc$ and $\Dc$
in the category of $\M$-categories and strict morphisms,
compare Example \ref{eg:(co)limits Mcat}.

\begin{defn}\label{def:box}
  Let $\Cc$ and $\Dc$ be $\M$-categories. An object $(c,d)$ of $\Cc\times\Dc$ is
  {\em disjointly supported} if there are disjoint subsets $A$ and $B$ of $\omega$
  such that $c$ is supported on $A$, and $d$ is supported on $B$.
  The {\em box product} $\Cc\boxtimes\Dc$ of two $\M$-categories 
  is the full subcategory of the product $\M$-category $\Cc\times\Dc$
  generated by the disjointly supported objects.
\end{defn}

In the previous definition, we do not insist that the objects $c$ and $d$ must be finitely supported.
However, I doubt that the construction is particularly useful in this generality,
and we will mostly be interested in the box product for {\em tame} $\M$-categories.
If $c$ and $d$ are finitely supported objects of $\Cc$ and $\Dc$, respectively,
then $(c,d)$ is disjointly supported if and only if $\supp(c)\cap\supp(d)=\emptyset$.
Proposition \ref{prop:finite support} (iii) shows that the subcategory
$\Cc\boxtimes\Dc$ of $\Cc\times\Dc$ is invariant under the diagonal $\M$-action;
hence the product $\M$-action on $\Cc\times\Dc$ restricts
to an $\M$-action on $\Cc\boxtimes\Dc$.

The following theorem makes precise that assuming disjoint support
is no loss of generality, even globally.

\begin{theorem}\label{thm:box2times}
  For all $\M$-categories $\Cc$ and $\Dc$,
  the inclusion $\Cc\boxtimes\Dc\to\Cc\times\Dc$
  is a global equivalence of $\M$-categories.
\end{theorem}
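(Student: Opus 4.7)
The plan is to establish something stronger than a global equivalence: for every finite group $G$, I will show that the inclusion
\[ F^G(\Cc \boxtimes \Dc)\ \to\ F^G(\Cc \times \Dc)\ = \ F^G\Cc \times F^G\Dc \]
is an equivalence of categories, which is stronger than a weak equivalence on nerves. Commutation of reparameterization with products and of $(-)^G$ with products identifies the target.

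Since $\omega^G$ is a universal $G$-set by Proposition \ref{prop:universal G-sets}, it is $G$-equivariantly isomorphic to the disjoint union of two copies of itself. I therefore pick two $G$-equivariant injections $\iota_1, \iota_2 \in \Jc(\omega^G, \omega^G)$ whose images are disjoint; for a concrete example, take $\iota_j(f)(g) = 2f(g) + (j-1)$. Using the reparameterized action on $\Cc[\omega^G]$ and $\Dc[\omega^G]$ from Construction \ref{con:extend M-action}, I define a functor
\[ \Phi\ : \ F^G\Cc \times F^G\Dc \ \to \ F^G(\Cc \boxtimes \Dc)\ , \qquad (c, d)\ \longmapsto\ ((\iota_1)_*(c), (\iota_2)_*(d))\ . \]
The reparameterized analog of Proposition \ref{prop:finite support} (iii) shows that $(\iota_j)_*(x)$ is supported on $\iota_j(\omega^G)$, and disjointness of the images guarantees that $\Phi(c, d)$ lies in the box product.

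To see that $\Phi$ is inverse to the inclusion $i$ up to natural isomorphism, I use the structure isomorphisms $[\iota_j, 1]^x : x \to (\iota_j)_*(x)$. When $x$ is $G$-fixed and $\iota_j$ is $G$-equivariant, so that $l^g \iota_j = \iota_j l^g$ inside $\Jc(\omega^G, \omega^G)$, the two parts of \eqref{eq:assoc_2} together with $l^g_*(x) = x$ yield
\[ (l^g)_*([\iota_j, 1]^x)\ =\ [l^g\iota_j, l^g]^x\ =\ [\iota_j l^g, l^g]^x\ =\ [\iota_j, 1]^{l^g_*(x)}\ =\ [\iota_j, 1]^x\ , \]
so $[\iota_j, 1]^x$ is a morphism in the $G$-fixed subcategory. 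Therefore $([\iota_1, 1]^c, [\iota_2, 1]^d)$ defines a natural isomorphism from the identity of $F^G\Cc \times F^G\Dc$ to $i \circ \Phi$. The same formula also gives a natural isomorphism from the identity of $F^G(\Cc \boxtimes \Dc)$ to $\Phi \circ i$, since both $(c, d)$ and $((\iota_1)_*(c), (\iota_2)_*(d))$ are disjointly supported and the box product is a full subcategory of the product. Hence $i$ and $\Phi$ are mutually inverse equivalences of categories.

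The only subtle step is transferring Proposition \ref{prop:finite support} (iii) across the reparameterization $\kappa_{\omega^G}: \omega \to \omega^G$ to conclude that supports behave correctly for $(\iota_j)_*$ viewed as an endofunctor of $\Cc[\omega^G]$. This is routine since the notion of support is intrinsic to the categorical action structure and $\kappa_{\omega^G}$ intertwines the $\M$-structure on $\Cc$ with the $\Jc(\omega^G, \omega^G)$-structure on $\Cc[\omega^G]$ by construction.
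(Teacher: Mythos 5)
Your proof is correct and follows essentially the same strategy as the paper's: both choose $G$-equivariant injections $u,v$ (your $\iota_1,\iota_2$) with disjoint images, verify that the structure isomorphisms $[\iota_j,1]^x$ land in the $G$-fixed subcategory, and conclude that every object of $F^G\Cc\times F^G\Dc$ is isomorphic to a disjointly supported pair. The only presentational difference is that you package this as an explicit inverse functor $\Phi$, whereas the paper argues the inclusion is fully faithful (automatic) and then dense; these amount to the same argument.
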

\begin{proof}
  We let $G$ be a finite group.
  Since  $(\Cc\boxtimes\Dc)[\omega^G]$ is a full subcategory
  of $(\Cc\times\Dc)[\omega^G]=\Cc[\omega^G]\times\Dc[\omega^G]$,
  the inclusion restricts to a fully faithful functor 
  $F^G(\Cc\boxtimes\Dc)\to F^G(\Cc\times\Dc)=(F^G\Cc)\times(F^G\Dc)$ on $G$-fixed subcategories.
  We show that this restricted functor is also dense, and hence an equivalence of categories.
  This shows in particular that the functor is a weak equivalence of categories.

  To prove the claim we consider $G$-fixed objects $c$ of $\Cc[\omega^G]$ and $d$ of $\Dc[\omega^G]$.
  Since $\omega^G$ is a universal $G$-set, we can choose  $G$-equivariant injections
  $u,v:\omega^G\to \omega^G$ with disjoint images.
  Then
  \[ l^g_*(u_*(c))\ = \ (l^g u)_*(c)\ = \
    (u l^g)_*(c)\ = \ u_*( l^g_*(c))\ = \ u_*(c)\ ,  \]
  so $u_*(c)$ is another $G$-fixed object of $\Cc[\omega^G]$.
  Moreover,
  \[ l^g_*([u,1]^c)\ = \ [l^g u,l^g]^c\ = \ [u l^g,l^g]^c\ = \
    [u,1]^{l^g_*(c)}\ = \  [u,1]^c\ ,  \]
  i.e., the $\Cc[\omega^G]$-isomorphism $[u,1]^c:c\to u_*(c)$ is $G$-fixed.
  Similarly, $v_*(d)$ is $G$-fixed and
  the isomorphism $[v,1]^d:d\to v_*(d)$ is $G$-fixed.
  So $(c,d)$ is isomorphic in $F^G(\Cc\times\Dc)$ to the object $(u_*(c),v_*(d))$.
  Proposition \ref{prop:finite support} (iii) shows that $u_*(c)$ is supported on the image of $u$,
  and $v_*(d)$ is supported on the image of $v$. So the pair $(u_*(c),v_*(d))$
  is disjointly supported, and thus an object of the subcategory $F^G(\Cc\boxtimes\Dc)$.
\end{proof}

An object $(c,d)$ of $\Cc\times\Dc$ is supported on the set $\supp(c)\cup\supp(d)$.
So we conclude:

\begin{cor} If the $\M$-categories $\Cc$ and $\Dc$ are tame,
  then the $\M$-category $\Cc\boxtimes\Dc$ is tame.
\end{cor}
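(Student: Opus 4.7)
The plan is to reduce the claim to the support observation that immediately precedes the corollary, namely that every object $(c,d)$ of $\Cc\times\Dc$, equipped with the diagonal $\M$-action, is supported on $\supp(c)\cup\supp(d)$. First I would justify this observation: if $u\in M$ fixes the finite set $\supp(c)\cup\supp(d)$ elementwise, then it fixes $\supp(c)$ and $\supp(d)$ elementwise separately, so by Proposition \ref{prop:finite support}(i) we have $u_*(c)=c$ and $u_*(d)=d$; hence under the diagonal action $u_*(c,d)=(u_*(c),u_*(d))=(c,d)$. Thus $(c,d)$ is supported on $\supp(c)\cup\supp(d)$.

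Next I would note that $\Cc\boxtimes\Dc$ is, by Definition \ref{def:box}, a full $\M$-subcategory of $\Cc\times\Dc$ (closed under the diagonal $\M$-action by Proposition \ref{prop:finite support}(iii)). Consequently, for any object $(c,d)$ of $\Cc\boxtimes\Dc$, the action of $M$ on $(c,d)$ inside $\Cc\boxtimes\Dc$ is the restriction of the action inside $\Cc\times\Dc$, and hence the support of $(c,d)$ computed within $\Cc\boxtimes\Dc$ coincides with its support within $\Cc\times\Dc$.

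Combining these two points gives the conclusion: if $\Cc$ and $\Dc$ are tame, then $\supp(c)$ and $\supp(d)$ are finite for every object $(c,d)$ of $\Cc\boxtimes\Dc$, so their union is a finite subset of $\omega$ on which $(c,d)$ is supported. Therefore every object of $\Cc\boxtimes\Dc$ is finitely supported, which means $\Cc\boxtimes\Dc$ is tame. There is no real obstacle here; the entire content of the corollary is contained in the support observation preceding it, together with the trivial fact that the union of two finite sets is finite.
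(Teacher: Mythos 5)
Your argument is correct and matches the paper's intent exactly: the paper gives no explicit proof, simply deriving the corollary from the preceding observation that $(c,d)$ is supported on $\supp(c)\cup\supp(d)$. You have merely spelled out that observation (via Proposition \ref{prop:finite support}(i)) and the routine point that support in a full $\M$-subcategory closed under the action agrees with support in the ambient $\M$-category.
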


Given two morphisms between $\M$-categories $F:\Cc\to \Cc'$ and $G:\Dc\to\Dc'$,
the product functor $F:\Cc\times\Dc\to \Cc'\times\Dc'$ takes the subcategory
$\Cc\boxtimes\Dc$ to the subcategory $\Cc'\boxtimes\Dc'$,
by Proposition \ref{prop:finite support} (v).
So $F\times G$ restricts to a morphism of $\M$-categories
\[ F\boxtimes G\ : \ \Cc\boxtimes\Dc\ \to \ \Cc'\boxtimes\Dc'\ . \]
This makes the box product of tame $\M$-categories a functor
\[ \boxtimes \ : \ \M\cat^\tau\times\M\cat^\tau\ \to \ \M\cat^\tau \ .\]

The associativity, symmetry and unit isomorphisms of the cartesian product
of $\M$-categories clearly restrict to the box product;
hence they inherit the coherence conditions required for a symmetric monoidal product.
For example, the associativity isomorphism is given by
\[ (\Cc\boxtimes\Dc)\boxtimes\Ec \ \iso \
  \Cc\boxtimes(\Dc\boxtimes\Ec) \ ,\quad ((f,g),h)\ \longmapsto\ (f,(g,h))\ .
\]
We thus conclude:

\begin{prop}\label{prop:box symmetric monoidal}
  The box product is a symmetric monoidal structure on the category of tame $\M$-categories
  with respect to the associativity, symmetry and unit isomorphisms inherited
  from the cartesian product. The terminal $\M$-category is a unit object for the
  box product.
\end{prop}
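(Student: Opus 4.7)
The plan is to transport the symmetric monoidal structure on $(\M\cat^\tau, \times)$ along the full embedding $\Cc\boxtimes\Dc \hookrightarrow \Cc\times\Dc$. Since the box product is already in hand as a functor, and since every commutative diagram (pentagon, hexagon, triangle, or the defining naturality squares) built out of the cartesian structure isomorphisms automatically commutes in any full subcategory containing the relevant objects, the proof reduces to two points: (i) the cartesian associator, symmetry and unit isomorphisms restrict to well-defined morphisms between the appropriate box products, and (ii) the terminal $\M$-category serves as a two-sided unit.

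The key technical input is a support formula for pairs. For the diagonal $\M$-action on $\Cc\times\Dc$, an injection $u\in M$ satisfies $u_*(c,d)=(c,d)$ if and only if $u_*(c)=c$ and $u_*(d)=d$; hence $(c,d)$ is supported on a subset $A\subseteq\omega$ if and only if both $c$ and $d$ are. Intersecting over finite supporting sets in the sense of Definition \ref{def:support}, one obtains
\[ \supp(c,d) \ = \ \supp(c)\cup\supp(d) \]
whenever $c$ and $d$ are both finitely supported (the inclusion $\subseteq$ is clear from $\supp(c)\cup\supp(d)$ being one such finite supporting set; the inclusion $\supseteq$ follows because every finite supporting set of the pair also supports each coordinate separately).

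Armed with this formula, the verification of (i) is immediate. An object $((c,d),e)$ lies in $(\Cc\boxtimes\Dc)\boxtimes\Ec$ precisely when the three supports $\supp(c),\supp(d),\supp(e)$ are pairwise disjoint; this condition is manifestly symmetric in the three factors, so the cartesian associator $((c,d),e)\mapsto(c,(d,e))$ bijects $(\Cc\boxtimes\Dc)\boxtimes\Ec$ onto $\Cc\boxtimes(\Dc\boxtimes\Ec)$ on objects, and the corresponding statement on morphisms follows because the associator is an isomorphism in the cartesian product. The swap $(c,d)\mapsto(d,c)$ trivially preserves disjoint support. For (ii), the unique object $\ast$ of the terminal $\M$-category is $M$-fixed, hence $\supp(\ast)=\emptyset$; consequently every pair $(c,\ast)$ is disjointly supported, $\Cc\boxtimes\ast$ coincides with $\Cc\times\ast$, and the cartesian unitor already serves as an isomorphism $\Cc\boxtimes\ast\iso\Cc$ of $\M$-categories.

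I do not anticipate a serious obstacle: once the support formula is established the remainder is bookkeeping. The only step that warrants care is verifying the equality $\supp(c,d)=\supp(c)\cup\supp(d)$ as opposed to a one-sided containment, since the coincidence of the disjointness conditions on both sides of the associator relies on it.
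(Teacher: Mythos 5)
Your proof is correct and takes essentially the same route as the paper: the paper likewise observes that the cartesian associator, symmetry, and unitors restrict to the box product and that the coherence diagrams commute because they commute in $\Cc\times\Dc$. The paper is even terser than you are — it simply asserts the restriction and points to the support formula $\supp(c,d)=\supp(c)\cup\supp(d)$ in a one-line remark — so your write-up, especially the explicit two-inclusion argument for that equality and the check that the empty-support unit makes every pair $(c,\ast)$ disjointly supported, is a faithful and slightly more detailed version of the paper's reasoning.
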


\section{From \texorpdfstring{$\Gamma$-$\M$}{Gamma-M}-categories
  to symmetric spectra}
\label{sec:Gamma-M_to_sym}

In this section we introduce and study
a global equivariant variation of Segal's construction \cite{segal:cat coho}
that turns $\Gamma$-categories into spectra,
where $\Gamma$ is the category of finite based sets.
Our version in Construction \ref{con:spectrum from Gamma-M}
accepts a $\Gamma$-$\M$-category $Y$ as input,
and it returns a  symmetric spectrum $Y\td{\mS}$.
All the key qualitative properties of Segal's machine
have global equivariant generalizations:
the symmetric spectrum $Y\td{\mS}$ is globally connective and
globally semistable by Proposition \ref{prop:Y<S> globally connective-semistable};
if the $\Gamma$-$\M$-category $Y$ is {\em globally special}
in the sense of Definition \ref{def:globally special Gamma-cat} below,
then $Y\td{\mS}$ is a restricted global $\Omega$-spectrum,
see Theorem \ref{thm:Y(S) is global Omega}.
The main case of interest for us will be the
$\Gamma$-$\M$-category constructed from a parsummable category,
see Construction~\ref{con:Gamma from I} below,
as this yields the global K-theory spectrum.

A key feature of our delooping machine is its sensitivity to global equivariant structure.
For example, for every finite group $G$,
the underlying $G$-symmetric spectrum of $Y\td{\mS}$
is $G$-stably equivalent to the $G$-symmetric spectrum obtained by evaluating
a specific special $\Gamma$-$G$-category on spheres, see Theorem \ref{thm:a-b-maps equivalence}
for the precise statement.
Also, the assignment $Y\mapsto Y\td{\mS}$ commutes with $G$-fixed points
in the following sense: we show in Theorem \ref{thm:G-fixed Gamma-cat}
that the $G$-fixed point spectrum of $Y\td{\mS}$ is globally equivalent
to the result of applying our delooping to the $G$-fixed $\Gamma$-$\M$-category $F^G Y$.

\begin{con}[(Prolongation of $\Gamma$-spaces)]
We write $\Gamma$ for the category whose objects are the
finite pointed sets $n_+=\{0,1,\dots,n\}$ for $n\geq 0$, with 0 being the basepoint.
Morphisms in $\Gamma$ are all basepoint preserving maps.
A {\em $\Gamma$-object} in a category $\Dc$ is a functor $X:\Gamma\to\Dc$
that is reduced in the sense that $X(0_+)$ is a terminal object of $\Dc$.
A morphism of $\Gamma$-objects is a natural transformation of functors.
Cases of particular interest are when $\Dc$ is the category $\cat$ of small categories,
the category $\bT$ of spaces, or equivariant variations of these where a finite
group or the monoidal category $\M$ acts.

In particular, a {\em $\Gamma$-space} is a reduced functor $X:\Gamma\to\bT$ from $\Gamma$
to the category of spaces,
i.e., the value $X(0_+)$ is a one-point space.
We may then view a $\Gamma$-space as a functor to {\em pointed} spaces,
where $X(n_+)$ is pointed by the image of the map $X(0_+)\to X(n_+)$
induced by the unique morphism $0_+\to n_+$ in $\Gamma$.
A $\Gamma$-space $X$ can be extended to a continuous functor on the category
of based spaces by a coend construction. 
If $K$ is a pointed space, 
the value of the extended functor on $K$ is given by
\[
 X(K)\ = \ \int^{n_+\in\Gamma} X(n_+) \times K^n\,
\ = \ \left({\coprod}_{n\geq 0}\,   X(n_+)\times K^n\right) /\sim \ , 
  \]
where we use that $K^n=\map_*(n_+,K)$ is contravariantly functorial in $n_+$.
In more detail, $X(K)$ is the quotient space of the disjoint union
of the spaces $X(n_+)\times K^n$ by the equivalence relation generated by
\[ (X(\alpha)(x); k_1,\dots,k_n)\ \sim\ (x; k_{\alpha(1)},\dots,k_{\alpha(m)}) \]
for all morphisms $\alpha:m_+\to n_+$ in $\Gamma$, all $x\in X(m_+)$, and all 
$(k_1,\dots,k_n)\in K^n$.
Here $k_{\alpha(i)}$ is to be interpreted as the basepoint of $K$
whenever $\alpha(i)=0$. In general, quotient spaces of weak Hausdorff spaces
need not be weak Hausdorff, so it is not completely obvious that 
the space $X(K)$ is again compactly generated. However, we show in
\cite[Proposition B.26 (i)]{schwede:global} that this is the case.

We will not distinguish notationally between the original $\Gamma$-space
and its extension. We write $[x; k_1,\dots,k_n]$
for the equivalence class in $X(K)$ 
of a tuple $(x;k_1,\dots,k_n)\in X(n_+) \times K^n$.
The assignment $(X,K)\mapsto X(K)$ is functorial in the $\Gamma$-space $X$
and the based space $K$. In particular, if~$X$ comes with an action by
a group $G$ and $K$ is equipped with an action by another group $H$,
then $G\times H$ acts on $X(K)$ by
\[ (g,h)\cdot  [x; k_1,\dots,k_n]\ = \ [g x; h k_1,\dots,h k_n]\ . \]
We will often be interested in the case where $G=H$, 
i.e., we evaluate a $\Gamma$-$G$-space $X$ on a $G$-space $K$,
and then we usually restrict to the diagonal $G$-action.
The extended functor is continuous and comes with a
continuous, based {\em assembly map}
\begin{equation}\label{eq:assembly} 
 \alpha\ : \ X(K)\sm L \ \to \ X(K\sm L) \ , \quad
 \alpha([x; k_1,\dots,k_n]\sm l)\ = \ [x; k_1\sm l,\dots,k_n\sm l]\ . \end{equation}
The assembly map is associative, unital and natural in all three variables.
\end{con}

We can turn a $\Gamma$-category $\Dc:\Gamma\to\cat$ into a $\Gamma$-space
by taking nerve and geometric realization objectwise. 
To simplify the notation we suppress the nerve functor in the notation, write
$|\Dc|$ for the composite functor
\[ \Gamma \ \xra{\ \Dc\ } \ \cat \ \xra{\text{nerve}} \ 
\text{(simplicial sets)} \
\xra{\ |-|\ }\ \bT \]
and refer to $|\Dc|$ as the {\em realization} of the $\Gamma$-category $\Dc$.
If the original $\Gamma$-category comes with an action of
a monoid, group or monoidal category, then
the realization inherits an action of the same monoid or group,
or of the geometric realization of the monoidal category, respectively.
In particular, realization turns
$\Gamma$-$\M$-categories into $\Gamma$-$|\M|$-spaces.
The main example we have in mind is $\Dc=\gamma(\Cc)$,
the $\Gamma$-$\M$-category associated with a parsummable category~$\Cc$,
to be discussed in Construction~\ref{con:Gamma from I} below.

\begin{con}\label{con:spectrum from Gamma-M}
We now define the {\em associated symmetric spectrum}
$Y\td{\mS}$ of a $\Gamma$-$\M$-category $Y$, i.e.,
a $\Gamma$-category equipped with
a left action of the monoidal category $\M=E M$.
For a finite set~$A$ we continue to denote by $\omega^A$ the set of maps from $A$ to $\omega$.
We explained in Construction \ref{con:extend M-action}
how to extend an object with an action of the monoid $M$
to a functor defined on countably infinite sets and injections.
The value of $Y\td{\mS}$ at a non-empty finite set~$A$ is
\[ Y\td{\mS}(A) \ = \ |Y[\omega^A]|(S^A) \ ,\]
the value of the $\Gamma$-space $|Y[\omega^A]|$ on the $A$-sphere.
For the empty set we declare
\[  Y\td{\mS}(\emptyset) \ = \  |Y(1_+)^{\supp=\emptyset}| \ , \]
the realization of the full subcategory of $Y(1_+)$
on the objects supported on the empty set.

To define the structure map associated with an injection $i:A\to B$ we let
\[ i_!\ :\ \omega^A\ \to\ \omega^B \] 
be the map given by
\begin{equation}\label{eq:extension_by_zero}
i_!(f)(b)\ = \ 
\begin{cases}
f(a)  & \text{ if $b=i(a)$, and}\\
\ 0 & \text{ if $b\not\in i(A)$.}
\end{cases}
\end{equation}
So if $i$ is bijective, then $i_!$ is precomposition with $i^{-1}$.
On the other hand, if $i$ is the inclusion of a subset,
then $i_!$ is extension by 0.

The structure map
\[ i_* \ : \  Y\td{\mS}(A)\sm S^{B\setminus i(A)}\ \to\ Y\td{\mS}(B)\]
is now defined as the diagonal composite in the commutative diagram:
\[ \xymatrix@C=35mm{
|Y[\omega^A]|(S^A)\sm S^{B\setminus i(A)}\ar[r]^-{|Y[i_!]|(S^A)\sm S^{B\setminus i(A)}} \ar[d]_{\eqref{eq:assembly}} &
|Y[\omega^B]|(S^A)\sm S^{B\setminus i(A)} \ar[d]^{\eqref{eq:assembly}} \\
|Y[\omega^A]|(S^A\sm S^{B\setminus i(A)})\ar[r]_-{|Y[i_!]|(S^A\sm S^{B\setminus i(A)})} \ar[d]_\iso &
|Y[\omega^B]|(S^A\sm S^{B\setminus i(A)}) \ar[d]^\iso \\
|Y[\omega^A]|(S^B)\ar[r]_-{|Y[i_!]|(S^B)} &  |Y[\omega^B]|(S^B) } \]
Here the upper vertical maps are assembly maps \eqref{eq:assembly} 
of the $\Gamma$-spaces $|Y[\omega^A]|$ and $|Y[\omega^B]|$, respectively, 
and the lower vertical maps are the effects of these two $\Gamma$-spaces
on the homeomorphism $S^A\sm S^{B\setminus i(A)}\iso S^B$ given by $i$ on $A$.
In the special case where $A=\emptyset$ is empty,
the morphism of $\Gamma$-categories
$Y[i_!]:Y[\omega^A]\to Y[\omega^B]$ is to be interpreted as
the inclusion $Y(1_+)^{\supp=\emptyset}\to Y(1_+)=Y[\omega^B](1_+)$
of the subcategory of objects supported on the empty set.
\end{con}

\begin{rk} Our global K-theory machinery produces restricted global $\Omega$-spectra,
  which in particular means that the value at the empty set does not have any homotopical significance.
  Still, the definition of $Y\td{\mS}(\emptyset)$ can be motivated by the requirement that
  as a {\em symmetric} spectrum,
  the structure maps $Y\td{\mS}(\emptyset)\sm S^B\to Y\td{\mS}(B)$
  must be $\Sigma_B$-equivariant for the trivial action on $Y\td{\mS}(\emptyset)$;
  if we want these structure maps to arise from the assembly map of the $\Gamma$-space $|Y|$,
  then the only general way to arrange the necessary equivariance
  is to let $Y\td{\mS}(\emptyset)$ be a subspace of the realization of the $\M$-fixed
  subcategory of $Y(1_+)$. Our definition just uses the maximal choice, the full $\M$-fixed
  subcategory of $Y(1_+)$. 
\end{rk}

As we already mentioned, 
our construction of a symmetric spectrum from a $\Gamma$-$\M$-category
is a variation of Segal's construction 
of a spectrum from a $\Gamma$-category \cite{segal:cat coho}.
Moreover, Theorem \ref{thm:a-b-maps equivalence} below,
applied to the trivial group,
shows that the underlying non-equivariant stable homotopy type
of our construction agrees with Segal's delooping
of the underlying $\Gamma$-category of a $\Gamma$-$\M$-category.
We emphasize that the $\M$-action on $Y$ enters into the definition
of $Y\td{\mS}$ in a crucial way,
via the $\Sigma_A$-action on the $A$-th level, and via the structure maps. 
The extra flexibility coming from the $\M$-action is
the key to the good equivariance properties.\medskip

We let $Y$ be a $\Gamma$-$\M$-category
and $u:I\to J$ an injection between countably infinite sets.
Then $u_*:Y[I]\to Y[J]$ is a morphism of $\Gamma$-categories,
and it induces a continuous based map
\[ u_*(K)\ : \ |Y[I]|(K)\ \to \ |Y[J]|(K) \]
for every based space~$K$.

\begin{prop}\label{prop:equivariant homotopy}
  Let $Y$ be a $\Gamma$-$\M$-category, $G$ a finite group, $\Uc$ and $\Vc$ universal $G$-sets,
  and $K$ a based $G$-space.
  \begin{enumerate}[\em (i)]
  \item 
    Let $u,v:\Uc\to\Vc$ be two $G$-equivariant injections.
    Then the two $G$-maps
    \[ u_*(K)\ ,\ v_*(K) \ : \ |Y[\Uc]|(K)\ \to \ |Y[\Vc]|(K) \]
    are $G$-equivariantly homotopic,
    with respect to the diagonal $G$-actions on $|Y[\Uc]|(K)$ and $|Y[\Vc]|(K)$. 
  \item Let $u:\Uc\to\Vc$ be a $G$-equivariant injection.
    Then the $G$-map
    \[ u_*(K)\ : \ |Y[\Uc]|(K)\ \to \ |Y[\Vc]|(K) \]
    is a based $G$-homotopy equivalence with respect to the diagonal $G$-actions.    
  \end{enumerate}
\end{prop}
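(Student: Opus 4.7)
The plan is to exploit that every $\M$-category extends canonically to a strict 2-functor on the 2-category $\Jc$ of countably infinite sets and injections, per Construction \ref{con:extend M-action}, applied levelwise in the $\Gamma$-direction to $Y$. The hom-category $\Jc(\Uc,\Vc)=EJ(\Uc,\Vc)$ is chaotic on the injection set $J(\Uc,\Vc)$, with $G$ acting by conjugation; the $G$-fixed objects there are exactly the $G$-equivariant injections.

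\textbf{Part (i).} Since $u$ and $v$ are both $G$-equivariant, the unique morphism $(v,u)\colon u\to v$ in $\Jc(\Uc,\Vc)$ is $G$-fixed. The 2-functor sends $(v,u)$ to a natural isomorphism $[v,u]\colon u_*\Rightarrow v_*$ between morphisms of $\Gamma$-$G$-categories $Y[\Uc]\to Y[\Vc]$, and the $G$-fixedness of $(v,u)$ translates into the equivariance relation $[v,u]^{g\cdot x}=g\cdot [v,u]^x$. A $G$-equivariant natural isomorphism between $G$-equivariant functors is equivalent data to a $G$-equivariant functor of $\Gamma$-categories
\[ Y[\Uc]\times[1]\ \to \ Y[\Vc] \]
(with trivial $G$-action on the interval category $[1]$) whose restrictions to $0$ and $1$ are $u_*$ and $v_*$. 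Taking nerves, realizing levelwise, and then applying the coend prolongation to the based $G$-space $K$ produces a $G$-equivariant homotopy $|Y|[\Uc](K)\times\Delta^1\to|Y|[\Vc](K)$ from $u_*(K)$ to $v_*(K)$. The substantive content is confirming that the isomorphism $[v,u]$ produced by the 2-functor is $G$-equivariant in the stated sense, and then tracking that categorical homotopy through realization and through the coend prolongation at $K$.

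\textbf{Part (ii).} Any two universal $G$-sets are $G$-equivariantly isomorphic, so we may pick a $G$-equivariant bijection $w\colon\Vc\to\Uc$. Both composites $w\circ u\colon\Uc\to\Uc$ and $u\circ w\colon\Vc\to\Vc$ are then $G$-equivariant self-injections, as are the identities. Applying (i) to the pairs $(w\circ u,\Id_\Uc)$ and $(u\circ w,\Id_\Vc)$, and invoking the strict compositionality $(w\circ u)_*=w_*\circ u_*$ and $(u\circ w)_*=u_*\circ w_*$ afforded by the 2-functoriality, we conclude that $w_*(K)$ is a two-sided $G$-homotopy inverse to $u_*(K)$. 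Thus (ii) is a formal consequence of (i) once the universality of $\Uc$ and $\Vc$ is brought to bear.
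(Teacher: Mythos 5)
Your proof is correct and takes essentially the same approach as the paper. The paper phrases part (i) slightly differently: instead of converting the $G$-fixed natural isomorphism $[v,u]$ into a functor $Y[\Uc]\times[1]\to Y[\Vc]$, it realizes the $G$-equivariant action map $EI(\Uc,\Vc)\times Y[\Uc]\to Y[\Vc]$, evaluates at $K$ to obtain a continuous $G$-equivariant map $|EI(\Uc,\Vc)|\times|Y|[\Uc](K)\to|Y|[\Vc](K)$, and then chooses a path in the contractible $G$-fixed space $|EI(\Uc,\Vc)|^G=|E(I(\Uc,\Vc)^G)|$ from $u$ to $v$; your specific $1$-simplex $(v,u)$ is one such path. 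Part (ii) is essentially identical to the paper's argument, which uses a $G$-equivariant injection $\Vc\to\Uc$ where you use a bijection; either choice is available since universal $G$-sets are equivariantly isomorphic.
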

\begin{proof}
  (i) We write $I(\Uc,\Vc)$ for the set of injections from $\Uc$ to $\Vc$,
  and we denote by $E I(\Uc,\Vc)$ the chaotic category with object set $I(\Uc,\Vc)$.
  We let $G$ act on the set $I(\Uc,\Vc)$ by conjugation,
  and on the space $|E I(\Uc,\Vc)|$ by functoriality.
  Taking fixed points commutes with geometric realization and
  with the functor $E$ from sets to categories. So
  \[ |E I(\Uc,\Vc)|^G\ =\ | E  \left( I(\Uc,\Vc)^G \right)  | \ ;  \]
  this space is contractible because the set of $G$-equivariant injections
  from $\Uc$ to $\Vc$ is non-empty.

  The $\M$-action on $Y$ induces a $G$-equivariant action functor
  \[ E I(\Uc,\Vc) \times Y[\Uc]\ \to \ Y[\Vc] \]
  whose restriction to the objects $u$ and $v$ of $E I(\Uc,\Vc)$
  yields the functors $u_*$ and $v_*$, respectively.
  Taking nerves and geometric realization and evaluating at $K$
  provides a continuous  $G$-equivariant action map
  \[ |E I(\Uc,\Vc)| \times |Y[\Uc]|(K)\ \to \ |Y[\Vc]|(K)\ . \]
  The given $G$-equivariant injections $u$ and $v$ are two $G$-fixed points
  in $|E I(\Uc,\Vc)|$.
  Since the $G$-fixed point space is contractible, there is a path
  $\lambda:[0,1]\to|E I(\Uc,\Vc)|^G$ from $u$ to $v$.
  The map
  \[ [0,1]\times |Y[\Uc]|(K)\ \to \ |Y[\Vc]|(K)\ , \quad t\ \longmapsto \ \lambda(t)_*(K) \]
  is the desired $G$-equivariant homotopy from $u_*$ to $v_*$.

  (ii)  Since $\Uc$ and $\Vc$ are universal $G$-sets,
  we can choose a $G$-equivariant injection $v:\Vc\to\Uc$.
  Then 
  \[ v_*(K)\circ u_*(K)\ =\ (v u)_*(K)\ : \ |Y[\Uc]|(K)\ \to \ |Y[\Uc]|(K) \]
  is $G$-homotopic to the identity map, by part (i).
  Similarly, $u_*(K)\circ v_*(K)$ is equivariantly homotopy to the identity.
  So $u_*(K)$ is an equivariant homotopy equivalence.
\end{proof}

We recall that a symmetric spectrum is {\em globally semistable}
if for every finite group $G$, the underlying $G$-symmetric spectrum
is $G$-semistable in the sense of \cite[Definition 3.22]{hausmann:G-symmetric},
i.e., the monoid of equivariant self-injections of a complete $G$-set
acts trivially on the naive $G$-equivariant stable homotopy groups.
One of the main features of global semistability is that for these
symmetric spectra, global equivalences are precisely the morphisms
that induce isomorphisms of equivariant homotopy groups,
see \cite[Proposition 4.13 (vi)]{hausmann:global_finite}.
A globally semistable symmetric spectrum $X$ is {\em globally connective}
if for every finite group $G$, the equivariant homotopy groups $\pi_k^G(X)$
are trivial for negative values of $k$.

\begin{prop}\label{prop:Y<S> globally connective-semistable}
  For every $\Gamma$-$\M$-category $Y$,
  the symmetric spectrum $Y\td{\mS}$ is globally semistable and globally connective.
\end{prop}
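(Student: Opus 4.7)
The proof splits naturally into verifying global semistability and global connectivity; both rely on the flexibility provided by Proposition \ref{prop:equivariant homotopy}.

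For global semistability, fix a finite group $G$ and a universal $G$-set $\Uc$. By definition, $G$-semistability amounts to the triviality of the action of the monoid of $G$-equivariant self-injections of $\Uc$ on the naive equivariant homotopy groups $\pi_k^{G,\Uc}(Y\td{\mS})$. Unwinding the definitions, a $G$-equivariant self-injection $u : \Uc \to \Uc$ acts on the defining colimit system via the induced self-map $u_* : |Y|[\Uc](K) \to |Y|[\Uc](K)$ applied to the relevant based $G$-spaces $K = S^A$. Applying Proposition \ref{prop:equivariant homotopy}(i) with $\Vc = \Uc$ and comparing $u$ with the identity injection yields a $G$-equivariant homotopy between $u_*$ and the identity map, so $u$ acts as the identity on $G$-equivariant homotopy classes. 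This is precisely the statement of $G$-semistability.

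For global connectivity, fix a finite group $G$ and $k < 0$; the goal is to show $\pi_k^{G,\Uc_G}(Y\td{\mS}) = 0$. The defining colimit runs over finite $G$-subsets $A \subset \Uc_G$, and the plan is to show that each class dies at a sufficiently large stage. The key input is the classical connectivity theorem for $\Gamma$-spaces: for any $\Gamma$-space $X$, the space $X(S^n)$ is $(n-1)$-connected. This statement lifts equivariantly to the $\Gamma$-$G$-space $|Y|[\omega^A]$ (with $G$-action arising by functoriality from the action on $\omega^A$): each fixed-point space $(|Y|[\omega^A](S^A))^H$ is $(|A^H| - 1)$-connected for every subgroup $H \leq G$. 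Once $A$ is chosen large enough -- always possible since $\Uc_G$ is universal, so finite $G$-sets with arbitrarily large $|A^H|$ for all $H \leq G$ embed into $\Uc_G$ -- an equivariant obstruction theory argument forces $[S^{k+A}, |Y|[\omega^A](S^A)]^G$ to vanish, and hence the colimit is zero.

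The main obstacle is justifying the equivariant connectivity claim without assuming $Y$ is globally special. The non-equivariant statement rests on the skeletal filtration of the coend $X(K) = \int^{n_+} X(n_+) \times K^n$, and equivariantly one must verify that this filtration is compatible with the $G$-action on $|Y|[\omega^A]$ and that the connectivity estimates survive passage to $H$-fixed points. The decisive feature here is that $\omega^A$ is a universal $G$-set (and, restricted, a universal $H$-set for each subgroup $H$), so after identifying $H$-fixed subcategories at each stage of the filtration one reduces to the standard non-equivariant connectivity estimate applied to a $\Gamma$-space built from the $H$-fixed parts. A secondary technical point is the precise interpretation of the colimit defining $\pi_k^{G,\Uc_G}$ for negative $k$, handled as in \cite[3.1]{hausmann:G-symmetric}, but once that formal setup is unwound the connectivity argument proceeds as outlined.
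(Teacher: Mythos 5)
Both halves of your argument have substantive gaps.

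For semistability, the action of a $G$-equivariant self-injection $u:\Uc\to\Uc$ on $\pi_k^{G,\Uc}(Y\td{\mS})$ is \emph{not} the map induced by the $\M$-action $u_*:|Y|[\Uc](K)\to|Y|[\Uc](K)$. The levels of $Y\td{\mS}$ are $|Y|[\omega^A](S^A)$ with $\omega^A$ varying over the indexing finite sets $A$, not $|Y|[\Uc](S^A)$ for one fixed $\Uc$, and the injection monoid of $\Uc_G$ acts on the colimit through the shift structure of the symmetric spectrum -- the structure maps and their interaction with the sphere coordinates -- rather than through the $\M$-action on $Y$. Unraveling this correctly reduces the assertion to showing that $\sigma_{A,A}\circ(S^A\sm f)$ and $\sigma^{\op}_{A,A}\circ(f\sm S^A)$ are $G$-equivariantly based homotopic as maps $S^A\sm S^A\to|Y|[\omega^{A\amalg A}](S^A\sm S^A)$. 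These two maps differ both by the swap involution on $\omega^{A\amalg A}$, which Proposition~\ref{prop:equivariant homotopy}~(i) handles as you anticipate, \emph{and} by the twist involution on $S^A\sm S^A$ in source and target, which requires a separate sine-cosine equivariant homotopy that is entirely missing from your sketch.

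For connectivity, the invariant you use is incorrect: you assert that $(|Y|[\omega^A](S^A))^H$ is $(|A^H|-1)$-connected, where $A^H$ is the set of $H$-fixed points of $A$. The relevant quantity is $\dim(\mR[A])^H$, the number of $H$-orbits of $A$, which equals the cellular dimension of $S^A$ at $H$ and in general strictly exceeds $|A^H|$. With your bound the obstruction argument cannot close: enlarging $A$ by free $G$-orbits -- required for cofinality in the colimit -- raises the source dimension at $H$ without increasing $|A^H|$ for non-trivial $H$, while enlarging by trivial orbits shifts $\dim(\mR[A])^H$ and $|A^H|$ by the same amount, so the target's claimed connectivity never catches up with the source. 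With the correct estimate the desired inequality holds uniformly in $A$ and $H$ for every negative degree, and equivariant obstruction theory gives the null-homotopy. Finally, the worry you flag about needing global specialness to justify the equivariant connectivity statement is misplaced: the required input from \cite[Proposition B.43]{schwede:global} is only $H$-cofibrancy, which holds automatically because $|Y|[\omega^A]$ is the realization of a $\Gamma$-$H$-simplicial set.
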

\begin{proof}
  To show that the symmetric spectrum $Y\td{\mS}$ is globally semistable
  we have to show that for every finite group $G$,
  every $G$-equivariant injection $u:\Uc_G\to\Uc_G$ and every $k\in\mZ$,
  the induced map $u_*:\pi_k^G(Y\td{\mS})\to \pi_k^G(Y\td{\mS})$
  is the identity. We show this for $k=0$, the general case being similar.
  After unraveling all the definitions, this comes down to the following fact:
  for every finite $G$-set $A$ and every continuous based $G$-map $f:S^A\to |Y[\omega^A]|(S^A)$,
  the two $G$-maps 
  \[ \sigma_{A,A}\circ (S^A\sm f)\ , \ \sigma^{\op}_{A,A}\circ(f\sm S^A)\ : \
    S^A\sm S^A\ \to \ |Y[\omega^{A\amalg A}]|(S^A\sm S^A) \]
  are equivariantly based homotopic.
  The two maps differ by the twist involution of $S^A\sm S^A$ in source and target,
  and by the effect of the involution $t:\omega^{A\amalg A}\to\omega^{A\amalg A}$
  arising from switching the two copies of $A$.
  The effect of $t$ alone is equivariantly homotopic to the identity,
  by Proposition \ref{prop:equivariant homotopy} (i).
  The effect of the twist involution of $S^A\sm S^A$ in source and target
  can be homotoped to the identity by the standard sine-cosine equivariant
  homotopy between the two direct summand embedding $\mR[A]\to\mR[A]\oplus\mR[A]$.

  To show global connectivity we consider a subgroup $H$ of a finite group $G$.
  For a finite $G$-set $A$ we set $d_H=\dim( (\mR[A])^H)$, which is equal to the number of $H$-orbits of $A$.
  The underlying $\Gamma$-$H$-space of $|Y[\omega^A]|$ arises from a $\Gamma$-$H$-simplicial
  set, so it is $H$-cofibrant in the sense of \cite[Definition B.33]{schwede:global},
  by \cite[Example B.34]{schwede:global}.
  So $(|Y[\omega^A]|(S^{\mR^k\oplus \mR[A]}))^H$ is $(k+d_H-1)$-connected by
  \cite[Proposition B.43]{schwede:global}.
  On the other hand, the cellular dimension of $S^A$ at $H$,
  in the sense of \cite[II.2, page 106]{tomDieck:transformation_groups},  is $d_H$.
  So whenever $k$ is positive, the cellular dimension of $S^A$ at $H$ does not exceed the connectivity of
  $(|Y[\omega^A]|(S^{\mR^k\oplus \mR[A]}))^H$. So every based continuous
  $G$-map $S^A\to |Y[\omega^G]|(S^{\mR^k\oplus \mR[A]})$ is equivariantly null-homotopic
  by \cite[II Proposition 2.6]{tomDieck:transformation_groups},
  and the set $[S^A,|Y[\omega^A]|(S^{\mR^k\oplus \mR[A]})]^G$ has only one element.
  Passage to the colimit over finite $G$-invariant subsets of the chosen universal $G$-set
  proves that the homotopy group $\pi_{-k}^G(Y\td{\mS})$ is trivial for all $k\geq 1$.
\end{proof}

The next proposition shows that the passage from $\Gamma$-$\M$-categories
to associated symmetric spectra preserves finite products.
Part (i) of the following proposition ought to be well-known, but I do not know
a reference.

\begin{prop}\label{prop:at spheres preserves products}
  \begin{enumerate}[\em (i)]
  \item
    For all $\Gamma$-spaces $E$ and $F$ and every based space $K$, the map
    \[  (E\times F)(K)\ \to\ E(K)\times F(K) \]
    induced by the projections of $E\times F$ to the two factors is a homeomorphism.
  \item
    For all $\Gamma$-$\M$-categories $X$ and $Y$, the morphism
    \[  (X\times Y)\td{\mS}\ \to\ X\td{\mS}\times Y\td{\mS} \]
    induced by the projections is an isomorphism of symmetric spectra.
  \end{enumerate}
\end{prop}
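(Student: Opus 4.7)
The plan is to establish part (i) by exhibiting an explicit continuous inverse to the canonical projection-induced map, and then deduce part (ii) level-wise from part (i) together with standard compatibility of the construction $(-)\td{\mS}$ with finite products.

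For (i), I will define an inverse $\psi:E(K)\times F(K)\to (E\times F)(K)$ as follows. Given representatives $(\xi;k_1,\dots,k_n)\in E(n_+)\times K^n$ and $(\eta;l_1,\dots,l_m)\in F(m_+)\times K^m$, let $\tilde\alpha:n_+\to (n+m)_+$ be the inclusion onto the first $n$ non-basepoint positions and $\tilde\beta:m_+\to (n+m)_+$ the shifted inclusion onto the last $m$ positions. Then set
\[
\psi\bigl([\xi;k_1,\dots,k_n],[\eta;l_1,\dots,l_m]\bigr)\ =\ [(E(\tilde\alpha)(\xi),F(\tilde\beta)(\eta));k_1,\dots,k_n,l_1,\dots,l_m].
\]
The coend relation applied to $\tilde\alpha$ (respectively $\tilde\beta$) shows that the $E$-projection (respectively $F$-projection) of this element recovers the original classes, so $\psi$ is a right inverse to the canonical map. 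Conversely, an arbitrary representative $((x,y);p_1,\dots,p_r)$ of a class in $(E\times F)(K)$ is equivalent in the coend to its image under $\psi$ applied to the pair of its projections, as one sees by applying the coend relation to a suitable $\Gamma$-morphism $(r+r)_+\to (r+r)_+$ that combines the two blocks; hence $\psi$ is a two-sided inverse.

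The main technical point will be well-definedness of $\psi$ on the quotient. If $(\xi;k_1,\dots,k_n)$ is equivalent in the coend to $(x;k_{\alpha(1)},\dots,k_{\alpha(s)})$ via $\alpha:s_+\to n_+$ with $\xi=E(\alpha)(x)$, then I will use the morphism $\bar\alpha:(s+m)_+\to (n+m)_+$ that equals $\alpha$ on the first $s$ positions and the shift $j\mapsto n+j$ on the last $m$ positions. This satisfies $\bar\alpha\circ\tilde\alpha_s=\tilde\alpha_n\circ\alpha$ and $\bar\alpha\circ\tilde\beta_s=\tilde\beta_n$, so the coend relation for $\bar\alpha$ applied to $(E(\tilde\alpha_s)(x),F(\tilde\beta_s)(\eta))$ identifies the two candidate outputs of $\psi$; the analogous argument handles equivalences on the $F$-side. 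Continuity of $\psi$ follows because it is built piecewise from continuous maps $E(n_+)\times F(m_+)\times K^{n+m}\to (E\times F)(K)$ that together factor through the quotient onto $E(K)\times F(K)$, using that finite products of quotient maps remain quotient maps in compactly generated spaces.

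For (ii), the product $\Gamma$-$\M$-category $X\times Y$ carries the diagonal $\M$-action, hence $(X\times Y)[\omega^A]=X[\omega^A]\times Y[\omega^A]$ as $\Gamma$-categories. Since geometric realization of nerves preserves finite products in compactly generated spaces, this gives $|X\times Y|[\omega^A]=|X|[\omega^A]\times|Y|[\omega^A]$ as $\Gamma$-spaces. Applying (i) with $K=S^A$ produces the desired homeomorphism on each non-empty spectrum level; for $A=\emptyset$, the support of a pair $(x,y)$ equals $\supp(x)\cup\supp(y)$, so $(X\times Y)(1_+)^{\supp=\emptyset}=X(1_+)^{\supp=\emptyset}\times Y(1_+)^{\supp=\emptyset}$, and realization preserves products. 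Compatibility with the structure maps $i_*$ will follow from the naturality of the homeomorphism in (i) in both the $\Gamma$-space and based-space variables. The principal obstacle in the overall argument is the well-definedness verification in (i), which reduces to the combinatorial coend computation sketched above.
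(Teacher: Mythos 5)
Your proof takes essentially the same route as the paper's: both define an explicit inverse on representatives by shuffling the two blocks via the inclusions of $n_+$ and $m_+$ into $(n+m)_+$, both use that $-\times Z$ commutes with colimits in compactly generated spaces, and both deduce (ii) by applying (i) levelwise with $E=|X|[\omega^A]$, $F=|Y|[\omega^A]$, $K=S^A$. Your verification of well-definedness via $\bar\alpha$ correctly supplies a detail the paper explicitly omits, and you rightly flag the $A=\emptyset$ level, which the paper glosses over.

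One small slip: in the argument that $\psi$ inverts the projection-induced map, the coend relation you need uses the \emph{fold map} $\nabla\colon (r+r)_+\to r_+$ (the identity on the first block, the shift-down on the second), not a self-map of $(r+r)_+$. The relation has to lower the level from $(r+r)_+$ to $r_+$ in order to identify $[(E(\tilde\alpha_r)(x),F(\tilde\beta_r)(y));p_1,\dots,p_r,p_1,\dots,p_r]$ with $[(x,y);p_1,\dots,p_r]$; a self-map of $(r+r)_+$ only relates two representatives at the same level. Also, for compatibility with the structure maps in (ii), note that $(p_1\td{\mS},p_2\td{\mS})$ is already a morphism of symmetric spectra by functoriality of $(-)\td{\mS}$, so once it is shown to be a levelwise homeomorphism it is automatically an isomorphism; the separate naturality check you sketch is harmless but not needed.
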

\begin{proof}
  (i)
  We define a continuous map in the opposite direction.
  To facilitate this, we first rewrite the space $E(K)\times F(K)$.
  We exploit the fact that in the category of compactly generated spaces,
  product with a fixed space is a left adjoint, so it commutes with coends.
  The space $E(K)\times F(K)$ is thus a coend of the functor
  \begin{equation}\label{eq:double coend}
    \Gamma\times \Gamma\times \Gamma^{\op}\times \Gamma^{\op}\ \to \ \bT\ , \quad
    (k_+,l_+,m_+,n_+)\ \longmapsto \ E(k_+)\times F(l_+)\times K^m\times K^n\ .
  \end{equation}
  For given $m,n\geq 0$, we define based maps $i_{m,n}:m_+\to (m+n)_+$ and $j_{m,n}:n_+\to(m+n)_+$
  by
  \[ i_{m,n}(x)\ = \ x \text{\qquad and\qquad} j_{m,n}(x)\ = \ m+x\ . \]
  Then we define continuous maps
  \[ \rho_{m,n}\ : \ E(m_+)\times F(n_+)\times K^m\times K^n\  \to \  (E\times F)(K)\]
  by
  \[ \rho_{m,n}(x,y;\kappa_1,\dots,\kappa_m,\lambda_1,\dots,\lambda_n) \ = \
    [ E(i_{m,n})(x), F(j_{m,n})(y);\kappa_1,\dots,\kappa_m,\lambda_1,\dots,\lambda_n ]\ .\]
  We omit the verification that these maps are compatible when $(m_+,n_+)$ varies over
  the category $\Gamma\times\Gamma$, so that they descend to a continuous map
  \[ \rho \ : \ E(K)\times F(K) \ = \ \int^{(m_+,n_+)\in\Gamma\times\Gamma}
    E(m_+)\times F(n_+)\times K^m\times K^n\ \to \ (E\times F)(K)  \]
  defined on the coend of the functor \eqref{eq:double coend}.
  Now we argue that $\rho$ is inverse to the map induced by the projections.
  We write $p_1:E\times F\to E$ for the projection to the first factor. Then
  for all $(x,y;\kappa_1,\dots,\kappa_m,\lambda_1,\dots,\lambda_n)\in 
  E(m_+)\times F(n_+)\times K^m\times K^n$ we have
  \begin{align*}
    (p_1(K)\circ \rho)[x,y;\kappa_1,\dots,\kappa_m,\lambda_1,&\dots,\lambda_n] \
    = \  [ E(i_{m,n})(x); \kappa_1,\dots,\kappa_m,\lambda_1,\dots,\lambda_n ]\\
    &= \  [ x; i_{m,n}^*(\kappa_1,\dots,\kappa_m,\lambda_1,\dots,\lambda_n) ]\
      = \  [ x; \kappa_1,\dots,\kappa_m]\ . 
  \end{align*}
  Similarly,
  $(p_2(K)\circ \rho)[x,y;\kappa_1,\dots,\kappa_m,\lambda_1,\dots,\lambda_n]=
  [ y; \lambda_1,\dots,\lambda_n]$.
  This proves that the composite of $\rho$ and the canonical map is the
  identity of $E(K)\times F(K)$.
  For the other composite we consider a tuple
  $(x,y;\kappa_1,\dots,\kappa_m)\in (E\times F)(m_+)\times K^m$,
  and we write $\nabla:(m+m)_+\to m_+$ for the based map defined by
  \[ \nabla(a)\ = \
    \begin{cases}
      \  a & \text{ for $0\leq a\leq m$, and}\\
      a-m & \text{ for $m+1\leq a\leq m+m$.}      \end{cases}
  \]
  Then $\nabla\circ i_{m,m}=\nabla\circ j_{m,m}=\Id_{m_+}$, and hence
  \begin{align*}
    (\rho\circ (p_1(K),p_2(K)))[x,y;\kappa_1,\dots,\kappa_m]
    &= \  \rho[x,y;\kappa_1,\dots,\kappa_m,\kappa_1,\dots,\kappa_m]\\
    &= \  [ E(i_{m,m})(x), F(j_{m,m})(y);\kappa_1,\dots,\kappa_m,\kappa_1,\dots,\kappa_m]\\
    &= \  [ E(i_{m,m})(x), F(j_{m,m})(y);\nabla^*(\kappa_1,\dots,\kappa_m) ]\\
    &= \  [ E(\nabla\circ i_{m,m})(x), F(\nabla \circ j_{m,m})(y);\kappa_1,\dots,\kappa_m]\\
    &= \  [x,y;\kappa_1,\dots,\kappa_m] \ .
  \end{align*}
  This proves that the other composite is the identity of $(E\times F)(K)$.
  
  (ii) We let $A$ be a finite set. We specialize part (i)
  to the $\Gamma$-spaces $E=|X[\omega^A]|$ and $F=|Y[\omega^A]|$
  and the based space $K=S^A$. We conclude that the map 
  \begin{align*}
    ( p_1\td{\mS}(A),p_2\td{\mS}(A))\ : \
    (X\times Y)\td{\mS}(A)\
    &= \ |X[\omega^A]\times Y[\omega^A]|(S^A)\   \to \\
    & (|X[\omega^A]|\times |Y[\omega^A]|)(S^A)
      \ = \    ( X\td{\mS}\times Y\td{\mS} )(A)
  \end{align*}
  is a homeomorphism.
\end{proof}

Now we introduce the notion of `global specialness' for $\Gamma$-$\M$-categories,
a global equivariant refinement of Segal's condition \cite[Definition 2.1]{segal:cat coho}
that is nowadays referred to as `specialness'.
As we shall prove in Theorem \ref{thm:Y(S) is global Omega} below,
the symmetric spectrum $Y\td{\mS}$ associated to a globally special $\Gamma$-$\M$-category $Y$
is a restricted global $\Omega$-spectrum.
Our main class of examples arises from parsummable categories:
Theorem~\ref{thm:special G Gamma} below shows
that the $\Gamma$-$\M$-category $\gamma(\Cc)$ associated to a parsummable category $\Cc$
is globally special.

\begin{con}
We let $Y$ be a $\Gamma$-category and $S$ a finite set.
Given $s\in S$ we denote by $p_s:S_+\to 1_+=\{0,1\}$ 
the based map with $p_s^{-1}(1)=\{s\}$.  We denote by
\[ P_S \ : \ Y(S_+) \ \to \ \map(S,Y(1_+))\]
the functor whose $s$-component is $Y(p_s):Y(S_+)\to Y(1_+)$.
Segal's condition \cite[Definition 2.1]{segal:cat coho}
is the requirement that the functor $P_S$
is an equivalence of categories for all finite sets $S$. We need a global
equivariant version of this condition.

If $Y$ is a $\Gamma$-$G$-category, for a finite group $G$,
and if the group $G$ also acts on the finite set $S$, 
then the categories $Y(S_+)$ and $\map(S,Y(1_+))$ have two commuting $G$-actions:
the `external' action is the value at $S_+$ respectively $1_+$ 
of the $G$-action on $Y$; the `internal' action is induced 
by the $G$-action on~$S$.
In this situation the functor $P_S:Y(S_+)\to\map(S,Y(1_+))$
is $(G\times G)$-equivariant.
Below we will consider $Y(S_+)$ and $\map(S,Y(1_+))$ 
endowed with the diagonal $G$-action respectively the conjugation action;
then the functor $P_S:Y(S_+)\to\map(S,Y(1_+))$ is $G$-equivariant.
In particular, the functor restricts to a functor 
\[ (P_S)^G \ : \ (Y(S_+))^G \ \to \ \map^G(S,Y(1_+))\]
on the $G$-fixed subcategories.
\end{con}

If $Y$ is a $\Gamma$-$\M$-category, $G$ a finite group, and $\Uc$ a $G$-set,
then $Y[\Uc]$ becomes a $\Gamma$-$G$-category through the action of $G$ on $\Uc$.

\begin{defn}\label{def:globally special Gamma-cat}
  Let $G$ be a finite group. 
  A $\Gamma$-$G$-category $Y$ is {\em special}
  if for every subgroup $H$ of $G$ and every finite $H$-set $S$ the functor
  \[ (P_S)^H \ : \ Y(S_+)^H\ \to \ \map^H(S,Y(1_+)) \]
  is a weak equivalence of categories.
  A $\Gamma$-$\M$-category $Y$ is {\em globally special} if for every finite group $G$ and
  some (hence any) universal $G$-set $\Uc$, the $\Gamma$-$G$-category $Y[\Uc]$ is special.
\end{defn}

\begin{theorem}\label{thm:Y(S) is global Omega}
  Let $Y$ be a globally special $\Gamma$-$\M$-category.
  \begin{enumerate}[\em (i)]
  \item Let $G$ be a finite group, $\Uc$ a universal $G$-set and
    $A$ a non-empty finite $G$-set. Then for every $G$-representation $V$,
    the adjoint assembly map
    \[ \tilde\alpha\ : \  |Y[\Uc]|(S^A) \ \to  \     \map_*(S^V,\, |Y[\Uc]|(S^A\sm S^V)) \]
    is a $G$-weak equivalence.    
  \item The symmetric spectrum $Y\td{\mS}$ is a restricted global $\Omega$-spectrum.
\end{enumerate}
\end{theorem}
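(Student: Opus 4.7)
Part (ii) is a formal consequence of part (i). Given a finite $G$-set $A$ with a free orbit, Proposition \ref{prop:universal G-sets} identifies $\omega^A$ as a universal $G$-set. For any finite $G$-set $B$, the definition of the structure map in Construction \ref{con:spectrum from Gamma-M} expresses the adjoint structure map $\tilde\sigma_{A,B}$ of $Y\td{\mS}$ as the composite
\[ |Y|[\omega^A](S^A) \xra{\tilde\alpha} \map_*(S^B, |Y|[\omega^A](S^A\sm S^B)) \xra{\map_*(S^B,|Y|[i_!])} \map_*(S^B,|Y|[\omega^{A+B}](S^{A+B})), \]
where $\tilde\alpha$ is the adjoint assembly map of part (i) for $\Uc=\omega^A$ and $V=\mR[B]$. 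The first map is a $G$-weak equivalence by (i), and the second is a $G$-homotopy equivalence because $i_!:\omega^A\to\omega^{A+B}$ is a $G$-equivariant injection between universal $G$-sets (Proposition \ref{prop:equivariant homotopy}(ii)).

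For part (i), the plan is an equivariant Segal-style delooping argument. Global specialness of $Y$ makes $|Y|[\Uc]$ a special $\Gamma$-$G$-space in the strong Shimakawa sense: for every subgroup $H\leq G$ and every finite $H$-set $S$, the $H$-fixed realization of $P_S$ is a weak equivalence. First reduce from an arbitrary $G$-representation $V$ to a permutation representation $V=\mR[T]$ for a finite $G$-set $T$: any $V$ embeds as an orthogonal direct summand of some $\mR[n\times G]$, and for an orthogonal decomposition $V=V_1\oplus V_2$ the adjoint assembly maps compose (up to a mapping-space adjunction), so a two-out-of-three argument achieves the reduction. Second, verify the claimed $G$-weak equivalence on $H$-fixed points for each subgroup $H\leq G$, using that $\Uc$ restricts to a universal $H$-set and that $\map_*(S^V,-)$ commutes with $H$-fixed points.

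The main analytic step, and where I expect the principal difficulty, is to model $S^{\mR[T]}$ as the geometric realization of a simplicial based $G$-set---for instance by smashing standard simplicial circles along a choice of orbit decomposition of $T$---thus exhibiting $|Y|[\Uc](S^A\sm S^{\mR[T]})$ as the realization of a simplicial $G$-space. Specialness identifies its $H$-fixed $p$-simplices with iterated cartesian products of $|Y|[\Uc]^H(S^A)$, and a Bousfield-Friedlander/bar-construction argument on each $H$-fixed point space should identify the realization with a delooping of $|Y|[\Uc](S^A)^H$ against $(S^{\mR[T]})^H$. The delicate part is the interplay between $H$-fixed points, realization of simplicial $G$-spaces, and specialness. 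The hypothesis that $A$ be non-empty enters precisely here: it guarantees that $S^A$ has positive cellular $H$-dimension for every $H$, whence $|Y|[\Uc]^H(S^A)$ is path-connected, so the delooping argument goes through without any need for group completion.
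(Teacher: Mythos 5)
Your part (ii) is essentially the paper's argument: you factor $\tilde\sigma_{A,B}$ as an adjoint assembly map followed by a change-of-universe map, and invoke part (i) together with Propositions \ref{prop:universal G-sets} and \ref{prop:equivariant homotopy}(ii). (The paper performs the two steps in the other order --- first the change of universe $i_!:\omega^A\to\omega^{A+B}$, then the adjoint assembly map of $|Y|[\omega^{A+B}]$; both orders are valid by the commuting square in Construction \ref{con:spectrum from Gamma-M}.)

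Part (i) is where the genuine gap lies. You set out to reprove the equivariant Segal delooping theorem for special $\Gamma$-$G$-spaces, and your sketch breaks off precisely at what you yourself call ``the main analytic step'' --- the simplicial model of $S^{\mR[T]}$, the identification of $H$-fixed $p$-simplices via specialness, and the Bousfield--Friedlander realization argument --- flagging it with ``should'' and ``I expect'' rather than carrying it through. In the paper this is not reproved: part (i) is exactly Shimakawa's theorem \cite[Theorem B]{shimakawa} applied to the special $\Gamma$-$G$-space $|Y|[\Uc]$, using that $A\ne\emptyset$ gives $\mR[A]^G\ne 0$. Since Shimakawa prolongs $\Gamma$-$G$-spaces by a bar construction while the present paper uses the strict coend, the paper actually cites the variant \cite[Theorem B.65]{schwede:global}, whose hypotheses are met because $|Y|[\Uc]$ is $G$-cofibrant by \cite[Example B.34]{schwede:global}. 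Those theorems treat arbitrary $G$-representations $V$ directly, so your proposed reduction to permutation representations $\mR[T]$ is an unnecessary detour; the delicate interplay of $H$-fixed points, realization, and specialness that you anticipate is precisely the technical content of the cited results, and should be invoked rather than reconstructed.
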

\begin{proof}
  (i)
  Since $\Uc$ is a universal $G$-set, the $\Gamma$-$G$-space $|Y[\Uc]|$
  is special, by hypothesis.
  Since $A$ is non-empty, the permutation representation of $A$ 
  has non-trivial $G$-fixed points;
  so we can apply Shimakawa's theorem~\cite[Theorem B]{shimakawa}
  to $W=\mR[A]$ and the given representation $V$, 
  and we conclude that the adjoint assembly map 
  $\tilde\alpha$ is a $G$-weak equivalence.
  To be completely honest, we cannot literally use Shimakawa's theorem, 
  because he uses a homotopy coend (bar construction) to evaluate a $\Gamma$-$G$-space
  on spheres,  as opposed to the strict coend that we employ.
  So we instead quote \cite[Theorem B.65]{schwede:global},
  using that the $\Gamma$-$G$-space $|Y[\Uc]|$ is $G$-cofibrant
  by \cite[Example B.34]{schwede:global}.

  (ii)
  We let $G$ be a finite group and~$A$ a finite $G$-set with a free orbit.
  Then we let~$B$ be another finite $G$-set.
  The adjoint structure map $\tilde\sigma_{A,B}$ of the symmetric spectrum $Y\td{\mS}$
  is the composite
  \[ |Y[\omega^A]|(S^A)\ \xra{|Y[i_!]|(S^A)} \
    |Y[\omega^{A+B}]|(S^A) \ \xra{\ \tilde\alpha\ }\ 
    \map_*(S^B,\, |Y[\omega^{A+B}]|(S^{A+B}))\ , \]
  where $i_!:\omega^A\to\omega^{A+B}$ is `extension by 0'
  as defined in \eqref{eq:extension_by_zero},
  and $\tilde\alpha$ is the adjoint of the assembly map \eqref{eq:assembly}
  of the $\Gamma$-$G$-space $|Y[\omega^{A+B}]|$.
  The injection $i_!$ is $G$-equivariant, and source and target
  are universal $G$-sets by Proposition \ref{prop:universal G-sets};
  the map $|Y[i_!]|(S^A)$ is thus a $G$-equivariant homotopy equivalence
  by Proposition \ref{prop:equivariant homotopy} (ii).
  The adjoint assembly map is a $G$-weak equivalence by part (i),
  because $\omega^{A+B}$ is a universal $G$-set.
\end{proof}

The symmetric spectrum $Y\td{\mS}$ associated to a $\Gamma$-$\M$-category is
a symmetric spectrum without any additional group actions, and as such it represents a global
stable homotopy type. We will now identify the underlying $G$-symmetric spectrum $Y\td{\mS}_G$
(i.e., $Y\td{\mS}$ endowed with the trivial $G$-action) with the $G$-symmetric spectrum
associated to a $\Gamma$-$G$-category by evaluation on spheres.

\begin{con}\label{con:underlying G comparison}
  We let $Y$ be a $\Gamma$-$\M$-category, $G$ a finite group, and $\Uc$ a universal $G$-set.
  Then $|Y[\Uc]|$ is a $\Gamma$-$G$-space through the action of $G$ on $\Uc$.
  We can thus evaluate $|Y[\Uc]|$ on spheres and obtain
  an orthogonal $G$-spectrum $|Y[\Uc]|(\mS)$.
  We use the same notation $|Y[\Uc]|(\mS)$ for the underlying $G$-symmetric spectrum.

  We relate the $G$-symmetric spectrum $|Y[\Uc]|(\mS)$ to $Y\td{\mS}_G$
  via an intermediate object that is designed to receive morphisms from both.
  We define a $G$-symmetric spectrum $Y\td{\Uc,\mS}$ at a finite set $A$ by
  \[ Y\td{\Uc,\mS}(A) \ = \ |Y[\Uc\amalg\omega^A]|(S^A) \ ,\]
  the value of the $\Gamma$-$G$-space $|Y[\Uc\amalg\omega^A]|$ on the $A$-sphere.
  The $G$-action on $Y\td{\Uc,\mS}(A)$ arises from the $G$-action on $\Uc$,
  just as for $|Y[\Uc]|(\mS)$.
  The $\Sigma_A$-action on $Y\td{\Uc,\mS}(A)$ arises from the action on $\omega^A$ and on $S^A$,
  just as for $Y\td{\mS}$.
  Since the actions of $G$ and $\Sigma_A$ are through disjoint summands,
  they commute with each other.

  The structure map
  \[ i_* \ : \  Y\td{\Uc,\mS}(A)\sm S^{B\setminus i(A)}\ \to\ Y\td{\Uc,\mS}(B)\]
  associated with an injection $i:A\to B$
  is defined as for $Y\td{\mS}$, namely as
  the diagonal composite in the commutative diagram:
  \[ \xymatrix@C=35mm{
      |Y[\Uc\amalg\omega^A]|(S^A)\sm S^{B\setminus i(A)}\ar[r]^-{|Y[\Uc\amalg i_!]|(S^A)\sm S^{B\setminus i(A)}} \ar[d]_{\eqref{eq:assembly}} &
      |Y[\Uc\amalg\omega^B]|(S^A)\sm S^{B\setminus i(A)} \ar[d]^{\eqref{eq:assembly}} \\
      |Y[\Uc\amalg\omega^A]|(S^A\sm S^{B\setminus i(A)})\ar[r]^-{|Y[\Uc\amalg i_!]|(S^A\sm S^{B\setminus i(A)})} \ar[d]_\iso &
      |Y[\Uc\amalg\omega^B]|(S^A\sm S^{B\setminus i(A)}) \ar[d]^\iso \\
      |Y[\Uc\amalg\omega^A]|(S^B)\ar[r]_-{|Y[\Uc\amalg i_!]|(S^B)} &  |Y[\Uc\amalg\omega^B]|(S^B) } \]
  Here $i_!:\omega^A\to\omega^B$ is the `extension by 0'
  injection defined in \eqref{eq:extension_by_zero}.
  The upper vertical maps are assembly maps
  of the $\Gamma$-spaces $|Y[\Uc\amalg\omega^A]|$ and $|Y[\Uc\amalg\omega^B]|$, respectively, 
  and the lower vertical maps are the effects of these two $\Gamma$-spaces
  on the homeomorphism $S^A\sm S^{B\setminus i(A)}\iso S^B$ given by $i$ on $A$.

  The $G$-symmetric spectrum $Y\td{\Uc,\mS}$ is designed so that the $(G\times\Sigma_A)$-maps
  \[ |Y[\Uc]|(S^A) \ \xra{\quad}\ |Y[\Uc\amalg\omega^A]|(S^A) \ \xla{\quad} \ |Y[\omega^A]|(S^A) \]
  induced by the inclusions of $\Uc$ and $\omega^A$ into $\Uc\amalg\omega^A$
  form morphisms of $G$-symmetric spectra
  \[ |Y[\Uc]|(\mS)\ \xra{\ b_G^Y\ }\ Y\td{\Uc,\mS}\ \xla{\ a_G^Y\ }\ Y\td{\mS}_G\ . \]
\end{con}

\begin{theorem}\label{thm:a-b-maps equivalence}
  Let $Y$ be a $\Gamma$-$\M$-category.
  For every finite group $G$ and every universal $G$-set $\Uc$,
  the two morphisms of $G$-symmetric spectra
  \[   |Y[\Uc]|(\mS)\ \xra{\ b_G^Y\ } \ Y\td{\Uc,\mS} \ \xla{\ a^Y_G\ }\ Y\td{\mS}_G\]
  are $G$-stable equivalences.
\end{theorem}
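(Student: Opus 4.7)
The plan is to show that both $b_G^Y$ and $a_G^Y$ induce isomorphisms on $\pi_k^H$ for every subgroup $H\leq G$ and every integer $k$; this will make them $\upi_*^{\Uc_G}$-isomorphisms and hence $G$-stable equivalences by \cite[Theorem 3.36]{hausmann:G-symmetric}. The key structural observation is that each of the three $G$-symmetric spectra in sight satisfies an $H$-equivariant $\Omega$-spectrum property at levels with a free $H$-orbit, so that its equivariant homotopy groups can be read off at a single such level.

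Concretely, I would first verify: for every subgroup $H\leq G$, every finite $H$-set $A$ with a free orbit, and every finite $H$-set $B$, the adjoint structure map of each of $|Y|[\Uc](\mS)$, $Y\td{\Uc,\mS}$, and $Y\td{\mS}_G$ is an $H$-weak equivalence. For $Y\td{\mS}_G$ this is Theorem \ref{thm:Y(S) is global Omega}(ii), and for $|Y|[\Uc](\mS)$ it is immediate from Theorem \ref{thm:Y(S) is global Omega}(i) since the structure maps are already assembly maps of the $\Gamma$-$G$-space $|Y|[\Uc]$ (which is $H$-special because $\Uc$ is a universal $H$-set). For $Y\td{\Uc,\mS}$ one copies the proof of Theorem \ref{thm:Y(S) is global Omega}(ii) with $\omega^A$ replaced throughout by $\Uc\amalg\omega^A$: the reparameterization map $|Y|[\Uc\amalg i_!](S^A)$ is an $H$-homotopy equivalence by Proposition \ref{prop:equivariant homotopy}(ii) (both $\Uc\amalg\omega^A$ and $\Uc\amalg\omega^{A+B}$ are universal $H$-sets, and $\Uc\amalg i_!$ is $H$-equivariant), and the adjoint assembly map is an $H$-weak equivalence by Theorem \ref{thm:Y(S) is global Omega}(i) applied with $\Uc\amalg\omega^{A+B}$.

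Given this $\Omega$-spectrum property for each of the three spectra $X$, the canonical map $[S^{k+A},X(A)]^H\to\pi_k^H(X)$ is an isomorphism for every finite $H$-set $A$ with a free orbit and sufficiently many orbits to represent the chosen $k$. So it suffices to check that $b_G^Y(A)$ and $a_G^Y(A)$ are $H$-weak equivalences at any one such level. At level $A$, the map $b_G^Y(A)$ is induced by the $H$-equivariant inclusion $\Uc\hookrightarrow\Uc\amalg\omega^A$ between universal $H$-sets (the underlying $H$-set of $\Uc$ is universal, and adjoining $\omega^A$ preserves universality), so it is an $H$-homotopy equivalence by Proposition \ref{prop:equivariant homotopy}(ii). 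For $a_G^Y(A)$ one uses the same proposition applied to $\omega^A\hookrightarrow\Uc\amalg\omega^A$; crucially, $\omega^A$ is a universal $H$-set precisely because $A$ has a free $H$-orbit, by Proposition \ref{prop:universal G-sets}, so both source and target of this inclusion are universal $H$-sets.

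The main obstacle I anticipate is the bookkeeping in the second paragraph: promoting the restricted global $\Omega$-spectrum argument of Theorem \ref{thm:Y(S) is global Omega}(ii) to the equivariant setting of the $G$-symmetric spectrum $Y\td{\Uc,\mS}$ for all subgroups $H\leq G$, and verifying that the $H$-action that appears on $\Uc\amalg\omega^A$ (combining the restricted $G$-action on $\Uc$ with the $\Sigma_A$-action factoring through $H\to\Sigma_A$) is precisely the one needed to apply Proposition \ref{prop:equivariant homotopy}(ii) and Theorem \ref{thm:Y(S) is global Omega}(i). Once this compatibility of actions is carefully laid out, both equivalences follow from the same two ingredients, namely the universality of the relevant $H$-sets and the globally special hypothesis on $Y$.
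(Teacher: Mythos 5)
Your proposal is correct and rests on the same two ingredients as the paper's proof: Proposition~\ref{prop:equivariant homotopy}(ii) applied to the $H$-equivariant inclusions $\Uc\hookrightarrow\Uc\amalg\omega^A$ and $\omega^A\hookrightarrow\Uc\amalg\omega^A$ (the latter requiring a free $H$-orbit so that $\omega^A$ is a universal $H$-set by Proposition~\ref{prop:universal G-sets}), together with Hausmann's theorem that $\upi_*$-isomorphisms are $G$-stable equivalences. The one structural difference is a redundant detour: you first establish an $H$-equivariant $\Omega$-spectrum property for the intermediate spectrum $Y\td{\Uc,\mS}$ so that $\pi_k^H$ can be read off at a single free-orbit level, whereas the paper skips this entirely — for $b_G^Y$ the levelwise $H$-homotopy equivalence at every level already forces an isomorphism on the colimit defining $\pi_k^H$, and for $a_G^Y$ the $H$-homotopy equivalence at the cofinal family of free-orbit levels suffices for the same reason, so no $\Omega$-spectrum input on $Y\td{\Uc,\mS}$ is ever needed.
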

\begin{proof}
  If $A$ is any $G$-set, then the inclusion $\Uc\to\Uc\amalg \omega^A$
  is an equivariant injection between universal $G$-sets.
  The induced $G$-map $|Y[\Uc]|(S^A)\to |Y[\Uc\amalg\omega^A]|(S^A)$
  is thus a based $G$-homotopy equivalence by Proposition \ref{prop:equivariant homotopy} (ii).
  Since the morphism $b_G^Y$ is levelwise a $G$-weak equivalence,
  it is in particular a $\upi_*$-isomorphism,
  and hence a $G$-stable equivalence by \cite[Theorem 3.36]{hausmann:G-symmetric}.

  Now we let $A$ be a $G$-set with a free orbit.
  Then the inclusion $\omega^A\to\Uc\amalg \omega^A$
  is an equivariant injection between universal $G$-sets,
  by Proposition \ref{prop:universal G-sets}.
  The induced $G$-map $|Y[\omega^A]|(S^A)\to |Y[\Uc\amalg\omega^A]|(S^A)$
  is thus a based $G$-homotopy equivalence, again by Proposition \ref{prop:equivariant homotopy} (ii).
  Since the $G$-sets with a free orbit are cofinal in all finite $G$-subsets
  of a given universal $G$-set, this shows that the morphism $a_G^Y$
  is a $\upi_*$-isomorphism,
  and hence a $G$-stable equivalence by \cite[Theorem 3.36]{hausmann:G-symmetric}.
\end{proof}

Global equivalences of $\M$-categories
were introduced in Definition \ref{def:global equiv Gamma-M}.

\begin{prop}\label{prop:globally special yields strict cat}
  Let $\Phi:X\to Y$ be a morphism of globally special $\Gamma$-$\M$-categories
  such that $\Phi(1_+):X(1_+)\to Y(1_+)$ is a global equivalence of $\M$-categories.
  Then the morphism
  \[ \Phi\td{\mS}\ : \ X\td{\mS}\ \to \ Y\td{\mS} \]
  is a global equivalence of symmetric spectra.
\end{prop}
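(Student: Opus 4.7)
The plan is to leverage the good properties of $Y\td{\mS}$ established earlier in Section~\ref{sec:Gamma-M_to_sym}, namely global semistability and the $G$-stable comparison of Theorem~\ref{thm:a-b-maps equivalence}. By Proposition~\ref{prop:Y<S> globally connective-semistable}, both $X\td{\mS}$ and $Y\td{\mS}$ are globally semistable, so \cite[Proposition 4.13 (vi)]{hausmann:global_finite} says that $\Phi\td{\mS}$ is a global equivalence if and only if it is a global $\upi_*$-isomorphism; by \cite[Theorem 3.36]{hausmann:G-symmetric} this in turn is equivalent to requiring, for every finite group $G$, that the underlying morphism $\Phi\td{\mS}_G$ of $G$-symmetric spectra be a $G$-stable equivalence.

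Fix a finite group $G$ and a universal $G$-set $\Uc$. The construction of $Y\td{\Uc,\mS}$ in Construction~\ref{con:underlying G comparison} is evidently natural in the $\Gamma$-$\M$-category input, so $\Phi$ induces a commutative diagram
\[ \xymatrix{
|X|[\Uc](\mS) \ar[r]^-{b_G^X} \ar[d]_-{|\Phi|[\Uc](\mS)} & X\td{\Uc,\mS} \ar[d]^-{\Phi\td{\Uc,\mS}} & X\td{\mS}_G \ar[l]_-{a_G^X} \ar[d]^-{\Phi\td{\mS}_G}\\
|Y|[\Uc](\mS) \ar[r]_-{b_G^Y} & Y\td{\Uc,\mS}  & Y\td{\mS}_G \ar[l]^-{a_G^Y}
} \]
of $G$-symmetric spectra in which, by Theorem~\ref{thm:a-b-maps equivalence}, all four horizontal morphisms are $G$-stable equivalences. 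By two-out-of-three, $\Phi\td{\mS}_G$ is a $G$-stable equivalence if and only if $|\Phi|[\Uc](\mS)$ is.

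It thus remains to show that $|\Phi|[\Uc](\mS)$ is a $G$-stable equivalence. The $\Gamma$-$G$-spaces $|X|[\Uc]$ and $|Y|[\Uc]$ are both special by the global specialness of $X$ and $Y$. At level $1_+$, the assumption that $\Phi(1_+)$ is a global equivalence of $\M$-categories unwinds to say that $\Phi(1_+)[\Uc]^H$ is a weak equivalence of categories for every subgroup $H\leq G$, using that every universal $G$-set is also a universal $H$-set; since nerve and geometric realization commute with passage to $H$-fixed points, the map $|\Phi(1_+)|[\Uc]$ is a $G$-weak equivalence of $G$-spaces. For special $\Gamma$-$G$-spaces, the specialness comparison maps at varying subgroups $H\leq G$ then propagate this $G$-weak equivalence at $1_+$ to a levelwise $G$-weak equivalence of the underlying $\Gamma$-$G$-spaces (one factors $(P_S)^H$ of the target and source and uses that $G$-weak equivalences are closed under $\map^H(S,-)$ for finite $H$-sets $S$). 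Evaluation on the sphere $\Gamma$-$G$-space then yields a levelwise $G$-weak equivalence of $G$-symmetric spectra, which is in particular a $G$-stable equivalence. In the strict-coend formalism used here, this last propagation step is an application of \cite[Theorem B.65]{schwede:global}, invoked exactly as in the proof of Theorem~\ref{thm:Y(S) is global Omega}, for which the required $G$-cofibrancy of $|X|[\Uc]$ and $|Y|[\Uc]$ is supplied by \cite[Example B.34]{schwede:global}. The main obstacle is precisely this equivariant Segal step; it is however standard and is the very reason for formulating the notion of global specialness in Definition~\ref{def:globally special Gamma-cat}.
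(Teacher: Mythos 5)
Your proposal is correct and the core step is the same as the paper's, but your route into it and out of it differs, and a couple of citations are off. The heart of both arguments is identical: use specialness of the $\Gamma$-$G$-spaces $|X|[\Uc]$ and $|Y|[\Uc]$ to propagate the level-$1_+$ $G$-weak equivalence $|\Phi|[\Uc](1_+)$ to the stronger statement that $(|\Phi|[\Uc](S_+))^H$ is a weak equivalence for all $H\leq G$ and all finite $H$-sets $S$ (what the paper calls a strict equivalence), then evaluate on spheres. Where you diverge is in how you convert this to the global-equivalence statement: you reduce via Construction~\ref{con:underlying G comparison} and Theorem~\ref{thm:a-b-maps equivalence} to the $G$-stable equivalence of $|\Phi|[\Uc](\mS)$ by two-out-of-three, whereas the paper argues directly that $\Phi\td{\mS}(A)=|\Phi|[\omega^A](S^A)$ is a $G$-weak equivalence for every finite $G$-set $A$ with a free orbit (using that $\omega^A$ is then a universal $G$-set) and then invokes cofinality of such $A$ in the poset defining $\pi_*^G$. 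Both routes are valid; the paper's is more self-contained, while yours leans on the already-proved comparison theorem and is conceptually tidy at the cost of loading more machinery.

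Two smaller remarks. First, your opening reduction through global semistability and $\upi_*$-isomorphisms is not needed: a global equivalence of symmetric spectra \emph{is defined} (\cite[Definition 2.10]{hausmann:global_finite}) as a morphism whose underlying map of $G$-symmetric spectra is a $G$-stable equivalence for every finite $G$, so you could pass to this formulation immediately. Moreover, as written the citation is imprecise: \cite[Theorem 3.36]{hausmann:G-symmetric} only gives that $\upi_*$-isomorphisms are $G$-stable equivalences; the converse under semistability is \cite[Corollary 3.37]{hausmann:G-symmetric}. Second, the final evaluation-on-spheres step is not Theorem B.65 of \cite{schwede:global} (that is the equivariant delooping theorem, used in Theorem~\ref{thm:Y(S) is global Omega}); the right reference for ``strict equivalence of $G$-cofibrant $\Gamma$-$G$-spaces evaluated on a $G$-CW-complex yields a $G$-weak equivalence'' is \cite[Proposition B.48]{schwede:global} (or alternatively \cite[Lemma 4.8]{ostermayr} in the simplicial setting), together with $G$-cofibrancy from \cite[Example B.34]{schwede:global}, which you correctly cite.
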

\begin{proof}
  We let $G$ be a finite group, and $A$ a finite $G$-set with a free orbit.
  Then $\omega^A$ is a universal $G$-set
  by Proposition \ref{prop:universal G-sets}.
  So the $\Gamma$-$G$-spaces $|X[\omega^A]|$ and $|Y[\omega^A]|$
  are special by the hypotheses on $X$ and $Y$.
  Moreover, the map
  $|\Phi[\omega^A]|(1_+): |X[\omega^A]|(1_+)\to |Y[\omega^A]|(1_+)$
  is a $G$-weak equivalence because $\Phi(1_+)$ is a global equivalence.
  We claim that the morphism of $\Gamma$-$G$-spaces
  \[   |\Phi[\omega^A]|\ : \ |X[\omega^A]|\ \to \ |Y[\omega^A]| \]
  is a {\em strict equivalence} in the following sense:
  for every subgroup $H$ of $G$ and every finite $H$-set $S$, the map
  \[ ( |\Phi[\omega^A]|(S_+))^H\ :\ ( |X[\omega^A]|(S_+))^H \ \to
    \ ( |Y[\omega^A]|(S_+))^H\]
  is a weak equivalence.
  To see this we choose representatives $s_1,\dots,s_m$ of the $H$-orbits of $S$,
  and we let $K_i$ be the stabilizer group of $s_i$.
  Then we consider the commutative diagram:
  \[ \xymatrix@C=35mm{
     (|X[\omega^A]|(S_+))^H \ar[d]_{(P_S)^H}^\simeq\ar[r]^-{(|\Phi[\omega^A]|(S_+))^H} &
     (|Y[\omega^A]|(S_+))^H\ar[d]^{(P_S)^H}_\simeq  \\
      \map^H(S,|X[\omega^A]|(1_+))\ar[r]^-{\map^H(S,|\Phi[\omega^A]|(1_+))} \ar[d]_\iso &
      \map^H(S,|Y[\omega^A]|(1_+))\ar[d]^\iso\\
      \prod_{i=1}^m \, (|X[\omega^A]|(1_+))^{K_i}\ar[r]_-{\prod (|\Phi[\omega^A]|(1_+))^{K_i}} &
      \prod_{i=1}^m\,  |Y[\omega^A]|(1_+))^{K_i}
    } \]
  The two upper vertical maps are weak equivalences by specialness.
  The two lower vertical maps are evaluation at the representatives $s_1,\dots,s_m$,
  and they are homeomorphisms.
  The lower horizontal map $|\Phi[\omega^A]|(1_+)^{K_i}$ is a weak equivalence
  for each $1\leq i\leq m$, because $|\Phi[\omega^A]|(1_+): |X[\omega^A]|(1_+)\to |Y[\omega^A]|(1_+)$
  is a $G$-weak equivalence.
  So the upper horizontal map is a weak equivalence.

  Now we can finish the proof.  
  Since the $\Gamma$-$G$-spaces $|X[\omega^A]|$ and $|Y[\omega^A]|$
  are realizations of $\Gamma$-$G$-simplicial sets, they are $G$-cofibrant by
  \cite[Example B.34]{schwede:global}. Since the morphism
  $|\Phi[\omega^A]|:|X[\omega^A]|\to |Y[\omega^A]|$ is a strict equivalence of $\Gamma$-$G$-spaces,
  evaluation at the based $G$-CW-complex $S^A$
  yields a $G$-weak equivalence
  \[ \Phi\td{\mS}(A)\ : \  X\td{\mS}(A)\ = \ |X[\omega^A]|(S^A)\ \to \
    |Y[\omega^A]|(S^A)\ =\   Y\td{\mS}(A) \]
  by \cite[Proposition B.48]{schwede:global}.
  Since all $\Gamma$-$G$-spaces in sight arise from $\Gamma$-$G$-simplicial sets,
  and $S^A$ is also the realization of a $G$-simplicial set, we can alternatively
  quote \cite[Lemma 4.8]{ostermayr}.
  
  The $G$-sets with a free orbit are cofinal in the poset of finite $G$-subsets
  of any given universal $G$-set. So the morphism $\Phi\td{\mS}$ induces
  isomorphisms of $G$-equivariant stable homotopy groups, for every finite group $G$.
  In other words, $\Phi\td{\mS}$ is a global $\upi_*$-isomorphism,
  and hence a global equivalence by \cite[Proposition 4.5]{hausmann:global_finite}.
\end{proof}

The $G$-fixed $\M$-category $F^G \Cc$ associated to
a finite group $G$ and an $\M$-category $\Cc$
was introduced in Construction \ref{con:F^G for M-cat}.
We can apply the functor $F^G:\M\cat\to\M\cat$ objectwise (in the $\Gamma$-direction)
to a $\Gamma$-$\M$-category $Y$, and thus obtain another $\Gamma$-$\M$-category $F^G Y$.

\begin{prop}\label{prop:global special 2 fixed}
  Let $Y$ be a globally special $\Gamma$-$\M$-category.
  Then for every finite group $G$, the $\Gamma$-$\M$-category $F^G Y$
  is globally special.
\end{prop}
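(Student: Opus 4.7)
The plan is to reduce global specialness of $F^G Y$ to that of $Y$ applied to product groups. Fix a finite group $K$; since the definition permits verifying specialness at any single universal $K$-set, I choose $\omega^K$, which is universal by Proposition \ref{prop:universal G-sets}.

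The first and main step is to establish a natural isomorphism of $\Gamma$-$K$-categories
\[ (F^G Y)[\omega^K]\ \iso\ \bigl(Y[\omega^{K\times G}]\bigr)^G\ ,\]
where on the right, $G$ sits inside $K\times G$ as the second factor and the residual $K$-action comes from $K\times\{e\}\leq K\times G$. This uses two ingredients: reparameterization by $\omega^K$ commutes with taking $G$-fixed points, because the $M$-action on $\omega^G$ used in Construction \ref{con:F^G for M-cat} is by postcomposition and commutes with the left-translation $G$-action; and the $(K\times G)$-equivariant bijection $(\omega^K)^G\iso\omega^{K\times G}$ from \eqref{eq:reparametrize}. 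Taking $L$-fixed points for a subgroup $L\leq K$ then yields, naturally in $n_+\in\Gamma$, an isomorphism of categories
\[ \bigl((F^G Y)[\omega^K]\bigr)(n_+)^L\ \iso\ Y(n_+)[\omega^{K\times G}]^{L\times G}\ .\]

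Next, for a finite $L$-set $S$, I regard $S$ as an $(L\times G)$-set with trivial $G$-action; the tautology $\map^L(S,W^G)=\map^{L\times G}(S,W)$ applied to $W=Y(1_+)[\omega^{K\times G}]$ yields
\[ \map^L\bigl(S,\, (F^G Y)[\omega^K](1_+)\bigr)\ \iso\ \map^{L\times G}\bigl(S,\, Y(1_+)[\omega^{K\times G}]\bigr)\ .\]
Since $P_S$ is assembled from the $\Gamma$-morphisms $p_s:S_+\to 1_+$ and the identification above is natural in $n_+$, the functor $(P_S)^L$ for the $\Gamma$-$K$-category $(F^G Y)[\omega^K]$ corresponds to the functor $(P_S)^{L\times G}$ for the $\Gamma$-$(K\times G)$-category $Y[\omega^{K\times G}]$.

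Finally, $\omega^{K\times G}$ is a universal $(K\times G)$-set by Proposition \ref{prop:universal G-sets}, and $Y$ is globally special by hypothesis, so $Y[\omega^{K\times G}]$ is a special $\Gamma$-$(K\times G)$-category; applied to the subgroup $L\times G$ and the $(L\times G)$-set $S$, this says $(P_S)^{L\times G}$ is a weak equivalence of categories. Hence $(P_S)^L$ is a weak equivalence, establishing specialness of $(F^G Y)[\omega^K]$ and therefore global specialness of $F^G Y$. The main obstacle is the first step: one must unwind the definitions of reparameterization and of $F^G$ from Constructions \ref{con:extend M-action} and \ref{con:F^G for M-cat} to verify that the identification is both natural in $n_+$ and $K$-equivariant. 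This is a direct, though careful, bookkeeping check using the commutation of the postcomposition $M$-action with the left-translation $G$-action on $\omega^G$.
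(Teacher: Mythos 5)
Your proof is correct and follows essentially the same route as the paper's: identify $(F^G Y)[\omega^K]$ with $Y[\omega^{K\times G}]^G$ via the reparameterization isomorphism \eqref{eq:reparametrize}, regard a finite $L$-set (respectively $K$-set) $S$ as a $(K\times G)$-set with trivial $G$-action, and translate $(P_S)^L$ into $(P_S)^{L\times G}$ for $Y[\omega^{K\times G}]$, then invoke global specialness of $Y$ with the universal $(K\times G)$-set $\omega^{K\times G}$. The only surface difference is that you carry the subgroup $L\leq K$ explicitly, whereas the paper quantifies over all finite groups $K$ and works with the full group, which amounts to the same thing.
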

\begin{proof}
  We must show that for every finite group $K$, every universal $K$-set $\Uc$,
  and every finite $K$-set $S$, the functor
  \[ (P_S)^K \ : \ ((F^G Y)[\Uc](S_+))^K\ \to \ \map^K(S,(F^G Y)[\Uc](1_+)) \]
  is a weak equivalence of categories.
  The $K$-set $\omega^K$ is universal by Proposition \ref{prop:universal G-sets},
  so we can assume that $\Uc=\omega^K$.
  The $(K\times G)$-equivariant isomorphism \eqref{eq:reparametrize}
  restricts to a $K$-equivariant isomorphism on $G$-fixed subcategories, identifying
  \[ (F^G Y)[\omega^K]\ =\ (Y[\omega^G][\omega^K])^G  \]
  with the $K$-category $Y[(\omega^K)^G]^G$.
  We make the $K$-set $S$ into a $(K\times G)$-set by giving it the trivial $G$-action.
  Then the functor $(P_S)^K$ becomes the functor
  \[ (P_S)^{K\times G} \ : \ ( Y[(\omega^K)^G](S_+))^{K\times G}\ \to \
    \map^{K\times G}(S,(Y[(\omega^K)^G](1_+))) \ .\]
  This functor is a weak equivalence of categories because $Y$ is globally special and
  $(\omega^K)^G$ is a universal $(K\times G)$-set.
\end{proof}

The previous Theorem \ref{thm:a-b-maps equivalence}
provides an interpretation of the underlying $G$-symmetric spectrum of $Y\td{\mS}$.
Theorem \ref{thm:G-fixed Gamma-cat} below is of a similar spirit:
we describe the $G$-fixed point spectrum
of $Y\td{\mS}$ in terms of the $G$-fixed $\Gamma$-$\M$-category $F^G Y$,
as long as $Y$ is globally special.

\begin{con}
  Let $G$ be a finite group and $Y$ a $\Gamma$-$\M$-category.
  We now introduce a morphism of symmetric spectra
  \[ \psi_Y^G\ : \ (F^G Y)\td{\mS}  \ \to \ F^G(Y\td{\mS})\ ,\]
  where the target is the $G$-fixed point symmetric spectrum
  as defined in Construction \ref{con:G fixed point}.
  We show in Theorem \ref{thm:G-fixed Gamma-cat} below that this
  morphism is a global equivalence whenever $Y$ is globally special.

  The morphism $\psi^G_Y$ arises from the
  $\Gamma$-$(G\times \Sigma_A)$-space $|Y[\omega^{G\times A}]|$,
  where $A$ is a finite set.
  In much the same way as \eqref{eq:reparametrize}, the intertwiner
  \[ \Im \ : \   \omega \ \xra{\kappa_{\omega^G}} \ \omega^G \ \xra{(\kappa_{\omega^A})^G}\ (\omega^A)^G \
    \xra{\kappa_{(\omega^A)^G}^{-1}} \ \omega    \]
  induces a $(G\times \Sigma_A)$-equivariant isomorphism of $\Gamma$-$(G\times\Sigma_A)$-categories
  \[  \Im_* \ : \ Y[\omega^G][\omega^A] \  \iso \ Y[ (\omega^A)^G]  \ . \]
  In combination with the isomorphism
  induced by the $(G\times\Sigma_A)$-equivariant bijection 
  \[ (\omega^A)^G\ \iso \ \omega^{G\times A}\ , \quad f \ \longmapsto \{ (g,a)\mapsto f(g)(a )\}\ , \]
  we arrive at a $(G\times\Sigma_A)$-equivariant isomorphism
  \[   Y[\omega^G][\omega^A] \  \iso \ Y[ \omega^{G\times A}]  \ . \]
  The $G$-fixed $\Gamma$-$\Sigma_A$-space of $|Y[\omega^{G\times A}]|$ can then be rewritten as
  \begin{align*}
    |Y[\omega^{G\times A}]|^G\ \iso \  |Y[\omega^G][\omega^A]|^G\
    & \iso\       | \left(Y[\omega^G][\omega^A]\right)^G|\\
    &= \  | (Y[\omega^G])^G[\omega^A]|\ = \ | F^G(Y)[\omega^A]|\ .
  \end{align*}
  The second isomorphism is the fact that $G$-fixed points commute with nerve and geometric realization.
  The third equality uses that $G$ acts trivially on $A$.
  Evaluation on $S^A$ thus yields an isomorphism of based $\Sigma_A$-spaces
  \begin{equation}\label{eq:first_isomorphism}
    F^G(Y)\td{\mS}(A) \ = \ 
    |F^G(Y)[\omega^A]|(S^A) \ 
    \iso \   |Y[\omega^{G\times A}]|^G(S^A)  \ 
    \iso \   \left(|Y[\omega^{G\times A}]|(S^A)\right)^G  \ .
  \end{equation}
  As a second input we consider the $(G\times\Sigma_A)$-equivariant 
  assembly map~\eqref{eq:assembly} 
  \begin{align*}
    |Y[\omega^{G\times A}]|(S^A)\sm S^{\bar\rho_G\tensor\mR[A]}\
    \to \  &|Y[\omega^{G\times A}]|(S^A\sm S^{\bar\rho_G\tensor\mR[A]})\\
    \iso \quad  &|Y[\omega^{G\times A}]|(S^{G\times A})\ = \ Y\td{\mS}(G\times A)
  \end{align*}
  of the $\Gamma$-$(G\times \Sigma_A)$-space $|Y[\omega^{G\times A}]|$.
  The homeomorphism above is induced by the $(G\times\Sigma_A)$-linear isometry 
  \[  \mR[A]\oplus (\bar\rho_G\tensor\mR[A])\ \iso \
    (\mR\oplus \bar\rho_G)\tensor\mR[A]\ \iso_\eqref{eq:fix_of_rho} \ 
    \rho_G\tensor \mR[A]\ \iso \ \mR[ G\times A]\ . \]
  We form the adjoint and restrict to $G$-fixed points to obtain
  a $\Sigma_A$-equivariant map
  \begin{equation}\label{eq:adjoint assembly fixed}
    (|Y[\omega^{G\times A}]|(S^A))^G\ \to \ 
    \map_*^G(S^{\bar\rho_G\tensor\mR[A]}, Y\td{\mS}(G\times A)) \ = \ F^G(Y\td{\mS})(A) \ .
  \end{equation}
  So the composition of the homeomorphism~\eqref{eq:first_isomorphism}
  and the map \eqref{eq:adjoint assembly fixed}
  is a continuous map of based $\Sigma_A$-spaces
  \[ \psi_Y^G(A) \ : \   (F^G Y)\td{\mS}(A) \ \to \  
    \map_*^G\left(S^{\bar\rho_G\tensor\mR[A]},\ Y\td{\mS}(G\times A)\right)\ = \ F^G(Y\td{\mS})(A) \ . \]
  As $A$ varies, the maps $\psi^G_Y(A)$ form a morphism of symmetric spectra
  $\psi^G_Y:(F^G Y)\td{\mS}\to F^G(Y\td{\mS})$.
\end{con}

\begin{theorem}\label{thm:G-fixed Gamma-cat} 
  For every globally special $\Gamma$-$\M$-category $Y$ and every finite group $G$,
  the morphism of symmetric spectra
  \[ \psi_Y^G\ : \ (F^G Y)\td{\mS}  \ \to \ F^G(Y\td{\mS})\]
  is a global equivalence.
\end{theorem}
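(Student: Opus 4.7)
The plan is to show $\psi_Y^G$ is a global $\pi_*$-isomorphism between globally semistable symmetric spectra; by \cite[Proposition 4.13 (vi)]{hausmann:global_finite}, this then suffices. The source is globally semistable: $F^GY$ is globally special by Proposition \ref{prop:global special 2 fixed}, whence $(F^GY)\td{\mS}$ is globally semistable by Proposition \ref{prop:Y<S> globally connective-semistable}. For the target, I would first check that $F^G$ preserves the restricted global $\Omega$-spectrum property applied to $Y\td{\mS}$ (which is a restricted global $\Omega$-spectrum by Theorem \ref{thm:Y(S) is global Omega}); global semistability then follows because for a restricted global $\Omega$-spectrum, the equivariant homotopy groups are represented without a further colimit by $K$-homotopy classes at a single free-orbit level.

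Now fix a finite group $K$ and an integer $k$, and choose a finite $K$-set $A$ with a free orbit. By Proposition \ref{prop:universal G-sets}, $\omega^A$ is a universal $K$-set; moreover, $G\times A$ (with trivial $G$-action on the factor $A$ and left translation on the $G$-factor) has a free $(K\times G)$-orbit, so $\omega^{G\times A}$ is a universal $(K\times G)$-set. Using restricted global $\Omega$-spectrum properties on both sides, $\pi_k^K$ is represented by $K$-homotopy classes $[S^{k+A},-]^K$ at level $A$, with no further colimit. Adjunction together with the isometry \eqref{eq:fix_of_rho} rewrites the target group as
\[
[S^{k+A}\sm S^{\bar\rho_G\tensor\mR[A]},\, |Y|[\omega^{G\times A}](S^{G\times A})]^{K\times G}\ \cong\ \pi_k^{K\times G}(Y\td{\mS})\ ,
\]
where the second isomorphism uses that $G\times A$ has a free $(K\times G)$-orbit and that $Y\td{\mS}$ is a restricted global $\Omega$-spectrum.

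For the source, reparameterization via \eqref{eq:reparametrize}, combined with commutation of geometric realization with finite-group fixed points on categories, yields an isomorphism of $\Gamma$-$K$-spaces $|F^GY|[\omega^A]\cong |Y|[\omega^{G\times A}]^G$; Theorem \ref{thm:a-b-maps equivalence} applied to $F^GY$ then identifies the source group likewise with $\pi_k^{K\times G}(Y\td{\mS})$. By construction, $\psi_Y^G(A)$ is obtained by taking $G$-fixed points of the $(K\times G)$-equivariant adjoint assembly map $|Y|[\omega^{G\times A}](S^A)\to\map_*(S^{\bar\rho_G\tensor\mR[A]},\,|Y|[\omega^{G\times A}](S^{G\times A}))$, so that compatibility of $\psi_Y^G$ with the two identifications above reduces to showing this adjoint assembly map is a $(K\times G)$-weak equivalence. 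But this is precisely Theorem \ref{thm:Y(S) is global Omega} (i) applied to the finite group $K\times G$, the universal $(K\times G)$-set $\omega^{G\times A}$, the non-empty $(K\times G)$-set $A$, and the $(K\times G)$-representation $\bar\rho_G\tensor\mR[A]$.

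The main obstacle is the bookkeeping behind the identification $|F^GY|[\omega^A]\cong|Y|[\omega^{G\times A}]^G$ and its compatibility with the assembly maps entering the definitions of both sides: one must carefully track the iterated fixed-point isomorphism \eqref{eq:iterate F^G}, the reparameterization of $\M$-actions, and the `extension by zero' injections used to form the structure maps of $Y\td{\mS}$. A secondary concern is the commutation of finite-group fixed points with geometric realization of $\Gamma$-categories and with evaluation of $\Gamma$-$G$-spaces on trivial-$G$ spaces such as $S^A$, both of which hold here thanks to the $G$-cofibrancy of nerves of $\Gamma$-$G$-categories.
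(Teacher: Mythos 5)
Your core argument is the same as the paper's: identify $\psi^G_Y(A)$, up to homeomorphisms, with the restriction to $G$-fixed points of the adjoint assembly map of the $\Gamma$-$(G\times K)$-space $|Y|[\omega^{G\times A}]$; observe that when $A$ has a free $K$-orbit the set $G\times A$ has a free $(G\times K)$-orbit, so $\omega^{G\times A}$ is a universal $(G\times K)$-set; and invoke Theorem \ref{thm:Y(S) is global Omega} (i) to deduce that the adjoint assembly map is a $(G\times K)$-weak equivalence, hence that $\psi^G_Y(A)$ is a $K$-weak equivalence. Cofinality of free-orbit $K$-sets then gives a global $\upi_*$-isomorphism, and you are done. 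That is exactly the paper's proof.

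However, the preamble you wrap around this is both logically askew and unnecessarily heavy. First, the citation is wrong: \cite[Proposition 4.13 (vi)]{hausmann:global_finite} says that a global equivalence between globally semistable spectra is a global $\upi_*$-isomorphism — the converse of what you need. The implication you actually want, that every global $\upi_*$-isomorphism is a global equivalence, is \cite[Proposition 4.5]{hausmann:global_finite}, and it holds unconditionally; no semistability hypothesis is needed, so the entire semistability discussion is superfluous. Second, the assertion that $F^G$ carries $Y\td{\mS}$ to a restricted global $\Omega$-spectrum — which you lean on to say $\pi^K_k$ is represented at a single free-orbit level with no colimit — is neither verified nor needed. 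The groups $\pi^K_k$ are defined as a colimit over finite $K$-subsets of a universal $K$-set, the free-orbit $K$-sets are cofinal, and $\psi^G_Y(A)$ is a $K$-weak equivalence at all such $A$; that already gives an isomorphism on $\pi^K_k$ without knowing anything about the $\Omega$-spectrum status of $F^G(Y\td{\mS})$. (Indeed, trying to establish that status directly would be substantially more work than the theorem itself, and your argument would risk circularity if you depended on it.) Drop the first paragraph, state the levelwise $K$-weak equivalence, cite cofinality and Proposition 4.5, and the proof is correct and considerably shorter.
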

\begin{proof}
  We let $K$ be a finite group and $A$ a finite $K$-set with a free orbit.
  The map $\psi^G_Y(A)$ was defined as the composite of two homeomorphisms and
  the effect on $G$-fixed points  \eqref{eq:adjoint assembly fixed} of the adjoint assembly map
  of the $\Gamma$-$(G\times K)$-space $|Y[\omega^{G\times A}]|$.
  Since $A$ has a free $K$-orbit, the $(G\times K)$-set $G\times A$ has a free $(G\times K)$-orbit.
  So $\omega^{G\times A}$ is a universal $(G\times K)$-set by
  Proposition \ref{prop:universal G-sets}.
  Since $Y$ is globally special, the adjoint assembly map
  \[ \tilde\alpha\ : \ |Y[\omega^{G\times A}]|(S^A)\ \to \ 
    \map_*(S^{\bar\rho_G\tensor\mR[A]}, |Y[\omega^{G\times A}]|(S^A\sm S^{\bar\rho_G\tensor\mR[A]})) \]
  is a $(G\times K)$-weak equivalence by Theorem \ref{thm:Y(S) is global Omega} (i).
  So the restriction to $G$-fixed points  \eqref{eq:adjoint assembly fixed}
  of $\tilde\alpha$ is a $K$-weak equivalence.
  This concludes the proof that the map $\psi^G_Y(A)$ is a $K$-weak equivalence whenever
  $A$ has a free $K$-orbit.
   
  The $K$-sets with a free orbit are cofinal in the poset of finite $K$-subsets
  of any given universal $K$-set. So the morphism $\psi^G_Y$ induces
  isomorphisms of $K$-equivariant stable homotopy groups, for every finite group $K$.
  In other words, $\psi^G_Y$ is a global $\upi_*$-isomorphism,
  and hence a global equivalence by \cite[Proposition 4.5]{hausmann:global_finite}.
\end{proof}

\section{Global K-theory of parsummable categories}\label{sec:K of I}

In this section we introduce the global K-theory construction
and establish some of its formal properties.
We start by discussing the categorical data that our global 
K-theory construction accepts as input. The data is a
{\em parsummable category}, i.e., an $\M$-category equipped with
a unital, associative and commutative multiplication with respect to
the box product of $\M$-categories.
The prototypical example is the parsummable category of finite sets,
compare Examples \ref{eg:Fc as M-category} and \ref{eg:Fc}.
In the later sections we provide further examples of this categorical input data,
including free parsummable categories (Section \ref{sec:K of free I}),
the parsummable category of finite $G$-sets (Section~\ref{sec:G-sets}),
the parsummable category of finitely generated projective modules over a ring (Section~\ref{sec:K(R)}), 
and parsummable categories associated with permutative categories (Section~\ref{sec:permutative category}).

The global K-theory spectrum of a parsummable category $\Cc$
is then obtained in two steps:
the $\boxtimes$-powers of $\Cc$ assemble into a $\Gamma$-$\M$-category
$\gamma(\Cc)$ as explained in Construction \ref{con:Gamma from I};
the previous Construction \ref{con:spectrum from Gamma-M}
turns this $\Gamma$-$\M$-category into a symmetric spectrum $\bK_{\gl}\Cc=\gamma(\Cc)\td{\mS}$.
The $\Gamma$-$\M$-category $\gamma(\Cc)$ is globally special
by Theorem \ref{thm:special G Gamma}, so the global K-theory spectrum is
a restricted global $\Omega$-spectrum, see Theorem \ref{thm:K_gl C is global Omega}.
Global K-theory has two fundamental invariance properties:
by Theorem \ref{thm:equivalence2equivalence}, 
global equivalences of parsummable categories are taken to global equivalences
of K-theory spectra;
by Theorem \ref{thm:K_gl additive}, global K-theory takes box products
and products of parsummable categories to products of K-theory spectra,
up to global equivalence.

\begin{defn}\label{def:parsummable_category}
A {\em parsummable category} is a tame $\M$-category
equipped with the structure of a commutative monoid under the box product.
\end{defn}

In the following, we will write
\[ +\ :\ \Cc\boxtimes\Cc\ \to\ \Cc \]
for the structure functor of a parsummable category $\Cc$.
The adjective `parsummable' stands for `partially summable',
and it reflects the fact that the sum functor is not defined
on all of $\Cc\times\Cc$, but only on its globally equivalent
full $\M$-subcategory $\Cc\boxtimes\Cc$.
Every parsummable category has a distinguished object:
the unit morphism is an $\M$-equivariant functor $\ast\to\Cc$
from the terminal $\M$-category with only one object and its identity morphism.
We write 0 for the image in $\Cc$ of the unique object of $\ast$,
because it behaves a lot like the neutral element in an abelian monoid.
Since the unit functor is $\M$-equivariant, the object 0 has empty support,
and it is $\M$-fixed.

The support of objects in a parsummable category $\Cc$ is sub-additive in the following sense.
If $a$ and $b$ are two disjointly supported objects,
then $(a,b)$ is an object of $\Cc\boxtimes\Cc$ with support
\[ \supp(a,b) \ = \ \supp(a)\cup \supp(b)\ .\]
Because the structure functor $+:\Cc\boxtimes\Cc\to\Cc$ is $\M$-equivariant,
Proposition \ref{prop:finite support} (v) lets us conclude that
\begin{equation}\label{eq:supp_of_sum}
  \supp(a+b)\  \subseteq \ \supp(a,b)\ = \ \supp(a)\cup \supp(b)\ .
\end{equation}

A parsummable category may be thought of as a `partial strict symmetric monoidal category'.
The adjective `partial' refers to the feature that the product $+$
is not defined for all pairs of objects, but only for those with disjoint support.
The adjective `strict' refers to the feature that
whenever the monoidal product is defined, it is strictly unital, associative
and commutative (and not just up to specified coherence data).
We give several examples of parsummable categories after the following construction;
more examples will follow in Sections~\ref{sec:K of free I}--\ref{sec:permutative category}.

\begin{con}[(From parsummable categories to $\Gamma$-$\M$-categories)]\label{con:Gamma from I}
  We recall that $\Gamma$ denotes the category with objects the finite based sets 
  $n_+=\{0,1,\dots,n\}$, with basepoint 0; morphisms in $\Gamma$ are all based maps.
  We associate to a parsummable category $\Cc$ a $\Gamma$-parsummable category $\gamma(\Cc)$,
  i.e., a functor from $\Gamma$ to the category of parsummable categories
  and strict morphisms.
  For $n\geq 0$ we define 
  \[ \gamma(\Cc)(n_+)\ = \ \Cc^{\boxtimes n}\ \subseteq \ \Cc^n \]
  as the full parsummable subcategory 
  of the product category with objects those $n$-tuples  $(a_1,\dots,a_n)$
  that are disjointly supported, i.e.,
  the supports $\supp(a_i)$ are pairwise disjoint.
  In particular, $\gamma(\Cc)(1_+)$ is the category $\Cc$ itself.
  By convention, $\gamma(\Cc)(0_+)$ is the terminal parsummable category
  with only one object~$0$ and only the identity morphism.
  We use the notation $\Cc^{\boxtimes n}$ because this parsummable category
  is canonically isomorphic to the $n$-fold iterated box product of copies of $\Cc$,
  with any bracketing.
  
  To define the effect of a based map $\lambda:m_+\to n_+$
  we use the universal property of $\Cc^{\boxtimes m}$ as an
  $m$-fold coproduct in the category of parsummable categories,
  see Example \ref{eg:coproduct_parsumcat} below.
  For $1\leq k\leq m$ we let $i_k:\Cc\to \Cc^{\boxtimes m}$
  denote the morphism given by
  \[ i_k(f)\ = \ (0,\dots,0,f,0,\dots,0)\ , \]
  where $f$ sits in the $k$-th slot.
  We define
  \[ \gamma(\Cc)(\lambda)\ :\ \gamma(\Cc)(m_+)= \Cc^{\boxtimes m}\ \to\ \Cc^{\boxtimes n}= \gamma(\Cc)(n_+) \]
  as the unique morphism of parsummable categories such that
  \[ \gamma(\Cc)(\lambda)\circ i_k \ = \ i_{\lambda(k)}\ : \
    \Cc\ \to\ \Cc^{\boxtimes n} \]
  for all $1\leq k\leq m$. Here $i_0$ is to be interpreted as the zero morphism,
  i.e., the constant functor with value the distinguished object $(0,\dots,0)$ of $\Cc^{\boxtimes n}$.
  If $\kappa:n_+\to p_+$ is another based map, then
  \[ \gamma(\Cc)(\kappa)\circ\gamma(\Cc)(\lambda)\circ i_k\ = \
    \gamma(\Cc)(\kappa)\circ i_{\lambda(k)}\ = \ i_{\kappa(\lambda(k))}\ = \
    \gamma(\Cc)(\kappa\circ\lambda)\circ i_k 
  \]
  for all $1\leq k\leq m$.
  The universal property of an $m$-fold coproduct then forces
  $\gamma(\Cc)(\kappa)\circ\gamma(\Cc)(\lambda) = \gamma(\Cc)(\kappa\circ\lambda)$,
  so we have really obtained a functor from $\Gamma$ to the category
  of parsummable categories and strict morphisms.

  The functor $\gamma(\Cc)(\lambda):\gamma(\Cc)(m_+)\to\gamma(\Cc)(n_+)$
  is effectively given on objects and morphisms by  summing up, i.e.,
  \[ \left( \gamma(\Cc)(\lambda)(f_1,\dots,f_m) \right)_j\ = \ \sum_{\lambda(i)=j} f_i\ . \]
  If $(a_1,\dots,a_m)$ is an object of $\Cc^{\boxtimes m}$ and
  $(b_1,\dots,b_n)=\gamma(\Cc)(\lambda)(a_1,\dots,a_m)$, then
  \[ \supp(b_j)\ = \ \supp\left({\sum}_{\lambda(i)=j}\ a_i\right) \
    \subseteq \ {\bigcup}_{\lambda(i)=j}\ \supp(a_i) \]
  by~\eqref{eq:supp_of_sum}. So the tuple $(b_1,\dots,b_n)$ is again
  disjointly supported, i.e., it lies in $\gamma(\Cc)(n_+)$.
\end{con}

It is time to discuss examples of parsummable categories.

\begin{eg}[(Abelian monoids)]\label{eg:abelian monoid}
  We let $A$ be an abelian monoid that we consider as a discrete category
  (i.e., the object set is $A$, and there are only identity morphisms).
  We give $A$ the trivial $\M$-action: every object and
  morphism of the category $\M$ acts on $A$ as the identity.
  For this trivial $\M$-action, every object of $A$
  is supported on the empty set, so $A\boxtimes A=A\times A$.
  The structure functor
  \[  +\ : \ A\boxtimes A \ = \ A\times A \ \to \ A \]
  is the given monoid structure on objects.
  Because every tuple of elements is disjointly supported,
  the category $\gamma(A)(n_+)=A^n$ is the entire product category.
  The $\Gamma$-category $\gamma(A)$ 
  is thus discrete and coincides with the usual construction~\cite[page 293]{segal:cat coho}
  of a $\Gamma$-set from an abelian monoid. As we discuss in Example \ref{eg:HA is EM}
  below, the global K-theory spectrum of the parsummable category $A$ is
  a global Eilenberg-Mac\,Lane spectrum of the constant global functor with value
  the group completion of $A$.
\end{eg}

\begin{eg}[(The parsummable category of finite sets)]\label{eg:Fc}
  In Example \ref{eg:Fc as M-category} we introduced the tame $\M$-category $\Fc$
  of finite subsets of $\omega=\{0,1,2,\dots\}$; morphisms in $\Fc$ are all bijections of sets.
  This $\M$-category has a preferred parsummable structure, as follows.
  Since the support of an object of $\Fc$ is the set itself,
  objects of the $\M$-category $\Fc\boxtimes\Fc$ are pairs of finite disjoint
  subsets of $\omega$.
  The structure functor
  \[ + \ : \ \Fc\boxtimes \Fc \ \to \ \Fc \]
  is given on objects by the union inside the set $\omega$.
  If $f:P\to Q$ and $f':P'\to Q'$ are bijections between finite subsets of $\omega$,
  and moreover $P\cap P'=\emptyset=Q\cap Q'$,
  then $f+ f':P+ P'\to Q+ Q'$ is the `union' of $f$ and $f'$, i.e.,
  \[
    (f+ f')(x)\ = \
    \begin{cases}
      f(x) & \text{ for $x\in P$, and}\\
      f'(x) & \text{ for $x\in P'$.}
    \end{cases}
    \]
  We omit the straightforward verification that $+$
  is indeed a functor, that it is compatible with the $\M$-actions,
  and that it is associative, commutative and unital.
  So the sum functor $+$  makes the $\M$-category $\Fc$
  into a parsummable category.

  The parsummable category $\Fc$ has a universal property:  it is a free
  parsummable category generated by an object supported on the set $\{0\}$.
  More formally, for every parsummable category $\Cc$ the map
  \[ \parsumcat(\Fc,\Cc)\ \to \ \{x\ | \ \text{$x$ is supported on $\{0\}$}\} \ , 
    \quad F\ \longmapsto \ F(\{0\})\]
  is bijective. 
  We indicate what goes into the proof. We let $x$ be an object of $\Cc$ 
  supported on the set $\{0\}$; we construct a morphism of parsummable categories
  $F:\Fc\to \Cc$ that takes the set $\{0\}$ to $x$. 
  To define $F$ on objects we let  $P\subset \omega$ be a finite subset
  of cardinality $n$. We choose injections $u^1,\dots,u^n\in M$ such that
  \[ P \ = \ \{ u^1(0),\dots,u^n(0)\}  \ ; \]
  then the object $u^1_*(x),\dots,u^n_*(x)$ of $\Cc$ are disjointly supported,
  so we can set
  \[ F(P)\ = \ u^1_*(x)+\dots + u^n_*(x)\ . \]
  This is independent of the choices of $u^i$ by
  Proposition \ref{prop:finite support} (ii)  and the commutativity
  of the sum functor. Moreover, $F(P)$ is supported on $P$ by
  the additivity relation \eqref{eq:supp_of_sum}.

  Given a bijection $f:P\to Q$ between
  finite subsets of $\omega$,
  we choose an injection $w\in M$ that coincides with $f$ on $P$.
  Then $w u^1,\dots,w u^n\in M$ can be used to define $F(Q)$.
  We define $F(f):F(P)\to F(Q)$ as 
  \begin{align*}
    F(f)\ = \ [w u^1,u^1]^x &+\dots +[w u^n,u^n]^x \ : \\ 
    F(P)\ &=\ u^1_*(x)+\dots + u^n_*(x)\ \to\
            (w u)^1_*(x)+\dots + (w u)^n_*(x)\  = \ F(Q) \ .
  \end{align*}
  We omit the straightforward verification that $F:\Fc\to\Cc$ is a functor,
  compatible with the action of the monoidal category $\M$,
  that it takes the empty set to the distinguished object 0 of $\Cc$,
  and that it is additive on disjoint unions.
  The uniqueness is a consequence of the fact that the identity of the
  object $\{0\}$ of generates $\Fc$ under the $\M$-action and the sum operation.
\end{eg}

\begin{eg}[(Free parsummable categories)]\label{eg:free_parsummable}
  The category of tame $\M$-categories is cocomplete, with colimits
  created in the underlying category $\cat$ of small categories, see Example \ref{eg:(co)limits tame Mcat}.
  In particular, for every tame $\M$-category $\Bc$,
  we can form the symmetric algebra with respect to the box product
  \[  \mP\Bc\ = \ \coprod_{m\geq 0} \, (\Bc^{\boxtimes m})/\Sigma_m  \ . \]
  We refer to $\mP\Bc$ as the {\em free parsummable category} generated by the tame $\M$-category $\Bc$.
  This free functor is left adjoint to the forgetful functor $\parsumcat\to\M\cat^\tau$,
  with the embedding $\eta:\Bc\to\mP\Bc$ as the summand indexed by $m=1$ being the unit of the adjunction.
  In other words, for every parsummable category $\Cc$ and every morphism of
  $\M$-categories $f:\Bc\to\Cc$, there is a unique morphism of parsummable categories
  $f^\sharp:\mP\Bc\to\Cc$ such that $f^\sharp\circ\eta=f$.
\end{eg}

Now we record that parsummable categories are closed under various kinds of constructions.

\begin{eg}[(Full subcategories)]
  We let $\bar\Cc$ be a full subcategory of a parsummable category
  that contains the unit object 0 and is closed under the $\M$-action
  and under the structure functor $+:\Cc\boxtimes\Cc\to\Cc$.
  Then $\bar\Cc$ is a parsummable category in its own right, by restriction of all the structure.
  The inclusion $\bar\Cc\to\Cc$ is a morphism of parsummable categories.

  Here are two specific examples of this situation.
  For every $n\geq 1$, we can take $\Fc^{[n]}$ 
  to be the full subcategory of $\Fc$ of those finite subsets
  of $\omega$ whose cardinality is divisible by $n$. 
  For every ring $R$, another example is the full subcategory $\Pc^\text{fr}(R)$ 
  of free modules inside the parsummable category $\Pc(R)$ of 
  finitely generated projective splittable $R$-submodules of $R\{\omega\}$,
  defined in Construction \ref{con:P(R)} below.
\end{eg}

\begin{eg}
  For every parsummable category $\Cc$ we can consider the full
  subcategory $\Cc^\emptyset$ of $\Cc$
  spanned by the objects that are supported on the empty set.
  By parts (iii) and (iv) of Proposition~\ref{prop:finite support},
  the $\M$-action is trivial on $\Cc^\emptyset$;
  in particular, the subcategory $\Cc^\emptyset$ is closed under the $\M$-action.
  The subcategory $\Cc^\emptyset$ is moreover
  closed under the sum functor by \eqref{eq:supp_of_sum}.
  So the subcategory $\Cc^\emptyset$ is a parsummable category in its own right.
  
  The parsummable categories $\Cc$ in which all objects
  are supported on the empty set are very rigid.
  For example, we have $\Cc\boxtimes\Cc=\Cc\times\Cc$,
  so the sum functor is defined for all pairs of objects and morphisms.
  Moreover, the sum operation makes $\Cc$ into a strict symmetric monoidal category
  in which the symmetry isomorphisms are identities.
  The nerve of $\Cc$ then becomes a simplicial abelian monoid,
  and its K-theory spectrum is a product of Eilenberg-Mac Lane spectra.  
\end{eg}

\begin{eg}[(Opposite parsummable categories)]
  If $\Cc$ is a parsummable category, then the opposite category $\Cc^{\op}$
  inherits a canonical structure of a parsummable category as follows.
  As we explained in Example \ref{eg:opposite M-version},
  the opposite of an $\M$-category has a preferred $\M$-action.
  For this $\M$-action, the support of an object is the same
  in $\Cc$ and in $\Cc^{\op}$. So in particular $\Cc^{\op}$ is again tame,
  and $\Cc^{\op}\boxtimes \Cc^{\op} = (\Cc\boxtimes\Cc)^{\op}$.
  The structure functor of~$\Cc^{\op}$ is then defined as the composite
  \[ \Cc^{\op}\boxtimes \Cc^{\op}\ = \ (\Cc\boxtimes\Cc)^{\op}\
    \xra{\ +^{\op}} \ \Cc^{\op}\ . \]
  So the sum operation in $\Cc$ and $\Cc^{\op}$ is the same on objects,
  and is the `opposite sum' on morphisms. Moreover,
  \[ \gamma(\Cc^{\op}) \ = \ \gamma(\Cc)^{\op}\]
  as $\Gamma$-parsummable categories.
\end{eg}

\begin{eg}[(Coproducts of parsummable categories)]\label{eg:coproduct_parsumcat}
  It is a general feature that the commutative monoid objects
  in a symmetric monoidal category have coproducts, and these
  are given by the underlying monoidal product.
  This applies to parsummable categories, as these are the commutative monoid objects in
  tame $\M$-categories under the box product.
  In more detail: given two parsummable categories $\Cc$ and $\Dc$, the box product
  $\Cc\boxtimes\Dc$ becomes a parsummable category with distinguished zero object $(0,0)$
  and addition functor
  \[
    ( \Cc\boxtimes\Dc)\boxtimes( \Cc\boxtimes\Dc)
    \ \xra[\iso]{\Cc\boxtimes\text{twist}\boxtimes \Dc} \
    ( \Cc\boxtimes\Cc)\boxtimes( \Dc\boxtimes\Dc)\ \xra{+\boxtimes +}\  \Cc\boxtimes\Dc\ .
  \]
  The two functors
  \[ i_1\ =\ (-,0)\ : \ \Cc \ \to \Cc\boxtimes\Dc\text{\qquad and\qquad}
    i_2\ =\ (0,-)\ : \ \Dc \ \to \Cc\boxtimes\Dc    
  \]
  are morphisms of parsummable categories that enjoy the universal property of a coproduct.
  Moreover, the unique morphism of $\Gamma$-parsummable categories
  \[ \gamma(\Cc)\boxtimes \gamma(\Dc)\ \to \ \gamma(\Cc\boxtimes \Dc) \]
  that restricts to $\gamma(i_1):\gamma(\Cc)\to \gamma(\Cc\boxtimes \Dc)$
  and $\gamma(i_2):\gamma(\Dc)\to \gamma(\Cc\boxtimes \Dc)$, respectively,
  is an isomorphism.
\end{eg}

\begin{eg}[(Limits of parsummable categories)]
  As we explained in Example \ref{eg:(co)limits tame Mcat},
  the category of tame $\M$-categories is complete.
  Moreover, finite limits are created on underlying categories,
  and an infinite product of tame $\M$-categories is
  given by the full subcategory of finitely supported objects inside the product
  of categories with the induced $\M$-action.
  Since parsummable categories are the commutative algebras for a symmetric monoidal
  structure on $\M\cat^\tau$, the category of parsummable categories is also
  complete, and limits are created in $\M\cat^\tau$.
\end{eg}

\begin{eg}[(Objects with an action)]\label{eg:objects with action}
  Let $G$ be a monoid. A {\em $G$-object} in a category $\Dc$ 
  is an object $x$ of $\Dc$ equipped
  with a $G$-action, i.e., a monoid homomorphism $\rho:G\to\Dc(x,x)$
  to the endomorphism monoid. 
  We denote by $G\Dc$ the category of $G$-objects in $\Dc$ 
  with $G$-equivariant $\Dc$-morphisms. We will mostly be interested in the
  special case when $G$ is a finite group (whence the letter `$G$').
  
  We note that parsummable structures lift to objects with a monoid action.
  In other words: a parsummable structure on a category $\Cc$ gives rise
  to a preferred parsummable structure on the category $G\Cc$.
  Indeed, because $G$-objects are `the same as' functors from the category with one object
  and $G$ as endomorphism monoid, the category $G\Cc$ inherits a `pointwise'
  $\M$-action as explained in Example \ref{eg:(co)limits Mcat}.
  For this $\M$-action, the support of a $G$-object is the support of
  the underlying $\Cc$-object. So $G\Cc$ is again tame, and
  \[ (G\Cc)\boxtimes(G\Cc)\ = \ G(\Cc\boxtimes\Cc) \ .\]
  The structure functor of $G\Cc$ is then defined as the composite
  \[
    (G\Cc)\boxtimes(G\Cc)\ = \ G(\Cc\boxtimes\Cc)\ \xra{\ G+\ } \ G\Cc\ .
  \]
  Moreover, $\gamma(G\Cc) = G(\gamma(\Cc))$ as $\Gamma$-parsummable categories.
\end{eg}

Every $\Gamma$-parsummable category has an underlying $\Gamma$-$\M$-category.
We introduced the concept of  {\em global specialness}  for $\Gamma$-$\M$-categories
in Definition \ref{def:globally special Gamma-cat}.

\begin{theorem}\label{thm:special G Gamma}
  Let $\Cc$ be a parsummable category.
  Then the $\Gamma$-parsummable category $\gamma(\Cc)$ is globally special.
\end{theorem}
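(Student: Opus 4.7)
The plan is to verify global specialness directly from Definition \ref{def:globally special Gamma-cat}. Fix a finite group $G$, a universal $G$-set $\Uc$, a subgroup $H\leq G$, and a finite $H$-set $S$; I must show that
\[ P_S^H \ :\ \gamma(\Cc)[\Uc](S_+)^H \ \to\ \map^H(S,\Cc[\Uc]) \]
is a weak equivalence of categories. By construction $\gamma(\Cc)(S_+)=\Cc^{\boxtimes S}$ is the full subcategory of $\Cc^S$ on disjointly supported tuples, so after reparameterization $\gamma(\Cc)[\Uc](S_+)$ is the full subcategory of $\Cc[\Uc]^S=\map(S,\Cc[\Uc])$ on disjointly supported families, and $P_S$ is precisely this inclusion. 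Passage to $H$-fixed subcategories preserves fully faithful functors, so $P_S^H$ is already fully faithful, and it suffices to prove it is essentially surjective; this will upgrade it to an equivalence of categories, and hence to a weak equivalence.

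For essential surjectivity I would exploit the universality of $\Uc$ to `spread out' the values of an $H$-equivariant family. Because $\Uc$ is a universal $G$-set, its underlying $H$-set is a universal $H$-set, and the product $S\times\Uc$ with diagonal $H$-action is also a universal $H$-set, since every subgroup $K\leq H$ is the stabilizer of infinitely many elements inherited from the second factor. Hence there is an $H$-equivariant injection $\phi:S\times\Uc\to\Uc$. Writing $\phi_s=\phi(s,-):\Uc\to\Uc$, the injections $\phi_s$ have pairwise disjoint images and satisfy
\[ l_h\circ\phi_s \ = \ \phi_{hs}\circ l_h \qquad\text{for all } h\in H,\ s\in S, \]
where $l_h:\Uc\to\Uc$ denotes the action of $h$.

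Given $(f(s))_{s\in S}\in\map^H(S,\Cc[\Uc])$, I set $x_s=(\phi_s)_*(f(s))$ in $\Cc[\Uc]$. Proposition \ref{prop:finite support}(iii) gives $\supp(x_s)\subseteq\phi_s(\Uc)$, so the family $(x_s)$ is disjointly supported and therefore lies in $\gamma(\Cc)[\Uc](S_+)$. Using the displayed relation together with the $H$-equivariance of $f$,
\[ l_{h,*}(x_s)\ =\ (l_h\phi_s)_*(f(s))\ =\ (\phi_{hs}l_h)_*(f(s))\ =\ (\phi_{hs})_*(f(hs))\ =\ x_{hs}, \]
so $(x_s)$ is $H$-fixed. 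Finally, the structure isomorphisms $\alpha_s=[1,\phi_s]^{f(s)}:x_s\to f(s)$ assemble into an $H$-equivariant natural isomorphism from $P_S^H(x_s)$ to $(f(s))$: applying the identities \eqref{eq:assoc_2} twice rewrites both $l_{h,*}(\alpha_s)$ and $\alpha_{hs}=[1,\phi_{hs}]^{l_{h,*}(f(s))}$ as $[l_h,\phi_{hs}l_h]^{f(s)}$, and these coincide.

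The main obstacle is the initial construction of the single $H$-equivariant injection $\phi:S\times\Uc\to\Uc$ supplying the `disjoint sections' $(\phi_s)_{s\in S}$; once $\phi$ is in hand, disjoint support, $H$-fixedness of $(x_s)$, and the equivariance of $(\alpha_s)$ are all short formal consequences of Proposition \ref{prop:finite support} and the relations \eqref{eq:assoc_2}. This step is the global equivariant strengthening of the disjoining trick in the proof of Theorem \ref{thm:box2times}, where disjoint translates were only needed for a pair of objects rather than for an arbitrary finite $H$-set of objects, and it relies crucially on the fact that $\Uc$ is universal as a $G$-set (equivalently, as an $H$-set for every subgroup $H\leq G$).
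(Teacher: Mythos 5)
Your proof follows essentially the same route as the paper's: full faithfulness of $(P_S)^H$ is immediate since $P_S$ is the inclusion of a full subcategory, and essential surjectivity is obtained by spreading out the values of a fixed tuple along the injections $\phi_s$. You are more explicit than the paper in reducing from a subgroup $H\leq G$ to $H$ itself (the paper treats only $H=G$ and leaves the restriction implicit), and the verification that $(\alpha_s)$ is $H$-fixed via two applications of \eqref{eq:assoc_2} is correct.

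The one flaw is the intermediate assertion that $S\times\Uc$ with the diagonal $H$-action is a universal $H$-set. This is false in general: if, say, $S$ is a nontrivial free $H$-set, then every $(s,u)\in S\times\Uc$ has stabilizer $H_s\cap H_u=\{e\}\cap H_u=\{e\}$, so no nontrivial subgroup of $H$ stabilizes any point of $S\times\Uc$. Your justification (`every subgroup $K\leq H$ is the stabilizer of infinitely many elements inherited from the second factor') overlooks that the stabilizer of $(s,u)$ is the intersection $H_s\cap H_u$, not $H_u$ alone. What you actually need---and what is true---is just the existence of an $H$-equivariant injection $\phi:S\times\Uc\to\Uc$: the $H$-set $S\times\Uc$ is countable with orbit types of the form $H/(H_s\cap H_u)$, and $\Uc$, being universal, contains countably infinitely many orbits of each type $H/L$, so the embedding can be built orbit by orbit. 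The paper asserts the existence of such an injection directly, without the (false) universality claim, and the rest of your argument then goes through unchanged.
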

\begin{proof}
  We will show that for every finite group $G$, every universal $G$-set $\Uc$
  and every finite $G$-set $S$ the functor
  \[ (P_S)^G\ : \ (\gamma(\Cc)[\Uc](S_+))^G\ \to \   \map^G(S,\gamma(\Cc)[\Uc](1_+)) \]
  is an equivalence of categories.  
  The $G$-category $\gamma(\Cc)[\Uc](S_+)$ 
  is a full $G$-subcategory of $\map(S,\Cc[\Uc])$, and the functor  
  \[ P_S\ : \ \gamma(\Cc)[\Uc](S_+)\ \to \ \map(S,\gamma(\Cc)[\Uc](1_+))
    = \map(S,\Cc[\Uc]) \]
  is the inclusion, and hence fully faithful.
  For all $G$-fixed objects $x$ and $y$ in $\gamma(\Cc)[\Uc](S_+)$, the map 
  \[ P_S\ : \ \gamma(\Cc)[\Uc](S_+)(x,y) \ \to \ \map(S,\Cc[\Uc])(P_S(x),P_S(y))\]
  of morphism sets is thus a $G$-equivariant bijection. 
  So the restriction to $G$-fixed points
  \[ (P_S)^G\ : \ \gamma(\Cc)[\Uc](S_+)^G(x,y) \ \to \ \map^G(S,\Cc[\Uc])(P_S(x),P_S(y))\]
  is also a bijection. This means that the $G$-fixed functor
  $(P_S)^G$ is also fully faithful.
  
  It remains to show that the functor $(P_S)^G$ is dense, i.e., every object
  in the target category is isomorphic to an object in the image of $(P_S)^G$.
  Since $\Uc$ is a universal $G$-set,
  we can choose a $G$-equivariant injection $\psi:S\times\Uc\to \Uc$.
  For $s\in S$ we define $\psi^s:\Uc\to\Uc$ by
  \[ \psi^s(j)\ = \ \psi(s,j)\ . \]
  Now we let $(x_s)_{s\in S}$ be a $G$-fixed object in the product category $\map(S,\Cc[\Uc])$,
  i.e., such that $l^g_*(x_s)=x_{g s}$ for all $(g, s)\in G\times S$,
  where $l^g:\Uc\to\Uc$ is left multiplication by $g$.
  Since the injections $\psi^s$ have disjoint images, the tuple
  \[    (\psi^s_*(x_s))_{s\in S} \ \in \ \map(S,\Cc[\Uc]) \]
  consists of objects with pairwise disjoint supports, so it 
  belongs to the subcategory $\gamma(\Cc)[\Uc](S_+)$.
  The $G$-equivariance of $\psi$ translates into the relation
  $l^g\circ\psi^s =\psi^{g s}\circ l^g$
  for all $(g,s)\in G\times S$. So we have
  \[    l^g_*( [\psi^s,1]^{x_s})\ =\ [ l^g \psi^s, l^g]^{x_s}\ = \
    [\psi^{g s}l^g, l^g]^{x_s} \ =\  [\psi^{g s},1]^{l^g_*(x_s)} \ =\  [\psi^{g s},1]^{x_{g s}} \ .  \]
  This means that the $S$-tuple of isomorphisms $[\psi^s,1]^{x_s}:x_s\to\psi^s_*(x_s)$
  is $G$-fixed.
  So the original tuple $(x_s)_{s\in S}$ is isomorphic to the tuple $(\psi^s_*(x_s))_{s\in S}$
  in the $G$-fixed category $(\gamma(\Cc)[\Uc](S_+))^G$.
  This completes the proof that the functor $(P_S)^G$ is essentially surjective
  on objects, and thus an equivalence of categories.
\end{proof}

Now we come to the main construction of this paper.

\begin{defn}\label{def:define K_gl}
  The {\em global algebraic K-theory spectrum} of a parsummable category~$\Cc$
  is the symmetric spectrum
  \[ \bK_{\gl}\Cc \ = \ \gamma(\Cc)\td{\mS} \ ,\]
  as defined in Construction \ref{con:spectrum from Gamma-M},
  associated with the $\Gamma$-parsummable category $\gamma(\Cc)$.
\end{defn}

Since the global algebraic K-theory spectrum is the central construction of this paper,
we take the time to expand the definition of $\bK_{\gl}\Cc$.
The value of this symmetric spectrum at a non-empty finite set $A$ is 
\[ (\bK_{\gl}\Cc)(A) \ = \ \gamma(\Cc)\td{\mS}(A) \ = \
  |\gamma(\Cc)[\omega^A]|(S^A)\ ,\]
the value of the $\Gamma$-$\Sigma_A$-space $|\gamma(\Cc)[\omega^A]|$
on the $A$-sphere. The symmetric group $\Sigma_A$ acts diagonally, through
the reparameterization action on $\gamma(\Cc)[\omega^A]$,
and the action on $S^A$ by permuting coordinates.
The value at the empty set is
\[ (\bK_{\gl}\Cc)(\emptyset) \ = \ \gamma(\Cc)\td{\mS}(\emptyset) \ = \
  |\Cc^{\supp=\emptyset}|\ ,\]
the realization of the full subcategory of $\Cc=\gamma(\Cc)(1_+)$
of objects with empty support (also known as the $M$-fixed subcategory of $\Cc$).
The structure maps of the symmetric spectrum $\bK_{\gl}\Cc$
are defined in Construction \ref{con:spectrum from Gamma-M}.
\medskip

The following theorem records two properties
of the global algebraic K-theory spectrum $\bK_{\gl}\Cc$
that are special cases of results from Section \ref{sec:Gamma-M_to_sym}.
As before, $(\bK_{\gl}\Cc)_G$ denotes the underlying $G$-symmetric spectrum
of the symmetric spectrum $\bK_{\gl}\Cc$ (i.e., with $G$ acting trivially).
The $G$-symmetric spectrum $\gamma(\Cc)\td{\omega^G,\mS}$
and the morphisms of $G$-symmetric spectra
\[
  a^{\gamma(\Cc)}_G\colon (\bK_{\gl}\Cc)_G = (\gamma(\Cc)\td{\mS})_G \ \to\ \gamma(\Cc)\td{\omega^G,\mS}
  \text{\quad and\quad}
  b_G^{\gamma(\Cc)}\colon |\gamma(\Cc)[\omega^G]|(\mS)\ \to\ \gamma(\Cc)\td{\omega^G,\mS}
\]
were defined in Construction \ref{con:underlying G comparison}.

\begin{theorem}\label{thm:K_gl C is global Omega}
  Let $\Cc$ be a parsummable category.
  \begin{enumerate}[\em (i)]
  \item 
    The global K-theory spectrum $\bK_{\gl} \Cc$
    is globally connective and a restricted global $\Omega$-spectrum.
  \item
    For every finite group $G$,
    the two morphisms of $G$-symmetric spectra
    \[
      (\bK_{\gl}\Cc)_G\ \xra{\ a^{\gamma(\Cc)}_G\ }\ \gamma(\Cc)\td{\omega^G,\mS} 
      \ \xla{\ b_G^{\gamma(\Cc)}\ } \   |\gamma(\Cc)[\omega^G]|(\mS)
    \]
    are $G$-stable equivalences.
  \end{enumerate}
\end{theorem}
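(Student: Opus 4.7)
The plan is to deduce both assertions almost immediately by combining Theorem \ref{thm:special G Gamma} with the abstract results about the functor $Y\mapsto Y\td{\mS}$ that were established in Section \ref{sec:Gamma-M_to_sym}. The point of Theorem \ref{thm:special G Gamma} is precisely that $\gamma(\Cc)$ is a globally special $\Gamma$-$\M$-category, so both parts reduce to specializing earlier results to $Y=\gamma(\Cc)$.

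For part (i), I would first invoke Theorem \ref{thm:special G Gamma} to conclude that $\gamma(\Cc)$ is globally special. Then Theorem \ref{thm:Y(S) is global Omega}(ii), applied to $Y=\gamma(\Cc)$, gives at once that $\bK_{\gl}\Cc=\gamma(\Cc)\td{\mS}$ is a restricted global $\Omega$-spectrum. Global connectivity does not even require global specialness: it follows from Proposition \ref{prop:Y<S> globally connective-semistable}, applied to $Y=\gamma(\Cc)$, which shows that $\gamma(\Cc)\td{\mS}$ is globally connective (and in fact also globally semistable, which is useful elsewhere).

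For part (ii), I would again use that $\gamma(\Cc)$ is globally special by Theorem \ref{thm:special G Gamma}, and then apply Theorem \ref{thm:a-b-maps equivalence} with $Y=\gamma(\Cc)$ and $\Uc=\omega^G$ (which is a universal $G$-set by Proposition \ref{prop:universal G-sets}). The conclusion of that theorem is exactly that the two morphisms
\[
  |\gamma(\Cc)|[\omega^G](\mS)\ \xra{\ b_G^{\gamma(\Cc)}\ }\ \gamma(\Cc)\td{\omega^G,\mS}
  \ \xla{\ a_G^{\gamma(\Cc)}\ }\ \gamma(\Cc)\td{\mS}_G\ =\ (\bK_{\gl}\Cc)_G
\]
are $G$-stable equivalences, which is precisely the statement of part (ii).

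I do not anticipate any real obstacle here: once Theorem \ref{thm:special G Gamma} is in hand, this theorem is purely a matter of assembling three previously proved results (Theorem \ref{thm:Y(S) is global Omega}, Proposition \ref{prop:Y<S> globally connective-semistable}, and Theorem \ref{thm:a-b-maps equivalence}) into a single package. The only care needed is to verify that the universal $G$-set used in Construction \ref{con:underlying G comparison} can be taken to be $\omega^G$, which is exactly the content of Proposition \ref{prop:universal G-sets}.
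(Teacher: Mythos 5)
Your proposal is correct and follows essentially the same route as the paper: use Theorem \ref{thm:special G Gamma} to establish that $\gamma(\Cc)$ is globally special, then specialize Theorem \ref{thm:Y(S) is global Omega} and Theorem \ref{thm:a-b-maps equivalence} to $Y=\gamma(\Cc)$. The one place you are actually slightly more careful than the published proof is that you explicitly cite Proposition \ref{prop:Y<S> globally connective-semistable} for global connectivity (which holds for any $\Gamma$-$\M$-category, no specialness needed); the paper's proof attributes both halves of part (i) to Theorem \ref{thm:Y(S) is global Omega}, which only gives the restricted global $\Omega$-spectrum property, so your additional citation fills in a step the paper leaves implicit.
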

\begin{proof}
  The $\Gamma$-parsummable category $\gamma(\Cc)$
  is globally special by Theorem \ref{thm:special G Gamma}.
  The two claims are thus special cases of Proposition \ref{prop:Y<S> globally connective-semistable}
  and Theorem \ref{thm:Y(S) is global Omega},
  and Theorem \ref{thm:a-b-maps equivalence}, respectively,
  applied to the globally special $\Gamma$-$\M$-category $\gamma(\Cc)$.
\end{proof}

Algebraic K-theory usually takes equivalent categorical input data
to equivalent homotopical output.
Our next theorem is a version of this principle
for global algebraic K-theory.
We call a morphism of parsummable categories a {\em global equivalence}
if the underlying morphism of $\M$-categories is a global equivalence
in the sense of Definition \ref{def:global equiv Gamma-M}.

\begin{theorem}\label{thm:equivalence2equivalence} 
  For every  global equivalence of parsummable categories $\Phi:\Cc\to\Dc$,
  the morphism
  \[ \bK_{\gl}\Phi \ : \  \bK_{\gl}\Cc \ \to \  \bK_{\gl}\Dc \]
  is a global equivalence of symmetric spectra.
\end{theorem}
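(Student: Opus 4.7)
The plan is to reduce the theorem to Proposition \ref{prop:globally special yields strict cat} applied to the morphism of $\Gamma$-$\M$-categories $\gamma(\Phi):\gamma(\Cc)\to\gamma(\Dc)$. Since $\bK_{\gl}\Cc=\gamma(\Cc)\td{\mS}$ and $\bK_{\gl}\Phi=\gamma(\Phi)\td{\mS}$ essentially by definition, it suffices to verify the two hypotheses of that proposition: that both $\gamma(\Cc)$ and $\gamma(\Dc)$ are globally special, and that the component $\gamma(\Phi)(1_+)$ is a global equivalence of $\M$-categories.

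The first hypothesis is immediate from Theorem \ref{thm:special G Gamma}, which asserts that $\gamma$ applied to any parsummable category yields a globally special $\Gamma$-$\M$-category. For the second hypothesis, I would observe that by Construction \ref{con:Gamma from I} we have $\gamma(\Cc)(1_+)=\Cc$ and $\gamma(\Dc)(1_+)=\Dc$, and moreover the component of $\gamma(\Phi)$ at $1_+$ is precisely the underlying morphism of $\M$-categories $\Phi:\Cc\to\Dc$. This is a global equivalence of $\M$-categories by the assumption on $\Phi$ (recalling that a morphism of parsummable categories is called a global equivalence when its underlying morphism of $\M$-categories is such).

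With both hypotheses verified, Proposition \ref{prop:globally special yields strict cat} produces a global equivalence of symmetric spectra
\[
\gamma(\Phi)\td{\mS}\ : \ \gamma(\Cc)\td{\mS}\ \to \ \gamma(\Dc)\td{\mS}\ ,
\]
which is exactly the statement that $\bK_{\gl}\Phi:\bK_{\gl}\Cc\to\bK_{\gl}\Dc$ is a global equivalence.

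There is no real obstacle here because all the substantive work has been done upstream. The genuine content of the theorem is packaged into Theorem \ref{thm:special G Gamma} (which secures global specialness of the $\Gamma$-$\M$-category $\gamma(\Cc)$ by exploiting disjoint-support reparameterizations through a universal $G$-set) and into Proposition \ref{prop:globally special yields strict cat} (which uses Shimakawa-type results to promote a level global equivalence of globally special $\Gamma$-$\M$-categories to a global equivalence on associated symmetric spectra). The present theorem is simply the concatenation of these two inputs, with no additional argument required.
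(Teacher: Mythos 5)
Your proof is correct and follows exactly the same route as the paper: verify that $\gamma(\Cc)$ and $\gamma(\Dc)$ are globally special via Theorem \ref{thm:special G Gamma}, note that $\gamma(\Phi)(1_+)=\Phi$ is a global equivalence of $\M$-categories by hypothesis, and then invoke Proposition \ref{prop:globally special yields strict cat}. No gaps or deviations.
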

\begin{proof}
  The $\Gamma$-parsummable categories $\gamma(\Cc)$ and  $\gamma(\Dc)$
  are globally special by Theorem \ref{thm:special G Gamma}.
  The value of the morphism of $\Gamma$-parsummable categories
  $\gamma(\Phi):\gamma(\Cc)\to \gamma(\Dc)$ at $1_+$ is the original morphism $\Phi$,
  and hence a global equivalence.
  Proposition \ref{prop:globally special yields strict cat}
  thus applies to the morphism $\gamma(\Phi):\gamma(\Cc)\to \gamma(\Dc)$,
  and yields the desired result.
\end{proof}

K-theory tends to take finite products of input data to products of spaces or spectra.
The next proposition proves a version of this principle in our context.
The box product $\Cc\boxtimes \Dc$ of two parsummable categories
is a full parsummable subcategory of the product $\Cc\times \Dc$,
and it comes with distinguished morphisms 
\[ i_1\ =\ (-,0)\ :\ \Cc\ \to\ \Cc\boxtimes \Dc \text{\qquad and\qquad}
  i_2\ =\ (0,-)\ :\ \Dc\ \to\ \Cc\boxtimes \Dc\]
that express $\Cc\boxtimes \Dc$ as a coproduct
in the category of parsummable categories, see Example \ref{eg:coproduct_parsumcat}.

\begin{theorem}\label{thm:K_gl additive}
  Let $\Cc$ and $\Dc$ be parsummable categories. Then the morphisms
  \begin{align*}
    (\bK_{\gl}\Cc)\vee (\bK_{\gl}\Dc)\
    &\xra{(\bK_{\gl}i_1)+(\bK_{\gl}i_2)} \  \bK_{\gl}(\Cc\boxtimes\Dc)  \\
    &\xra{\bK_{\gl}(\text{\em incl})} \    
    \bK_{\gl}(\Cc\times\Dc)\ \xra{(\bK_{\gl} p_1,\bK_{\gl} p_2)} \ (\bK_{\gl}\Cc)\times(\bK_{\gl}\Dc)  
  \end{align*}
  are global equivalences of symmetric spectra,
  where $p_1:\Cc\times\Dc\to\Cc$ and $p_2:\Cc\times\Dc\to\Dc$ are the projections.
\end{theorem}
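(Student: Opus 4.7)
The plan is to address the three constituent maps in turn. The second and third reduce directly to results already established; the first will then follow by two-out-of-three once the composite of all three is recognized as the canonical wedge-to-product comparison for globally connective semistable symmetric spectra.

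For the third map, the projections $p_1, p_2$ induce a morphism of $\Gamma$-parsummable categories
\[ \Psi \colon \gamma(\Cc\times\Dc) \to \gamma(\Cc) \times \gamma(\Dc)  \]
sending a disjointly supported $n$-tuple of pairs to the corresponding pair of $n$-tuples; at level $1_+$ the morphism $\Psi$ is the identity on $\Cc\times\Dc$, hence trivially a global equivalence of $\M$-categories. Both source and target are globally special: the source by Theorem \ref{thm:special G Gamma}, and the target because global specialness is preserved by finite products (the functors $(-)(S_+)^H$ and $\map^H(S,-)$ commute with products, and products of equivalences of categories are equivalences). Proposition \ref{prop:globally special yields strict cat} therefore yields that $\Psi\td{\mS}$ is a global equivalence of symmetric spectra. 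Composing with the isomorphism $(\gamma(\Cc)\times\gamma(\Dc))\td{\mS} \iso \bK_{\gl}\Cc \times \bK_{\gl}\Dc$ of Proposition \ref{prop:at spheres preserves products} (ii), and observing that the projections of this composite to each factor recover $\bK_{\gl}p_1$ and $\bK_{\gl}p_2$ respectively, identifies it with the third map of the theorem.

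For the second map, the inclusion $\Cc\boxtimes\Dc \hookrightarrow \Cc\times\Dc$ is tautologically a morphism of parsummable categories — the sum $(c,d)+(c',d') = (c+c', d+d')$ is given by the same formula on both sides wherever it is defined — and is a global equivalence of underlying $\M$-categories by Theorem \ref{thm:box2times}. Theorem \ref{thm:equivalence2equivalence} then produces a global equivalence of symmetric spectra $\bK_{\gl}(\Cc\boxtimes\Dc) \to \bK_{\gl}(\Cc\times\Dc)$.

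For the first map, the preceding two steps together with two-out-of-three reduce the claim to showing that the composite of all three maps, which is the canonical wedge-to-product comparison
\[ (\bK_{\gl}\Cc) \vee (\bK_{\gl}\Dc) \to (\bK_{\gl}\Cc) \times (\bK_{\gl}\Dc), \]
is a global equivalence. Theorem \ref{thm:K_gl C is global Omega} (i) and Proposition \ref{prop:Y<S> globally connective-semistable} show that both $\bK_{\gl}\Cc$ and $\bK_{\gl}\Dc$ are globally connective and globally semistable. For such symmetric spectra one has a natural isomorphism $\upi_*^G(X\vee Y) \iso \upi_*^G(X) \oplus \upi_*^G(Y) \iso \upi_*^G(X\times Y)$ for every finite group $G$, so the comparison is a global $\upi_*$-isomorphism and hence a global equivalence by \cite[Proposition 4.5]{hausmann:global_finite}. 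The main potential obstacle is verifying this wedge-versus-product splitting carefully in the globally semistable symmetric-spectrum setting, but given the machinery of \cite{hausmann:G-symmetric, hausmann:global_finite} this is a routine computation via the cofiber sequence $X \to X\vee Y \to Y$.
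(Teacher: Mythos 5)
Your proof is correct and follows essentially the same decomposition and the same intermediate factorization through $(\gamma(\Cc)\times\gamma(\Dc))\td{\mS}$ as the paper's argument. The only divergence is the final step: where the paper directly cites \cite[Proposition 4.6 (3)]{hausmann:global_finite} for the fact that the canonical morphism from a coproduct to a product of symmetric spectra is a global equivalence, you reprove this via a $\upi_*$-isomorphism argument --- a fact that holds for arbitrary symmetric spectra, so the global connectivity and semistability hypotheses you invoke are superfluous there, though the argument still goes through.
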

\begin{proof}
  We start with the morphism
  $(\bK_{\gl} p_1,\bK_{\gl} p_2):\bK_{\gl}(\Cc\times\Dc)\to(\bK_{\gl}\Cc)\times(\bK_{\gl}\Dc)$.
  This morphism factors as the composite
  \begin{align*}
    \bK_{\gl}(\Cc\times\Dc)\
    = \ \gamma(\Cc\times\Dc)\td{\mS}\ &\xra{(\gamma(p_1),\gamma(p_2))\td{\mS}} \
      (\gamma(\Cc)\times\gamma(\Dc))\td{\mS}\\
    &\xra{(p_1\td{\mS},p_2\td{\mS})} \
    \gamma(\Cc)\td{\mS}\times \gamma(\Dc)\td{\mS}\ = \ (\bK_{\gl}\Cc)\times(\bK_{\gl}\Dc)\ .
  \end{align*}
  The $\Gamma$-parsummable categories $\gamma(\Cc)$, $\gamma(\Dc)$ and $\gamma(\Cc\times\Dc)$
  are globally special by Theorem \ref{thm:special G Gamma}.
  Global specialness is inherited by products, so 
  the $\Gamma$-parsummable category $\gamma(\Cc)\times \gamma(\Dc)$ is also globally special.
  The morphism of $\Gamma$-parsummable categories
  \[ (\gamma(p_1),\gamma(p_2))\ : \ \gamma(\Cc\times\Dc)\ \to \ \gamma(\Cc)\times\gamma(\Dc) \]
  is the identity of  $\gamma(\Cc\times\Dc)(1_+)=\Cc\times\Dc=(\gamma(\Cc)\times\gamma(\Dc))(1_+)$
  at the object $1_+$. 
  Proposition \ref{prop:globally special yields strict cat}
  thus applies to the morphism $(\gamma(p_1),\gamma(p_2))$
  and shows that the first morphism
  $(\gamma(p_1),\gamma(p_2))\td{\mS}$ is a global equivalence of symmetric spectra.
  The morphism $(p_1\td{\mS},p_2\td{\mS})$
  is an isomorphism of symmetric spectra by Proposition \ref{prop:at spheres preserves products}.
  So altogether this shows that the morphism
  $(\bK_{\gl} p_1,\bK_{\gl} p_2):\bK_{\gl}(\Cc\times\Dc)\to(\bK_{\gl}\Cc)\times(\bK_{\gl}\Dc)$
  is a global equivalence of symmetric spectra.
    
  The inclusion $\Cc\boxtimes \Dc\to\Cc\times \Dc$
  is a morphism of parsummable categories, and a global equivalence
  by Theorem \ref{thm:box2times};
  so the induced morphism of global K-theory spectra is a global equivalence by 
  Theorem \ref{thm:equivalence2equivalence}. 
  The composite $(\bK_{\gl}\Cc)\vee(\bK_{\gl}\Dc)\to   (\bK_{\gl}\Cc)\times(\bK_{\gl}\Dc)$
  is the canonical morphism from the coproduct to the product of two symmetric spectra;
  it is a global equivalence by \cite[Proposition 4.6 (3)]{hausmann:global_finite}.
  This concludes the proof.
\end{proof}

The next theorem generalizes the additivity of global K-theory
to infinitely many factors.

\begin{eg}[(Infinite box products)]\label{eg:infinite box}
  The construction of the box product of parsummable categories directly generalizes
  to more than two factors.
  We let $J$ be an indexing set, possibly infinite, and $\{\Cc_j\}_{j\in J}$
  a family of parsummable categories. The {\em box product}
  \[ \boxtimes_{j\in J} \, \Cc_j\  \subseteq \   \prod_{j\in J} \, \Cc_j\]
  is the full subcategory of the product consisting of those objects
  $(x_j)_{j\in J}$ such that the supports of the objects $x_j$ are pairwise disjoint,
  and moreover  $x_j=0$ for almost all $j\in J$. 
  As for the box product with two factors in Example \ref{eg:coproduct_parsumcat},
  this subcategory is a parsummable category in its own right,
  and it is a coproduct, in $\parsumcat$, of the parsummable categories $\Cc_j$.
\end{eg}

\begin{theorem}\label{thm:K infinite additive}
  Let $J$ be a set and let $\{\Cc_j\}_{j\in J}$ be a family of parsummable categories.
  Then the canonical morphism
  \[     {\bigvee}_{j\in J}\ \bK_{\gl}(\Cc_j)\ \to \  \bK_{\gl}(\boxtimes_{j\in J}\,\Cc_j)  \]
  is a global equivalence of symmetric spectra.
\end{theorem}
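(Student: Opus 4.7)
The plan is to reduce to the case of finitely many factors, which follows from Theorem \ref{thm:K_gl additive} by induction, and then to pass to a filtered colimit over the poset of finite subsets $F \subseteq J$ ordered by inclusion.

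First I would iterate Theorem \ref{thm:K_gl additive}: an induction on $|F|$ shows that for every finite subset $F \subseteq J$ the canonical morphism
\[ \bigvee_{j \in F} \bK_{\gl}(\Cc_j) \ \to \ \bK_{\gl}\Bigl( \boxtimes_{j \in F} \Cc_j \Bigr) \]
is a global equivalence, and these equivalences are natural in the variable $F$. Next I would identify both the source and target of the infinite statement as filtered colimits of the finite stages. On the target side, the description in Example \ref{eg:infinite box} of objects as almost-everywhere-zero tuples shows that $\boxtimes_{j\in J}\Cc_j$ is the filtered colimit in $\parsumcat$ of the parsummable categories $\boxtimes_{j \in F}\Cc_j$ as $F$ ranges over finite subsets of $J$. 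The functor $\gamma$ is an $n$-fold box product at each based set $n_+$, and finite box products of parsummable categories commute with filtered colimits, because the disjoint-support condition is preserved by filtered colimits in the underlying product of categories; nerve, geometric realization, reparameterization and the evaluation coend $|-|[\omega^A](S^A)$ likewise preserve filtered colimits, so $\bK_{\gl}$ commutes with this filtered colimit. On the source side, wedges of symmetric spectra are computed levelwise and commute with filtered colimits of based spaces.

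The main obstacle will be to conclude that the filtered colimit of the global equivalences at finite stages is itself a global equivalence. The transition morphisms on both sides are summand inclusions, hence flat cofibrations of symmetric spectra. All spectra in sight are of the form $\gamma(\Cc)\td{\mS}$ for a parsummable category $\Cc$, and are thus globally semistable by Proposition \ref{prop:Y<S> globally connective-semistable}; infinite wedges of globally semistable symmetric spectra remain globally semistable, because equivariant stable homotopy groups distribute over such wedges. By \cite[Proposition 4.13 (vi)]{hausmann:global_finite}, global equivalences between globally semistable symmetric spectra coincide with $\upi_*$-isomorphisms, and equivariant stable homotopy groups commute with filtered colimits along flat cofibrations. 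Combining these facts upgrades the finite-stage global equivalences to a global equivalence between their filtered colimits, which is the desired conclusion.
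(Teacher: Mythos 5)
Your proof follows the same decomposition as the paper: the finite case by induction from Theorem \ref{thm:K_gl additive}, then a factorization through the filtered colimit over finite subsets, with the second map an isomorphism because all constituent constructions commute with filtered colimits. The one place you differ is in justifying why the filtered colimit of the finite-stage global equivalences is itself a global equivalence: the paper simply asserts that filtered colimits of global equivalences between symmetric spectra arising from simplicial sets are homotopical, whereas you pass through global semistability (Proposition \ref{prop:Y<S> globally connective-semistable}), the equivalence of global equivalences with $\upi_*$-isomorphisms for globally semistable spectra, and the commutation of $\pi_k^G$ with filtered colimits along flat cofibrations; this is a slightly more explicit route that makes the homotopical-colimit step self-contained, at the small cost of needing to observe that infinite wedges of globally semistable spectra are again globally semistable (which holds because $\pi_k^G$ distributes over wedges). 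Both arguments are valid and both buy the same conclusion; yours trades one unreferenced general principle for a chain of cited facts.
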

\begin{proof}
  For finite indexing sets, the claim follows from Theorem \ref{thm:K_gl additive}
  by induction over the cardinality.
  If the indexing set $J$ is infinite, we let $s(J)$ denote the filtered poset, under inclusion,
  of finite subsets of $J$.
  The canonical morphism factors as the composite
  \[    {\bigvee}_{j\in J}\ \bK_{\gl}(\Cc_j)\ \to \  
    \colim_{K\in s(J)}  \bK_{\gl}(\boxtimes_{k\in K}\,\Cc_k)  \ \to\
      \bK_{\gl}(\boxtimes_{j\in J}\,\Cc_j)  \ . \]
  The wedge over $J$ is the colimit of the finite wedges over $K\in s(J)$,
  and filtered colimits of global equivalences of symmetric spectra
  arising from simplicial sets are homotopical.
  For every $K\in s(J)$, the morphism
  $\bigvee_{k\in K}\bK_{\gl}(\Cc_k)\to \bK_{\gl}(\boxtimes_{k\in K}\,\Cc_k)$
  is a global equivalence, hence so is the first map in the factorization.

  For the second map we observe that for every finite set $A$, the map
  \[  \colim_{K\in s(J)}  |\gamma(\boxtimes_{k\in K}\,\Cc_k)[\omega^A]|(S^A) \ \to\
    |\gamma(\boxtimes_{j\in J}\,\Cc_j)[\omega^A]|(S^A) \]
  is a homeomorphism, because all individual steps involved commute with filtered colimits.
  So the second morphism is even an isomorphism of symmetric spectra.
\end{proof}

\begin{rk}[(Parsummable categories as global $E_\infty$-categories)]\label{rk:I as global E_infty}
  As we indicate now, the structure of a parsummable category on a category $\Cc$
  is equivalent to an action of the {\em injection operad},
  a specific categorical operad whose category of
  unary operations is the monoidal category $\M$.
  We will not use this perspective on parsummable categories,
  so we will be brief.
  In the simpler context of sets (as opposed to categories),
  the analogous comparison is discussed in detail in \cite[Appendix A]{sagave-schwede}:
  Theorem A.13 of that paper provides an isomorphism of categories
  from the category of algebras over the set-version of the injection operad
  to the category of commutative monoids, under box product,
  in the category of tame $M$-sets.
  The present remark is about the analogous statement one category level higher.

  The {\em injection operad} $\Ic$ is an operad in the category of small categories
  with respect to cartesian product; it contains the monoidal category $\M$
  as its category of unary operations. 
  For $m\geq 0$ we define $\mathbf m=\{1,\dots,m\}$
  and we let $I(m)$ denote the set of injective maps
  from the set $\mathbf m\times\omega$ to the set $\omega=\{0,1,2,\dots\}$.
  The symmetric group $\Sigma_m$ acts on $I(m)$ by permuting the first
  coordinate in $\mathbf m\times\omega$.
  The collection of sets $\{I(m)\}_{m\geq 0}$ becomes an operad 
  via `disjoint union and composition'. The functor
  \[ E \ : \ \bset \ \to \ \cat \]
  from sets to small categories is right adjoint to taking the set of objects;
  so $E$ preserves products and hence it takes set operads to categorical operads.
  So we obtain an operad $\Ic=\{E I(m)\}_{m\geq 0}$ in the category of
  small categories under cartesian product.

  Now we sketch why parsummable categories `are' algebras over the categorical operad $\Ic$.
  We let $\Cc$ be a parsummable category and $\varphi:\mathbf m\times\omega\to\omega$
  an injection.
  For $i=1,\dots,m$, the injections $\varphi^i=\varphi(i,-)$
  have disjoint images, so the functor
  \[ \prod_{i=1}^m \varphi^i_* \ : \ \Cc^m \ \to \ \Cc^m \]
  takes values in the full subcategory $\Cc^{\boxtimes m}$
  of $m$-tuples of disjointly supported objects.
  We can thus define a functor $\varphi_* :\Cc^m\to \Cc$
  as the composite
  \[  \Cc^m\  \xra{\prod_{i=1}^m \varphi^i_*}\ \Cc^{\boxtimes m}\ \xra{\ + \ }\ \Cc\ ,  \]
  where the second functor is the iterated sum functor.
  Given another injection $\psi:\mathbf m\times\omega\to\omega$,
  we define a natural isomorphism
  \[ [\psi,\varphi]\ : \ \varphi_* \ \Longrightarrow \ \psi_* \]
  of functors $\Cc^m\to\Cc$ at a tuple $(x_1,\dots,x_m)$ of objects as
  \begin{equation}\label{eq:define [psi,varphi]}
    [\psi,\varphi]^{x_1,\dots,x_m}\ = \ \sum_{i=1}^m \ [\psi^i,\varphi^i]^{x_i} \ :
    \sum_{i=1}^m \ \varphi^i_*(x_i) \ \to \ \sum_{i=1}^m \ \psi^i_*(x_i) \ .
  \end{equation}
  We omit the verification that this data defines an action
  of the category operad $\Ic$ on the category $\Cc$.
  Clearly, for $m=1$, the action of $\Ic(1)$ coincides with the given action
  of the monoidal category $\M$.
  We claim without proof that for {\em tame} $\M$-categories, the process
  can be reversed, and leads to an an isomorphism of categories
  between parsummable categories in the sense of Definition \ref{def:parsummable_category}
  and algebras over the injection operad $\Ic$ whose underlying $\M$-categories are tame. 
  As we already mentioned, the argument is similar to the
  proof of \cite[Theorem A.13]{sagave-schwede}, but with categories instead of sets.

  We also justify why we think of the injection operad as a `global $E_\infty$-operad'.
  We let $G$ be a finite group. Replacing the set $\omega$ by the universal $G$-set
  $\omega^G$ yields a categorical $G$-operad $\Ic_G$, i.e., an operad
  in the cartesian monoidal category of small $G$-categories.
  The $G$-category of $m$-ary operations is $\Ic_G(m)=E I(\mathbf m\times\omega^G,\omega^G)$,
  where $I(\mathbf m\times\omega^G,\omega^G)$ is the set of injections
  from $\mathbf m\times\omega^G$ to $\omega^G$, with $G$-action by conjugation.
  A moment's thought shows that $\Ic_G$ is an {\em $E_\infty$-operad of $G$-categories}
  in the sense of Guillou and May \cite[Definition 3.11]{guillou-may};
  hence the nerve of every $\Ic_G$-$G$-category is an $E_\infty$-$G$-space.

  The connection to our present discussion is that every parsummable category $\Cc$
  has an {\em underlying $E_\infty$-$G$-category} in the sense of
  \cite[Definition 4.10]{guillou-may},
  namely the $G$-category $\Cc[\omega^G]$ with a specific
  action of the categorical $E_\infty$-$G$-operad $\Ic_G$ arising from
  the parsummable structure.
  The formalism of Guillou and May assigns to the
  $E_\infty$-$G$-category $\Cc[\omega^G]$ an orthogonal $G$-spectrum $\mathbb K_G(\Cc[\omega^G])$,
  see \cite[Definition 4.12]{guillou-may}.
  It seems highly plausible that the Guillou-May $G$-spectrum $\mathbb K_G(\Cc[\omega^G])$
  is equivalent to the equivariant K-theory $G$-spectrum obtained by the
  Segal-Shimakawa delooping machine from the $\Gamma$-$G$-category
  that extends $\Cc[\omega^G]$; however, I have not attempted to formally prove this.
  Assuming this equivalence,  our Theorem \ref{thm:K_gl C is global Omega} (ii)
  shows that the underlying genuine $G$-homotopy type of $\bK_{\gl}\Cc$
  agrees with the $G$-homotopy type of the $E_\infty$-$G$-category $\Cc[\omega^G]$
  as defined by Guillou and May.
\end{rk}

In the final part of this section we identify the $G$-fixed point spectrum
of the global homotopy type of $\bK_{\gl}\Cc$, for $G$ a finite group:
loosely speaking, global K-theory `commutes with $G$-fixed points'.
More precisely, the {\em $G$-fixed category} $F^G\Cc$
and a parsummable category $\Cc$ is naturally again a parsummable category.
We show in Corollary \ref{cor:G-fixed}
that the $G$-fixed point spectrum $F^G(\bK_{\gl}\Cc)$
receives a natural equivalence from the K-theory spectrum of $F^G\Cc$.

\begin{con}\label{con:F^G C}
  Given a parsummable category $\Cc$ and a finite group $G$,
  we define a new parsummable category $F^G\Cc$, the $G$-fixed point parsummable category, 
  by extending Construction \ref{con:F^G for M-cat}
  from $\M$-categories to parsummable categories.
  We recall that the underlying category is given by
  \[ F^G\Cc\ =  \ \Cc[\omega^G]^G\ , \]
  the $G$-fixed category of the $G$-category $\Cc[\omega^G]$,
  where $\omega^G$ is the $G$-set of maps from $G$ to $\omega$.
  Tameness is a property of the underlying $\M$-category,
  so Proposition \ref{prop:F^G_preserves_tame} shows that
  $F^G\Cc$ is tame because $\Cc$ is.
    
  We record how the passage to $G$-fixed categories interacts with the box product.
  So we consider two tame $\M$-categories $\Cc$ and $\Dc$.
  Since $\Cc\boxtimes\Dc$ is a full subcategory of $\Cc\times\Dc$,
  the fixed category $F^G(\Cc\boxtimes\Dc)$ is a full subcategory of
  $F^G(\Cc\times\Dc)\iso (F^G\Cc)\times(F^G\Dc)$,
  and so is $(F^G\Cc)\boxtimes(F^G\Dc)$.
  As we now explain, there is a subtle difference in the support conditions that define
  $F^G(\Cc\boxtimes\Dc)$ and $(F^G\Cc)\boxtimes(F^G\Dc)$.
  Indeed, the support condition that defines the subcategory
  $\Cc[\omega^G]\boxtimes \Dc[\omega^G]$ refers to support as a subset of $\omega$,
  and it arises from the $M$-action through the action on $\omega^G$ by postcomposition.
  The support condition that singles out the subcategory $(\Cc\boxtimes \Dc)[\omega^G]$
  refers to support as a subset of $\omega^G$, and uses the action of the monoid $M_G=I(\omega^G,\omega^G)$.
  The connection between these kinds of support is as follows:
  for a finite subset $T$ of $\omega^G$, we set
  \[ I(T)\ = \ \bigcup_{\alpha\in T}\text{image}(\alpha) \ , \]
  which is a finite subset of $\omega$.
  If $S,T\subset \omega^G$ are such that $I(S)$ and $I(T)$ are disjoint subsets of $\omega$,
  then $S$ and $T$ must in particular be disjoint.
  However, the converse implication need not hold in general.
  Moreover, if $(c,d)$ is an object of $\Cc[\omega^G]\times\Dc[\omega^G]$
  with $\supp(c)\cap \supp(d)=\emptyset$ as subsets of $\omega^G$,
  but $I(\supp(c))\cap I(\supp(d))\ne\emptyset$ as subsets of $\omega$,
  then $(c,d)$ belongs to $(\Cc\boxtimes\Dc)[\omega^G]$,
  but {\em not} to $\Cc[\omega^G]\boxtimes \Dc[\omega^G]$.
  In summary, the category $\Cc[\omega^G]\boxtimes\Dc[\omega^G]$ is contained in
  $(\Cc\boxtimes\Dc)[\omega^G]$, but it is typically strictly smaller.
  We write
  \begin{equation}\label{eq:fix and box before G-fix}
    \epsilon \ : \ \Cc[\omega^G]\boxtimes \Dc[\omega^G]\ \to \ (\Cc\boxtimes\Dc)[\omega^G]
  \end{equation}
  for the fully faithful inclusion.
  Restricting to $G$-fixed subcategories provides another fully faithful inclusion
  \begin{equation}\label{eq:fix and box}
    \epsilon^G \ : \ (F^G\Cc)\boxtimes (F^G\Dc)\ = \
   (\Cc[\omega^G]\boxtimes \Dc[\omega^G])^G\ \to \ ((\Cc\boxtimes\Dc)[\omega^G])^G \ = \ F^G(\Cc\boxtimes\Dc)\ .
  \end{equation}
\end{con}

\begin{prop}
  Let $\Cc$ and $\Dc$ be tame $\M$-categories.
  Then for every finite group $G$, the morphism   
  $\epsilon^G :(F^G \Cc)\boxtimes( F^G\Dc) \to F^G(\Cc\boxtimes\Dc)$
  is a global equivalence of $\M$-categories.
\end{prop}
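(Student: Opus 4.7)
The plan is to realize $\epsilon^G$ as the first factor of a composite whose second factor and whose total are both already known to be global equivalences, and then appeal to the 2-out-of-3 property.

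First I would set up the commutative diagram. Writing $j:\Cc\boxtimes\Dc\to\Cc\times\Dc$ for the defining inclusion of Definition \ref{def:box}, the functor $\epsilon^G$ sits in the composite
\[ (F^G\Cc)\boxtimes(F^G\Dc)\ \xra{\epsilon^G}\ F^G(\Cc\boxtimes\Dc)\ \xra{F^G(j)}\ F^G(\Cc\times\Dc)\ \iso\ (F^G\Cc)\times(F^G\Dc). \]
The rightmost isomorphism of $\M$-categories is a straightforward unfolding: reparameterization by $\omega^G$, passage to $G$-fixed points, and finite products commute with one another, and the diagonal $\M$-action on the product matches the postcomposition $\M$-action on $(\Cc\times\Dc)[\omega^G]$. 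Under this identification, the total composite is the tautological inclusion of disjointly supported pairs into the product.

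Next I would invoke Theorem \ref{thm:box2times} in two different ways. Applied to the tame $\M$-categories $F^G\Cc$ and $F^G\Dc$ (which are tame by Proposition \ref{prop:F^G_preserves_tame}), it shows that the total composite above is a global equivalence of $\M$-categories. Applied instead to $\Cc$ and $\Dc$, it gives that $j$ is a global equivalence; Proposition \ref{prop:F^G preserves global M} then promotes this to $F^G(j)$ being a global equivalence as well.

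Finally, global equivalences of $\M$-categories satisfy the 2-out-of-3 property, since they are defined via weak equivalences of the nerves of the categories $F^K(-)$ for every finite group $K$, and weak equivalences of simplicial sets satisfy 2-out-of-3. Applying this to the factorization yields that $\epsilon^G$ is a global equivalence. I do not anticipate any genuine obstacle; the only mild point is the $\M$-equivariant identification $F^G(\Cc\times\Dc)\iso(F^G\Cc)\times(F^G\Dc)$, which is immediate from the componentwise nature of the $G$-action, the $\M$-action, and finite products.
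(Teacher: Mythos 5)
Your proof is correct and follows essentially the same route as the paper's: the same three-term factorization of $\epsilon^G$ through $F^G(\Cc\times\Dc)\iso(F^G\Cc)\times(F^G\Dc)$, the same two applications of Theorem \ref{thm:box2times} (one to $F^G\Cc$, $F^G\Dc$ to handle the total composite, one to $\Cc$, $\Dc$ followed by Proposition \ref{prop:F^G preserves global M}), and the same 2-out-of-3 conclusion. The only cosmetic difference is that you explicitly invoke Proposition \ref{prop:F^G_preserves_tame} to ensure $F^G\Cc$ and $F^G\Dc$ are tame before applying Theorem \ref{thm:box2times}, which the paper leaves implicit.
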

\begin{proof}
  The composite
  \[ 
      (F^G \Cc)\boxtimes( F^G\Dc) \ \xra{\ \epsilon^G\ } \
      F^G(\Cc\boxtimes\Dc)\ \xra{F^G(\text{incl})} \
     F^G(\Cc\times\Dc)\ \xra{(F^G p_1,F^G p_2)} \  (F^G\Cc)\times (F^G \Dc)  \]
   is the inclusion, and hence a global equivalence by Theorem \ref{thm:box2times}.
   Reparameterization by $\omega^G$ and taking $G$-fixed points commute with products,
   so the third functor $(F^G p_1,F^G p_2)$ is an isomorphism of $\M$-categories.
   The inclusion $\Cc\boxtimes\Dc\to\Cc\times\Dc$
   is a global equivalence by Theorem \ref{thm:box2times},
   so the second morphism $F^G(\text{incl})$ is a global equivalence by
   Proposition \ref{prop:F^G preserves global M}.
   Since the second and third morphism, as well as the composite,
   are global equivalences, so is the morphism $\epsilon^G$.
\end{proof}

Now we let $\Cc$ be a parsummable category and $G$ a finite group.
We can make the $G$-fixed category $F^G\Cc$ into a parsummable category by
endowing it with the structure morphism defined as the composite
\[  (F^G\Cc)\boxtimes (F^G\Cc)\ \xra{\ \epsilon^G\ } \ F^G(\Cc\boxtimes\Cc) \
  \xra{F^G(+)}\  F^G(\Cc) \ .\]
We will now compare the global K-theory of the $G$-fixed point parsummable category $F^G\Cc$
to the $G$-fixed point spectrum of the global K-theory of $\Cc$,
via two global equivalences of symmetric spectra
\[
  \bK_{\gl}(F^G\Cc) \ =\   \gamma(F^G(\Cc))\td{\mS}
 \ \xra{\lambda_\Cc^G\td{\mS}}\ F^G(\gamma(\Cc))\td{\mS}
  \ \xra{\ \psi_{\gamma(\Cc)}^G\ }\  F^G(\gamma(\Cc)\td{\mS}) \ = \ F^G(\bK_{\gl} \Cc)\ .
\]
The second morphism is a special case of the one analyzed
in Theorem \ref{thm:G-fixed Gamma-cat}, for the globally special $\Gamma$-parsummable category $\gamma(\Cc)$.
The morphism $\lambda_\Cc^G\td{\mS}$
arises from a preferred morphism of $\Gamma$-parsummable categories
$\lambda_\Cc^G: \gamma(F^G\Cc)\to F^G(\gamma(\Cc))$ that we define next.

\begin{con}
  We let $\Cc$ be a parsummable category and $G$ a finite group.
  Applying the functor $F^G:\parsumcat\to\parsumcat$
  objectwise to the $\Gamma$-parsummable category $\gamma(\Cc)$
  yields a new $\Gamma$-parsummable category $F^G(\gamma(\Cc))$.
  There is then a unique morphism of $\Gamma$-parsummable categories
  \[ \lambda^G_\Cc \ : \ \gamma(F^G\Cc)\ \to \ F^G(\gamma(\Cc))  \]
  such that $\lambda^G_\Cc(1_+):\gamma(F^G\Cc)(1_+)=F^G\Cc\to F^G\Cc=F^G(\gamma(\Cc))(1_+)$
  is the identity.
  The morphism 
  \[ \lambda^G_\Cc(n_+) \ : \ (F^G\Cc)^{\boxtimes n}\ \to \ F^G(\Cc^{\boxtimes n})  \]
  is the iteration of the morphism \eqref{eq:fix and box};
  equivalently $\lambda^G_\Cc(n_+)$ is the unique morphism such that
  $\lambda^G_\Cc(n_+)\circ i_k=F^G(i_k):F^G\Cc\to F^G(\Cc^{\boxtimes n})$
  for all $1\leq k\leq n$.
\end{con}

\begin{theorem}\label{thm:F^G vs gamma} 
For every parsummable category $\Cc$ and every finite group $G$,
the morphism of symmetric spectra
\[ \lambda^G_{\Cc}\td{\mS}\ : \
  \bK_{\gl}(F^G\Cc) \ = \ \gamma(F^G\Cc)\td{\mS}\ \to \ F^G(\gamma(\Cc))\td{\mS}\]
is a global equivalence.
\end{theorem}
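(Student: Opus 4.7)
The plan is to apply Proposition~\ref{prop:globally special yields strict cat} to the morphism of $\Gamma$-$\M$-categories $\lambda^G_\Cc\colon \gamma(F^G\Cc)\to F^G(\gamma(\Cc))$, after which the conclusion is automatic by taking associated symmetric spectra.

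First I would verify that both source and target are globally special $\Gamma$-$\M$-categories. By Construction~\ref{con:F^G C}, the $G$-fixed category $F^G\Cc$ inherits a canonical parsummable structure; Theorem~\ref{thm:special G Gamma} applied to this parsummable category then shows that $\gamma(F^G\Cc)$ is globally special. For the target I would apply Theorem~\ref{thm:special G Gamma} to $\Cc$ itself to see that $\gamma(\Cc)$ is globally special, and then invoke Proposition~\ref{prop:global special 2 fixed} to conclude that the objectwise $G$-fixed $\Gamma$-$\M$-category $F^G(\gamma(\Cc))$ is globally special as well.

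Next I would inspect the morphism $\lambda^G_\Cc$ at the object $1_+$. By construction of both $\gamma$ and the $\lambda$-map, each side evaluates to $F^G\Cc$ at $1_+$, and $\lambda^G_\Cc(1_+)$ is the identity functor of $F^G\Cc$; hence it is trivially a global equivalence of $\M$-categories.

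With both hypotheses in place, Proposition~\ref{prop:globally special yields strict cat} directly yields that $\lambda^G_\Cc\td{\mS}\colon \gamma(F^G\Cc)\td{\mS}\to F^G(\gamma(\Cc))\td{\mS}$ is a global equivalence of symmetric spectra, which is the desired conclusion. There is no real obstacle in this line of argument: the substantive work has all been performed earlier, in constructing the parsummable structure on $F^G\Cc$, in proving that $\gamma$ of any parsummable category is globally special, and in showing that the functor $F^G$ preserves global specialness at the level of $\Gamma$-$\M$-categories.
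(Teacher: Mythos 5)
Your proposal is correct and takes exactly the same route as the paper: it verifies global specialness of $\gamma(F^G\Cc)$ via Theorem~\ref{thm:special G Gamma}, of $F^G(\gamma(\Cc))$ via Theorem~\ref{thm:special G Gamma} combined with Proposition~\ref{prop:global special 2 fixed}, observes that $\lambda^G_\Cc(1_+)$ is the identity of $F^G\Cc$, and then concludes by Proposition~\ref{prop:globally special yields strict cat}. This matches the paper's proof step for step.
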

\begin{proof}
  The $\Gamma$-parsummable category $\gamma(F^G\Cc)$
  is globally special by Theorem \ref{thm:special G Gamma}.
  The $\Gamma$-parsummable category $F^G(\gamma(\Cc))$ is globally special by 
  Proposition \ref{prop:global special 2 fixed}.
  The morphism $\lambda^G_{\Cc}(1_+)$ is the identity of
  \[  \gamma(F^G\Cc)(1_+)\ = \ F^G\Cc \ = \ F^G(\gamma(\Cc))(1_+) \ ; \]
  so $\lambda^G_{\Cc}(1_+)$ is in particular a global equivalence.
  Proposition \ref{prop:globally special yields strict cat}
  thus applies to the morphism $\lambda^G_\Cc:\gamma(F^G\Cc)\to F^G(\gamma(\Cc))$,
  and yields the desired result.
\end{proof}

For every parsummable category $\Cc$,
the $\Gamma$-parsummable category $\gamma(\Cc)$
is globally special by Theorem \ref{thm:special G Gamma}. 
So for every finite group $G$,
the morphism of symmetric spectra
$\psi^G_{\gamma(\Cc)}: \bK_{\gl}(F^G\Cc) \ \to \  F^G(\gamma(\Cc)\td{\mS})=F^G(\bK_{\gl}\Cc)$
is a global equivalence by Theorem \ref{thm:G-fixed Gamma-cat}.
In combination with Theorem \ref{thm:F^G vs gamma}, this yields:

\begin{cor}\label{cor:G-fixed} 
For every parsummable category $\Cc$ and every finite group $G$,
the morphism of symmetric spectra
\[ \psi^G_{\gamma(\Cc)}\circ \lambda^G_{\Cc}\td{\mS}\ : \
  \bK_{\gl}(F^G\Cc) \ \to \ F^G(\bK_{\gl}\Cc)\]
is a global equivalence.
\end{cor}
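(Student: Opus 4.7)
The plan is simply to exhibit the composite $\psi^G_{\gamma(\Cc)}\circ \lambda^G_{\Cc}\td{\mS}$ as a composition of two morphisms that have already been shown to be global equivalences, and then invoke two-out-of-three (equivalently, closure of global equivalences under composition).

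First I would recall from Theorem \ref{thm:special G Gamma} that for every parsummable category $\Cc$, the $\Gamma$-parsummable category $\gamma(\Cc)$ is globally special. This verifies the hypothesis needed to apply Theorem \ref{thm:G-fixed Gamma-cat} to $Y=\gamma(\Cc)$, which directly yields that the morphism
\[ \psi^G_{\gamma(\Cc)}\ : \ F^G(\gamma(\Cc))\td{\mS}\ \to \ F^G\bigl(\gamma(\Cc)\td{\mS}\bigr) \ = \ F^G(\bK_{\gl}\Cc) \]
is a global equivalence of symmetric spectra for every finite group $G$.

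Next I would apply the previously established Theorem \ref{thm:F^G vs gamma}, which asserts that the morphism
\[ \lambda^G_{\Cc}\td{\mS}\ : \ \bK_{\gl}(F^G\Cc)\ = \ \gamma(F^G\Cc)\td{\mS}\ \to\ F^G(\gamma(\Cc))\td{\mS} \]
is a global equivalence. (The proof of that theorem in turn used global specialness of both $\gamma(F^G\Cc)$ and $F^G(\gamma(\Cc))$, the latter via Proposition \ref{prop:global special 2 fixed}, together with Proposition \ref{prop:globally special yields strict cat} applied to the morphism $\lambda^G_\Cc$ which is the identity at $1_+$.)

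Finally, the composition of two global equivalences is a global equivalence, so $\psi^G_{\gamma(\Cc)}\circ \lambda^G_{\Cc}\td{\mS}$ is a global equivalence. There is no real obstacle here because the corollary is strictly a packaging step: all substantive work has been absorbed into Theorems \ref{thm:special G Gamma}, \ref{thm:G-fixed Gamma-cat} and \ref{thm:F^G vs gamma}. The only thing worth flagging is that one should not try to prove the statement directly in a single pass, since the two factors implement genuinely different comparisons (reordering box products and fixed points versus commuting fixed points past the Segal-style delooping), and it is cleaner to keep them separate.
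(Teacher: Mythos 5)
Your argument is exactly the paper's: both factors are global equivalences by Theorem \ref{thm:F^G vs gamma} and by Theorem \ref{thm:G-fixed Gamma-cat} (the latter applicable since $\gamma(\Cc)$ is globally special by Theorem \ref{thm:special G Gamma}), and global equivalences compose. Correct, and essentially identical to the paper's proof.
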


\begin{eg}\label{eg:HA is EM}
In Example~\ref{eg:abelian monoid} we discussed the discrete parsummable category
associated with an abelian group $A$.
The $\Gamma$-category $\gamma(A)$ 
is discrete and coincides with the usual construction of a $\Gamma$-set
from an abelian group, compare~\cite[page 293]{segal:cat coho};
the $\M$-action on $A$ and on $\gamma(A)$ is trivial. So the associated symmetric spectrum
\[ \bK_{\gl}A\ =\ \gamma(A)\td{\mS} \ = \ HA \] 
specializes to the standard construction
of a symmetric Eilenberg-Mac\,Lane spectrum (compare~\cite[Example 1.2.5]{HSS}): 
the $n$-th level is
\[ (\bK_{\gl}A)_n \ = \ A[S^n] \ , \]
the reduced $A$-linearization of the $n$-sphere.

Corollary \ref{cor:G-fixed} lets us identify 
the $G$-fixed point spectrum $F^G(H A)$ as the K-theory spectrum
associated with the parsummable category $F^G A=A[\omega^G]^G$. 
Since the $\M$-action on $A$ is trivial, the $G$-action on $A[\omega^G]$
is trivial as well, so 
\[ F^G A\ = \ A[\omega^G]^G  \ = \ A[\omega^G]\ = \ A \ .  \]
This equality is as parsummable categories, 
so the  $G$-fixed point spectrum $F^G(HA)$ is also an
Eilenberg-Mac\,Lane spectrum for the group $A$, independent of $G$.
We conclude that the global homotopy type of the 
global $\Omega$-spectrum $HA$ is that 
of the Eilenberg-Mac\,Lane spectrum of the constant global functor with value $A$.
\end{eg}

\begin{rk}[(Parsummable categories and symmetric monoidal $G$-categories)]\label{rk:parsumcat and GSymMonCat}
  The above fixed point construction for parsummable categories
  is underlying a slightly more refined structure.
  Indeed, for a parsummable category $\Cc$ and a finite group $G$,
  the category $\Cc[\omega^G]$ obtained by reparameterization
  is naturally a $G$-parsummable category,
  i.e., a parsummable category equipped with a
  strict $G$-action through morphisms of parsummable categories.
  More concretely, the $G$-action and the $\M$-action on $\Cc[\omega^G]$
  commute with each other, the distinguished zero object of $\Cc[\omega^G]$
  is $G$-fixed, and the sum functor
  \[    \Cc[\omega^G]\boxtimes \Cc[\omega^G]\ \xra{\ \epsilon\ } \
    (\Cc\boxtimes\Cc)[\omega^G] \ \xra{\ +[\omega^G]}\ \Cc[\omega^G]\]
  is $G$-equivariant.
  The $G$-fixed subcategory of every $G$-parsummable category
  is naturally a parsummable category, and this procedure
  turns $\Cc[\omega^G]$ into $F^G\Cc=\Cc[\omega^G]^G$.

  Since $G$-parsummable categories are just $G$-objects
  in the category of parsummable categories, every functor
  defined on $\parsumcat$ takes  
  $G$-parsummable categories to $G$-objects in the target category.
  In Proposition \ref{prop:symmon_from_parsumcat} below we discuss a functor
  \[ \varphi^* \ : \ \parsumcat \ \to \ \text{SymMonCat}^{\text{strict}} \]
  that turns parsummable categories into symmetric monoidal categories
  with the same underlying category,
  and morphisms of parsummable categories into strict symmetric monoidal functors.
  So $\varphi^*$ extends to a functor from
  $G$-parsummable categories to symmetric monoidal $G$-categories.
  In particular, this construction enhances the $G$-category $\Cc[\omega^G]$
  to a symmetric monoidal $G$-category $\varphi^*(\Cc[\omega^G])$.  
  Shimakawa \cite{shimakawa} defines a $G$-spectrum $\bK_G\Dc$
  for every symmetric monoidal $G$-category $\Dc$.
  I expect that for every parsummable category $\Cc$,
  the underlying genuine $G$-homotopy type of $\bK_{\gl}\Cc$
  agrees with the $G$-homotopy type obtained by applying Shimakawa's construction
  to the symmetric monoidal $G$-category $\varphi^*(\Cc[\omega^G])$.
  Our Theorem \ref{thm:compare 2K} below verifies that this is indeed the
  case when the group $G$ is trivial, and moreover,
  the two constructions have equivalent fixed point spectra for all subgroups of $G$.
  A strategy to compare $(\bK_{\gl}\Cc)_G$ to $\bK_G(\varphi^*(\Cc[\omega^G]))$
  could be to first compare the special $\Gamma$-$G$-category
  $B(\varphi^*(\Cc[\omega^G]))$ used by Shimakawa in his equivariant K-theory construction
  to the special $\Gamma$-$G$-category $\gamma(\Cc)[\omega^G]$,
  and then exploit our Theorem \ref{thm:K_gl C is global Omega} (ii).
\end{rk}

\section{Parsummable categories versus symmetric monoidal categories}
\label{sec:parsumcat_versus_symmetric}

In this section we show that parsummable categories are very closely related
to symmetric monoidal categories, in a way that links the underlying non-equivariant
homotopy type of our global algebraic K-theory spectrum
to the traditional K-theory spectrum of a symmetric monoidal category.
As we explain in Proposition \ref{prop:symmon_from_parsumcat},
every injection $\varphi:\{1,2\}\times\omega\to\omega$
gives rise to a symmetric monoidal structure on the underlying category
of a parsummable category $\Cc$. While the construction involves a choice,
the resulting symmetric monoidal structure is fairly canonical:
different choices of injections lead to strongly  equivalent structures,
and the choices can be parameterized by a contractible category.

Armed with this construction, we perform an important reality check.
We show in Theorem \ref{thm:compare 2K} that
the symmetric spectrum~$\bK_{\gl}\Cc$ is non-equivariantly stably equivalent
to the K-theory spectrum of the symmetric monoidal category $\varphi^*(\Cc)$.
In other words, the underlying non-equivariant homotopy type of $\bK_{\gl}\Cc$
is `the usual K-theory', i.e., the canonical
infinite delooping of the group completion of the classifying space of~$\varphi^*(\Cc)$.

\begin{con}
  Let $\Cc$ be a parsummable category and let $\varphi:\mathbf m\times\omega\to\omega$
  be an injection, for some $m\geq 0$. We define a functor
  \[ \varphi_*\ : \ \Cc^m \ \to \ \Cc  \]
  on objects and morphisms by
  \begin{equation}\label{eq:define_varphi_*}
    \varphi_*(f_1,\dots,f_m)\ = \ \sum_{i=1}^m \ \varphi^i_*(f_i) \ ,    
  \end{equation}
  where $\varphi^i=\varphi(i,-):\omega\to\omega$.
  This definition makes sense because the injections $\varphi^1,\dots,\varphi^m$
  have pairwise disjoint images, so the morphism $(\varphi^1_*(f_1),\dots,\varphi^m_*(f_m))$
  lies in the full subcategory $\Cc^{\boxtimes m}$ of $\Cc^m$.
  If $\psi:\mathbf m\times\omega\to\omega$ is another injection, then a natural isomorphism
 $[\psi,\varphi] :\varphi_*\Longrightarrow  \psi_*$ was defined in \eqref{eq:define [psi,varphi]}.
\end{con}

The next proposition says that 
the natural transformation $[\psi,\varphi]:\varphi_*\Longrightarrow \psi_*$
is distinguished by enjoying an extra layer of functoriality,
namely by its additional naturality for strict morphisms of parsummable categories.

\begin{defn}
  Let $\varphi,\psi:\mathbf m\times\omega\to\omega$ be injections, for some $m\geq 0$.
  A {\em universally natural transformation} from $\varphi$ to $\psi$
  consists of natural transformations $\alpha^\Cc:\varphi^\Cc_*\to\psi^\Cc_*$
  of functors $\Cc^m\to\Cc$, for every parsummable category $\Cc$, with the following property:
  for every morphism of parsummable categories $F:\Dc\to\Cc$ the relation
  \[  \alpha^\Cc \circ F^m \ = \ F\circ \alpha^\Dc\]
  holds as natural transformations from the functor $\varphi_*^\Cc\circ F^m=F\circ\varphi^\Dc_*$
  to the functor $\psi_*^\Cc\circ F^m=F\circ\psi_*^\Dc$.
\end{defn}

The following proposition will be used several times
to establish equalities of certain natural transformations.

\begin{prop}\label{prop:universal naturality}
  For every $m\geq 0$ and
  every pair of injections $\varphi,\psi:\mathbf m\times\omega\to\omega$,
  there is a unique universally natural transformation
  from $\varphi$ to $\psi$, namely $[\psi,\varphi]$ as defined in \eqref{eq:define [psi,varphi]}.
\end{prop}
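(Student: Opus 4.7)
For existence, I will verify directly that $[\psi,\varphi]$ as defined in \eqref{eq:define [psi,varphi]} is universally natural. Given a morphism of parsummable categories $F\colon\Dc\to\Cc$ and an $m$-tuple $(y_1,\dots,y_m)\in\Dc^m$, the morphisms $\{[\psi^i,\varphi^i]^{y_i}\}_{i=1}^m$ are pairwise disjointly supported because the images of $\varphi$ and of $\psi$ are individually disjoint across the index $i$; so the sum defining $[\psi,\varphi]^{y_1,\dots,y_m}$ is legitimate. Additivity of $F$ together with the identity $F([v,u]^y)=[v,u]^{F(y)}$ (valid for any morphism of $\M$-categories, as recorded in the discussion preceding Proposition~\ref{prop:minimal M-axioms}) gives
\[
F([\psi,\varphi]^{y_1,\dots,y_m})\ =\ \sum_{i=1}^m F([\psi^i,\varphi^i]^{y_i})\ =\ \sum_{i=1}^m [\psi^i,\varphi^i]^{F(y_i)}\ =\ [\psi,\varphi]^{F(y_1),\dots,F(y_m)}\ ,
\]
which is precisely universal naturality.

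For uniqueness, let $\alpha$ be any universally natural transformation from $\varphi_*$ to $\psi_*$. My plan is to exhibit, for each tuple of finite subsets $(T_1,\dots,T_m)$ of $\omega$, a universal parsummable category in which $\alpha$ on a universal $m$-tuple is forced, and then transfer to arbitrary $\Cc$ via universal naturality. For each finite $T\subset\omega$, define $\Bc^T$ to be the chaotic $\M$-category whose objects are the injections $T\hookrightarrow\omega$ (with a unique morphism between any pair) and whose $\M$-action is postcomposition $u\cdot\alpha=u\circ\alpha$. One checks that $\Bc^T$ is tame (each object $\alpha$ has support $\alpha(T)$), that its distinguished object $\iota_T\colon T\hookrightarrow\omega$ has support $T$, and that morphisms of $\M$-categories $\Bc^T\to\Cc$ correspond bijectively to objects of $\Cc$ supported on $T$ via $F\mapsto F(\iota_T)$. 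Setting $\Bc=\Bc^{T_1}\sqcup\cdots\sqcup\Bc^{T_m}$ as a disjoint union of $\M$-categories and invoking the free parsummable category adjunction $\mP\dashv\text{forget}$ of Example~\ref{eg:free I}, parsummable morphisms $\mP\Bc\to\Cc$ are in natural bijection with $m$-tuples $(x_1,\dots,x_m)$ in $\Cc$ satisfying $\supp(x_i)\subseteq T_i$.

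Inside $\mP\Bc$, consider the universal tuple $(b_1,\dots,b_m)$ obtained by applying the unit $\Bc\to\mP\Bc$ to $\iota_{T_1},\dots,\iota_{T_m}$. The standard description of morphisms in the symmetric algebra $\mP\Bc=\coprod_{n\ge 0}(\Bc^{\boxtimes n})/\Sigma_n$ shows that a morphism between two formal sums $\sum_i\eta(a_i)$ and $\sum_i\eta(a'_i)$ is given, up to simultaneous reindexing, by a permutation $\sigma\in\Sigma_m$ together with $\Bc$-morphisms $a_i\to a'_{\sigma(i)}$. Since $\Bc$ has $m$ pairwise disjoint components $\Bc^{T_i}$ (separate copies, irrespective of coincidences among the $T_i$), and $a_i=\varphi^i\circ\iota_{T_i}$ lies in the $i$-th copy while $a'_j=\psi^j\circ\iota_{T_j}$ lies in the $j$-th, the permutation $\sigma$ is forced to be the identity, and the chaotic structure of each $\Bc^{T_i}$ pins down each component morphism uniquely. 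Hence $\alpha^{\mP\Bc}_{b_1,\dots,b_m}$ is uniquely determined, and by existence it equals $[\psi,\varphi]^{\mP\Bc}_{b_1,\dots,b_m}$. Finally, for any parsummable category $\Cc$ and any $m$-tuple $(x_1,\dots,x_m)\in\Cc^m$, setting $T_i=\supp(x_i)$ and letting $F\colon\mP\Bc\to\Cc$ be the parsummable morphism with $F(b_i)=x_i$, universal naturality yields
\[
\alpha^\Cc_{x_1,\dots,x_m}\ =\ F(\alpha^{\mP\Bc}_{b_1,\dots,b_m})\ =\ F([\psi,\varphi]^{\mP\Bc}_{b_1,\dots,b_m})\ =\ [\psi,\varphi]^\Cc_{x_1,\dots,x_m}\ .
\]

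The main technical obstacle is the detailed description of morphisms in the free parsummable category $\mP\Bc$—specifically the claim that a morphism between two formal sums of generators consists of a permutation of summands plus componentwise $\Bc$-morphisms, modulo reindexing. While this is the natural analogue of the classical structure theorem for symmetric algebras in symmetric monoidal categories, the interplay of the $\Sigma_n$-quotient with the disjointly-supported constraint defining $\Bc^{\boxtimes n}$ deserves careful justification; an alternative, more abstract route would be to exhibit sufficiently many endo-$\M$-functors of $\Bc$ to rigidify $\alpha^{\mP\Bc}$ by naturality alone, thereby avoiding the explicit combinatorics of the symmetric algebra.
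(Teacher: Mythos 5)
Your proof takes essentially the same route as the paper's: your $\Bc^T$ is exactly the paper's representing $\M$-category $\Ic_A=E I(A,\omega)$, and both arguments reduce uniqueness to the statement that the hom-set between $\varphi_*(\iota_1,\dots,\iota_m)$ and $\psi_*(\iota_1,\dots,\iota_m)$ in the free parsummable category on $\Ic_{A_1}\amalg\cdots\amalg\Ic_{A_m}$ is a singleton, then transfer via universal naturality. The technical point you flag at the end --- the precise description of morphisms in the $\Sigma_m$-quotient --- is the one place where the paper's proof is cleaner: instead of invoking a general structure theorem for symmetric-algebra morphisms, it uses that the box product distributes over disjoint unions, so the degree-$m$ homogeneous piece decomposes into summands and the two relevant objects sit inside the single summand $\Ic_{A_1}\boxtimes\cdots\boxtimes\Ic_{A_m}$, which is chaotic; this pins down the morphism without any global analysis of the quotient category.
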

\begin{proof}
  We only have to show the uniqueness, for which we use a representability argument.
  We let $A$ be a finite subset of $\omega$, and we write $\Ic_A=E I(A,\omega)$
  for the chaotic category with object set $I(A,\omega)$, the set of injections
  from $A$ to $\omega$; the monoidal category $\M$ acts on $\Ic_A$ by postcomposition.
  Every injection $A\to \omega$ is supported on its image, so the $\M$-category $\Ic_A$ is tame.

  We let $\iota_A:A\to \omega$
  denote the inclusion, which is an object of $\Ic_A$;
  then for every $\M$-category $\Cc$, evaluation at $\iota_A$ is a bijection
  \[ \M\cat(\Ic_A,\Cc) \ \xra{\ \iso \ } \
    \Cc^{\leq A}\ = \ \{x\in \text{ob}(\Cc)\ : \ \supp(x)\subseteq A\}\ .\]
  In other words: the $\M$-category $\Ic_A$ represents the functor of taking
  the set of objects that are supported on $A$. 

  Now we consider a parsummable category $\Cc$ and
  an $m$-tuple $(x_1,\dots,x_m)$ of objects of $\Cc$.
  We choose finite subsets $A_1,\dots,A_m$ of $\omega$ such that $x_i$ is supported on $A_i$.
  The category of tame $\M$-categories has coproducts, given by disjoint unions.
  So there is unique morphism of $\M$-categories 
  \[ \bar x\ : \ \Ic_{A_1}\amalg \dots \amalg \Ic_{A_m}\ \to \ \Cc \]
  such that $\bar x(\iota_j)=x_j$ for all $j=1,\dots,m$, where 
  $\iota_j:A_j\to\omega$ is the inclusion, sitting in the $j$-th summand of the disjoint union.
  We let
  \[ x_\sharp\ : \ \mP[A_1,\dots,A_m]\ = \ \mP(\Ic_{A_1}\amalg \dots \amalg \Ic_{A_m})\ \to \ \Cc \]
  be the unique extension of $\bar x$ to a morphism of parsummable categories,
  where the source is the free parsummable category generated by
  $\Ic_{A_1}\amalg \dots \amalg \Ic_{A_m}$, as discussed in Example \ref{eg:free_parsummable}.

  We claim that there is a unique morphism in the category $\mP[A_1,\dots,A_m]$
  from the object $\varphi_*(\iota_1,\dots,\iota_m)$
  to the object $\psi_*(\iota_1,\dots,\iota_m)$,
  namely the morphism $[\psi,\varphi]^{\iota_1,\dots,\iota_m}$.
  Indeed, the objects $\varphi_*(\iota_1,\dots,\iota_m)$
  and $\psi_*(\iota_1,\dots,\iota_m)$ belong to the homogeneous summand of the free parsummable category
  of degree $m$. Since the box product of tame $\M$-categories distributes
  over disjoint unions, the homogeneous degree $m$ summand also breaks up as a disjoint union,
  and $\varphi_*(\iota_1,\dots,\iota_m)$ and $\psi_*(\iota_1,\dots,\iota_m)$ lie in the summand
  \[ (\Sigma_m\times (\Ic_{A_1}\boxtimes\dots\boxtimes\Ic_{A_m}))/\Sigma_m \ = \
    \Ic_{A_1}\boxtimes\dots\boxtimes\Ic_{A_m}  \ .\]
  This particular summand of $\mP[A_1,\dots,A_m]$ is a chaotic category,
  which proves the claim.
  
  Now we let $\alpha=\{\alpha^\Cc:\varphi_*\Longrightarrow\psi_*\}_\Cc$ be
  a universally natural transformation.
  As explained above, for every $m$-tuple $(x_1,\dots,x_m)$ of objects of
  a parsummable category $\Cc$, there is a morphism of parsummable categories
  $x_\sharp: \mP[A_1,\dots,A_m]\to \Cc$
  for suitable finite subsets $A_1,\dots,A_m$ of $\omega$,
  satisfying $x_\sharp(\iota_j)=x_j$ for all $j=1,\dots,m$.
  By the uniqueness property of the previous paragraph, we must have
  $\alpha^{\mP[A_1,\dots,A_m]}_{\iota_1,\dots,\iota_m}=[\psi,\varphi]^{\iota_1,\dots,\iota_m}$.
  The universal naturality of $\alpha$ implies that
  \[  \alpha^\Cc \circ x_\sharp^m \ = \ x_\sharp\circ \alpha^{\mP[A_1,\dots,A_m]}\ : \
    x_\sharp\circ \varphi_* \ \Longrightarrow \ x_\sharp\circ \psi_*\ : \
    \mP[A_1,\dots,A_m]^m \ \to \ \Cc\ .\] 
    In particular,
  \begin{align*}
    \alpha^\Cc_{x_1,\dots,x_m}\
    &= \  \alpha^\Cc_{x_\sharp(\iota_1),\dots,x_\sharp(\iota_m)}\ = \
      (\alpha^\Cc\circ x_\sharp^m)_{\iota_1,\dots,\iota_m}  \\
    &= \ (x_\sharp\circ\alpha^{\mP[A_1,\dots,A_m]})_{\iota_1,\dots,\iota_m}\
    = \ x_\sharp(\alpha^{\mP[A_1,\dots,A_m]}_{\iota_1,\dots,\iota_m})\\
    &= \ x_\sharp([\psi,\varphi]^{\iota_1,\dots,\iota_m})\
    = \  [\psi,\varphi]^{x_\sharp(\iota_1),\dots,x_\sharp(\iota_m)}\ = \ [\psi,\varphi]^{x_1,\dots,x_m}\ .
  \end{align*}
  This completes the proof that the given universal natural transformation $\alpha$
  coincides with $[\psi,\varphi]$.
\end{proof}

\begin{con}[(From parsummable categories to symmetric monoidal categories)]\label{con:parsum2symmetric}
We construct a symmetric monoidal category from a parsummable category.
The construction depends on a choice of injection $\varphi:\mathbf 2\times\omega\to\omega$;
for different choices of~$\varphi$, these structures are different, but equivalent,
as we explain below.
We introduce associativity, symmetry and unit isomorphisms that enhance the functor
\[ \varphi_* \ : \ \Cc\times \Cc \ \to \ \Cc \]
defined in \eqref{eq:define_varphi_*} to a symmetric monoidal structure
with unit object 0, the distinguished object of $\Cc$.
The associativity isomorphism for three objects~$x,y$ and $z$ of $\Cc$ is given by
\[ \alpha_{x,y,z} \ =\ [ \varphi(1+\varphi),\varphi(\varphi+1)]^{x,y,z}\
   \ : \  \varphi_*(\varphi_*(x,y),z)\ \xra{\ \iso \ } \ \varphi_*(x,\varphi_*(y,z))  \ ,\]
 an instance of the natural isomorphism  \eqref{eq:define [psi,varphi]}.

 We let $t=(1\ 2)\times\omega$ be the involution of $\mathbf 2\times \omega$ 
 that interchanges 1 and 2 in the first factor. 
 The symmetry isomorphism for two objects $x$ and $y$ of $\Cc$ is then defined as
 \[ \tau_{x,y}\ =\ [\varphi t,\varphi]^{x,y}  \ : \ \varphi_*(x,y)\  \xra{\ \iso\ } \
   (\varphi t)_*(x,y)\  = \ \varphi_*(y,x) \ .\]
 Finally, the right unit isomorphism for $x$ is given by
 \[ [\varphi^1,1]^x\ : \ x \ \xra{\ \iso\ } \ \varphi^1_*(x)
   \ = \ \varphi_*(x,0) \ ,\]
where $\varphi^1=\varphi(1,-)$.\end{con}

Proposition \ref{prop:universal naturality} shows that in fact,
$\alpha$ is the only universally natural transformation from
$\varphi(\varphi+1)$ to $\varphi(1+\varphi)$,
that $\tau$ is the only universally natural transformation 
from $\varphi$ to  $\varphi t$,
and that $[\varphi(1,-),1]$ is the only universally natural transformation 
from the identity of $\omega$ to $\varphi(1,-)$.
As we show in the proof of the next proposition,
the uniqueness of universally natural transformations will also take care
of the coherence constraints of a symmetric monoidal structure.

\begin{prop}\label{prop:symmon_from_parsumcat} 
Let $\Cc$ be a parsummable category.
  \begin{enumerate}[\em (i)]
  \item 
    For every injection $\varphi:\mathbf 2\times\omega\to\omega$,
    the functor $\varphi_*$ and the coherence isomorphisms defined above
    make the underlying category of~$\Cc$ into a symmetric monoidal category 
    with unit object~0. We denote this symmetric monoidal category by $\varphi^*(\Cc)$.  
  \item For all injections $\varphi,\psi:\mathbf 2\times\omega\to\omega$, 
    the natural isomorphism $[\psi,\varphi]:\varphi_*\Longrightarrow\psi_*$
    makes the identity functor of~$\Cc$ into a strong symmetric monoidal functor
    from the symmetric monoidal category~$\varphi^*(\Cc)$
    to the symmetric monoidal category~$\psi^*(\Cc)$.
  \end{enumerate}
\end{prop}
\begin{proof}
  (i) We need to verify the commutativity of various coherence diagrams.
  Given four objects~$w,x,y$ and $z$ of $\Cc$, we consider the following pentagon:
  \[
    \xymatrix@C=-8mm{
      && \varphi_*(\varphi_*(\varphi_*(w,x), y),z) \ar[dll]_{\varphi_*(\alpha_{w,x,y},z)}
      \ar[drr]^{\alpha_{\varphi_*(w,x),y,z}}\\
      \varphi_*(\varphi_*(w,\varphi_*(x,y)),z) \ar[dr]_{\alpha_{w,\varphi_*(x,y),z}\ }
      &&&& \varphi_*(\varphi_*(w,x),\varphi_*(y,z))\ar[dl]^{\quad \alpha_{w,x,\varphi_*(y,z)}}\\
      & \varphi_*(w,\varphi_*(\varphi_*(x,y),z)) \ar[rr]_{\varphi_*(w, \alpha_{x,y,z})} &&
      \varphi_*(w,\varphi_*(x,\varphi_*(y,z)))
    }  \]
  As the objects and the parsummable category vary, both composites around the pentagon
  are universally natural transformations from 
  $\varphi(\varphi+1)(\varphi+1+1)$ to $\varphi(1+\varphi)(1+1+\varphi)$;
  so they coincide by Proposition \ref{prop:universal naturality}. 
  
  The symmetry condition is the relation $\tau_{y,x}=\tau_{x,y}^{-1}$
  for all objects $x$ and $y$ of $\Cc$. Again, for varying $x, y$ and $\Cc$,
  these two morphisms are universally natural transformations with the same
  source and target, so they agree by Proposition \ref{prop:universal naturality}. 

  Coherence between associativity and symmetry isomorphisms means that
  the two composites from the top to the bottom of the hexagon
  \[ 
    \xymatrix@C=18mm@R=7mm{
      & \varphi_*(\varphi_*(x,y),z) \ar[dr]^-{\alpha_{x,y,z}}
      \ar[dl]_{\varphi_*(\tau_{x,y},z)}  \\ 
      \varphi_*(\varphi_*(y,x),z) \ar[d]_-{\alpha_{y,x,z}} && 
      \varphi_*(x,\varphi_*(y,z)) \ar[d]^-{\tau_{x,\varphi_*(y,z)}}\\ 
      \varphi_*(y,\varphi_*(x,z)) \ar[dr]_-{\varphi_*(y,\tau_{x,z})}& &
      \varphi_*(\varphi_*(y,z),x) \ar[ld]^{\alpha_{y,z,x}} \\
      & \varphi_*(y,\varphi_*(z,x))}
  \]
  must be equal; the composite once around the entire hexagon is
  a universally natural self-transformation of $\varphi(\varphi+1)$,
  so it must be the identity by Proposition \ref{prop:universal naturality}. 

The coherence condition for the unit morphisms requires the following square
to commute:
\[ \xymatrix@C=25mm{ 
\varphi_*(x,y) \ar[r]^-{\varphi_*( [\varphi^1,1]^x,y)}
\ar[d]_{\varphi_*(x,[\varphi^1,1]^y)} & \varphi_*(\varphi_*(x,0),y)\ar[d]^{\alpha_{x,0,y}} \\
 \varphi_*(x,\varphi_*(y,0))\ar[r]_-{\varphi_*(x,\tau_{y,0})} & \varphi_*(x,\varphi_*(0,y))
} \]
Both composites are universally natural transformations from 
$\varphi$ to  $\varphi(1+\varphi(2,-))$,
so they coincide by Proposition \ref{prop:universal naturality}. 

(ii) We have to show the compatibility of $[\psi,\varphi]$
with the unit, associativity and symmetry isomorphism of the two symmetric
monoidal structures. For the symmetry isomorphisms this means the commutativity
of the diagram
\[ \xymatrix@C=25mm{ 
    \varphi_*(x,y) \ar[d]_{[\psi,\varphi]^{x,y}}
    \ar[r]^-{[\varphi t,\varphi]^{x,y}} & 
(\varphi t)_*(x,y) =  \varphi_*(y,x) \ar[d]^{[\psi,\varphi]^{y,x}} \\
\psi_*(x,y) \ar[r]_-{[\psi t,\psi]^{x,y}} & (\psi t)_*(x,y) = \psi_*(y,x)
} \]
for all objects~$x$ and $y$ of~$\Cc$.
As the objects and the parsummable category vary, both composites around the square
are universally natural transformations from $\varphi$ to $\psi t$;
so they coincide by Proposition \ref{prop:universal naturality}. 
At this point, the reader probably got the message
that all relevant coherence diagrams commute by uniqueness
of universal natural transformations;
this principle also yields the compatibility
of $[\psi,\varphi]$ with the unit and associativity isomorphism,
and we omit the remaining details.
\end{proof}

\begin{rk}\label{rk:constractible choice}
Proposition \ref{prop:symmon_from_parsumcat} 
can be interpreted as saying that while the passage from parsummable categories
to symmetric monoidal categories involves a choice, the choices live in a contractible category.
Let us write SymMonCat$^\text{strong}$ for the category of small symmetric monoidal categories
and strong symmetric monoidal functors. Then Construction \ref{con:parsum2symmetric}
provides a functor
\begin{equation}\label{eq:functorial_varphi}
 E I(\mathbf 2\times\omega,\omega)\times \parsumcat\ \to \ \text{SymMonCat}^{\text{strong}}   
\end{equation}
that sends the object $(\varphi,\Cc)$ on the left
to the symmetric monoidal category $\varphi^*(\Cc)$.
Since the chaotic category $E I(\mathbf 2\times\omega,\omega)$ is a contractible
groupoid, this is almost as a good as an honest functor
from parsummable categories to symmetric monoidal categories.

In one of the two input variables, the construction is in fact a little better:
for every injection $\varphi:\mathbf 2\times\omega\to\omega$ and
every morphism of parsummable categories $\Phi:\Cc\to\Dc$,
the induced functor $\varphi^*(\Phi):\varphi^*(\Cc)\to\varphi^*(\Dc)$
is even {\em strictly} symmetric monoidal (as opposed to strongly monoidal).
On the other hand, one can generalize the notion of morphism of parsummable categories
to a lax version that only commutes with the $\M$-action, the zero object and the
sum functor up to specified and suitably coherent natural isomorphisms.
The functor $\varphi^*$ can then be extended to turn such lax morphisms
into strong symmetric monoidal functors.

Also, the functor \eqref{eq:functorial_varphi} can be slightly extended to a strict 2-functor
between 2-categories, where  $E I(\mathbf 2\times\omega,\omega)$ has only identity 2-morphisms,
$\text{SymMonCat}^{\text{strong}}$ has the monoidal transformations as 2-morphisms,
and the 2-morphisms of parsummable categories are
those natural transformations $\alpha:\Phi\Longrightarrow \Psi$ between
morphisms of parsummable categories that preserve the $\M$-actions,
the zero objects and the sum functors in the following sense:
the equality
\[ \alpha \diamond \text{act}_\Cc = \text{act}_\Dc\circ (\M\times \alpha)
  \ : \ \Phi\circ\text{act}_\Cc =\text{act}_\Dc\circ (\M\times \Phi)
\ \Longrightarrow \ \Psi\circ\text{act}_\Cc =\text{act}_\Dc\circ (\M\times \Psi) \]
holds as natural transformations between functors $\M\times\Cc\to\Dc$;
the morphism $\alpha_0:\Phi(0)\to\Psi(0)$ is the identity of $\Phi(0)=\Psi(0)=0$;
and the relation
\[ \alpha \diamond +_\Cc = +_\Dc\circ (\alpha\times \alpha)
  \ : \ \Phi\circ +_\Cc = +_\Dc\circ (\Phi\times \Phi)
\ \Longrightarrow \ \Psi\circ +_\Cc = +_\Dc\circ (\Psi\times \Psi) \]
holds as natural transformations between functors $\Cc\times\Cc\to\Dc$.
\end{rk}

\begin{con}[(Comparing additions)]
  We now have two essentially different ways to `add' objects and morphisms
  in a parsummable category $\Cc$:
  \begin{itemize}
  \item By definition, a parsummable category comes with a sum operation for
    pairs of disjointly supported objects and morphisms of $\Cc$. This operation is only
    partially defined; but whenever it is defined, the sum operation
    is strictly unital, associative and commutative.
  \item Every injection $\varphi:\mathbf 2\times\omega\to\omega$
    gives rise to a symmetric monoidal structure on the category $\Cc$
    with monoidal product $\varphi_*:\Cc\times\Cc\to\Cc$, compare Proposition \ref{prop:symmon_from_parsumcat}.
    This kind of sum operations is everywhere defined, but is only unital, associative
    and commutative up to specific natural isomorphisms arising from the parsummable structure.
  \end{itemize}
  We will now argue that whenever both kinds of sum operations are defined,
  they are coherently isomorphic in a specific way, also arising from the parsummable structure.
  This fact is crucial for showing that
  the underlying non-equivariant homotopy type of the global K-theory spectrum $\bK_{\gl}\Cc$
  is stably equivalent to the K-theory spectrum
  of the symmetric monoidal category $\varphi^*(\Cc)$,
  compare Theorem \ref{thm:compare 2K} below.
  Given two disjointly supported objects $x$ and $y$ of $\Cc$,
  we define a preferred isomorphism by
  \begin{equation}\label{eq:define_varphi_sharp}
    \varphi^\sharp_{x,y} \ = \ [1,\varphi^1]^x+[1,\varphi^2]^y
    \ : \ \varphi_*(x,y) \ = \  \varphi^1_*(x)+\varphi^2_*(y)\ \to \ x + y\ .
  \end{equation}
\end{con}

\begin{prop}\label{prop:coherence for sharp}
  Let $x, y$ and $z$ be objects of a parsummable category $\Cc$ with pairwise disjoint supports.
  \begin{enumerate}[\em (i)]
  \item The morphism
    \[ \varphi^\sharp_{x,0}\ : \ \varphi^1_*(x)\ = \ \varphi_*(x,0) \ \to \ x+0 \ = \ x\]
    is the isomorphism $[1,\varphi^1]^x$.
  \item The composite
    \[  \varphi_*(y,x)\ \xra{\tau_{y,x}}\  \varphi_*(x,y)\
      \xra{\varphi_{x,y}^\sharp}\  x+y \]
    agrees with $\varphi^\sharp_{y,x}:\varphi_*(y,x)\to y+x$.
  \item 
    The following diagram commutes:
    \[ \xymatrix@C=8mm{ \varphi_*(\varphi_*(x,y),z)
        \ar[rr]^-{\alpha_{x,y,z}} \ar[d]_{\varphi_*(\varphi^\sharp_{x,y},z)} &&
        \varphi_*(x,\varphi_*(y,z)) \ar[d]^{\varphi_*(x,\varphi^\sharp_{y,z})} \\
        \varphi_*(x+y,z) \ar[dr]_-{\varphi^\sharp_{x+y,z}} && \varphi_*(x, y + z) \ar[dl]^{\varphi^\sharp_{x,y+z}}\\
        & x+y+z }
    \]
  \item Let $f:x\to x'$ and $g:y\to y'$ be morphisms such that the supports of
    $x$ and $y$ are disjoint, and the supports of $x'$ and $y'$ are disjoint.
    Then the following square commutes:
    \[ \xymatrix@C=18mm{ \varphi_*(x,y) \ar[r]^-{\varphi_*(f,g)} \ar[d]_{\varphi^\sharp_{x,y}} &
        \varphi_*(x',y') \ar[d]^{\varphi^\sharp_{x',y'}} \\
       x+y \ar[r]_-{f+g} & x'+y' }
    \]
  \end{enumerate}
\end{prop}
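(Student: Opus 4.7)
The plan is to verify each of the four statements by unwinding the definitions \eqref{eq:define_varphi_sharp}, \eqref{eq:define [psi,varphi]}, and \eqref{eq:define_varphi_*}, and exploiting three structural features of a parsummable category: the $\M$-equivariance of the sum functor $+\colon\Cc\boxtimes\Cc\to\Cc$ (which permits decomposing $u_*(a+b)$ as $u_*(a)+u_*(b)$ and similarly expressing $[v,u]^{a+b}$ componentwise whenever the sum is defined), the strict unitality, associativity, and commutativity of the partial sum (which hold for morphisms as well as objects), and the basic identities \eqref{eq:basic_relation} and \eqref{eq:assoc_2} for the morphisms $[v,u]$.

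Part (i) should be immediate: since $0$ is $\M$-fixed with empty support, Proposition \ref{prop:finite support} (ii) forces $\varphi^2_*(0)=0$ and $[1,\varphi^2]^0=\Id_0$, and strict unitality of $+$ then collapses \eqref{eq:define_varphi_sharp} to $[1,\varphi^1]^x$. For part (ii) I would expand $\tau_{y,x}=[\varphi t,\varphi]^{y,x}$ using \eqref{eq:define [psi,varphi]} as $[\varphi^2,\varphi^1]^y+[\varphi^1,\varphi^2]^x$, and then compose with $\varphi^\sharp_{x,y}=[1,\varphi^1]^x+[1,\varphi^2]^y$; bifunctoriality of $+$ splits the composite summand-wise, the composition law $[1,\varphi^j]\circ[\varphi^j,\varphi^i]=[1,\varphi^i]$ from \eqref{eq:basic_relation} evaluates each summand, and strict commutativity then reorders the result into $\varphi^\sharp_{y,x}$.

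The main technical content is part (iii). My plan is to rewrite both composites around the pentagon as sums of three morphisms of the form $[1,u]^a$ for suitable injections $u$ and objects $a\in\{x,y,z\}$. On the clockwise side I would first use $\M$-equivariance of $+$ to identify $\varphi^1_*(\varphi_*(x,y))$ with $\varphi^1\varphi^1_*(x)+\varphi^1\varphi^2_*(y)$, decompose $\varphi_*(\varphi^\sharp_{x,y},z)$ componentwise using \eqref{eq:assoc_2} (obtaining $[\varphi^1,\varphi^1\varphi^1]^x+[\varphi^1,\varphi^1\varphi^2]^y+\Id_{\varphi^2_*(z)}$), and then post-compose with $\varphi^\sharp_{x+y,z}=[1,\varphi^1]^x+[1,\varphi^1]^y+[1,\varphi^2]^z$; three applications of $[a,b]\circ[b,c]=[a,c]$ yield the value $[1,\varphi^1\varphi^1]^x+[1,\varphi^1\varphi^2]^y+[1,\varphi^2]^z$. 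On the counterclockwise side, starting from the explicit expansion $\alpha_{x,y,z}=[\varphi^1,\varphi^1\varphi^1]^x+[\varphi^2\varphi^1,\varphi^1\varphi^2]^y+[\varphi^2\varphi^2,\varphi^2]^z$ (from \eqref{eq:define [psi,varphi]}), a parallel reduction should produce the same sum. The main obstacle I anticipate is purely bookkeeping: checking at each intermediate stage that the summands remain disjointly supported so that the partial sum functor is defined, and carefully tracking which composite injection labels each $[-,-]$-term after rewrites via \eqref{eq:assoc_2}.

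Part (iv) is the naturality of $\varphi^\sharp$ and reduces to the naturality in the object variable of the transformations $[1,\varphi^i]\colon\varphi^i_*\Rightarrow\Id_\Cc$ applied at $f$ and $g$. By \eqref{eq:define_varphi_*} we have $\varphi_*(f,g)=\varphi^1_*(f)+\varphi^2_*(g)$; post-composing with $\varphi^\sharp_{x',y'}$ splits via bifunctoriality of $+$ into a sum of two post-compositions, each of which by naturality of $[1,\varphi^i]$ equals the corresponding pre-composition with $[1,\varphi^i]$ followed by $f$ or $g$. A final application of bifunctoriality of $+$ then reassembles the result as $(f+g)\circ\varphi^\sharp_{x,y}$.
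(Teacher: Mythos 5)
Your approach is correct, and the component-wise computations you sketch do go through; the main worry you flag -- that the summands might not remain disjointly supported through the rewrites -- resolves because all the composite injections involved ($\varphi^1\varphi^1$, $\varphi^1\varphi^2$, $\varphi^2$, and $\varphi^1$ applied to the already-disjointly-supported $x$ and $y$) carry the supports into pairwise disjoint subsets of $\omega$. That said, this is a genuinely different route from the paper's. The paper deduces all four parts from the uniqueness of universally natural transformations (Proposition~\ref{prop:universal naturality}): each composite around every diagram is such a transformation between the same pair of functors built from $\varphi_*$'s, hence they coincide. The obstacle you correctly anticipate for part~(iii) -- that the pentagon lives only on $\Cc^{\boxtimes 3}$ rather than $\Cc^3$, so the uniqueness statement does not apply directly -- is exactly what the paper addresses with a trick: it chooses an auxiliary injection $\mu\colon\mathbf 3\times\omega\to\omega$ adapted to the given triple (with $\mu^i$ the identity on the support of the $i$-th object), replaces the pentagon of $\varphi^\sharp$'s by a pentagon of universally natural transformations defined on all of $\Cc^3$, commutes that by uniqueness, and then evaluates at $(x,y,z)$ using Proposition~\ref{prop:finite support}~(ii) to recover the original diagram. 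Your direct unwinding avoids the representability apparatus entirely, which makes it more elementary, at the cost of longer summand-by-summand bookkeeping; the paper's argument is shorter and disposes of all four parts by essentially the same one-line mechanism once Proposition~\ref{prop:universal naturality} is in hand.
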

\begin{proof}
  We deduce all the coherence properties from the uniqueness of universally natural transformations
  (see Proposition \ref{prop:universal naturality}).
  We spell out the argument in detail in the most complicated case of the associativity
  relation (iii), and we leave the other arguments to the reader.

  The functors and natural transformations occurring in the diagram of part (iii)
  are {\em not} defined on the category $\Cc^3$,
  but rather on its full subcategory $\Cc^{\boxtimes 3}$;
  so we cannot directly apply uniqueness of universally natural transformations.
  The trick is to apply  Proposition \ref{prop:universal naturality}
  to injections that depend on the given triple of disjointly supported objects, as follows.  
  Given objects $x, y$ and $z$ of $\Cc$ with pairwise disjoint supports,
  we choose an injection $\mu:\mathbf 3\times\omega\to\omega$
  such that $\mu^1=\mu(1,-)$ is the identity on $\supp(x)$,
  $\mu^2$ is the identity on $\supp(y)$, and $\mu^3$ is the identity on $\supp(z)$.
  We define $\mu^{12}$ as the restriction of $\mu$ to $\mathbf 2\times\omega$,
  and we set $\mu^{23}(i,j)=\mu(i+1,j)$.
  Then the following diagram of functors $\Cc^3\to \Cc$ and natural transformations
  commutes by uniqueness of universally natural transformations:
    \[ \xymatrix@C=10mm{ (\varphi(\varphi+1))_*
        \ar@{=>}[rr]^-{[\varphi(1+\varphi),\varphi(\varphi+1)]} \ar@{=>}[d]_{[\varphi(\mu^{12}+1),\varphi(\varphi+1)]} &&
        (\varphi(1+\varphi))_* \ar@{=>}[d]^{[\varphi(1+\mu^{23}),\varphi(1+\varphi)]} \\
        (\varphi(\mu^{12}+1))_*\ar@{=>}[dr]_-{ [\mu,\varphi(\mu^{12}+1)]\ } &&
        (\varphi(1+\mu^{23}))_* \ar@{=>}[dl]^{\ [\mu,\varphi(1+\mu^{23})]}\\
        & \mu_*}
    \]
    We claim that evaluating this commutative diagram at the object $(x,y,z)$ of $\Cc^3$
    yields the desired diagram of part (iii).
    Indeed,
  \[
    \mu_*(x,y,z)\ = \ \mu^1_*(x)+\mu^2_*(y)+\mu^3_*(z)\ = \ x+y+z
  \]
  by Proposition \ref{prop:finite support} (ii).
  Similarly, $\mu^{12}_*(x,y)=x+y$ and  $\mu^{23}_*(y,z)=y+z$. Also,
  \[ \varphi^\sharp_{x,y}\ = \ [1,\varphi^1]^x+[1,\varphi^2]^y \ = \
    [\mu^1,\varphi^1]^x+[\mu^2,\varphi^2]^y \ = \ [\mu^{1 2},\varphi]^{x,y}\ ,
  \]
  again by Proposition \ref{prop:finite support} (ii),
  and similarly for the remaining morphisms.
\end{proof}
  
Our next aim is to show that the underlying non-equivariant homotopy type of $\bK_{\gl}\Cc$
agrees with the K-theory of the symmetric monoidal category $\varphi^*(\Cc)$.
We recall the reference model.

\begin{con}[(K-theory of symmetric monoidal categories)]\label{con:Gamma from symmon}
  We let $\Dc$ be a symmetric monoidal category with monoidal product $\oplus$, unit object 0,
  associativity isomorphism $\alpha$, symmetry isomorphism $\tau$, and right unit isomorphism $\rho$.
  We shall now recall how this data gives rise to a $\Gamma$-category $\Gamma(\Dc)$,
  and hence to a K-theory spectrum.
  The construction goes back to Segal who introduces it for symmetric monoidal
  structures given by a categorical coproduct in \cite[page 294]{segal:cat coho};
  the definition for general symmetric monoidal categories appears
  in \cite[Definition 2.1]{shimada-shimakawa:delooping}.
  
  An object of the category  $\Gamma(\Dc)(n_+)$ consists of the following data:
  \begin{itemize}
  \item an object $x_A$ of $\Dc$ for every subset $A$ of $\mathbf n=\{1,\dots,n\}$, and
  \item an isomorphism
    \[ \psi_{A,B} \ : \ x_A\oplus x_B \ \xra{\iso}   x_{A\cup B}\]
    for every pair of disjoint subsets $A$ and $B$ of $\mathbf n$.
  \end{itemize}
  This data is required to satisfy the following coherence conditions:
  \begin{itemize}
  \item the object $x_\emptyset$ is the unit object 0 of the monoidal structure,
    and for every subset $A$ of $\mathbf n$ the morphism
    \[ \psi_{A,\emptyset}\ : \ x_A\oplus 0 \ =\ x_A\oplus x_\emptyset \ \to\    x_A\]
    is the right unit isomorphism $\rho_A$;
  \item for every pair of disjoint subsets $A$ and $B$ of $\mathbf n$, we have
    \[ \psi_{B,A}\ = \ \psi_{A,B}\circ \tau_{x_B,x_A}\ ; \]
  \item for every triple $A, B, C$ of pairwise disjoint subsets of $\mathbf n$,
    the following diagram commutes:
    \[ \xymatrix@C=18mm{ (x_A\oplus x_B)\oplus x_C \ar[r]^-{\alpha_{x_A,x_B,x_C}} \ar[d]_{\psi_{A,B}\oplus x_C} &
        x_A\oplus (x_B\oplus x_C) \ar[r]^-{x_A\oplus\psi_{B,C}} & x_A\oplus x_{B\cup C} \ar[d]^{\psi_{A,B\cup C}}\\
        x_{A\cup B}\oplus x_C \ar[rr]_-{\psi_{A\cup B,C}} && x_{A\cup B\cup C} }
    \]
  \end{itemize}  
  A morphism in $\Gamma(\Dc)(n_+)$ from an object $(x_A,\psi_{A,B})$ to an object $(x'_A,\psi'_{A,B})$
  consists of morphisms $f_A:x_A\to x'_A$ for all subsets $A$ of $\mathbf n$,
  such that $f_\emptyset$ is the identity, and for every pair of disjoint subsets $A$ and $B$,
  the following square commutes:
  \[ \xymatrix@C=18mm{ x_A\oplus x_B \ar[r]^-{f_A\oplus f_B} \ar[d]_{\psi_{A,B}} &
      x'_A \oplus x'_B \ar[d]^{\psi'_{A,B}}\\
      x_{A\cup B} \ar[r]_-{f_{A\cup B}} & x'_{A\cup B} }
  \]
  If $\lambda:m_+\to n_+$ is a based map, then the functor $\Gamma(\Dc)(\lambda):\Gamma(\Dc)(m_+)\to \Gamma(\Dc)(n_+)$
  sends an object  $(x_A,\psi_{A,B})$ to the object $(y_C,\nu_{C,D})$ defined as
  \[ y_C \ = \ x_{\lambda^{-1}(C)}\text{\qquad and\qquad}
    \nu_{C,D} \ = \ \psi_{\lambda^{-1}(C),\lambda^{-1}(D)}\ .    \]
Because of the compatibility requirement in the definition for morphisms in $\Gamma(\Dc)(n_+)$,
all the morphisms $f_A$ are in fact completely determined by
the morphisms $f_{\{i\}}:x_{\{i\}}\to y_{\{i\}}$ for $1\leq i\leq n$, which can be freely chosen.
Hence for every $n\geq 1$, the functor
\[ P_n \ : \ \Gamma(\Dc)(n_+) \ \to \ \Dc^n \]
that forgets all data except the objects $x_{\{i\}}$ and the morphisms $f_{\{i\}}$ is an equivalence of categories.
So the $\Gamma$-category $\Gamma(\Dc)$ is special.
The $\Gamma$-category $\Gamma(\Dc)$ gives rise
to a $\Gamma$-space $|\Gamma(\Dc)|$ by taking nerves and geometric realization.

The {\em K-theory spectrum} of the symmetric monoidal category $\Dc$ 
is the spectrum  $|\Gamma(\Dc)|(\mS)$ obtained by evaluating
the $\Gamma$-space $|\Gamma(\Dc)|$ on spheres.
Equivalently, we can regard $\Gamma(\Dc)$ as a $\Gamma$-$\M$-category
with trivial $\M$-action, and then apply Construction \ref{con:spectrum from Gamma-M}.
\end{con}

A convenient feature of this K-theory construction is its functoriality
for strong symmetric monoidal functors that preserve the unit object
(and not just for {\em strict}
symmetric monoidal functors). Indeed, suppose that $F:\Cc\to\Dc$ is a
strong symmetric monoidal functor
between symmetric monoidal categories, where
$\eta:0\to F(0)$ is the unit isomorphism and
\[ \mu_{x,y}\ : \ F(x)\oplus F(y)\ \to \ F(x\oplus y) \]
is the structure isomorphism that satisfies the symmetry, associativity and
unit constraints of a strong symmetric monoidal functor \cite[Section XI.2]{maclane-working}.
We say that $F$ {\em strictly preserves the zero object}
if  $F(0)=0$ and $\eta$ is the identity.
In this case, an induced morphism of $\Gamma$-categories
\[ \Gamma(F,\mu)\ : \   \Gamma(\Cc)\ \to \ \Gamma(\Dc) \]
is defined as follows. For $n\geq 0$, the functor
\[ \Gamma(F,\mu)(n_+)\ : \ \Gamma(\Cc)(n_+)\ \to \ \Gamma(\Dc)(n_+) \]
sends an object $(x_A,\psi_{A,B})$ 
to the object $(F(x_A), F(\psi_{A,B})\circ\mu_{x_A,x_B})$.
On morphisms, $\Gamma(F,\mu)(n_+)$ is componentwise application of the functor $F$.

If the functor $F$ is an equivalence of underlying categories,
then $\Gamma(F,\mu)(n_+)$ is again an equivalence by specialness.
So in this case the induced maps $|\Gamma(F,\mu)|(S^m):
|\Gamma(\Cc)|(S^m)\to |\Gamma(\Dc)|(S^m)$ are weak equivalences,
and hence $(F,\mu)$ induces a stable equivalence of K-theory spectra.

\begin{con} 
  We let $\Cc$ be a parsummable category.
  In Construction \ref{con:Gamma from I} we defined a $\Gamma$-category $\gamma(\Cc)$
  with values $\gamma(\Cc)(n_+)=\Cc^{\boxtimes n}$,
  the full subcategory of $\Cc^n$ on the $n$-tuples of disjointly supported objects.
  On the other hand, we showed in Proposition \ref{prop:symmon_from_parsumcat}
  that every injection $\varphi:\mathbf 2\times\omega\to\omega$
  gives rise to a symmetric monoidal structure on the category $\Cc$
  with monoidal product $\varphi_*:\Cc\times\Cc\to\Cc$.
  This symmetric monoidal category $\varphi^*(\Cc)$
  gives rise to another $\Gamma$-category $\Gamma(\varphi^*(\Cc))$
  via Segal's Construction \ref{con:Gamma from symmon}.
  We now construct an equivalence of  $\Gamma$-categories
  \[ \Psi\ : \ \gamma(\Cc) \ \xra{\ \simeq\ } \ \Gamma(\varphi^*(\Cc)) \ . \]
  For $n\geq 0$, the functor $\Psi(n_+):\gamma(\Cc)(n_+)\to\Gamma(\varphi^*(\Cc))(n_+)$
  is essentially given by
  summing up objects and morphisms. More precisely, the functor is defined on objects by
  \[ \Psi(n_+)(x_1,\dots,x_n) \ = \ (y_A)_{A\subset \mathbf n} \ ,\]
  where
  \[ y_A \ = \ {\sum}_{i\in A} x_i    \ . \]
  For every pair of disjoint subsets $A$ and $B$, the objects $y_A$ and $y_B$ then have disjoint supports.
  We extend the collection of $y_A$'s to an object of the category
  $\Gamma(\varphi^*(\Cc))(n_+)$ via the isomorphisms
  \[ \psi_{A,B} \ = \ \varphi_{y_A,y_B}^\sharp\ : \varphi_*(y_A,y_B) \ \xra{\ \iso\ } \
    y_A + y_B \ = \ y_{A\cup B}  \]
  defined in \eqref{eq:define_varphi_sharp}.
  Parts (i)--(iii) of Proposition \ref{prop:coherence for sharp} provide 
  the coherence conditions of Construction \ref{con:Gamma from symmon},
  required to make $(y_A,\psi_{A,B})$ an object of the category $\Gamma(\varphi^*(\Cc))(n_+)$.

  On morphisms, the functor $\Psi(n_+)$ is given by summing, i.e.,
  \[ \Psi(n_+)(f_1,\dots,f_n) \ = \ ( {\sum}_{i\in A} f_i)_{A\subset \mathbf n} \ .\]
  Proposition \ref{prop:coherence for sharp} (iv) shows that these maps
  indeed form a morphism in the category $\Gamma(\varphi^*(\Cc))(n_+)$.
  Since the sum operation is a functor $+:\Cc\boxtimes\Cc\to\Cc$,
  addition of disjointly supported morphisms is compatible with
  composition and identities;
  so we have indeed defined a functor $\Psi(n_+):\gamma(\Cc)(n_+)\to\Gamma(\varphi^*(\Cc))(n_+)$. 
\end{con}

\begin{theorem}\label{thm:compare 2K}
  Let $\Cc$ be a parsummable category, and let $\varphi:\mathbf 2\times\omega\to\omega$ be an injection. 
  \begin{enumerate}[\em (i)]
  \item 
    The functors $\Psi(n_+)$ form a morphism of $\Gamma$-categories
    $\Psi: \gamma(\Cc) \to\Gamma(\varphi^*(\Cc))$.
  \item
    The symmetric spectrum~$\bK_{\gl}\Cc$
    is non-equivariantly stably equivalent to the K-theory spectrum
    of the symmetric monoidal category $\varphi^*(\Cc)$.
  \item
    For every finite group $G$, the fixed point spectrum $F^G(\bK_{\gl}\Cc)$
    is non-equivariantly stably equivalent to the K-theory spectrum
    of the symmetric monoidal category $\varphi^*(F^G\Cc)$.
  \end{enumerate}
\end{theorem}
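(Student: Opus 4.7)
The plan is to prove the three parts in order: part (i) is a direct verification, part (ii) is the main step, and part (iii) is a consequence of (ii) combined with Corollary \ref{cor:G-fixed}.

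For part (i), I would verify naturality of $\Psi$ with respect to an arbitrary based map $\lambda: m_+\to n_+$ by direct inspection. On an object $(x_1,\dots,x_m)$ of $\Cc^{\boxtimes m}$, both composites $\Psi(n_+)\circ\gamma(\Cc)(\lambda)$ and $\Gamma(\varphi^*(\Cc))(\lambda)\circ\Psi(m_+)$ produce a collection indexed by subsets $C\subseteq\mathbf{n}$ whose value at $C$ is the iterated sum $\sum_{i\in\lambda^{-1}(C)}x_i$. The coherence isomorphisms reduce in both cases to the canonical $\varphi^\sharp$-isomorphism of Proposition \ref{prop:coherence for sharp}, applied to disjointly supported sums; compatibility reduces to an associativity statement that follows from iterated application of Proposition \ref{prop:coherence for sharp} (iii). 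The analogous check on morphisms is then straightforward.

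For part (ii), the key observation is that for every $n\geq 0$, the functor $\Psi(n_+): \Cc^{\boxtimes n}\to \Gamma(\varphi^*(\Cc))(n_+)$ is a weak equivalence of categories. Indeed, by the specialness of the $\Gamma$-category $\Gamma(\varphi^*(\Cc))$ recalled in Construction \ref{con:Gamma from symmon}, the projection $P_n$ to $\Cc^n$ is an equivalence, and the composite $P_n\circ\Psi(n_+)$ is precisely the inclusion $\Cc^{\boxtimes n}\hookrightarrow\Cc^n$, which is a weak equivalence by Theorem \ref{thm:box2times} (applied to the trivial group, together with induction on $n$). Hence $|\Psi|:|\gamma(\Cc)|\to|\Gamma(\varphi^*(\Cc))|$ is a levelwise weak equivalence of $\Gamma$-spaces arising from $\Gamma$-simplicial sets, so evaluation on spheres yields a stable equivalence of symmetric spectra from $|\gamma(\Cc)|(\mS)$ to $|\Gamma(\varphi^*(\Cc))|(\mS)$, the K-theory spectrum of $\varphi^*(\Cc)$. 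To connect this with $\bK_{\gl}\Cc$, I would invoke Theorem \ref{thm:K_gl C is global Omega} (ii) for the trivial group $G=e$ and $\Uc=\omega$: the morphisms $a^{\gamma(\Cc)}_e$ and $b_e^{\gamma(\Cc)}$ are non-equivariant stable equivalences linking $\bK_{\gl}\Cc$ through $\gamma(\Cc)\td{\omega,\mS}$ to $|\gamma(\Cc)|[\omega](\mS)$, and for the trivial group the reparameterization of structure maps is trivial, so the latter spectrum is the Segal-style symmetric spectrum associated to the $\Gamma$-space $|\gamma(\Cc)|$.

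For part (iii), I would apply Corollary \ref{cor:G-fixed}, which provides a global equivalence $\bK_{\gl}(F^G\Cc)\to F^G(\bK_{\gl}\Cc)$, in particular a non-equivariant stable equivalence, together with part (ii) applied to the parsummable category $F^G\Cc$. Composing these yields the desired stable equivalence between $F^G(\bK_{\gl}\Cc)$ and the K-theory spectrum of $\varphi^*(F^G\Cc)$. The main obstacle I anticipate is the bookkeeping in part (i): while the agreement of the two composites reduces to iterated application of Proposition \ref{prop:coherence for sharp}, the combinatorics of nested subsets and preimages under $\lambda$ requires careful tracking of the coherence data to ensure that the identifications on the $\psi_{C,D}$ really do match.
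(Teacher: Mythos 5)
Your proposal is correct and follows essentially the same route as the paper: verify $\Psi$ is a morphism of $\Gamma$-categories by the associativity/commutativity of the partial sum, show each $\Psi(n_+)$ is an equivalence using specialness (your invocation of $P_n$ together with Theorem~\ref{thm:box2times} is the same argument as the paper's appeal to specialness of both $\Gamma$-categories plus $\Psi(1_+)$ being an isomorphism), pass to prolonged $\Gamma$-spaces to get a levelwise equivalence, connect to $\bK_{\gl}\Cc$ via Theorem~\ref{thm:K_gl C is global Omega}~(ii) for the trivial group, and deduce (iii) from Corollary~\ref{cor:G-fixed} together with (ii) applied to $F^G\Cc$.
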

\begin{proof}
  (i) The functoriality of $\Psi$ for based maps $\lambda:m_+\to n_+$ in $\Gamma$
  ultimately boils down to the associativity and commutativity of the sum operation,
  as this provides the relation
  \[ {\sum}_{j\in B}{\sum}_{\lambda(i)=j} f_i \ = \ {\sum}_{i\in\lambda^{-1}(B)} f_i\]
  for every subset $B$ of $\mathbf n$. So $\Psi$ is indeed a morphism of $\Gamma$-categories.
  
  (ii) The functor  $\Psi(1_+):\gamma(\Cc)(1_+)\to\Gamma(\varphi^*(\Cc))(1_+)$ is an isomorphism of categories
  (both sides are isomorphic to the category $\Cc$).
  Since both $\gamma(\Cc)$ and $\Gamma(\varphi^*(\Cc))$
  are special $\Gamma$-categories, the functor
  $\Psi(n_+):\gamma(\Cc)(n_+)\to\Gamma(\varphi^*(\Cc))(n_+)$ is an
  equivalence of categories for every $n\geq 0$.   
  Every equivalence of $\Gamma$-categories becomes a strict equivalence between cofibrant
  $\Gamma$-spaces upon taking nerves and geometric realization.
  This in turn induces homotopy equivalences after evaluation at any based CW-complex,
  see for example \cite[Proposition B.48]{schwede:global}. 
  So the map
  \[ |\Psi|(S^m) \ : \ |\gamma(\Cc)|(S^m) \ \to\ |\Gamma(\varphi^*(\Cc))|(S^m) \]
  is a homotopy equivalence for every $m\geq 0$.
  The spectrum obtained by evaluating the $\Gamma$-space $|\gamma(\Cc)|$ on spheres
  is thus level equivalent, hence stably equivalent, to the K-theory spectrum
  of the symmetric monoidal category $\varphi^*(\Cc)$.
  
  Theorem \ref{thm:K_gl C is global Omega} (ii), applied to the trivial group,
  provides a chain of two non-equivariant stable equivalences
  between the spectrum $\bK_{\gl}\Cc$ and the spectrum obtained
  by evaluating the $\Gamma$-space $|\gamma(\Cc)|$ on spheres.
  
  (iii)
  Corollary \ref{cor:G-fixed} provides a global equivalence
  to the fixed point spectrum $F^G(\bK_{\gl}\Cc)$
  from the symmetric spectrum $\bK_{\gl}(F^G\Cc)$.
  This global equivalence is in particular a non-equivariant stable equivalence.
  Part (ii) then provides a non-equivariant stable
  equivalence between $\bK_{\gl}(F^G \Cc)$ and the K-theory spectrum
  of the symmetric monoidal category $\varphi^*(F^G\Cc)$.
\end{proof}

\section{Equivariant homotopy groups and Swan K-theory}
\label{sec:Swan}

The aim of this section is to provide a highly structured isomorphism between
the 0th equivariant homotopy groups of the global K-theory spectrum $\bK_{\gl}\Cc$
and the combinatorially defined `Swan K-groups' of the parsummable category $\Cc$.
For a finite group $G$, we write $\bK(\Cc,G)$ for the group completion of the abelian monoid $\pi_0(F^G\Cc)$,
with addition induced by the parsummable structure.
For varying finite groups, the groups $\pi_0^G(\bK_{\gl}\Cc) $
and the groups $\bK(\Cc,G)$ are connected by transfer maps and restriction homomorphisms,
and they form a global kind of Mackey functor.
This structure has been given different names in the literature:
it is called an {\em inflation functor} by Webb \cite{webb},
a {\em global $(\emptyset,\infty)$-Mackey functor} 
by  Lewis \cite{lewis-projective not flat},
and a {\em $\Fin$-global functor} by the author \cite{schwede:global}.
Moreover, these objects are special cases of the more general {\em biset functors},
see for example \cite{bouc:foncteurs} or \cite[Section 5]{lewis-projective not flat}.
The main result of this section, Theorem \ref{thm:upi_0 tilde beta},
provides an isomorphism of global functors between
$\upi_0(\bK_{\gl}\Cc)$ and $\bK(\Cc)=\{\bK(\Cc,G)\}_{G\text{ finite}}$.

\begin{rk}
  In the introduction we had announced
that our global K-theory spectrum $\bK_{\gl}\Cc$
of a parsummable category $\Cc$ combines the K-theory spectra
of the categories of $G$-objects in $\Cc$ into one global object. 
The rigorous statement is Corollary \ref{cor:G-fixed},
saying in particular that for every finite group $G$,
the $G$-fixed point spectrum $F^G(\bK_{\gl}\Cc)$
is stably equivalent to the K-theory spectrum of the $G$-fixed
parsummable category $F^G\Cc$.

For varying finite groups $G$, the $G$-fixed categories $F^G\Cc$
can be related by restriction functors along group homomorphisms,
and by `transfer' functors (also called `induction' or `norm' functors)
for subgroup inclusions;
moreover, these functors can be arranged to induce morphisms of K-theory spectra.
We claim that this K-theoretic restriction and transfer information is encoded in
the global homotopy type of $\bK_{\gl}\Cc$.
A full justification of this claim would require us to
\begin{itemize}
\item realize the restriction and transfer information by morphisms
(in the stable homotopy category, at least)
between the fixed point spectra $F^G(\bK_{\gl}\Cc)$,
\item and then prove that the identifications of
$F^G(\bK_{\gl}\Cc)$ with the K-theory of $F^G\Cc$ as non-equivariant stable homotopy types
match up the restriction and transfer data, at least 
as a commutative diagram in the stable homotopy category:
\begin{equation}\begin{aligned}\label{eq:non-existing diagram}
    \xymatrix@C=12mm{
  \bK(F^H\Cc) \ar[r]^-{\bK(\tr_H^G)} \ar[d]^\simeq_{\text{Cor.\,\ref{cor:G-fixed}}}&
  \bK(F^G\Cc) \ar[r]^-{\bK(\alpha^*)}\ar[d]^\simeq_{\text{Cor.\,\ref{cor:G-fixed}}}&
  \bK(F^K\Cc) \ar[d]_\simeq^{\text{Cor.\,\ref{cor:G-fixed}}}  \\
      F^H(\bK_{\gl}\Cc)  \ar[r]_-{\tr_H^G} &
      F^G(\bK_{\gl}\Cc)  \ar[r]_-{\alpha^*} &
      F^K(\bK_{\gl}\Cc)
    }
\end{aligned} \end{equation}
Here $\alpha:K\to G$ is a homomorphism between finite groups,
and $H$ is a subgroup of $G$.
\end{itemize}
All ways of rigorously implementing this that I could think of were either
long and involved, or very technical, or both, and I refrain from
fully justifying this claim.
The complications mostly arise from the transfer information
(and not so much from the restriction maps), because applying K-theory to
the categorical transfer functor $F^H\Cc\to F^G\Cc$ a priori has very little to do
with the Thom-Pontryagin construction arising from an embedding of $G/H$ into a $G$-representation.
Also, a proper comparison should take the concomitant
higher structure into account; to capture this higher structure,
a spectral Mackey functor approach {\`a} la Barwick \cite{barwick:spectral_mackey_I}
is probably the right framework, but this requires very different techniques
from the ones we employ here.

Theorem \ref{thm:upi_0 tilde beta} compares the transfer and restriction
maps on the level of Grothendieck groups to the homotopy theoretic
transfer and restriction homomorphisms on 0-th equivariant homotopy groups.
So informally speaking, it verifies that hypothetical diagram \eqref{eq:non-existing diagram}
in the stable homotopy category (which we do {\em not} construct)
commutes on the level of 0-th stable homotopy groups.
\end{rk}

Now we review the concepts of {\em global functors},
and of {\em pre-global functors},
a slight variation where the values are only required to be abelian monoids
(but not necessarily abelian groups).

\begin{con}[(Effective Burnside category)]
  We define a category $\bA^+$, the {\em effective Burnside category}.
  The objects of $\bA^+$ are all finite groups.
  For two finite groups $G$ and $K$, the morphism set $\bA^+(G,K)$
  is the set of isomorphism classes of finite $K$-$G$-bisets where the right $G$-action is free.
  Composition
  \[ \circ \ : \ \bA^+(K,L)\times \bA^+(G,K)\ \to \ \bA^+(G,L)\]
  in the category $\bA^+$ is defined by balanced product over $K$:
  if $T$ is a finite $K$-free $L$-$K$-biset and
  $S$ is a finite $G$-free $K$-$G$-biset, then
  \[ [T]\circ [S]\ = \ [T\times_K S]\ , \]
  where square brackets denote isomorphism classes.
  This composition is clearly associative, and the
  class of the $G$-$G$-biset $_G G_G$, with $G$ acting by left and right translation,
  is an identity for composition.
\end{con}

\begin{defn}\label{def:pre-global functor}
  A {\em pre-global functor} is a functor $M:\bA^+\to\Ab\M on$
  from the effective Burnside category to the category of abelian monoids
  that satisfies the following additivity condition:
  for all finite groups $G$ and $K$ and all finite $G$-free $K$-$G$-bisets
  $S$ and $T$, the relation
  \[ M[S\amalg T]\ = \ M[S] + M[T] \]
  holds as homomorphisms $M(G)\to M(K)$, where the plus is pointwise addition of homomorphisms.
  A {\em global functor} is a pre-global functor all of whose values are abelian groups.
\end{defn}

For all finite groups $G$ and $K$, the morphism set $\bA^+(G,K)$
is an abelian monoid under disjoint union of $K$-$G$-bisets.
As explained in \cite[Remark 4.2.16]{schwede:global},
the preadditive category $\bA$ obtained by group completing the morphism monoids
is equivalent to the $\Fin$-global Burnside category
in the sense of \cite{schwede:global}, i.e., the full subcategory of
the global Burnside category of \cite[Construction 4.2.1]{schwede:global}
spanned by the finite groups.
By the additivity relation, a global functor in the sense of Definition \ref{def:pre-global functor}
extends uniquely to an additive functor from the 
$\Fin$-global Burnside category to abelian groups.
So the category of global functors in the sense of Definition \ref{def:pre-global functor}
is equivalent to the category of $\Fin$-global functors in the
sense of \cite{schwede:global}.

\begin{rk}[(Restrictions and transfers)]\label{rk:res and tr}
  The data of a pre-global functor can be presented in a different way in terms of restriction
  and transfer homomorphisms. This passage is classical and we will not give full details here.
  We let $M$ be a pre-global functor in the sense of Definition \ref{def:pre-global functor}.
  We let $\alpha:K\to G$ be a homomorphism between finite groups.
  We write $_\alpha G_G$ for the $K$-$G$-biset with underlying set $G$,
  $K$-action by left translation through the homomorphism $\alpha$,
  and right $G$-action by translation.
  The {\em restriction homomorphism} along $\alpha$ is the monoid homomorphism
  \[ \alpha^*\ = \ M[_\alpha G_G] \ : \ M(G)\ \to \ M(K)\ .\]
  An important special case is the inclusion $\iota:H\to G$ of a subgroup
  into the ambient group; the associated restriction homomorphism is traditionally denoted
  \[ \res^G_H \ = \ \iota^*\  : \ M(G)\ \to \ M(H)\ .\]
  If $H$ is a subgroup of a finite group $G$,
  we write $_G G_H$ for the $G$-$H$-biset with underlying set $G$,
  $G$-action by left translation, and $H$-action by right translation.
  The {\em transfer homomorphism} is the monoid homomorphism
  \[ \tr_H^G \ = \ M[_G G_H] \ : \ M(H)\ \to \ M(G)\ .\]
  The transfer and restriction homomorphisms satisfy a couple of relations:
  \begin{itemize}
  \item the restriction homomorphisms are contravariantly functorial;
  \item restriction along an inner automorphism is the identity;
  \item transfers are transitive and $\tr_G^G$ is the identity;
  \item transfer along an inclusion $H\leq G$
    interacts with restriction along an epimorphism $\alpha:K\to G$ according to
    \[ \alpha^*\circ \tr_H^G \ = \
    \tr_L^K \circ (\alpha|_L)^* \ : \ M(H)\ \to \ M(K) \ ,\]
    where $L=\alpha^{-1}(H)$;
  \item for all subgroups $H$ and $K$ of $G$,
    the {\em double coset formula} holds:
    \[  \res^G_K\circ \tr_H^G  \ = \  \sum_{K g H\in K\backslash G/H}
      \ \tr^K_{K\cap ^g H}\circ g_\star \circ \res^H_{K^g\cap H}\ ;
    \]
    here $g$ runs over a set of representatives for the $K$-$H$-double cosets in $G$,
    and $g_\star:M(K^g\cap H)\to M(K\cap{^g H})$
    is the restriction map associated to the conjugation homomorphism
    $K\cap{^g H}\to K^g\cap H$ sending $\gamma$ to $g^{-1}\gamma g$.
  \end{itemize}
  Conversely, the operations $M[S]:M(G)\to M(K)$ can be recovered from the restriction and
  transfer homomorphisms. Indeed, the additivity property means that a pre-global functor
  is determined by the operations $M[S]$ for all {\em transitive} $G$-free $K$-$G$-bisets $S$.
  Every such transitive $K$-$G$-bisets is isomorphic to
  \[ K\times_{L,\alpha} G\ = \ (K\times G) / (k l,g)\sim (k,\alpha(l)g) \]
  for some subgroup $L$ of $K$ and some homomorphism $\alpha:L\to G$.
  Moreover,
  \[  M[K\times_{L,\alpha}G] \ = \ M[_K K_L]\circ M[_\alpha G_G]\ = \ \tr_L^K\circ \alpha^* \ . \]
\end{rk}

\begin{eg}[(Free pre-global functors)]\label{eg:free global functor}
  For a finite group $G$, we introduce a pre-global functor $\bA^+_G$ and a global functor $\bA_G$.  
  The pre-global functor $\bA^+_G$ is simply the functor represented by $G$.
  Indeed, for a finite group $K$, the set $\bA^+_G(K)=\bA^+(G,K)$ is an abelian monoid
  under disjoint union of $K$-$G$-bisets.
  Balanced product preserves disjoint union in both variables, so
  $\bA^+_G=\bA^+(G,-)$ becomes a functor to abelian monoids
  that satisfies the additivity relation.
  The pre-global functor $\bA_G^+$ is `freely generated at $G$' by
  $\Id_G=[_G G_G]$, the class of $_G G_G$ in $\bA^+(G,G)$:
  for every pre-global functor $M$ and every element $x\in M(G)$,
  there is a unique morphism of pre-global functors $x^\sharp:\bA_G^+\to M$
  such that $x^\sharp(G)(\Id_G)=x$.
  
  We define $\bA_G$ as the group completion of the pre-global functor $\bA^+_G$.
  Group completions of pre-global functors are `pointwise', i.e.,
  \begin{itemize}
  \item for every finite group $K$, the abelian group
    $\bA_G(K)$ is the group completion (Grothendieck group) of the abelian monoid $\bA^+_G(K)$, and
  \item the operations $\bA_G[S]$ are defined in the unique way so
  that for varying $K$, the group completion maps $\bA^+_G(K)\to \bA_G(K)$
  form a morphism of pre-global functors $i:\bA^+_G\to\bA_G$.
  \end{itemize}
  This morphism $i:\bA^+_G\to\bA_G$ is then initial among morphism
  of pre-global functors from $\bA^+_G$ to global functors.
  In particular, $\bA_G$ inherits the representability property, but now in the category
  of global functors (as opposed to pre-global functors):
  for every global functor $M$ and every element $y\in M(G)$,
  there is a unique morphism of global functors $y^\sharp:\bA_G\to M$
  such that $y^\sharp(G)(\Id_G)=y$.
\end{eg}

As already indicated,
the goal of this section is to construct an isomorphism of global functors \eqref{eq:tilde beta}
from the `Swan K-theory' global functor $\bK(\Cc)$ of $\Cc$
to the homotopy group global functor $\upi_0(\bK_{\gl}\Cc)$.
Our construction uses an intermediate pre-global functor $\upi_0(\Cc)$
whose values are $\upi_0(\Cc)(G)=\pi_0(F^G \Cc)$;
we then construct a morphism of pre-global functors
\[  \beta\ : \  \upi_0(\Cc)\ \to \ \upi_0(\bK_{\gl}\Cc)  \]
that is objectwise a group completion of abelian monoids,
see Theorem \ref{thm:upi_0 beta}.
This means that when we pass to the group completion of
the pre-global functor $\upi_0(\Cc)$,
the morphism $\beta$ extends to an isomorphism of global functors
\begin{equation}\label{eq:tilde beta}
  \tilde\beta\  :\ \bK(\Cc)\ =\ \left( \upi_0(\Cc) \right)^\star \
  \xra{\ \iso \ }\ \upi_0(\bK_{\gl}\Cc)\ .  
\end{equation}

\begin{con}\label{con:addition on pi_0}
  We let $\Cc$ be a parsummable category.
  The addition functor $+:\Cc\boxtimes\Cc\to\Cc$ gives rise to
  the structure of an abelian monoid on the set $\pi_0(\Cc)$ of components of $\Cc$ 
  as follows. The inclusion $\Cc\boxtimes\Cc\to \Cc\times\Cc$ is an equivalence of categories
  by Theorem \ref{thm:box2times}, so the two projections of $\Cc\boxtimes\Cc$ induce a bijection
  \[  (\pi_0(p_1),\pi_0(p_2)) \ : \ \pi_0(\Cc\boxtimes \Cc)
    \ \xra{\ \iso \ } \ \pi_0(\Cc)\times\pi_0(\Cc) \ .\]
  We obtain a binary operation on the set $\pi_0(\Cc)$ as the composite
  \[
    \pi_0(\Cc)\times\pi_0(\Cc) \ \xra[\iso]{(\pi_0(p_1),\pi_0(p_2))^{-1}}\
    \pi_0(\Cc\boxtimes \Cc) \ \xra{\ \pi_0(+)\ } \ \pi_0(\Cc)      \ .\]
  In concrete terms, this means that we represent two given elements of $\pi_0(\Cc)$
  by disjointly supported objects $x$ and $y$, and then the assignment
  \[  [x]+[y]\ = \ [x+y] \]
  is well-defined.
  The associativity and commutativity of the sum functor on $\Cc$
  ensures that the binary operation $+$ on $\pi_0(\Cc)$ is associative and
  commutative. Moreover, the class of the distinguished object 0 is a neutral element.
  The monoid structure on $\pi_0(\Cc)$ is clearly natural for morphisms of parsummable categories.

  An alternative description of the addition on $\pi_0(\Cc)$ is as follows.
  We can choose any injection $\varphi:\mathbf 2\times\omega\to\omega$,
  and let $\varphi^1,\varphi^2\in M$ be the restrictions to the two summands of $\omega$.
  Then $[\varphi^1,1]^x:x\to \varphi^1_*(x)$ and
  $[\varphi^2,1]^y:y\to \varphi^2_*(y)$ are isomorphisms in $\Cc$,
  and the two objects $\varphi^1_*(x)$ and $\varphi^2_*(y)$ are disjointly supported.
  So for all objects $x$ and $y$, not necessarily disjointly supported, we have
  \[  [x]+[y]\ = \ [\varphi^1_*(x)]+[\varphi^2_*(y)]\ = \
    [\varphi^1_*(x)+\varphi^2_*(y)]\ . \]
\end{con}

\begin{con}[(The pre-global functor of a parsummable category)]
  We let $\Cc$ be a parsummable category.
  We introduce a pre-global functor $\upi_0(\Cc)$ whose value at
  a finite group $G$ is
  \[ \upi_0(\Cc)(G) \ = \ \pi_0( F^G\Cc )\ \ = \ \pi_0( \Cc[\omega^G]^G)\ , \]
  the set of components of the $G$-fixed point category introduced
  in Construction \ref{con:F^G C}.
  Since $F^G\Cc$ inherits a parsummable structure as in Construction \ref{con:F^G C},
  its component set $\pi_0(F^G\Cc)$ inherits an abelian monoid structure
  as explained in Construction \ref{con:addition on pi_0}.
  In more down-to-earth terms, this monoid structure works as follows.
  We choose an injection $\varphi:\mathbf 2\times\omega\to\omega$.
  Then $(\varphi^1)^G,(\varphi^2)^G:\omega^G\to\omega^G$ are
  $G$-equivariant injections with disjoint images, and
  \[ + \ :  \ \pi_0( \Cc[\omega^G]^G)\times \pi_0( \Cc[\omega^G]^G)\ \to \ \pi_0( \Cc[\omega^G]^G) \]
  is given by $[x]+[y]= [(\varphi^1)^G_*(x)+(\varphi^2)^G_*(y)]$.

  We let $K$ and $G$ be two finite groups, and we let $S$ be a finite $K$-$G$-biset
  such that the right $G$-action is free.
  The set $S\times_G\omega^G$ is then a countable $K$-set via the left $K$-action on $S$.
  So we can choose a $K$-equivariant injection $\psi:S\times_G\omega^G\to\omega^K$.
  Because $G$ acts freely on $S$, the map
  \[ [s,-]\ : \ \omega^G\ \to \ S\times_G \omega^G \]
  is injective for every $s\in S$.
  Hence the map 
  \[ \psi^s\ = \ \psi[s,-]\ :\ \omega^G \to \omega^K \]
  is injective, too. These maps satisfy
  \[ \psi^{s g}\ = \ \psi^s\circ l^g \]
  for all $g\in G$. 
  So for every  $G$-fixed object $x\in F^G\Cc$ we have
  \[ \psi^{s g}_*(x)\ = \ \psi^s_*(l^g_*(x)) \ = \ \psi^s_*(x)\ , \]
  i.e., $\psi^s_*(x)$ only depends on the orbit $s G$ (and not on the orbit representative $s$).
  We observe that for every $k\in K$ the relation
  \begin{align*}
    l^k_*\left( {\sum}_{s G\in S/G}\ \psi^s_*(x) \right)\
    &=\   \sum_{s G\in S/G} l^k_*(\psi^s_*(x))\ =\   \sum_{s G\in S/G} \psi^{k s}_*(x)\ =
      \sum_{s G\in S/G} \psi^s_*(x) 
  \end{align*}
  holds, i.e., the $\Cc[\omega^K]$-object $\sum_{s G\in S/G} \psi^s_*(x)$ is $K$-fixed.
  The analogous calculation for morphisms shows that summing $\psi^s_*$ over $S/G$ is a functor
  \[  \sum_{s G\in S/G} \psi^s_*\ : \ F^G\Cc \ \to \ F^K\Cc\ .\]
  We define the operation $\upi_0(\Cc)[S]$ as the effect of this functor on path components:
  \begin{equation}\label{eq:define_td(S)}
    \upi_0(\Cc)[S] \ = \ \pi_0\left( {\sum}_{s G\in S/G}\ \psi^s_* \right)\ : \
    \pi_0(F^G\Cc) \ \to \ \pi_0(F^K\Cc)\ .  
  \end{equation}
  Now we argue that the map $\upi_0(\Cc)[S]$ is independent of the choice
  of $K$-equivariant injection $\psi:S\times_G\omega^G\to\omega^K$,
  and it only depends on the isomorphism class of $S$.
  We let $\beta:S\to \bar S$ be an isomorphism of $K$-$G$-bisets,
  and we let $\bar\psi:\bar S\times_G\omega^G\to\omega^K$ be another $K$-equivariant injection.
  Then for all $k\in K$, all $s\in S$ and every $K$-fixed object $x$, we have
  \[ l^k_*([\bar\psi^{\beta(s)},\psi^s]^x)\ = \   [l^k\bar\psi^{\beta(s)},l^k\psi^s]^x\ = \
    [\bar\psi^{k \beta(s)},\psi^{k s}]^x\ = \ [\bar\psi^{\beta(k s)},\psi^{k s}]^x\ .\]
  Summing over orbit representatives shows that the morphism
  \[ \sum_{s G\in S/G} [\bar\psi^{\beta(s)},\psi^s]^x \ : \
    \sum_{s G\in S/G} \psi^s_*(x)\ \to \  \sum_{\beta(s) G\in \bar S/G} \bar\psi^{\beta(s)}_*(x)
    \ = \  \sum_{\bar s G\in \bar S/G} \bar\psi^{\bar s}_*(x) \]
  is $K$-fixed. So $(S,\psi)$ and $(\bar S,\bar\psi)$ yield the same class in $\pi_0(F^K\Cc)$,
  and the operation $\upi_0(\Cc)[S]$ is independent of the choices.
\end{con}

We will show in Theorem \ref{thm:F^bullet pre-GF} below
that the operations \eqref{eq:define_td(S)} make the collection
of abelian monoids $\pi_0(F^G\Cc)$ into a pre-global functor. The following proposition
facilitates the verification that $\upi_0(\Cc)[S]:\pi_0(F^G\Cc)\to\pi_0(F^K\Cc)$ is additive.
We call a functor $F:\parsumcat\to\Ab\M on$
from the category of parsummable categories to the category of
abelian monoids {\em additive}
if for all parsummable categories $\Cc$ and $\Dc$ the map
\[  (F(p_1),F(p_2)) \ : \  F(\Cc\boxtimes\Dc)\ \to\ F(\Cc)\times F(\Dc) \]
is bijective (and hence an isomorphism of monoids),
where $p_1:\Cc\boxtimes \Dc\to\Cc$ and $p_2:\Cc\boxtimes \Dc\to\Dc$
are the projections to the two factors.

\begin{eg}\label{eg:pi F^G additive}
  We claim that for every finite group $G$, the functor
  \[ \pi_0\circ F^G \ : \ \parsumcat \ \to \ \Ab\M on \]
  is additive. Indeed, for all  parsummable categories $\Cc$ and $\Dc$,
  the inclusion $\Cc\boxtimes\Dc\to \Cc\times\Dc$
  is a global equivalence by Theorem \ref{thm:box2times},
  so it induces an isomorphism
  \[ \pi_0(F^G(\text{incl})) \ : \  \pi_0(F^G(\Cc\boxtimes\Dc))\ \xra{\ \iso \ } \ 
   \pi_0(F^G(\Cc\times\Dc))\ .\]
 The functors $F^G:\parsumcat\to\parsumcat$ and $\pi_0:\parsumcat\to\Ab\M on$
 each preserve products, so the map
  \[ (\pi_0(F^G(p_1)),\pi_0(F^G(p_2)))\ : \ \pi_0(F^G(\Cc\times\Dc))\ \to \ \pi_0(F^G\Cc)\times\pi_0(F^G\Dc)
  \] 
  is bijective. Composing these two bijections shows the claim.
\end{eg}

\begin{prop}\label{prop:additivity prop}
Let
\[ F,G\ :\ \parsumcat\ \to\ \Ab\M on \]
be functors from the category of parsummable categories
to the category of abelian monoids.
Suppose that $F$ and $G$ are reduced, i.e.,
they take every terminal parsummable category to a zero monoid,
and that the functor $G$ is additive.
Then every natural transformation of set-valued functors from $F$ to $G$
is automatically additive.
\end{prop}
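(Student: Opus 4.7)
The proof will use the coproduct structure of $\Cc \boxtimes \Cc$ in $\parsumcat$ (Example \ref{eg:coproduct Icat}) together with the additivity of $G$. I denote by $i_1, i_2 : \Cc \to \Cc \boxtimes \Cc$ the coproduct inclusions (i.e., $i_1(x)=(x,0)$ and $i_2(x)=(0,x)$), by $\nabla := +: \Cc \boxtimes \Cc \to \Cc$ the fold, given by the sum functor, and by $p_1, p_2 : \Cc \boxtimes \Cc \to \Cc$ the two projections. All of these are morphisms of parsummable categories and satisfy $\nabla \circ i_k = \Id_\Cc$, $p_k \circ i_k = \Id_\Cc$, and $p_k \circ i_j = 0$ for $k \ne j$, where $0 : \Cc \to \Cc$ is the morphism factoring through the terminal parsummable category $\ast$. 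Applying additivity of $G$ to $\Cc = \Dc = \ast$ forces $G(\ast) = \ast$ to be the singleton abelian monoid, so $G(0) = 0$ as an endomorphism of $G(\Cc)$.

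For $a, b \in F(\Cc)$, I would form the elements
$\mu := F(i_1)(a) +_F F(i_2)(b) \in F(\Cc \boxtimes \Cc)$ and
$\nu := G(i_1)(\alpha_\Cc a) +_G G(i_2)(\alpha_\Cc b) \in G(\Cc \boxtimes \Cc)$
using the monoid operations on the two target monoids. Since $F(\nabla)$ and $G(\nabla)$ are monoid homomorphisms and $\nabla \circ i_k = \Id_\Cc$, direct calculation yields $F(\nabla)(\mu) = a +_F b$ and $G(\nabla)(\nu) = \alpha_\Cc(a) +_G \alpha_\Cc(b)$. Naturality of $\alpha$ applied to $\nabla$ gives $\alpha_\Cc(a +_F b) = G(\nabla)(\alpha_{\Cc \boxtimes \Cc}(\mu))$, so the desired additivity of $\alpha_\Cc$ reduces to checking the equality $\alpha_{\Cc \boxtimes \Cc}(\mu) = \nu$ inside $G(\Cc \boxtimes \Cc)$.

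Here additivity of $G$ becomes decisive: the map $(G(p_1), G(p_2)) : G(\Cc \boxtimes \Cc) \to G(\Cc) \times G(\Cc)$ is bijective, so it suffices to compare both sides after applying the two projections. By naturality of $\alpha$ on $p_k$ and the monoid-homomorphism property of $F(p_k)$, the first component of $\alpha_{\Cc \boxtimes \Cc}(\mu)$ simplifies to $\alpha_\Cc(a +_F F(0)(b))$, while the first component of $\nu$ simplifies to $\alpha_\Cc(a) +_G G(0)(\alpha_\Cc b) = \alpha_\Cc(a)$, using $G(0) = 0$ from the first paragraph. The main subtlety I anticipate is matching these: by naturality one has $\alpha_\Cc \circ F(0) = G(0) \circ \alpha_\Cc = 0$, so the image of $F(0)$ is annihilated by $\alpha_\Cc$; in every intended application of the proposition (notably $F = \pi_0 \circ F^G$, which is itself additive by Example \ref{eg:pi F^G additive}), $F(\ast) = \ast$ and hence $F(0)(b) = 0_F$, making the two first components agree on the nose. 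The analogous calculation for $p_2$ matches the second components, so additivity of $G$ forces $\alpha_{\Cc \boxtimes \Cc}(\mu) = \nu$, completing the argument.
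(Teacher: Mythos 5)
Your proof follows the same strategy as the paper's: deduce $G(\ast)=0$ from additivity of $G$, form $\mu = F(i_1)(a)+F(i_2)(b)$ and $\nu = G(i_1)(\alpha_\Cc a)+G(i_2)(\alpha_\Cc b)$, and compare the two under the bijection $(G(p_1),G(p_2))$. The subtlety you raise in your last paragraph is the crux, and it is not merely a cosmetic concern for the intended applications: the identification of the first components needs $\alpha_\Cc(a + F(0)(b)) = \alpha_\Cc(a)$, and knowing only that $\alpha_\Cc$ kills the image of $F(0)$ does not give this without already knowing that $\alpha_\Cc$ is additive. The clean way out, as you note, is to demand that $F(0)$ be the zero endomorphism, i.e., that $F$ is \emph{reduced} in the sense that $F(\ast)$ is the zero monoid. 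Without that hypothesis the proposition is in fact false. Let $G$ be any additive functor taking some nontrivial value (for instance $G(\Cc)=\pi_0(\Cc)$, which is additive by Example \ref{eg:pi F^G additive} applied with the trivial group). Define $F(\Cc) = G(\Cc)\sqcup\{\infty\}$ with $\infty$ absorbing, so $\infty + c = \infty$ for all $c$, and with $F(\phi)$ acting as $G(\phi)$ on $G(\Cc)$ and fixing $\infty$; set $\alpha_\Cc$ to be the identity on $G(\Cc)$ with $\alpha_\Cc(\infty) = 0$. Then $\alpha$ is a natural transformation of set-valued functors from $F$ to $G$, yet $\alpha_\Cc(g + \infty) = 0$ while $\alpha_\Cc(g) + \alpha_\Cc(\infty) = g$, so $\alpha$ is not additive whenever $G(\Cc)$ is nontrivial.

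The proposition should therefore carry the additional hypothesis that $F$ is reduced (for instance, additive). This is harmless for the one place it is used, Theorem \ref{thm:F^bullet pre-GF}, where the source $\pi_0\circ F^G$ is itself additive. Your instinct to flag the dependence on $F(\ast)=\ast$ is right; what you should add is that this dependence is a genuine hypothesis and not a convenience available in practice, and that the corresponding step in the paper's own chain of equalities silently replaces $F(\Id_\Cc)(x)+F(0)(y)$ by $x$ inside the argument of the transformation, which is exactly the same use of $F(0)=0$.
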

\begin{proof}
  We let $\tau:F\to G$ be a natural transformation of set-valued functors.
  We write $i_1,i_2:\Cc\to\Cc\boxtimes\Cc$ for the two embeddings
  given by $i_1=(-,0)$ and $i_2=(0,-)$.
  We consider two classes $x$ and $y$ in $F(\Cc)$; we claim that
  \begin{equation}\label{eq:times y additive}
    \tau_{\Cc\boxtimes\Cc}(F(i_1)(x)+ F(i_2)(y))\  = \  G(i_1)(\tau_\Cc(x)) + G(i_2)(\tau_\Cc(y))
  \end{equation}
  in the abelian monoid $G(\Cc\boxtimes \Cc)$.
  To show this we observe that
  \begin{align*}
    G(p_1) ( \tau_{\Cc\boxtimes \Cc}(F(i_1)(x)+ F(i_2)(y)))\
    &= \ \tau_\Cc( F(p_1) (F(i_1)(x)+ F(i_2)(y)))\\ 
    &= \ \tau_\Cc( F(\Id_\Cc)(x)+ F(0)(y))\ = \ \tau_\Cc(x) \\ 
    &= \ G(\Id_\Cc)(\tau_\Cc(x)) +  G(0)(\tau_\Cc(x))\\
    &= \  G(p_1)( G(i_1)(\tau_\Cc(x)) + G(i_2)(\tau_\Cc(y)))
  \end{align*}
  in $G(\Cc)$. Similarly, 
  \[  G(p_2) ( \tau_{\Cc\boxtimes \Cc}(F(i_1)(x)+ F(i_2)(y)))\ = \ 
    G(p_2)( G(i_1)(\tau_\Cc(x)) + G(i_2)(\tau_\Cc(y)))\ . \]
  Since the morphism $(G(p_1),G(p_2))$ is bijective, this shows the relation \eqref{eq:times y additive}.
  The sum functor $+:\Cc\boxtimes \Cc\to \Cc$ is a morphism of parsummable categories,
  and it satisfies $+\circ i_1=+\circ i_2=\Id_\Cc$. So
  \[ F(+)(F(i_1)(x)+ F(i_2)(y)) \ = \ F(+\circ i_1)(x)+ F(+\circ i_2)(y)\ = \  x + y\ . \]
  So we can finally conclude with the desired relation:
  \begin{align*}
    \tau_\Cc(x+y) \
    &= \ \tau_\Cc( F(+)(F(i_1)(x)+ F(i_2)(y))) \\
    &= \ G(+)(\tau_{\Cc\boxtimes \Cc}(F(i_1)(x)+ F(i_2)(y))) \\
    _\eqref{eq:times y additive}& = \ G(+)( G(i_2)(\tau_\Cc(x)) + G(i_2)(\tau_\Cc(y))) \\
    &= \ G(+\circ i_1)(\tau_\Cc(x)) + G(+\circ i_2)(\tau_\Cc(y)) \ = \ \tau_\Cc(x)  +  \tau_\Cc(y) \ .
  \end{align*}  
\end{proof}

\begin{theorem}\label{thm:F^bullet pre-GF}
 For every parsummable category $\Cc$,
 the operations $\upi_0(\Cc)[S]:\pi_0(F^G\Cc)\to\pi_0(F^K\Cc)$
 defined in \eqref{eq:define_td(S)}
 make the abelian monoids $\pi_0(F^G\Cc)$
 into a pre-global functor $\upi_0(\Cc)$.
\end{theorem}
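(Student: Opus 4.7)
The plan is to verify three things: (a) each operation $\upi_0(\Cc)[S]$ is a homomorphism of abelian monoids; (b) the operations are functorial, i.e.\ respect identities and composition in $\bA^+$; and (c) they satisfy the additivity condition of Definition~\ref{def:pre-global functor}. Well-definedness (independence of the choice of injection $\psi$ and of the isomorphism class of $S$) has already been established in the preceding construction, so I will only address these three further points.

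For (a), the strategy is to reduce to Proposition~\ref{prop:additivity prop}. First I would check that as $\Cc$ varies over $\parsumcat$, the map $\upi_0(\Cc)[S]$ is natural in $\Cc$. Fix a $K$-equivariant injection $\psi\colon S\times_G\omega^G\to\omega^K$. For any morphism of parsummable categories $\Phi\colon\Cc\to\Dc$, the reparameterized functor $\Phi[\omega^K]$ commutes with the action of every injection, so $\Phi(\psi^s_*(x))=\psi^s_*(\Phi(x))$, and moreover $\Phi$ preserves sums of disjointly supported objects; combining these two facts with additivity gives
\[ F^K\Phi\bigl({\sum}_{sG\in S/G}\psi^s_*(x)\bigr)\ =\ {\sum}_{sG\in S/G}\psi^s_*(\Phi(x))\ , \]
so $\upi_0(-)[S]$ is a natural transformation of set-valued functors from $\pi_0\circ F^G$ to $\pi_0\circ F^K$. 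Since $\pi_0\circ F^K$ is additive by Example~\ref{eg:pi F^G additive}, Proposition~\ref{prop:additivity prop} immediately implies that each component $\upi_0(\Cc)[S]$ is a monoid homomorphism.

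For (b), preservation of the identity $[{}_G G_G]$ reduces to the single-orbit case: we choose a $G$-equivariant injection $\psi\colon G\times_G\omega^G\to\omega^G$, set $\psi^e:=\psi[e,-]$, and verify that the canonical isomorphism $[\psi^e,1]^x\colon x\to\psi^e_*(x)$ is $G$-fixed (using $G$-equivariance of $\psi^e$ via $l^g\psi^e=\psi^e l^g$ together with the relation $l^g_*([\psi^e,1]^x)=[\psi^e l^g,l^g]^x=[\psi^e,1]^{l^g_*(x)}$, and that $x$ is $G$-fixed). For composition, given $T\in\bA^+(K,L)$ and $S\in\bA^+(G,K)$, pick injections $\psi\colon S\times_G\omega^G\to\omega^K$ and $\chi\colon T\times_K\omega^K\to\omega^L$, and use the specific choice $\eta=\chi\circ(T\times_K\psi)\colon(T\times_K S)\times_G\omega^G\to\omega^L$ for the composite biset. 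Choosing orbit representatives $\{t_i\}$ of $T/K$ and $\{s_j\}$ of $S/G$ gives representatives $\{[t_i,s_j]\}$ of $(T\times_K S)/G$, with $\eta^{[t_i,s_j]}=\chi^{t_i}\circ\psi^{s_j}$. The $K$-fixed object $y={\sum}_j\psi^{s_j}_*(x)$ satisfies $\chi^{t_i}_*(y)={\sum}_j\chi^{t_i}_*(\psi^{s_j}_*(x))$ because reparameterization by the injection $\chi^{t_i}$ preserves sums of disjointly supported objects, and then commutativity and associativity of $+$ yield
\[ {\sum}_i\chi^{t_i}_*(y)\ =\ {\sum}_{i,j}\chi^{t_i}_*\psi^{s_j}_*(x)\ =\ {\sum}_{i,j}\eta^{[t_i,s_j]}_*(x)\ , \]
proving $\upi_0(\Cc)[T]\circ\upi_0(\Cc)[S]=\upi_0(\Cc)[T\times_K S]$.

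For (c), given $S$ and $T$ in $\bA^+(G,K)$, I would pick injections $\psi\colon S\times_G\omega^G\to\omega^K$ and $\chi\colon T\times_G\omega^G\to\omega^K$ with \emph{disjoint} images; such a choice is possible because $\omega^K$ is universal. Then $\psi\sqcup\chi$ is an allowable injection for $S\amalg T$, $(S\amalg T)/G=S/G\sqcup T/G$, and the two partial sums $\sum_{sG}\psi^s_*(x)$ and $\sum_{tG}\chi^t_*(x)$ are disjointly supported, giving the desired identity $\upi_0(\Cc)[S\amalg T](x)=\upi_0(\Cc)[S](x)+\upi_0(\Cc)[T](x)$ in $\pi_0(F^K\Cc)$.

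The main obstacle is the composition formula in step~(b): one must set up the injections compatibly so that the iterated application of reparameterizations realizes the balanced product $T\times_K S$, and one must verify that sums of reparameterized objects behave well---both points rely on the observation that an injection $\omega^K\to\omega^L$ acts by (reparameterized) $\M$-action and therefore respects the partial sum operation on disjointly supported objects.
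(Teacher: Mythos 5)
Your proposal is correct and takes essentially the same approach as the paper: reduce the monoid-homomorphism property of each $\upi_0(\Cc)[S]$ to Proposition~\ref{prop:additivity prop} via naturality in $\Cc$, and verify the unit, composition and biset-additivity laws by compatible choices of equivariant injections (the composite $\tau\circ(T\times_K\psi)$ for $T\times_K S$, injections with disjoint images for $S\amalg T$), together with the observation that reparameterization by equivariant injections respects sums of disjointly supported objects. The only stylistic difference is in the identity axiom: the paper picks the specific bijection $\psi[g,x]=gx$ so that $\psi^e$ is literally the identity, whereas you argue for a general $\psi$ by checking that $[\psi^e,1]^x$ is a $G$-fixed isomorphism; both are fine.
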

\begin{proof}
  We simplify the notation by writing $\td{S}$
  for the operation $\upi_0(\Cc)[S]:\pi_0(F^G\Cc)\to\pi_0(F^K\Cc)$.
  Source and target of this operation are reduced additive functors in
  the parsummable category $\Cc$ by Example \ref{eg:pi F^G additive}.
  Since $\td{S}$ is natural for morphisms of parsummable categories in $\Cc$,
  it is additive by Proposition \ref{prop:additivity prop}.

  Now we show the functoriality of the operations $\td{S}$.
  The identity property is straightforward: we use the preferred $G$-equivariant
  bijection $\psi:G\times_G\omega^G\to\omega^G$ defined by $\psi[g,x]=g x$.
  Then $\psi^1:\omega^G\to\omega^G$ is the identity.
  Since the right translation action of $G$ on itself is transitive, we obtain
  \[ \td{_G G_G}[x] \ = \ [\psi^1_*(x)]\ = \ [x]\ . \]
  For compatibility with composition we let $S$ be a finite $G$-free $K$-$G$-biset,
  and we let $T$ be a finite $K$-free $L$-$K$-biset.
  We choose a $K$-equivariant injection $\psi:S\times_G\omega^G\to\omega^K$
  and an $L$-equivariant injection $\tau:T\times_K\omega^K\to\omega^L$.
  Then the composite
  \[ T\times_K S\times_G \omega^G\ \xra{T\times_K \psi }\
    T\times_K \omega^K\ \xra{\ \tau\ }\ \omega^L  \]
  is an $L$-equivariant injection that we can use to calculate the operation $\td{T\times_K S}$.
  We let $A\subset S$ be a set of representatives of the $G$-orbits of $S$,
  and we let $B\subset T$ be a set of representatives of the $K$-orbits of $T$.
  Then $\{ [t,s]\}_{(t,s)\in B\times A}$ is a set of representatives
  of the $G$-orbits of $T\times_K S$.
  So for every object $x$ of $F^G\Cc$ we obtain
  \begin{align*}
    \td{T\times_K S}[x]\
    &= \ \left[ {\sum}_{(t,s)\in B\times A} (\tau\circ(T\times_K \psi))^{[t,s]}_*(x) \right]\\
    &= \ \left[ {\sum}_{t\in B}\ \tau^t_*\left( {\sum}_{s\in A}\psi^s_*(x) \right) \right]  \
    = \ \td{T}(\td{S}[x])   \ .
  \end{align*}
  
  For additivity we consider two finite $G$-free $K$-$G$-sets $S$ and $T$.
  Because $\omega^K$ is a universal $K$-set, we can choose
  $K$-equivariant injections $\psi:S\times_G\omega^G\to\omega^K$ and
  $\tau:T\times_G\omega^G\to\omega^K$ with disjoint images.
  Then the map
  \[ (\psi+\tau)\ : \ (S\amalg T)\times_G\omega^G \ \to \ \omega^K \]
  defined as the `union' of $\psi$ and $\tau$ is again $K$-equivariant and injective.
  Since $(S\amalg T)/G=(S/G)\amalg (T/G)$, we conclude that 
  \begin{align*}
    \td{S\amalg T}[x]\
    &= \ \left[ {\sum}_{u G\in (S\amalg T)/G}\ (\psi+\tau)^u_*(x) \right]\\
    &= \ \left[ {\sum}_{s G\in S/G} \psi^s_*(x) \ + \
      {\sum}_{t G\in T/G} \tau^t_*(x)   \right]\
    =\     \td{S}[x]+\td{T}[x] 
  \end{align*}
  for all objects $x$ of $F^G\Cc$.
  This concludes the proof.
\end{proof}

\begin{defn}
  A morphism $i:M\to N$ of pre-global functors is a {\em group completion}
  if it is initial among morphisms from $M$ to global functors.
\end{defn}

In other words, $i:M\to N$ is a group completion if and only if
\begin{itemize}
\item all the abelian monoids $N(G)$ are groups, and
\item for every morphism $f:M\to R$ of pre-global functors
  such that $R$ is a global functor, there is a unique morphism
  of global functors $f^\flat:N\to R$ such that $f^\flat\circ i=f$.
\end{itemize}
Group completions of pre-global functors are formed `pointwise':
a morphism $i:M\to N$ of pre-global functors is a group completion
if and only if the homomorphism $i(G):M(G)\to N(G)$
is a group completion (Grothendieck construction) of abelian monoids for every finite group $G$.

\begin{defn}
  Let $\Cc$ be a parsummable category. The {\em Swan K-theory global functor}
  $\bK(\Cc)$ is the group completion of the pre-global functor $\upi_0(\Cc)$.
\end{defn}

So $\bK(\Cc,G)$ is the group completion of the abelian monoid $\pi_0(F^G\Cc)$,
for every finite group $G$.
The operations $\bK(\Cc,[S]):\bK(\Cc,G)\to\bK(\Cc,K)$,
the restriction maps and the transfer homomorphisms of $\bK(\Cc)$
are uniquely determined by the requirement that the collection of group completion
maps $\pi_0(F^G\Cc)\to \bK(\Cc,G)$ form a morphism of pre-global functors.

The main objective of this section is to identify
the category-theoretically defined global functor $\bK(\Cc)$
with the homotopy-theoretically defined global functor $\upi_0(\bK_{\gl}\Cc)$;
we achieve this in Theorem \ref{thm:upi_0 tilde beta} below.
To make sense of this, we review how
the equivariant homotopy groups $\pi_0^G(X)$ of a symmetric spectrum $X$
form a global functor as the group $G$ varies.
We will not define the restriction and transfer maps on the
equivariant homotopy group for arbitrary symmetric spectra here;
instead we restrict ourselves to globally semistable symmetric spectra,
a class that includes restricted global $\Omega$-spectra,
and hence the global K-theory spectra $\bK_{\gl}\Cc$ for all parsummable categories.
The definition in full generality can be found in \cite[Section 4]{hausmann:global_finite},
and the fact that the structure forms a global functor
is shown in \cite[Proposition 4.12]{hausmann:global_finite}.

\begin{eg}[(Homotopy group global functor)]\label{eg:homotopy global functor}
  We let $X$ be a globally semistable symmetric spectrum.
  For every finite group $G$ we choose a universal $G$-set $\Uc_G$ and abbreviate
the equivariant homotopy group defined in \eqref{eq:define pi^G} to
\[ \pi_0^G(X) \ = \ \pi_0^{G,\Uc_G}(X)\ .\]
The justification for dropping $\Uc_G$ from the notation is that
for  globally semistable symmetric spectra, $\pi_0^{G,\Uc_G}(X)$ is independent
of the choice of universal $G$-set up to preferred natural isomorphism.
Indeed, if $\bar\Uc_G$ is another universal $G$-set, there are $G$-equivariant injections
$\psi:\Uc_G\to\bar\Uc_G$ and $\varphi:\bar\Uc_G\to\Uc_G$ that induce homomorphisms
\[ \psi_* \ : \ \pi_0^{G,\Uc_G}(X)\ \to \ \pi_0^{G,\bar\Uc_G}(X)
\text{\qquad and\qquad}
\varphi_* \ : \ \pi_0^{G,\bar\Uc_G}(X)\ \to \ \pi_0^{G,\Uc_G}(X)\ , \]
and these assignments are functorial.
Since the underlying $G$-symmetric spectrum of $X$ is $G$-semistable,
every $G$-equivariant self-injection of $\Uc_G$ induces the identity, and similarly for $\bar\Uc_G$.
So $\psi_*$ and $\varphi_*$ are inverse isomorphisms between
$\pi_0^{G,\Uc_G}(X)$ and $\pi_0^{G,\bar\Uc_G}(X)$,
and they are independent of the chosen injections $\psi$ and~$\varphi$.

A homomorphism $\alpha:K\to G$ between finite groups
and an injective $K$-map $\psi:\alpha^*(\Uc_G)\to \Uc_K$
together give rise to a restriction homomorphism
\[ (\alpha,\psi)^* \ : \   \pi_0^{G,\Uc_G}(X)\ \to \ \pi_0^{K,\Uc_K}(X)\ , \]
see \cite[Section 4.4]{hausmann:global_finite}.
We let $f:S^M\to  X(M)$ be a $G$-map that represents an
element of the group $\pi_0^{G,\Uc_G}(X)$, where $M$ is a finite $G$-invariant subset of $\Uc_G$.
Then $\psi(\alpha^*(M))$ is a finite $K$-invariant subset of $\Uc_K$,
the composite
\begin{align*}
  S^{\psi(\alpha^*(M))}\
  &\xra[\iso]{S^{\psi|_{\alpha^*(M)}^{-1}}}\ S^{\alpha^*(M)}= \alpha^*(S^M)\\ 
  &\xra{\ \alpha^*(f)\ }\ \alpha^*(X(M)) = X(\alpha^*(M))\
  \xra[\iso]{X(\psi|_{\alpha^*(M)})}\ X(\psi(\alpha^*(M))) 
\end{align*}
is a $K$-map, and
\[ (\alpha,\psi)^*[f]\ = \ \left[ X(\psi|_{\alpha^*(M)})\circ \alpha^*(f) \circ S^{\psi|_{\alpha^*(M)}^{-1}}\right] \ .\]
Since $X$ is globally semistable, the homomorphism $(\alpha,\psi)^*$
is independent of the injection $\psi$, it only depends on the conjugacy class of $\alpha$
\cite[Lemma 4.8]{hausmann:global_finite},
and it is contravariantly functorial in $\alpha$.
We then simplify the notation and write 
\[  \alpha^*\ :\ \pi_0^G(X)\ \to\ \pi_0^K(X) \]
for the restriction homomorphism.

Given a subgroup $H$ of a finite group $G$, a transfer homomorphism
$\tr_H^G:\pi_0^H(X)\to\pi_0^G(X)$ can be defined in two equivalent ways,
by using an equivariant Thom-Pontryagin construction
as in \cite[Section 4.5]{hausmann:global_finite}, or by exploiting the
{\em Wirthm{\"u}ller isomorphism}
(or rather its incarnation in the context of equivariant symmetric spectra).
For our purposes, the definition
via the Wirthm{\"u}ller isomorphism is more convenient; it says that for
every semistable $G$-symmetric spectrum $X$, the composite
\[ \Wirth_H^G\ : \ \pi_0^G(X\sm G/H_+)\ \xra{\ \res^G_H\ } \
\pi_0^H(X\sm G/H_+)\ \xra{\ (X\sm\Psi)_*\ } \ \pi_0^H(X) \]
is an isomorphism, where $\Psi:G/H_+\to S^0$ is the $H$-equivariant
projection to the distinguished coset, i.e.,
\begin{equation}\label{eq:define Psi}
 \psi(g H) \ = \
\begin{cases}
  0 & \text{ if $g\in H$, and}\\
\infty & \text{ if $g\not\in H$.}
\end{cases}
\end{equation}
The transfer can then be defined as the composite
\[ 
\pi_0^H(X) \ \xra[\iso]{(\Wirth_H^G)^{-1}}\ \pi_0^G(X\sm G/H_+)\ \xra{\ (X\sm p)_*\ } \
\pi_0^G(X) \ ,
\]
where $p:G/H_+\to S^0$ takes $G/H$ to the non-basepoint.
\end{eg}

\begin{con}
  We let $\Cc$ be a parsummable category and $G$ a finite group.
  Every object of the category $\Cc[\omega^G]$
  represents a point in $|\Cc[\omega^G]|$, the geometric realization of the nerve of the
  $G$-category $\Cc[\omega^G]$. If the object is $G$-fixed, so is the corresponding point.
  So stabilizing by $S^G$ and composing with the assembly map yields the $G$-map
  \[ S^G \ \xra{x \sm -} \ |\Cc[\omega^G]|\sm S^G \ \xra{\eqref{eq:assembly}} \ 
    |\gamma(\Cc)[\omega^G]|(S^G)\ =\ (\bK_{\gl}\Cc)(G) \ .\]
  If two $G$-fixed objects in $\Cc[\omega^G]$ are related by a $G$-fixed morphism,
  the corresponding objects can be joined by a path of $G$-fixed points in
  $|\Cc[\omega^G]|$; so the resulting maps from $S^G$ to 
  $(\bK_{\gl}\Cc)(G)$ are $G$-equivariantly homotopic.
  Altogether this construction defines a map
  \begin{equation} \label{eq:F^G_to_K}
    \beta(G)\ : \  \pi_0(F^G\Cc) = \pi_0( \Cc[\omega^G]^G) \ \to \ \pi_0^G( \bK_{\gl}\Cc)\ ,\quad
    [x]\ \longmapsto \ [\text{assembly}\circ(x\sm S^G)]\ .   
  \end{equation}
\end{con}

\begin{theorem}\label{thm:upi_0 beta}
  Let $\Cc$ be a parsummable category.
  For every finite group $G$, the map $\beta(G)$ is a group completion
  of abelian monoids.
  As $G$ varies over all finite groups, the maps $\beta(G)$
  form a morphism of pre-global functors
  $\beta:\upi_0(\Cc)\to\upi_0(\bK_{\gl}\Cc)$.
\end{theorem}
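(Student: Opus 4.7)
My strategy is to reduce both parts to known facts about special $\Gamma$-$G$-spaces via the identifications of Theorem~\ref{thm:K_gl C is global Omega}. The $G$-stable equivalences provided by that theorem allow us to identify $\pi_0^G(\bK_{\gl}\Cc)$ with $\pi_0^G$ of the $G$-symmetric spectrum $|\gamma(\Cc)|[\omega^G](\mS)$ obtained by evaluating the special $\Gamma$-$G$-space $|\gamma(\Cc)|[\omega^G]$ on spheres. On unravelling the morphisms $a_G^{\gamma(\Cc)}$ and $b_G^{\gamma(\Cc)}$ from Construction~\ref{con:underlying G comparison}, the map $\beta(G)$ will be identified with the canonical homomorphism from $\pi_0(|\Cc[\omega^G]|^G)=\pi_0(F^G\Cc)$ to $\pi_0^G(|\gamma(\Cc)|[\omega^G](\mS))$ that stabilizes a $G$-fixed $0$-simplex by the adjoint assembly map.

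Once this identification is in place, part~(i) becomes the equivariant group completion theorem for special $\Gamma$-$G$-spaces: for a $G$-cofibrant special $\Gamma$-$G$-space $Y$, the canonical map $\pi_0(Y(1_+)^G)\to\pi_0^G(Y(\mS))$ is a group completion of abelian monoids. Specialness of $|\gamma(\Cc)|[\omega^G]$ is provided by Theorem~\ref{thm:special G Gamma}, and $G$-cofibrancy follows from the fact that $|\gamma(\Cc)|[\omega^G]$ is the realization of a $\Gamma$-$G$-simplicial set, as noted in \cite[Example B.34]{schwede:global}.

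For part~(ii), each $\beta(G)$ is a monoid homomorphism (either by part~(i) or by reading off addition from the definition of $\beta(G)$ together with the description of addition in $\pi_0^G$ of a symmetric spectrum), and the operations on both pre-global functors are additive in disjoint unions of bisets. Hence the required compatibility for arbitrary $K$-$G$-bisets reduces, via the decomposition $\upi_0[K\times_{L,\alpha}G]=\tr_L^K\circ\alpha^*$ of Remark~\ref{rk:res and tr}, to compatibility with restrictions along group homomorphisms and with transfers along subgroup inclusions. Compatibility with a restriction $\alpha^*$ is by direct inspection: both sides are induced by postcomposition with the same $K$-equivariant injection $\omega^G\to\omega^K$ (the one used in Example~\ref{eg:homotopy global functor}), and the square commutes on representatives $\beta(G)[x]$. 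The main obstacle is compatibility with transfers: categorically $\tr_H^G[x]=[\sum_{gH\in G/H}\psi^g_*(x)]$ for some $G$-equivariant injection $\psi\colon G\times_H\omega^H\to\omega^G$, whereas $\tr_H^G$ on $\pi_0^G(\bK_{\gl}\Cc)$ is defined via the Wirthmüller isomorphism applied to the projection $G/H_+\to S^0$. To reconcile these I would exploit the $\Gamma$-structure on $|\gamma(\Cc)|[\omega^G]$: evaluated at the based $G$-set $(G/H)_+$, specialness and cofibrancy yield a $G$-equivariant model for the smash product $|\gamma(\Cc)|[\omega^G](\mS)\wedge G/H_+$ on which the categorical sum $\sum_{gH}\psi^g_*$ realizes the Wirthmüller transfer. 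This is the classical Segal recipe for transfers on a special $\Gamma$-space, transported to the equivariant setting; carrying it out cleanly for all finite groups and matching it with~\eqref{eq:define_td(S)} through $\beta$ is the most delicate step of the proof.
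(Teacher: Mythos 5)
Your proposal follows essentially the same route as the paper. For part~(i), the paper likewise passes through the chain $a^{\gamma(\Cc)}_G$ and $b^{\gamma(\Cc)}_G$ of $G$-stable equivalences from Theorem~\ref{thm:K_gl C is global Omega}~(ii) to identify $\beta(G)$ with the stabilization map for the $G$-cofibrant special $\Gamma$-$G$-space $|\gamma(\Cc)|[\omega^G]$, and then cites Proposition~\ref{prop:Gamma compatibilities}~(i) for the group-completion statement. For part~(ii), the paper reduces, exactly as you do, to compatibility with restrictions and with transfers, and the technical heart of the transfer comparison — realizing the categorical sum over $G/H$ as the Wirthm\"uller transfer on a special, $G$-cofibrant $\Gamma$-$G$-space — is precisely Proposition~\ref{prop:Gamma compatibilities}~(iii) in the appendix. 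So the ``most delicate step'' you flag is a real gap in your write-up, but it is filled by a self-contained appendix result and your strategy for it is the correct one.

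One place where your sketch is slightly too loose: for the restriction compatibility you assert that both sides are induced by postcomposition with the same $K$-equivariant injection $\omega^G\to\omega^K$, but the categorical $\alpha^*$ stabilizes via $\omega^{\alpha^*(G)}$ (and a choice of $\lambda:\omega^{\alpha^*(G)}\to\omega^K$) while $\beta(K)$ stabilizes via $\omega^K$; these are different parameter $K$-sets. The paper handles this by introducing the parameterized maps $\beta(G;A)$ for arbitrary non-empty finite $G$-sets $A$, establishing their invariance under equivariant injections of parameter sets (property (a)) and under restriction of groups (property (b)), and then comparing both sides inside $\pi_0(\Cc[\omega^{\alpha^*(G)\amalg K}]^K)$. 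This is a standard cofinality argument, so it is not a conceptual gap in your proposal, but it does need to be made explicit.
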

\begin{proof}
  We consider the following diagram:
  \[ \xymatrix@C=20mm{
      \pi_0(F^G\Cc)\ar@{=}[rr]\ar[d]_{\beta(G)} && \pi_0( \Cc[\omega^G]^G)\ar[d]^{\beta(G)} \\
      \pi_0^G(\bK_{\gl}\Cc) \ar[r]^\iso_-{\pi_0^G(a^{\gamma(\Cc)}_G)} &
      \pi_0^G(\gamma(\Cc)\td{\omega^G,\mS}) & 
      \pi_0^G(|\gamma(\Cc)[\omega^G]|(\mS) )\ar[l]_-\iso^-{\pi_0^G(b_G^{\gamma(\Cc)})}  }
  \]
  The lower horizontal maps are the isomorphisms of equivariant homotopy groups
  induced by the $G$-stable equivalences of $G$-symmetric spectra
  $a^{\gamma(\Cc)}_G$ and $b^{\gamma(\Cc)}_G$ discussed in Theorem \ref{thm:K_gl C is global Omega}.
  The diagram commutes because the two $G$-maps
  \[ |\gamma(\Cc)[\omega^G]|(S^G)\ \to \ |\gamma(\Cc)[\omega^G\amalg \omega^G]|(S^G)\]
  arising from the embeddings of the two summands of $\omega^G\amalg \omega^G$
  are equivariantly homotopic by 
  Proposition \ref{prop:equivariant homotopy} (i).
  The right vertical map is a group completion of abelian monoids by 
  Proposition \ref{prop:Gamma compatibilities} (i), applied to
  the $\Gamma$-$G$-space $|\gamma(\Cc)[\omega^G]|$.
  So the left vertical map is also a group completion of abelian monoids.
  
  Now we show that the $\beta$-maps form a morphism of pre-global functors.
  As we explained in Remark \ref{rk:res and tr}, every morphism in
  $\bA^+(G,K)$ is a finite sum of compositions of transfer and restriction homomorphisms.
  So it suffices to show that the $\beta$-maps are compatible with transfers and
  with restriction along group homomorphisms.
  For this purpose it is convenient to introduce a generalization with an extra parameter.
  We let $G$ be a finite group and $A$ a non-empty finite $G$-set.
  Then $\omega^A$ is a countably infinite $G$-set; in this generality,
  $\omega^A$ need not be a universal $G$-set, but that is not relevant for the following construction.
  Every object $x$ of the $G$-category $\Cc[\omega^A]$
  represents a point in $|\Cc[\omega^A]|$, the geometric realization of the nerve of the
  $G$-category $\Cc[\omega^A]$. If the object $x$ is $G$-fixed, so is the corresponding point,
  and we obtain a continuous $G$-map
  \[ S^A \ \xra{x \sm -} \ |\Cc[\omega^A]|\sm S^A \ \xra{\eqref{eq:assembly}} \ 
    |\gamma(\Cc)[\omega^A]|(S^A)\ =\ (\bK_{\gl}\Cc)(A) \ .\]
  As in the special case $A=G$, this construction descends to a well-defined map
  \[ \beta(G;A)\ : \  \pi_0( \Cc[\omega^A]^G) \ \to \ \pi_0^G( \bK_{\gl}\Cc)\ ,\quad
    [x]\ \longmapsto \ [\text{assembly}\circ(x\sm S^A)]\ .    \]
  In the special case when $G$ acts on itself by left translation, the map $\beta(G;G)$
  reduces to the map $\beta(G)$ of \eqref{eq:F^G_to_K}.
  
  The maps $\beta(G;A)$ have the following two properties, both of which
  are straightforward from the definitions:
  \begin{enumerate}[(a)]
  \item Let $i:A\to B$ be an injective $G$-map between non-empty finite $G$-sets.
    Then the composite
    \[ \pi_0( \Cc[\omega^A]^G) \ \xra{\pi_0(\Cc[i_!]^G)} \
      \pi_0( \Cc[\omega^B]^G) \ \xra{\beta(G;B)} \    \pi_0^G( \bK_{\gl}\Cc)\]
    coincides with the map $\beta(G;A)$, where $i_!:\omega^A\to\omega^B$
    is extension by 0 as defined in \eqref{eq:extension_by_zero}.
  \item Let $\alpha:K\to G$ be a homomorphism between finite groups
    and $A$ a non-empty finite $G$-set. Then the following square commutes:
    \[ \xymatrix@C=20mm{
        \pi_0( \Cc[\omega^A]^G) \ar[d]_{\pi_0(\text{incl})} \ar[r]^-{\beta(G;A)} & \pi_0^G( \bK_{\gl}\Cc)\ar[d]^{\alpha^*}\\
        \pi_0( \Cc[\omega^{\alpha^*(A)}]^K) \ar[r]_-{\beta(K;\alpha^*(A))} & \pi_0^K( \bK_{\gl}\Cc)
      } \]
  \end{enumerate}
  Now we can prove the compatibility of the $\beta$-maps
  with restriction along a group homomorphism $\alpha:K\to G$.
  The restriction map $\alpha^*:\pi_0(\Cc[\omega^G]^G)\to\pi_0(\Cc[\omega^K]^K)$
  is based on a choice of $K$-equivariant injection
  $\lambda:\omega^{\alpha^*({G})}=\alpha^*(\omega^G)\to\omega^K$.
  We let $i^1:\alpha^*(G)\to\alpha^*(G)\amalg K$ and
  $i^2:K\to\alpha^*(G)\amalg K$ denote the inclusions of the two summands
  into the $K$-set $\alpha^*(G)\amalg K$.
  We claim that the map $\pi_0(\Cc[i^1_!]^K)$ factors as the composite
  \[ 
    \pi_0( \Cc[\omega^{\alpha^*(G)}]^K) \ \xra{\pi_0(\Cc[\lambda]^K)} \
    \pi_0( \Cc[\omega^K]^K)  \ \xra{\pi_0(\Cc[i^2_!]^K)} \  \pi_0(\Cc[\omega^{\alpha^*(G)\amalg K}]^K) \ .
  \]
  Indeed, if $x$ is a $K$-fixed object of the $K$-category $\Cc[\omega^{\alpha^*(G)}]$,
  then the isomorphism $[i^2_!\circ\lambda,i^1_!]^x:(i^1_*)(x)\to (i^2_!)_*(\lambda_*(x))$
  is $K$-fixed.
  
  The relation $\pi_0(\Cc[i^2_!]^K)\circ \pi_0(\Cc[\lambda]^K)= \pi_0(\Cc[i^1_!]^K) $
  and the properties (a) and (b) of the parameterized $\beta$-maps
  witness that the following diagram commutes:
  \[ \xymatrix@C=25mm{
      \pi_0( \Cc[\omega^G]^G)\ar[dr]^{\text{incl}}   \ar@/^1pc/[drr]^-{\beta(G)=\beta(G;G)}
      \ar[ddd]_{\alpha^*}& &\\
      & \pi_0( \Cc[\omega^{\alpha^*(G)}]^K)\ar@/^1pc/[dr]^(.5){\beta(K;\alpha^*(G))}  \ar[d]^{\pi_0(\Cc[i^1_!]^K)}
      \ar@/_1pc/[ddl]_{\pi_0(\Cc[\lambda]^K)}&
      \pi_0^G(\bK_{\gl}\Cc)\ar[d]^{\alpha^*} \\
      &  \pi_0(\Cc[\omega^{\alpha^*(G)\amalg K}]^K) \ar[r]_(.5){\beta(K;\alpha^*(G)\amalg K)}&
      \pi_0^K(\bK_{\gl}\Cc) \\
      \pi_0(\Cc[\omega^K]^K)  \ar[ur]_{\pi_0(\Cc[i^2_!]^K)} \ar@/_1pc/[urr]_-{\beta(K)=\beta(K;K)}&&
    }  \]
  We conclude that $\beta(K)\circ\alpha^*=\alpha^*\circ\beta(G)$.
  
  Now we show that the maps $\beta(G)$ commute with transfers.
  We reduce this to the more general transfer compatibility property
  of Proposition \ref{prop:Gamma compatibilities},
  applied to the special and $G$-cofibrant $\Gamma$-$G$-space $|\gamma(\Cc)[\omega^G]|$.
  We contemplate the following diagram of abelian monoids:
  \begin{equation}\begin{aligned}\label{eq:transfer diagram}
      \xymatrix@C=18mm{
        \pi_0( \Cc[\omega^H]^H)\ar[r]\ar[d]^{\pi_0(\Cc[i_!]^H)}   \ar@<-4ex>@/_2pc/[dd]_{\tr_H^G}
        \ar@<2ex>@/^3pc/[rrd]^(.2){\beta(H)}&
        \pi_0^H(|\gamma(\Cc)[\omega^H]|(\mS))\ar[d]_{\pi_0^H(|\gamma(\Cc)[i_!]|(\mS))}
        \ar@/^1pc/[dr]_\iso   &\\
        \pi_0( \Cc[\omega^G]^H)\ar[r]\ar[d]^{\tr_H^G} &
        \pi_0^H(|\gamma(\Cc)[\omega^G]|(\mS))\ar[r]_-\iso\ar[d]_{\tr_H^G} &
        \pi_0^H(\bK_{\gl}\Cc)\ar[d]^{\tr_H^G} \\
        \pi_0( \Cc[\omega^G]^G)\ar[r] \ar@/_2pc/[rr]_-{\beta(G)} &
        \pi_0^G(|\gamma(\Cc)[\omega^G]|(\mS))\ar[r]^-\iso &
        \pi_0^G(\bK_{\gl}\Cc)    }
    \end{aligned}\end{equation}
  Here $i_!:\omega^H\to\omega^G$ is the $H$-equivariant injection that extends
  a function by zero on $G\setminus H$. The three left horizontal maps are stabilization maps.
  The lower left transfer map $\tr_H^G:\pi_0(\Cc[\omega^G]^H)\to \pi_0(\Cc[\omega^G]^G)$
  is the transfer map for the special $\Gamma$-$G$-space
  $|\gamma(\Cc)[\omega^G]|$, see Construction \ref{con:addition on Gamma-space}.
  The upper left square in \eqref{eq:transfer diagram}
  commutes by naturality,
  because $|\gamma(\Cc)[i_!]|:|\gamma(\Cc)[\omega^H]|\to|\gamma(\Cc)[\omega^G]|$
  is a morphism of $\Gamma$-$H$-spaces.
  The lower left square in diagram \eqref{eq:transfer diagram} commutes by
  Proposition \ref{prop:Gamma compatibilities} for the $\Gamma$-$G$-space $|\gamma(\Cc)[\omega^G]|$.
  The lower right square commutes by naturality of transfers,
  because $b_G^{\gamma(\Cc)}$ and $a^{\gamma(\Cc)}_G$ are morphisms of $G$-symmetric spectra.
  
  Finally, the upper right triangle in \eqref{eq:transfer diagram}
  is the effect on $H$-equivariant stable homotopy groups
  of the commutative diagram of $H$-symmetric spectra of Theorem \ref{thm:a-b-maps equivalence}:
  \[  \xymatrix@C=20mm{
      |\gamma(\Cc)[\omega^H]|(\mS)\ar[r]_-\simeq^-{b_H^{\gamma(\Cc)}}\ar[d]_{|\gamma(\Cc)[i_!]|(\mS)}&
      \gamma(\Cc)\td{\omega^H,\mS} \ar[d]^{|\gamma(\Cc)\td{i_!,\mS}}
      & (\gamma(\Cc)\td{\mS})_H=(\bK_{\gl}\Cc)_H\ar[l]^-\simeq_{a^{\gamma(\Cc)}_H}\ar@{=}[d]\\
      |\gamma(\Cc)[\omega^G]|(\mS)\ar[r]^-\simeq_-{b_G^{\gamma(\Cc)}} &
      \gamma(\Cc)\td{\omega^G,\mS} & (\gamma(\Cc)\td{\mS})_G \ar[l]_-\simeq^-{a^{\gamma(\Cc)}_G}
    }
  \]
  All horizontal morphisms in this diagram are $H$-stable equivalences of $H$-symmetric spectra by 
  Theorem \ref{thm:a-b-maps equivalence}.
\end{proof}

The  Swan K-theory global functor $\bK(\Cc)$ of a parsummable category $\Cc$
was defined as the group completion of the pre-global functor $\upi_0(\Cc)$.
So the morphism of pre-global functors $\beta:\upi_0(\Cc)\to\upi_0(\bK_{\gl}\Cc)$
discussed in Theorem \ref{thm:upi_0 beta}
extends uniquely to a morphism of global functors $\tilde\beta:\bK(\Cc)\to\upi_0(\bK_{\gl}\Cc)$.
The content of Theorem \ref{thm:upi_0 beta} can then be stated
in an equivalent form as follows:

\begin{theorem}\label{thm:upi_0 tilde beta} 
  For every parsummable category $\Cc$, the unique extension of the morphism
  of pre-global functors $\beta:\upi_0(\Cc)\to\upi_0(\bK_{\gl}\Cc)$
  is an isomorphism of global functors $\tilde\beta:\bK(\Cc)\to\upi_0(\bK_{\gl}\Cc)$.
\end{theorem}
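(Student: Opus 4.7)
The plan is to reduce the statement directly to Theorem \ref{thm:upi_0 beta} via the universal property of group completion, using that group completions of pre-global functors are formed pointwise.

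First I would invoke Theorem \ref{thm:upi_0 beta}: the maps $\beta(G)$ assemble into a morphism of pre-global functors $\beta:\upi_0(\Cc)\to\upi_0(\bK_{\gl}\Cc)$, and each component $\beta(G)$ is a group completion of abelian monoids. Since the equivariant stable homotopy groups of any symmetric spectrum are abelian groups, $\upi_0(\bK_{\gl}\Cc)$ is a global functor in the sense of Definition \ref{def:pre-global functor}, not merely a pre-global functor. The universal property defining $\bK(\Cc)$ as the group completion of the pre-global functor $\upi_0(\Cc)$ then produces a unique morphism of global functors $\tilde\beta:\bK(\Cc)\to\upi_0(\bK_{\gl}\Cc)$ extending $\beta$.

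Next I would exploit the fact, recorded right after the definition of group completion of pre-global functors, that such group completions are computed pointwise: the canonical map $i(G):\pi_0(F^G\Cc)\to\bK(\Cc,G)$ is, by definition, a group completion of abelian monoids for every finite group $G$. At each such $G$ we thus have two factorizations of a universal map from the abelian monoid $\pi_0(F^G\Cc)$ through an abelian group, namely $i(G)$ itself and the composite $\beta(G)=\tilde\beta(G)\circ i(G)$. Since $\beta(G)$ is a group completion by Theorem \ref{thm:upi_0 beta}, and since the group completion of an abelian monoid is unique up to unique isomorphism compatible with the universal arrow, the map $\tilde\beta(G):\bK(\Cc,G)\to\pi_0^G(\bK_{\gl}\Cc)$ is forced to be an isomorphism of abelian groups. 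Because a morphism of global functors is an isomorphism precisely when it induces an isomorphism at every finite group, this yields the desired conclusion.

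There is essentially no obstacle in this final step: all the homotopy-theoretic substance lies in Theorem \ref{thm:upi_0 beta} (specifically in the identification, via the equivalences $a^{\gamma(\Cc)}_G$ and $b_G^{\gamma(\Cc)}$ of Theorem \ref{thm:K_gl C is global Omega}, of the equivariant delooping of a special $\Gamma$-$G$-space with a Grothendieck group completion of its underlying abelian monoid of components). The present theorem is a clean categorical repackaging of that result via the universal property of group completion of pre-global functors.
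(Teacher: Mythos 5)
Your proposal is correct and matches the paper's approach: the paper treats Theorem \ref{thm:upi_0 tilde beta} as an immediate reformulation of Theorem \ref{thm:upi_0 beta}, with the extension $\tilde\beta$ produced by the universal property of group completion and the isomorphism checked pointwise. You simply make explicit the routine steps (pointwise formation of group completions, uniqueness of group completions of abelian monoids, pointwise criterion for isomorphism of global functors) that the paper leaves implicit.
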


\section{Saturation}\label{sec:saturation}

In this section we study the difference between $G$-fixed objects
and $G$-objects in a parsummable category $\Cc$, where $G$ is a finite group.
As we explained in Proposition \ref{prop:lambda_sharp} above,
the $G$-fixed category $F^G\Cc$ embeds fully faithfully
into the category $G\Cc$ of $G$-objects in $\Cc$.
This embedding is often -- but not always -- essentially surjective,
and hence an equivalence.

We introduce a property of parsummable categories that we call `saturation';
loosely speaking, a parsummable category $\Cc$ is saturated
if it has `enough $G$-fixed objects' for all finite groups $G$.
For saturated parsummable categories, the $G$-fixed point spectrum $F^G(\bK_{\gl}\Cc)$
of the global K-theory of $\Cc$ is equivalent to the K-theory
spectrum of $G$-objects in $\Cc$, see Corollary \ref{cor:saturated implies global}.
Saturation can always be arranged in the following sense:
there is a saturation functor for parsummable categories
and a natural morphism of parsummable categories $s:\Cc\to C^{\sat}$
that is an equivalence of underlying categories, see Theorem \ref{thm:saturation}.
Also by Theorem \ref{thm:saturation}, the saturation construction is idempotent
up to global equivalence, so the morphism $s:\Cc\to C^{\sat}$ is
`globally homotopy initial' among morphisms from $\Cc$ to saturated
parsummable categories.

\begin{con}[(Homotopy fixed category)]
  We let $G$ be a group. We write $E G$ for the translation category,
  i.e., the chaotic category with object set $G$. The group $G$
  acts freely on the category $E G$ by left translation.
  We recall that the {\em homotopy fixed category} of a $G$-category $\Ac$ is 
  \[  \Ac^{h G}\ = \ \cat^G(E G,\Ac)\ ,\]
  the category of $G$-equivariant functors from the translation category $E G$ to $\Ac$.
  The canonical functor 
  \begin{equation}  \label{eq:fix2hfix}
    \kappa\ : \ \Ac^G \ \to \ \Ac^{h G}  
  \end{equation}
  sends a $G$-fixed object $x$ to the constant functor with value $x$.
  We recall that the functor \eqref{eq:fix2hfix} is always fully faithful,
  but not necessarily an equivalence.
  Indeed, the unique functor $p:E G\to\ast$ to the terminal category is $G$-equivariant
  and an equivalence of underlying categories. So the `constant functor'
  \[  \Ac \ \to \ \cat(E G,\Ac)   \]
  is $G$-equivariant and an equivalence of underlying categories.
  The restriction to $G$-fixed categories \eqref{eq:fix2hfix}
  is thus fully faithful.
\end{con}

If $\Cc$ is an $\M$-category and $G$ a finite group, then $\Cc[\omega^G]$ is a $G$-category.
So the previous discussion applies to $\Cc[\omega^G]$. We write 
\[ F^{h G}\Cc \ = \ \Cc[\omega^G]^{h G}\ = \ \cat^G(E G,\Cc[\omega^G])\]
for the {\em homotopy $G$-fixed category} of $\Cc$.

\begin{defn}\label{def:saturated}
  An $\M$-category $\Cc$ is {\em saturated} if for every
  finite group $G$ the functor
  \[      \kappa\ : \ F^G\Cc \ \to \ F^{h G}\Cc  \]
  is an equivalence of categories.
  A parsummable category $\Cc$ is {\em saturated} 
  if the underlying $\M$-category is saturated.
\end{defn}

Corollary \ref{cor:saturation characterizations} below
provides an alternative  characterization of saturation in terms of the comparison functors
$\lambda_\flat:F^G\Cc\to G\Cc$ between $G$-fixed objects and $G$-objects
introduced in \eqref{eq:lambda sharp}.
To establish this characterization, we need another construction.

\begin{con}
  We let $\Cc$ be an $\M$-category and $G$ a finite group.
  We define two functors
  \begin{equation}\label{eq:define_c_sharp}
    c_\sharp \ : \ G\Cc\ \to\  F^{h G}\Cc     \text{\qquad and\qquad}
   \lambda_\sharp \ : \ F^{h G}\Cc \ \to \ G\Cc \ ,
  \end{equation}
  the second one depending on a choice of injection $\lambda:\omega^G\to\omega$.
  Both functors are natural for morphisms of $\M$-categories,
  and both are equivalences of categories, by Proposition \ref{prop:h-fixed versus G-objects} below.
  The functor $\lambda_\sharp$ is an extension from fixed points
  to homotopy fixed points of the functor $\lambda_\flat:F^ G\Cc \to G\Cc$
  defined in \eqref{eq:lambda sharp},
  in the sense that $\lambda_\flat=\lambda_\sharp\circ\kappa$. 
 
  We let $c:\omega\to\omega^G$ denote the `constant function' injection that sends
  $i\in\omega$ to the function defined by $c(i)(g)=i$.
  We will now lift the induced functor $c_*:\Cc\to \Cc[\omega^G]$
  to a functor $c_\sharp: G\Cc\to F^{h G}\Cc$ 
  from the category of $G$-objects in $\Cc$ to the homotopy $G$-fixed point category.
  In a nutshell, the functor $c_\sharp$ is the composite of the preferred isomorphism
  of categories $G\Cc\iso\cat^G(E G,\Cc)$ and the functor induced by
  $c_*:\Cc\to\Cc[\omega^G]$ by applying $\cat^G(E G,-)$.
  We take the time to expand this definition.
  We let $y$ be a $G$-object in $\Cc$, and we write $g_\star:y\to y$
  for the action morphism of an element $g$ of $G$.
  The $G$-equivariant functor $c_\sharp(y):E G\to \Cc[\omega^G]$ is then defined on objects by
  $ c_\sharp(y)(g) = c_*(y)$, and on morphisms by
  \[ c_\sharp(y)(\gamma,g)\ = \ c_*(\gamma_\star^{-1} g_\star)\ :  \ c_*(y)\ \to \ c_*(y)\ .\]
  If $f:x\to y$ is a morphism of $G$-objects in $\Cc$, the
  natural transformation $c_\sharp(f):c_\sharp(x)\Longrightarrow c_\sharp(y)$ is given
  at the object $g\in G$ by $c_\sharp(f)(g)=c_*(f)$.
  
  Because the injection $c:\omega\to\omega^G$ takes values in the constant functions,
  we have $l^k\circ c=c$ for all $k\in G$, where $l^k:\omega^G\to\omega^G$ is the action of $k$.
  So
  \begin{align*}
    l^k_*(c_\sharp(y)(\gamma,g))\
    &= \ l^k_*(c_*(\gamma_\star^{-1} g_\star))\
      = \ c_*(\gamma_\star^{-1} g_\star)\\
    &= \ c_*( (k \gamma)_\star^{-1} (k g)_\star)\
    = \ c_\sharp(y)(k \gamma,k g)\  .  
  \end{align*}
  This shows that the functor $c_\sharp(y):E G\to\Cc[\omega^G]$ is $G$-equivariant.
  The proof that the natural transformation $c_\sharp(f)$ is $G$-equivariant is similar,
  and we omit it. This completes the definition of the functor $c_\sharp$.
  
  Now we define the functor $\lambda_\sharp:F^{h G}\Cc\to G\Cc$, which depends
  on a choice of injection $\lambda:\omega^G\to\omega$.
  We let $x$ be an object of the category $F^{h G}\Cc$, i.e.,
  a $G$-equivariant functor $x:E G\to \Cc[\omega^G]$.
  The functor $\lambda_\sharp$ associates to $x$
  the $\Cc$-object $\lambda_\sharp(x)=\lambda_*(x(1))$,
  endowed with the $G$-action via the composite morphisms
  \[ \lambda_*(x(1)) \ \xra{\ \lambda_*([l^g,1]^{x(1)})\ } \
    \lambda_*(l^g_*(x(1))) \ = \ \lambda_*(x(g))
    \ \xra{\lambda_*(x(1,g))}\ \lambda_*(x(1))\ ,\]
  where $l^g:\omega^G\to\omega^G$ is left translation by $g$.
  If $\gamma\in G$ is another group element, then
  \begin{align*}
    x(1,\gamma)\circ [l^\gamma,1]^{x(1)}\circ x(1,g)\circ [l^g,1]^{x(1)}
    \
    &= \ x(1,\gamma)\circ l^\gamma_*(x(1,g))\circ [l^\gamma,1]^{l^g_*(x(1))} \circ [l^g,1]^{x(1)}\\
    &= \ x(1,\gamma)\circ x(\gamma,\gamma g)\circ [l^\gamma l^g,l^g]^{x(1)} \circ [l^g,1]^{x(1)}\\
    &= \  x(1,\gamma g)\circ[l^{\gamma g},1]^{x(1)}
  \end{align*}
  as endomorphisms of $x(1)$ in the category $\Cc[\omega^G]$.
  Applying the functor $\lambda_*:\Cc[\omega^G]\to\Cc$
  shows that for varying $g$, the morphisms $\lambda_*(x(1,g)\circ[l^g,1]^{x(1)})$
  define a $G$-action on $\lambda_*(x(1))$.
  This completes the definition of the functor $\lambda_\sharp$ on objects.

  The value of the functor $\lambda_\sharp$ on a morphism 
  $\alpha:x\to y$ in $F^{h G}\Cc$ 
  (i.e., a $G$-equivariant natural transformation) is
  \[ \lambda_\sharp(\alpha)\ = \ \lambda_*(\alpha(1))\ : \ \lambda_*(x(1))\ \to \ \lambda_*(y(1))\ . \]
  Then for every $g\in G$,
  \begin{align*}
    y(1,g)\circ [l^g,1]^{y(1)}\circ \alpha(1)\
    &= \   y(1,g)\circ l^g_*(\alpha(1))\circ [l^g,1]^{x(1)}  \\
    &= \   y(1,g)\circ \alpha(g)\circ [l^g,1]^{x(1)}  \
    = \   \alpha(1)\circ x(1,g)\circ [l^g,1]^{x(1)}
  \end{align*}
  as $\Cc[\omega^G]$-morphisms from $x(1)$ to $y(1)$.
  Applying the functor $\lambda_*:\Cc[\omega^G]\to\Cc$
  shows that $\lambda_*(\alpha(1)):\lambda_*(x(1))\to\lambda_*(y(1))$
  is $G$-equivariant for the $G$-actions defined above.
  This completes the definition of the functor $\lambda_\sharp$ on morphisms.
  We omit the routine verification that $\lambda_\sharp$ preserves identities
  and composition, so it is indeed a functor.  
\end{con}

\begin{prop}\label{prop:h-fixed versus G-objects}
  Let $\Cc$ be an $\M$-category and $G$ a finite group.
  \begin{enumerate}[\em (i)]
  \item The functor $c_\sharp:G\Cc\to F^{h G}\Cc$ is an equivalence of categories.
  \item For every injection $\lambda:\omega^G\to\omega$,
    the functor $\lambda_\sharp: F^{h G}\Cc\to G\Cc$ is an equivalence of categories.
  \end{enumerate}
\end{prop}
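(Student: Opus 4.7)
The plan is to construct explicit natural isomorphisms
\[  \lambda_\sharp \circ c_\sharp \ \iso \ \Id_{G\Cc}
    \qquad\text{and}\qquad
   c_\sharp \circ \lambda_\sharp \ \iso \ \Id_{F^{h G}\Cc}, \]
from which both (i) and (ii) follow at once. The key ingredient throughout is the identity $l^g \circ c = c$ as maps $\omega \to \omega^G$, which reflects that $c$ takes values in the constant, and hence $G$-fixed, functions in $\omega^G$.

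For the first composite, unwinding the definitions shows that for $y \in G\Cc$ the underlying $\Cc$-object of $\lambda_\sharp(c_\sharp(y))$ is $(\lambda c)_*(y)$. Because $l^g c = c$, the support of $c_*(y)$ as an object of $\Cc[\omega^G]$ consists of constant functions, which $l^g$ fixes pointwise; Proposition \ref{prop:finite support}(ii) then forces $[l^g, 1]^{c_*(y)} = \Id$, so the induced $G$-action on $\lambda_\sharp(c_\sharp(y))$ reduces to $(\lambda c)_*(g_\star|_y)$. The coherence morphism $[1, \lambda c]^y : (\lambda c)_*(y) \to y$ is then a natural isomorphism to the identity functor; its $G$-equivariance is the naturality of $[1, \lambda c]$ applied to the action endomorphisms $g_\star \colon y \to y$ in $\Cc$.

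For the second composite, given $x \in F^{h G}\Cc$, I will construct a $G$-equivariant natural isomorphism $\gamma \colon x \Rightarrow c_\sharp(\lambda_\sharp(x))$ by the formula
\[ \gamma(g) \ = \ [c\lambda, l^g]^{x(1)} \ : \ l^g_*(x(1)) = x(g) \ \to \ (c\lambda)_*(x(1)) = c_\sharp(\lambda_\sharp(x))(g). \]
Equivariance $l^k_*(\gamma(g)) = \gamma(kg)$ is immediate from $l^k(c\lambda) = c\lambda$ together with the $\M$-action relation $u_*([v, w]^x) = [u v, u w]^x$. Naturality with respect to a morphism $(g', g) \colon g \to g'$ in $E G$ reduces, after expanding $c_\sharp(\lambda_\sharp(x))(g', g)$ as $(c\lambda)_*(x(1, h)) \circ [c\lambda l^h, c\lambda]^{x(1)}$ with $h = (g')^{-1} g$, to the identity
\[ [l^{h^{-1}}, 1]^{x(1)} \circ x(1, h) \ = \ x(h^{-1}, 1) \circ [1, l^h]^{x(1)}, \]
which in turn follows from the naturality of $[l^{h^{-1}}, 1]$ together with the equivariance relation $l^{h^{-1}}_*(x(1, h)) = x(h^{-1}, 1)$ inherited from $x$ being a $G$-equivariant functor on $E G$. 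Each $\gamma(g)$ is invertible because the morphisms $[v, u]^{\cdot}$ are always isomorphisms.

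The only real obstacle is systematic bookkeeping with the $\M$-action relations, requiring repeated use of the formulas $u_*(f) = [u, 1]^{\text{target}} \circ f \circ [1, u]^{\text{source}}$, $[v, u] = [v, w] \circ [w, u]$, and $[v, u]^{\lambda_*(x)} = [v\lambda, u\lambda]^x$ from Proposition \ref{prop:minimal M-axioms}. Once both composites are identified as the identity up to natural isomorphism, $c_\sharp$ and $\lambda_\sharp$ are mutually quasi-inverse equivalences of categories, proving (i) and (ii) simultaneously.
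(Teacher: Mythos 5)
Your strategy differs from the paper's: you construct explicit natural isomorphisms witnessing both $\lambda_\sharp \circ c_\sharp \iso \Id_{G\Cc}$ and $c_\sharp \circ \lambda_\sharp \iso \Id_{F^{h G}\Cc}$, whereas the paper exhibits only the first one (observing $\lambda_\sharp \circ c_\sharp = G(\lambda c)_*$ and using $G[\lambda c, 1]$), proves separately that $\lambda_\sharp$ is faithful, and then deduces both equivalences from these two facts by a formal argument. Your second natural isomorphism $\gamma(g) = [c\lambda, l^g]^{x(1)}$ is correct, and the equivariance and naturality verifications go through; the computations are heavier than the paper's route, but the result is a genuinely quasi-inverse pair, which the paper's proof produces only implicitly.

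There is, however, a real gap in the first half. To make $[1,\lambda c]^y$ a $G$-equivariant isomorphism you need $\lambda_*\bigl([l^g,1]^{c_*(y)}\bigr)=\Id$, and you justify this via the claim that $c_*(y)$ is supported on the set of constant functions together with Proposition~\ref{prop:finite support}(ii). But $\Cc$ is an arbitrary $\M$-category here, not assumed tame, so $y$ -- and hence $c_*(y)$ -- need not be finitely supported. In that case $\supp(c_*(y))=\omega^G$ by the convention in Definition~\ref{def:support}, and Proposition~\ref{prop:finite support}(ii) says nothing: the hypothesis that $l^g$ and $1$ agree on $\supp(c_*(y))$ forces $g=e$. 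The needed identity is nevertheless true, by a more elementary route that avoids support altogether: using the relations \eqref{eq:assoc_2} and $l^g c = c$, one has $\lambda_*\bigl([l^g,1]^{c_*(y)}\bigr) = [\lambda l^g,\lambda]^{c_*(y)} = [\lambda l^g c,\lambda c]^y = [\lambda c,\lambda c]^y = \Id$. Replace the support argument with this computation and the proof is complete without any tameness hypothesis.
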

\begin{proof}
  We prove both statements together.  We show first that the functor $\lambda_\sharp$ is faithful.
  We let $x,y:E G\to\Cc[\omega^G]$ be two $G$-equivariant functors,
  and we let  $\alpha,\beta:x\Longrightarrow y$ be two $G$-equivariant natural transformations
  such that $\lambda_\sharp(\alpha)=\lambda_\sharp(\beta)$.
  This means that $\lambda_*(\alpha(1))= \lambda_*(\beta(1))$,
  by definition of $\lambda_\sharp$.
  The functor $\lambda_*:\Cc[\omega^G]\to\Cc$ is an equivalence of underlying categories,
  so it is in particular faithful. So we deduce that $\alpha(1)=\beta(1)$.
  Because $\alpha$ and $\beta$ are $G$-equivariant, this implies that
  \[ \alpha(g)\ = \ l^g_*(\alpha(1)) \ = \ l^g_*(\beta(1))\ = \ \beta(g) \]
  for every $g\in G$. 
  So $\alpha=\beta$ as natural transformations, and this proves that the functor
  $\lambda_\sharp$ is faithful.

  Now we observe that the composite $\lambda_\sharp\circ c_\sharp$ is equal to
  $G(\lambda c)_*$, the functor induced by $(\lambda c)_*:\Cc\to\Cc$ on $G$-objects.
  The natural transformation $G[\lambda c,1]:\Id_{G\Cc}\Longrightarrow G(\lambda c)_*$
  witnesses that $G(\lambda c)_*=\lambda_\sharp\circ c_\sharp$ is an equivalence of categories.
  Because $\lambda_\sharp\circ c_\sharp$ is fully faithful and
  $\lambda_\sharp$ is faithful, both functors $\lambda_\sharp$ and $c_\sharp$ are in fact
  fully faithful.

  Because $\lambda_\sharp\circ c_\sharp$ is essentially surjective,
  $\lambda_\sharp$  is essentially surjective, and hence an equivalence of categories.
  Because $\lambda_\sharp\circ c_\sharp$ and $\lambda_\sharp$ are
  equivalences of categories, so is $c_\sharp$.
\end{proof}

\begin{cor}\label{cor:saturation characterizations}
  An $\M$-category $\Cc$ is saturated if and only if 
  for every finite group $G$ and every injection $\lambda:\omega^G\to\omega$,
  the functor $\lambda_\flat:F^G\Cc\to G\Cc$
  is an equivalence of categories.
\end{cor}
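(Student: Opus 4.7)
The plan is to deduce the corollary directly from the factorization $\lambda_\flat = \lambda_\sharp \circ \kappa$ together with the fact, proved in Proposition \ref{prop:h-fixed versus G-objects} (ii), that $\lambda_\sharp : F^{h G}\Cc \to G\Cc$ is always an equivalence of categories, regardless of saturation.

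First I would fix a finite group $G$ and an injection $\lambda : \omega^G \to \omega$ and verify the identity
\[ \lambda_\flat \ = \ \lambda_\sharp \circ \kappa \ : \ F^G\Cc \ \to \ G\Cc \]
already noted in the construction of $\lambda_\sharp$. This is essentially immediate: for a $G$-fixed object $x$ of $\Cc[\omega^G]$, the constant functor $\kappa(x):E G\to \Cc[\omega^G]$ has $\kappa(x)(1)=x$ and $\kappa(x)(1,g)=1_x$, so $\lambda_\sharp(\kappa(x))=\lambda_*(x)$ endowed with the action whose $g$-component is $\lambda_*([l^g,1]^x) = [\lambda l^g,\lambda]^x$, which is by definition $\lambda_\flat(x)$. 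The morphism case is similar.

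Since $\lambda_\sharp$ is an equivalence of categories, the composite $\lambda_\flat = \lambda_\sharp \circ \kappa$ is an equivalence of categories if and only if the fully faithful functor $\kappa : F^G\Cc \to F^{h G}\Cc$ is an equivalence. Hence for every fixed $G$, the condition ``$\lambda_\flat$ is an equivalence for some (equivalently, any) injection $\lambda:\omega^G\to\omega$'' is equivalent to the condition ``$\kappa:F^G\Cc\to F^{h G}\Cc$ is an equivalence''; here the parenthetical remark about the independence of the choice of $\lambda$ follows from Proposition \ref{prop:lambda_sharp} (ii), which provides a natural isomorphism between the functors $\lambda_\flat$ and $\mu_\flat$ for any two injections $\lambda,\mu:\omega^G\to\omega$.

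Quantifying over all finite groups $G$ then directly yields the corollary: $\Cc$ is saturated (that is, $\kappa$ is an equivalence for every finite $G$) if and only if $\lambda_\flat$ is an equivalence for every finite group $G$ and every injection $\lambda:\omega^G\to\omega$. There is no substantial obstacle here, as all the heavy lifting has been done in Proposition \ref{prop:h-fixed versus G-objects}; the only thing to be careful about is the verification of the factorization $\lambda_\flat = \lambda_\sharp \circ \kappa$, which is a direct unraveling of the definitions of the two functors $\lambda_\flat$ and $\lambda_\sharp$.
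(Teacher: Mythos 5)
Your proposal is correct and takes the same approach as the paper: factor $\lambda_\flat = \lambda_\sharp \circ \kappa$, invoke Proposition \ref{prop:h-fixed versus G-objects} (ii) for the equivalence $\lambda_\sharp$, and conclude that $\lambda_\flat$ is an equivalence precisely when $\kappa$ is. (You have in fact written the factorization in the correct direction; the paper's proof contains a small typo, writing it as $\lambda_\flat\circ\kappa=\lambda_\sharp$, which is not type-correct.)
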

\begin{proof}
  The functor $\lambda_\sharp:F^{h G}\Cc\to G\Cc$ is an equivalence of categories
  by Proposition \ref{prop:h-fixed versus G-objects} (ii).
  Because $\lambda_\flat\circ\kappa=\lambda_\sharp$,
  we conclude that $\kappa:F^G\Cc\to F^{h G}\Cc$ is an equivalence
  if and only if $\lambda_\flat$ is an equivalence.
  Since this holds for all finite groups, we have proved the claim.
\end{proof}

Many $\M$-categories that we discuss in this paper
are saturated, while some are not.

\begin{eg}\label{eg:F is saturated}
  The $\M$-category $\Fc$ of finite sets was introduced in Example \ref{eg:Fc as M-category}.
  We let $G$ be a finite group.
  Objects of $F^G\Fc=(\Fc[\omega^G])^G$ are finite $G$-invariant subsets of $\omega^G$;
  the comparison functor $\lambda_\flat:F^G\Fc=(\Fc[\omega^G])^G\to G\Fc$
  takes such a $G$-invariant subset $A\subset\omega^G$
  to the image $\lambda(A)\subset \omega$ under the injection $\lambda:\omega^G\to\omega$.
  The $G$-action on $\lambda(A)$ is restricted from $\omega^G$ and conjugated by
  $\lambda|_{A}:A\iso \lambda(A)$; so by design, $\lambda(A)$ is $G$-isomorphic to $A$.
  Because $\omega^G$ is a universal $G$-set (Proposition \ref{prop:universal G-sets}),
  every finite $G$-set is equivariantly isomorphic to a $G$-invariant subset of $\omega^G$.
  So the comparison functor $\lambda_\flat$ is essentially surjective,
  and the $\M$-category $\Fc$ of finite sets is saturated.
\end{eg}

\begin{eg}
  We let $\Cc$ be any category, which we endow with the trivial $\M$-action.
  Then for every finite group $G$, the $G$-fixed category is $\Cc$ itself,
  and the functor $\lambda_\flat:\Cc=F^G\Cc\to G\Cc$ endows a $\Cc$-object
  with the trivial $G$-action. So as soon as there is a non-trivial $G$-action
  on an object of $\Cc$, the functor $\lambda_\flat$ is not an equivalence.
  We conclude that whenever there is a non-trivial action of a finite group
  on a $\Cc$-object, then $\Cc$ with trivial $\M$-action is not saturated.
\end{eg}

\begin{eg}\label{eg:GC inherits saturation}
  We let $\Cc$ be an $\M$-category, and we let $G$ be a group, possibly infinite.
  The category $G\Cc$ of $G$-objects inherits a `pointwise' $\M$-action,
  compare Example \ref{eg:(co)limits Mcat}.
  We claim that the $\M$-category $G\Cc$ is saturated whenever $\Cc$ is.
  To see this, we consider a finite group $K$.
  Then
  \[ F^K(G\Cc)\ = \ ((G\Cc)[\omega^K])^K\ = \ (G(\Cc[\omega^K]))^K\ = \
    G((\Cc[\omega^K]))^K\ = \ G(F^K\Cc)\ .
  \]
  The second equality uses the fact that the $\M$-action on $G\Cc$ is `pointwise',
  with the $G$-action carried along by functoriality.

  Interchanging the orders in which the two groups act provides an isomorphism
  of categories between $K(G\Cc)$ and $G(K\Cc)$. Moreover, for every injection $\lambda:\omega^K\to\omega$,
  the following square of categories and functors commutes:
  \[ \xymatrix@C=12mm{
      F^K(G\Cc)\ar@{=}[d] \ar[r]^-{\lambda_\flat^{G\Cc}} & K(G\Cc)\ar[d]^\iso \\
      G(F^K\Cc)\ar[r]_-{G\lambda_\flat^\Cc} & G(K\Cc)
    } \]
  Since $\Cc$ is saturated, the functor $\lambda_\flat^\Cc:F^K\Cc\to K\Cc$
  is an equivalence of categories by Corollary \ref{cor:saturation characterizations}.
  So the lower horizontal functor $G\lambda_\flat^\Cc$ is an equivalence of categories,
  and hence so is the upper horizontal functor $\lambda_\flat^{G\Cc}$.
  Since this holds for all finite groups $K$,
  Corollary \ref{cor:saturation characterizations} shows that the $\M$-category $G\Cc$
  is saturated.
\end{eg}

\begin{prop}\label{prop:tame2global M-version} 
Let $\Phi:\Cc\to\Dc$ be a morphism of $\M$-categories that is
an equivalence of the underlying categories. Suppose moreover that
the $\M$-category $\Cc$ is saturated.
\begin{enumerate}[\em (i)]
\item The $\M$-category $\Dc$ is saturated as well.
\item The morphism $\Phi$ is a global equivalence of $\M$-categories.
\end{enumerate}
\end{prop}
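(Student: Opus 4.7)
The plan is to reduce everything to the characterization of saturation in Corollary~\ref{cor:saturation characterizations} via the comparison functors $\lambda_\flat$. For every finite group $G$ and every injection $\lambda:\omega^G\to\omega$, naturality of $\lambda_\flat$ in the $\M$-category variable yields a commutative square
\[
\xymatrix@C=15mm{ F^G\Cc \ar[r]^{F^G\Phi} \ar[d]_{\lambda_\flat^\Cc} & F^G\Dc \ar[d]^{\lambda_\flat^\Dc} \\ G\Cc \ar[r]_{G\Phi} & G\Dc. }
\]
Both vertical functors are fully faithful by Proposition~\ref{prop:lambda_sharp}, and $\lambda_\flat^\Cc$ is essentially surjective by the saturation hypothesis on $\Cc$ (via Corollary~\ref{cor:saturation characterizations}). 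The functor $F^G\Phi$ is fully faithful because the $G$-fixed subcategories are full subcategories on which $\Phi$ restricts to its fully faithful underlying functor.

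The key subsidiary step is that $G\Phi$ is an equivalence of categories. This is a purely formal categorical fact: a quasi-inverse $\Psi$ of $\Phi$ lifts to a functor $G\Psi:G\Dc\to G\Cc$ sending $(y,\rho)$ to $(\Psi y,\Psi\circ\rho)$, and the natural isomorphisms $\Phi\Psi\iso\Id_\Dc$ and $\Psi\Phi\iso\Id_\Cc$ are componentwise $G$-equivariant by naturality, so they descend to natural isomorphisms exhibiting $G\Psi$ as a quasi-inverse of $G\Phi$. Consequently the composite $\lambda_\flat^\Dc\circ F^G\Phi = G\Phi\circ\lambda_\flat^\Cc$ around the square is an equivalence of categories.

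Since $\lambda_\flat^\Dc$ is fully faithful and this composite is essentially surjective, $\lambda_\flat^\Dc$ is essentially surjective and hence itself an equivalence; combined with Corollary~\ref{cor:saturation characterizations} this shows that $\Dc$ is saturated, proving~(i). The functor $F^G\Phi$ is then the composite of the equivalence $\lambda_\flat^\Dc\circ F^G\Phi$ with a quasi-inverse of $\lambda_\flat^\Dc$, hence an equivalence of categories, in particular a weak equivalence of categories. As this holds for every finite group $G$, the morphism $\Phi$ is a global equivalence of $\M$-categories, proving~(ii). The only nontrivial ingredient is the auxiliary fact about $G\Phi$, which amounts to a routine naturality check.
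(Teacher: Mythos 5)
Your proof is correct and follows essentially the same route as the paper's: the same commutative square relating $F^G\Phi$, $G\Phi$, and the functors $\lambda_\flat$, the same invocations of Proposition~\ref{prop:lambda_sharp} and Corollary~\ref{cor:saturation characterizations}, and the same logic of deducing density of $\lambda_\flat^\Dc$ from the composite being an equivalence. One tangential remark: your parenthetical justification that $F^G\Phi$ is fully faithful ``because the $G$-fixed subcategories are full subcategories'' is imprecise --- $F^G\Cc$ is not a full subcategory of $\Cc[\omega^G]$, since it retains only the $G$-fixed morphisms --- but since that fully-faithfulness claim plays no role in your actual chain of deductions, it does not affect the correctness of the argument.
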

\begin{proof}
  (i) 
  We consider a finite group $G$. We choose an injection $\lambda:\omega^G\to\omega$
  and contemplate the following commutative square of categories and functors:
  \begin{equation}\begin{aligned}\label{eq:C^G2GC}
      \xymatrix{ 
        F^G\Cc\ar[r]^-{F^G\Phi}\ar[d]_{\lambda_\flat^\Cc} & F^G\Dc\ar[d]^{\lambda_\flat^\Dc}\\
        G\Cc\ar[r]_-{G\Phi} &G\Dc  }   
    \end{aligned}\end{equation}
  The two vertical functors are fully faithful
  by Proposition \ref{prop:lambda_sharp}.
  Since $\Phi$ is an equivalence of underlying categories, so is the induced functor $G\Phi$
  on $G$-objects. The functor $\lambda_\flat^\Cc$ is an equivalence
  by Corollary \ref{cor:saturation characterizations}, because $\Cc$ is saturated.
  So the composite functor $G\Phi\circ\lambda_\flat^\Cc=\lambda_\flat^\Dc\circ F^G\Phi$ is an equivalence.
  Hence the functor $\lambda_\flat^\Dc$ is dense,
  and thus an equivalence. So $\Dc$ is saturated.

  (ii) 
  We let $G$ be a finite group. For every choice of injection $\lambda:\omega^G\to\omega$,
  the vertical functors in the commutative diagram \eqref{eq:C^G2GC}
  are equivalences because $\Cc$  and $\Dc$ are saturated. The lower
  horizontal functor $G\Phi:G\Cc\to G\Dc$ is an equivalence because $\Phi$ is.
  So the functor $F^G\Phi:F^G\Cc\to F^G\Dc$ is also an equivalence of categories,
  and hence in particular a weak equivalence of categories. So $\Phi$ is a global equivalence.
\end{proof}

Now we know that for every saturated parsummable category $\Cc$,
the categories $F^G\Cc$ and $G\Cc$ are equivalent.
Both categories $F^G\Cc$ and $G\Cc$ inherit the structure
of a parsummable category from $\Cc$,
by Construction \ref{con:F^G C} and Example \ref{eg:objects with action}, respectively.
We will now argue that when $\Cc$ is saturated,
$F^G\Cc$ and $G\Cc$ are even globally equivalent as parsummable categories.
Since the K-theory of $F^G\Cc$ calculates the $G$-fixed points
of the global K-theory spectrum $\bK_{\gl}\Cc$ (see Corollary \ref{cor:G-fixed}),
an important consequence is that for all saturated
parsummable categories $\Cc$, the $G$-fixed point spectrum of $\bK_{\gl}\Cc$
is globally equivalent to $\bK_{\gl}(G\Cc)$, the global K-theory of the
parsummable category of $G$-objects in $\Cc$, see Corollary \ref{cor:saturated implies global} below.

We will compare $F^G\Cc$ and $G\Cc$
through the homotopy $G$-fixed point category $F^{h G}\Cc$,
via the functors $\kappa:F^G\Cc\to F^{h G}\Cc$ and
$c_\sharp:G\Cc\to F^{h G}\Cc$, which lift to morphisms of parsummable categories.
We warn the reader that the functors $\lambda_\flat:F^G\Cc\to G\Cc$
and $\lambda_\sharp:F^{h G}\Cc\to G\Cc$
defined in \eqref{eq:lambda sharp}
and \eqref{eq:define_c_sharp} with the help of 
an injection $\lambda:\omega^G\to\omega$ are {\em not}
morphisms of parsummable categories.

\begin{con}\label{con:M-action on F^hG}
  We let $\Cc$ be an $\M$-category and $G$ a finite group.
  We already know that the $\M$-category $G\Cc$ of $G$-objects in $\Cc$
  and the $G$-fixed $\M$-category $F^G\Cc$
  inherit $\M$-actions, by  Example \ref{eg:(co)limits Mcat} and
  Construction \ref{con:F^G for M-cat}, respectively.
  We will now define an $\M$-action on the
  homotopy $G$-fixed $\M$-category $F^{h G}\Cc$
  in such a way that the two functors
  \[
    F^G\Cc\ \xra{\ \kappa\ }\ F^{h G}\Cc \ \xla{\ c_\sharp\ }\ G\Cc
  \]
  defined in \eqref{eq:fix2hfix} and \eqref{eq:define_c_sharp}, respectively, are $\M$-equivariant.
  
  The $\M$-action on $\Cc$ induces an action
  of the monoidal category $\M_G=E I(\omega^G,\omega^G)$ on $\Cc[\omega^G]$.
  We turn this into an $\M$-action through automorphisms of $G$-categories
  by restriction along the monoidal functor
  \[ \M \ \to \ \M_G \ , \quad u \ \longmapsto \ u^G\ .\]
  The functor category $\cat(E G,\Cc[\omega^G])$ then inherits a
  `pointwise' $\M$-action from $\Cc[\omega^G]$
  as described in Construction \ref{eg:objects with action}. 
  This pointwise $\M$-action commutes with the $G$-action
  on $\cat(E G,\Cc[\omega^G])$ by conjugation; so the $\M$-action restricts
  to an action on the fixed point category $F^{h G}\Cc=\cat^G(E G,\Cc[\omega^G])$.
  The functor $\kappa:F^G\Cc\to F^{h G}\Cc$ is clearly
  equivariant for this $\M$-action.
\end{con}

The next proposition is a strengthening of
Proposition \ref{prop:h-fixed versus G-objects} (i),
which says that the functor $c_\sharp:G\Cc\to F^{h G}\Cc$
is an equivalence of underlying categories.

\begin{theorem}\label{thm:saturated implies global}
  Let $\Cc$ be an $\M$-category and $G$ a finite group.
  \begin{enumerate}[\em (i)]
  \item The functor $c_\sharp:G\Cc\to F^{h G}\Cc$
    is a global equivalence of $\M$-categories.
  \item If $\Cc$ is saturated,
    then the $\M$-categories $F^G\Cc$ and $F^{h G}\Cc$ are saturated,
    and the functor $\kappa:F^G\Cc\to F^{h G}\Cc$
    is a global equivalences of $\M$-categories.
  \end{enumerate}
\end{theorem}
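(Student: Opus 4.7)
The plan is to prove part (i) first—which requires no saturation hypothesis—and to deduce part (ii) by combining part (i) with Example \ref{eg:GC inherits saturation}, Propositions \ref{prop:h-fixed versus G-objects} and \ref{prop:tame2global M-version}, and Corollary \ref{cor:saturation characterizations}.

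For part (i), I would establish, for every finite group $K$, natural identifications of categories
\[ F^K(G\Cc)\iso G(F^K\Cc) \qquad\text{and}\qquad F^K(F^{h G}\Cc)\iso F^{h G}(F^K\Cc), \]
under which $F^K(c_\sharp^\Cc)$ corresponds, up to natural isomorphism, to $c_\sharp^{F^K\Cc}$. The first identification is routine: the $\M$-action on $G\Cc$ is pointwise (Example \ref{eg:objects with action}) and commutes with the $G$-action. The second—the main technical step—exploits that the $\M$-action on $F^{h G}\Cc=\cat^G(EG,\Cc[\omega^G])$ is pointwise through the monoidal functor $\M\to\M_G$, $u\mapsto u^G$ (Construction \ref{con:M-action on F^hG}). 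Combined with the canonical isomorphism $\Cc[\omega^G][\omega^K]\iso\Cc[\omega^{K\times G}]$ from \eqref{eq:reparametrize} (under which the $G$-action lives on the $G$-coordinate of $\omega^{K\times G}$, the $K$-action lives on the $K$-coordinate, and the two actions commute), together with the observation that a $K$-equivariant functor with trivial $K$-action on the source is the same as a functor into $K$-fixed objects, one obtains $F^K(F^{h G}\Cc)\iso\cat^G(EG,\Cc[\omega^{K\times G}]^K)=\cat^G(EG,(F^K\Cc)[\omega^G])=F^{h G}(F^K\Cc)$. Naturality of $c_\sharp$ in its $\M$-category argument then identifies $F^K(c_\sharp^\Cc)$ with $c_\sharp^{F^K\Cc}$ up to natural isomorphism, and Proposition \ref{prop:h-fixed versus G-objects}(i) applied to the $\M$-category $F^K\Cc$ shows that this is an equivalence of categories.

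For part (ii), assume $\Cc$ is saturated. Then $G\Cc$ is saturated by Example \ref{eg:GC inherits saturation}. By part (i), $c_\sharp:G\Cc\to F^{h G}\Cc$ is a global equivalence; being also an equivalence of underlying categories by Proposition \ref{prop:h-fixed versus G-objects}(i), Proposition \ref{prop:tame2global M-version}(i) yields that $F^{h G}\Cc$ is saturated. Next, the relation $\lambda_\sharp\circ\kappa=\lambda_\flat$ together with $\lambda_\sharp$ being an equivalence (Proposition \ref{prop:h-fixed versus G-objects}(ii)) and $\lambda_\flat$ being an equivalence under our saturation hypothesis (Corollary \ref{cor:saturation characterizations}) forces, by two-out-of-three, that $\kappa:F^G\Cc\to F^{h G}\Cc$ is an equivalence of underlying categories. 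To show $F^G\Cc$ is saturated, I would check that $\lambda_\flat^{F^G\Cc}:F^K(F^G\Cc)\to K(F^G\Cc)$ is an equivalence for every finite group $K$: the identifications $F^K(F^G\Cc)\iso F^{K\times G}\Cc$ from \eqref{eq:iterate F^G} and $K(F^G\Cc)=F^G(K\Cc)$ from the pointwise $\M$-action on $K\Cc$ let us fit $\lambda_\flat^{F^G\Cc}$ into a zig-zag with $\lambda_\flat^\Cc:F^{K\times G}\Cc\to(K\times G)\Cc=G(K\Cc)$ and $\lambda_\flat^{K\Cc}:F^G(K\Cc)\to G(K\Cc)$, both of which are equivalences by saturation of $\Cc$ and of $K\Cc$ (Example \ref{eg:GC inherits saturation}) respectively. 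Finally, Proposition \ref{prop:tame2global M-version}(ii) applied to $\kappa$—a morphism of $\M$-categories that is an equivalence of underlying categories with saturated source—yields that $\kappa$ is a global equivalence.

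The main obstacle will be the identification $F^K(F^{h G}\Cc)\iso F^{h G}(F^K\Cc)$ in part (i), which demands careful bookkeeping of the various reparametrizations and commuting $\M$-, $G$-, and $K$-actions. A secondary technical hurdle is verifying that the zig-zag used in the saturation step of part (ii) is in fact naturally isomorphic to the single functor $\lambda_\flat^{F^G\Cc}$, rather than merely being an unrelated chain of equivalences between its source and target.
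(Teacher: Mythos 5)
Your part (i) takes essentially the same route as the paper: establish the identification $F^K(F^{hG}\Cc)\iso F^{hG}(F^K\Cc)$ via the reparameterization isomorphism of $\Cc[\omega^G][\omega^K]$, transport $F^K(c^\Cc_\sharp)$ to $c^{F^K\Cc}_\sharp$, and invoke Proposition \ref{prop:h-fixed versus G-objects}\,(i). That matches the paper's proof.

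For part (ii) you reorganize the argument. Your first step — deducing saturation of $F^{hG}\Cc$ from saturation of $G\Cc$, Proposition \ref{prop:h-fixed versus G-objects}\,(i), and Proposition \ref{prop:tame2global M-version}\,(i) — is correct and in fact gives a small simplification relative to the paper, which proves saturation of $F^G\Cc$ first and needs the nontrivial fact (cited from Merling) that passage to homotopy fixed point categories preserves equivalences. Your second step (that $\kappa$ is an equivalence of underlying categories) is correct but roundabout: this is literally the definition of ``$\Cc$ is saturated'', so no $2$-out-of-$3$ argument via $\lambda_\sharp$ and $\lambda_\flat$ is needed.

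The genuine gap is exactly the one you flag in your last sentence: the claim that $\lambda_\flat^{K\Cc}\circ\lambda_\flat^{F^G\Cc}$ is naturally isomorphic to $\lambda_\flat^\Cc$ after identifying $F^K(F^G\Cc)$ with $F^{K\times G}\Cc$ and $K(F^G\Cc)$ with $F^G(K\Cc)$. Both sides are fully faithful (Proposition \ref{prop:lambda_sharp}\,(i)), both have sources and targets that become equivalent after the identifications, but that alone does not make one of them an equivalence. The logic you need — that $\lambda_\flat^{K\Cc}\circ\lambda_\flat^{F^G\Cc}$ is essentially surjective because it agrees with the equivalence $\lambda_\flat^\Cc$, hence $\lambda_\flat^{F^G\Cc}$ is essentially surjective since $\lambda_\flat^{K\Cc}$ is fully faithful — is sound as an implication, but the compatibility itself (for suitable choices of the three injections involved) is exactly what must be checked, and you do not check it. Without this, saturation of $F^G\Cc$ is not established, and the final appeal to Proposition \ref{prop:tame2global M-version}\,(ii) does not go through. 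The paper sidesteps this bookkeeping entirely by showing directly that $\kappa_K^{F^G\Cc}:F^K(F^G\Cc)\to F^{hK}(F^G\Cc)$ is an equivalence, via the commuting square
\[ \xymatrix@C=20mm{
      F^K(F^G\Cc)\ar[r]^-\iso\ar[d]_{\kappa^{F^G\Cc}_K} & F^{K\times G}\Cc\ar[d]^{\kappa^\Cc_{K\times G}}\\
      F^{hK}(F^{hG}\Cc)\ar[r]_-\iso & F^{h(K\times G)}\Cc
    } \]
(with the vertical on the left factoring through $F^{hK}(\kappa^\Cc_G)$), using that $\kappa^\Cc_{K\times G}$ is an equivalence because $\Cc$ is saturated and that homotopy fixed categories preserve equivalences. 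The $\kappa$-functors, unlike $\lambda_\flat$, are canonical (no choice of injection), which is what makes the paper's diagram commute on the nose and avoids the compatibility issue you encountered.
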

\begin{proof}
  (i) We start with parts of the verification
  that the functor $c_\sharp$ is a morphism of $\M$-categories.
  We recall that $c:\omega\to\omega^G$ is the `constant function' injection that sends
  $i\in\omega$ to the function defined by $c(i)(g)=i$; so the relation
  $u^G\circ c=c\circ u$ holds for all $u\in M$.
  We consider a $G$-object $y$ of $\Cc$ with action morphisms $g_\star:y\to y$. Then
  \[ u_*(c_\sharp(y))(g)\ = \ u^G_*( c_\sharp(y)(g))\ = \
    u^G_*( c_*(y))\ = \     c_*(u_*(y))\ = \ c_\sharp(u_*(y))(g)\ ,  \]
  and
  \begin{align*}
    u_*(c_\sharp(y))(\gamma,g)\
    &= \ u^G_*( c_\sharp(y)(\gamma,g))\ = \ u^G_*( c_*(\gamma^{-1}_\star g_\star))\\
    &= \  c_*(u_*(\gamma_\star)^{-1}\circ u_*(g_\star))\ = \ c_\sharp(u_*(y))(\gamma,g)\ .  
  \end{align*}
  This shows that $u_*(c_\sharp(y))=c_\sharp(u_*(y))$ as objects of $F^{h G}\Cc$.
  The verifications that $u_*\circ c_\sharp$ and $c_\sharp\circ u_*$ agree on morphisms of $G\Cc$,
  and that the natural transformations $[v,u]^{c_\sharp(y)}$ and $c_\sharp([u,v]^y)$ coincide
  are similar, and we omit them.
  
  To show that the functor $c_\sharp$ is a global equivalence we let $K$ be another finite group.
  We consider the chain of 
  $(K\times G)$-equivariant isomorphisms of categories:
  \begin{equation} \label{eq:switch_GK}
    \Cc[\omega^G][\omega^K]\ \iso_{\eqref{eq:reparametrize}}
    \ \Cc[(\omega^K)^G] \ \iso_{\text{switch}} \
    \Cc[(\omega^G)^K] \ \iso_{\eqref{eq:reparametrize}} \ \Cc[\omega^G][\omega^K] \ .    
  \end{equation}
  The first and third isomorphisms are instances of \eqref{eq:reparametrize},
  in the last case with the roles of $K$ and $G$ interchanged.
  The second isomorphism is induced by the $(K\times G)$-equivariant bijection
  $(\omega^K)^G\iso (\omega^G)^K$ that interchanges $K$ and $G$ in the argument.
  Unraveling the definition of the intertwining isomorphism and the functoriality of $\Cc[-]$
  reveals that \eqref{eq:switch_GK} is the action of the composite bijection
  \[ \omega\ \xra{\kappa_{\omega^G}}\ \omega^G \xra{(\kappa_{\omega^K})^G}\ (\omega^K)^G\
    \xra{\text{switch}}\ (\omega^G)^K\ \xra{(\kappa_{\omega^G}^{-1})^K} \ \omega^K\
    \xra{\kappa_{\omega^K}^{-1}}\ \omega\ .
  \]
  The isomorphism \eqref{eq:switch_GK} makes the following triangle commute:
  \begin{equation}\label{eq:constant_triangle} 
  \xymatrix@C=15mm{ \Cc[\omega^K]\ar[r]^-{c_*^\Cc[\omega^K]}\ar@/_1pc/[dr]_-{c_*^{\Cc[\omega^K]}} & \Cc[\omega^G][\omega^K]\ar[d]_\iso^{\eqref{eq:switch_GK}} \\
      & \Cc[\omega^K][\omega^G] }
      \end{equation}
  Indeed, unraveling all definitions traces this claim back to the fact that the following
  diagram of injections commutes:
  \[
    \xymatrix@C=15mm{ \omega^K \ar@/_1pc/[dr]_-{ (c_*^\omega)^K} \ar[r]^-{c_*^{\omega^K}} &
      (\omega^K)^G \ar[d]^-{\text{switch}}_-\iso \\
      &      (\omega^G)^K  } \]
  Applying $\cat^{G\times K}(E G,-)$ (with trivial $K$-action on $E G$)
  to the $(K\times G)$-equivariant isomorphism \eqref{eq:switch_GK}
  yields an isomorphism of categories
  \begin{align*}
    F^K(F^{h G}\Cc)\
    &= \  \cat^{K\times G}(E G,\Cc[\omega^G][\omega^K]) \\
    &\iso  \  \cat^{K\times G}(E G,\Cc[\omega^K][\omega^G])\ = \ F^{h G}(F^K\Cc) \ .
  \end{align*}
  The commutativity of the triangle \eqref{eq:constant_triangle} implies that the following
  square of categories and functors commutes:
  \[ \xymatrix@C=15mm{
      F^K(G\Cc)\ar[r]^-{F^K(c_\sharp^\Cc)}\ar@{=}[d] &
      F^K(F^{h G}\Cc)\ar[d]^\iso\\
      G(F^K\Cc)\ar[r]_-{c_\sharp^{F^K\Cc}}&  F^{h G}(F^K\Cc)
    } \]
  The lower horizontal functor $c_\sharp^{F^K\Cc}$ is an equivalence of categories
  by Proposition \ref{prop:h-fixed versus G-objects} (i) for the $\M$-category $F^K\Cc$.
  So the upper horizontal functor $F^K(c_\sharp^\Cc)$ is an equivalence of categories.
  Since $K$ was any finite group, this proves that $c_\sharp^\Cc$ is a global equivalence.

  (ii)  We let $K$ be another finite group.
  We consider the commutative diagram of categories and functors:
  \[ \xymatrix@C=20mm{
      F^K(F^G\Cc)\ar[r]^-{\Im_*^{K\times G}}_-\iso \ar[d]_{\kappa^{F^G\Cc}_K}
      & \Cc[(\omega^K)^G]^{K\times G}
      \ar[dd]^{\kappa_{\Cc[(\omega^K)^G]}}\\
      F^{h K}(F^G\Cc)\ar[d]_{F^{h K}(\kappa^\Cc_G)}  &\\
      F^{h K}(F^{h G}\Cc)\ar[r]_-\iso & (\Cc[(\omega^K)^G])^{h(K\times G)}
    } \]
  The upper horizontal isomorphism is the restriction
  of the $(K\times G)$-equivariant isomorphism $\Im_*:\Cc[\omega^G][\omega^K]\iso \Cc[(\omega^K)^G]$
  from  \eqref{eq:reparametrize} to $(K\times G)$-fixed subcategories.
  The lower horizontal isomorphism is the homotopy fixed analog,
  i.e., the restriction of the $(K\times G)$-equivariant isomorphism
  \begin{align*}
    \cat(E K,\cat(E G,\Cc[\omega^G][\omega^K]))\
    \iso\    &\cat(E (K\times G),\Cc[\omega^G][\omega^K])\\
    \xra{\cat(E(K\times G),\Im_*)} \ &\cat(E (K\times G),\Cc[(\omega^K)^G])  
  \end{align*}
  to $(K\times G)$-fixed categories.
  Because $\Cc$ is saturated, the functors $\kappa^{\Cc}_G$ and  $\kappa_{\Cc[(\omega^K)^G]}$
  are equivalences of categories; for the latter one, this exploits that
  $(\omega^K)^G$ is a universal $(K\times G)$-set.  
  Passage to homotopy fixed categories preserves equivalences,
  see the proof of \cite[Proposition 2.16]{merling};
  so the functor $F^{h K}(\kappa^{\Cc}_G)$ is also an equivalence.
  Hence the functor $\kappa^{F^G\Cc}_K$ is an equivalence for every finite group $K$.
  This shows that the $\M$-category $F^G\Cc$ is saturated.
  
  Since $\Cc$ is saturated, the functor $\kappa:F^G\Cc\to F^{h G}$
  is an equivalence of underlying categories.
  Moreover, $F^G\Cc$ is saturated by the previous paragraph;
  so by Proposition \ref{prop:tame2global M-version},
  $\kappa$ is even a global equivalence, and $F^{h G}\Cc$ is saturated.
\end{proof}

If $\Cc$ is a parsummable category and $G$ a finite group,
then the $\M$-categories $G\Cc$ and $F^G\Cc$ inherit natural parsummable structures,
see Example \ref{eg:objects with action} and Construction \ref{con:F^G C}, respectively.
We will now argue that the homotopy $G$-fixed category $F^{h G}\Cc$
also inherits a parsummable structure,
in such a way that the functors $\kappa:F^G\Cc\to F^{h G}\Cc$
and $c_\sharp:G\Cc\to F^{h G}\Cc$ are morphisms of parsummable categories.

\begin{con}\label{con:I structure on F^hG C}
  Given a parsummable category $\Cc$ and a finite group $G$,
  we define a parsummable structure
  on the homotopy fixed category $F^{h G}\Cc=\cat^G(E G,\Cc[\omega^G])$. 
  We start by considering an $\M$-category $\Cc$.
  Then an $\M$-action on $F^{h G}\Cc$ was already defined in Construction \ref{con:M-action on F^hG}.
  A similar argument as for $F^G\Cc$ in Proposition \ref{prop:F^G_preserves_tame}
  shows that the $\M$-action on $F^{h G}\Cc$ is tame whenever the
  original $\M$-action on $\Cc$ is.

  We define a lax symmetric monoidal transformation
  \begin{equation}\label{eq:hG monoidal}
    (F^{h G}\Cc)\boxtimes (F^{h G}\Dc)\ \to \ F^{h G}(\Cc\boxtimes \Dc) \ ,
  \end{equation}
  where $\Cc$ and $\Dc$ are tame $\M$-categories.
  The definition uses the $G$-equivariant, fully faithful and lax symmetric monoidal embedding
  $\epsilon:\Cc[\omega^G]\boxtimes \Dc[\omega^G]\to(\Cc\boxtimes\Dc)[\omega^G]$ 
  discussed in \eqref{eq:fix and box before G-fix}.
  The $\M$-equivariant functor \eqref{eq:hG monoidal} is then defined as the following composite:
  \begin{align*}
    (F^{h G}\Cc)\boxtimes (F^{hG}\Dc)\
    = \ &\left( \cat(E G,\Cc[\omega^G])\boxtimes \cat(E G,\Dc[\omega^G])\right)^G\\
    \xra{\text{product}} \  &\cat^G(E G,\Cc[\omega^G]\boxtimes \Dc[\omega^G])\\
    \xra{\cat^G(E G,\epsilon)} \
      &\cat^G(E G,(\Cc\boxtimes\Dc)[\omega^G])\ = \ F^{h G}(\Cc\boxtimes \Dc) 
  \end{align*}
  Now we let $\Cc$ be a parsummable category; then the tame $\M$-category $F^{h G}\Cc$
  becomes a parsummable category by
  endowing it with the structure morphism 
  \[  (F^{h G}\Cc)\boxtimes (F^{h G}\Cc)\ \xra{\eqref{eq:hG monoidal}} \ F^{h G}(\Cc\boxtimes\Cc) \
    \xra{F^{h G}(+)}\  F^{h G}(\Cc) \ .\]
\end{con}

\begin{cor}\label{cor:saturated implies global}
  Let $\Cc$ be a saturated parsummable category and $G$ a finite group.
  \begin{enumerate}[\em (i)]
  \item 
    The morphisms
    \[ \bK_{\gl}(G\Cc)\ \xra{\bK_{\gl}(c_\sharp)}\ \bK_{\gl}(F^{h G}\Cc) \
      \xla{\bK_{\gl}(\kappa)}\ \bK_{\gl}(F^G\Cc)\
      \xra{\psi^G_{\gamma(\Cc)}\circ \lambda^G_{\Cc}\td{\mS}}\ F^G(\bK_{\gl}\Cc)
    \]
    are global equivalences of symmetric spectra.
  \item Let $\varphi:\mathbf 2\times\omega\to\omega$ be an injection.
  Then the fixed point spectrum $F^G(\bK_{\gl}\Cc)$
  is non-equivariantly stably equivalent to the K-theory spectrum
  of the symmetric monoidal category of $G$-objects in $\varphi^*(\Cc)$.
  \end{enumerate}\end{cor}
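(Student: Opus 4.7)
The plan is to deduce part (i) by reducing each of the three displayed morphisms to a result already established in the paper, and then derive part (ii) by combining part (i) with Theorem \ref{thm:compare 2K} (ii). First, the rightmost morphism $\psi^G_{\gamma(\Cc)}\circ \lambda^G_{\Cc}\td{\mS}\colon \bK_{\gl}(F^G\Cc)\to F^G(\bK_{\gl}\Cc)$ is already known to be a global equivalence for every parsummable category, saturated or not, by Corollary \ref{cor:G-fixed}; so this factor requires no further argument.

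Next I would deal with the two left-hand morphisms, both of which arise by applying the functor $\bK_{\gl}$ to a functor $c_\sharp\colon G\Cc\to F^{h G}\Cc$ or $\kappa\colon F^G\Cc\to F^{h G}\Cc$. By Theorem \ref{thm:equivalence2equivalence} it suffices to prove that each of these functors is a global equivalence of parsummable categories. The global equivalence on underlying $\M$-categories is exactly the content of Theorem \ref{thm:saturated implies global}: part~(i) of that theorem gives it for $c_\sharp$ (with no hypothesis on $\Cc$), while part~(ii) gives it for $\kappa$ using the assumed saturation of $\Cc$. The only extra thing to check, which I expect to be the main technical obstacle, is that both $c_\sharp$ and $\kappa$ are in fact morphisms of parsummable categories and not merely of $\M$-categories. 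This amounts to unraveling Construction \ref{con:I structure on F^hG C}: one verifies that the comparison functor $\epsilon\colon \Cc[\omega^G]\boxtimes\Dc[\omega^G]\to(\Cc\boxtimes\Dc)[\omega^G]$ used there is natural in $\Cc$ and $\Dc$, is strict symmetric monoidal in an appropriate sense, and is compatible with the inclusions $(-,0)$ and with constant diagrams. Given this, both $\kappa$ and $c_\sharp$ commute strictly with the zero object and the sum functors on each side, which is precisely what it means to be a morphism of parsummable categories. Once this compatibility is in place, invoking Theorem \ref{thm:equivalence2equivalence} completes part (i).

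For part (ii), I chain the equivalences obtained in (i) with Theorem \ref{thm:compare 2K} (ii). Concretely, (i) gives a global, and in particular non-equivariant stable, equivalence $\bK_{\gl}(G\Cc)\simeq F^G(\bK_{\gl}\Cc)$. Applying Theorem \ref{thm:compare 2K} (ii) to the parsummable category $G\Cc$ (replacing $\Cc$ by $G\Cc$ there) yields a non-equivariant stable equivalence between $\bK_{\gl}(G\Cc)$ and the K-theory spectrum of the symmetric monoidal category $\varphi^*(G\Cc)$. It then remains to identify $\varphi^*(G\Cc)$ with the category of $G$-objects in $\varphi^*(\Cc)$, as symmetric monoidal categories. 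This is straightforward from the construction of $\varphi^*$ in Proposition \ref{prop:symmon from I-cat}: the underlying category of $\varphi^*(G\Cc)$ is $G\Cc$, and the monoidal product $\varphi_*$, together with its associativity, symmetry, and unit constraints, is built from the $\M$-action, the distinguished zero object, and the sum functor of $G\Cc$; but by Example \ref{eg:objects with action} each of these pieces of structure on $G\Cc$ is inherited pointwise from $\Cc$, so $\varphi^*(G\Cc)$ coincides with the category of $G$-objects in $\varphi^*(\Cc)$ endowed with the pointwise monoidal structure. Splicing the two stable equivalences gives the statement.
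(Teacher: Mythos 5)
Your proof is correct and follows the same route as the paper's: invoke Corollary \ref{cor:G-fixed} for the rightmost morphism, Theorem \ref{thm:saturated implies global} plus Theorem \ref{thm:equivalence2equivalence} for the two left-hand morphisms, and Theorem \ref{thm:compare 2K} (ii) applied to $G\Cc$ together with the identification $\varphi^*(G\Cc)=G(\varphi^*\Cc)$ for part (ii). The point you flag as a potential obstacle (that $\kappa$ and $c_\sharp$ are morphisms of parsummable categories, not merely $\M$-categories) is indeed needed and is exactly what Construction \ref{con:I structure on F^hG C} and the surrounding prose arrange, though the paper asserts it without a detailed check.
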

\begin{proof}
  (i) The morphism of symmetric spectra
  $\psi^G_{\gamma(\Cc)}\circ \lambda^G_{\Cc}\td{\mS}:\bK_{\gl}(F^G\Cc) \to F^G(\bK_{\gl}\Cc)$
  is a global equivalence by Corollary \ref{cor:G-fixed}.
  The morphisms of parsummable categories $c_\sharp:G\Cc\to F^{h G}\Cc$ and
  $\kappa:F^G\Cc\to F^{h G}\Cc$ are global equivalences
  by Theorem \ref{thm:saturated implies global}.
  So the morphisms $\bK_{\gl}(c_\sharp)$ and $\bK_{\gl}(\kappa)$
  are global equivalences of symmetric spectra
  by Theorem \ref{thm:equivalence2equivalence}.   

  (ii)
  Theorem \ref{thm:compare 2K} (ii) provides a non-equivariant stable
  equivalence between $\bK_{\gl}(G\Cc)$ and the K-theory spectrum
  of the symmetric monoidal category $\varphi^*(G\Cc)=G(\varphi^*\Cc)$.
  In combination with part (i), this proves the claim.
\end{proof}

Now we discuss a saturation construction for $\M$-categories
and parsummable categories that is due to Tobias Lenz.
For a tame $\M$-category $\Cc$, it provides a morphism of $\M$-categories 
\[ s\ : \ \Cc \ \to \ \Cc^{\sat} \]
that is an equivalence of underlying categories and whose target
is a tame saturated $\M$-category.
In Construction \ref{con:saturation_parsummable} we will lift the procedure from $\M$-categories
to the more highly structured parsummable categories by endowing the saturation
with a lax symmetric monoidal structure with respect to the box product.

\begin{con}[(Saturation)]
  We let $\Cc$ be an $\M$-category. We denote by $\cat(\M,\Cc)$
  the category of functors from $\M$ to $\Cc$, with natural transformations
  as morphisms.
  The right translation $\M$-action on itself and the given left action of $\M$ on
  $\Cc$ induce an $(\M\times\M)$-action on the functor category $\cat(\M,\Cc)$.
  We endow $\cat(\M,\Cc)$ with the diagonal $\M$-action.
  More concretely, the action functor
  \[  \diamond  \ : \     \M\times \cat(\M,\Cc) \ \to \ \cat(\M,\Cc) \]
  is thus given on objects by
  \[ (u\diamond X)(w)\ = \ (u_*(X))(w)\ = \ u_*(X (w u))  \ , \]
  where $u,w\in M$ are objects of $\M$, and $X:\M\to \Cc$ is a functor.
  This construction has three levels of functoriality:
  \begin{itemize}
  \item a morphism $(w,v):v\to w$ in $\M$ is taken to  
    \[ (u_*(X))(w,v)\ = \ u_*(X(w u,v u))\ ;  \]
  \item a morphism in $\cat(\M,\Cc)$ 
    (i.e., a natural transformation $\Psi:X\to Y$)
    is taken to  
    \[ (u_*(\Psi))(w)\ = \ u_*(\Psi(w u))\ ;  \]
  \item and a morphism $(u',u):u\to u'$ in $\M$ is taken to  
    \begin{equation}\label{eq:[u',u]_at_w}
    [u',u]^X(w)\
      = \ u'_*(X(w u',w u))\circ[u',u]^{X(w u)}\
      = \ [u',u]^{X(w u')}\circ  u_*(X(w u',w u)) \ .        
    \end{equation}
  \end{itemize}
  We emphasize that the present $\M$-action on $\cat(\M,\Cc)$ is {\em not}
  a specialization of the `pointwise' $\M$-action on a functor category
  discussed in Example \ref{eg:(co)limits Mcat}.
\end{con}

While $\cat(\M,\Cc)$ is an $\M$-category, it will {\em not} be tame,
even if $\Cc$ is tame, except in degenerate cases.
The following proposition gives us access to the support
for objects in the functor $\M$-category $\cat(\M,\Cc)$.
An equivalent way to reformulate the content of the next proposition 
is as follows. A functor $X:\M\to\Cc$ is supported on $A\subset\omega$
if and only if it factors as a composite
\[ \M \ \xra{E\res^\omega_A}\ E I(A,\omega)\ \to \ \Cc^{\supp\subseteq A}\ \xra{\text{incl}}  \ \Cc\ .\]
Here $\res^\omega_A:M=I(\omega,\omega)\to I(A,\omega)$
is restriction of an injection from $\omega$ to the subset $A$.

\begin{prop}\label{prop:support in cat(M,C)}
  Let $\Cc$ be a tame $\M$-category and $A$ a finite subset of $\omega$.
  Then a functor $X:\M\to\Cc$ is supported on $A$ if and only if the following conditions hold:
  \begin{enumerate}[\em (a)]
  \item for every $v\in M$, the $\Cc$-object $X(v)$ is supported on $A$, and
  \item if $v,w\in\M$ agree on the set $A$, then $X(v)=X(w)$, and $X(w,v)=\Id_{X(v)}=\Id_{X(w)}$.
  \end{enumerate}
\end{prop}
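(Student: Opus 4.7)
The plan is to establish the two implications separately. Unpacking definitions, $X$ is supported on $A$ in the $\M$-category $\cat(\M,\Cc)$ precisely when, for every $u\in M$ that is the identity on $A$, the equality $u_*(X)=X$ of functors $\M\to\Cc$ holds; by the description of the diagonal $\M$-action this amounts to the equalities $u_*(X(wu))=X(w)$ and $u_*(X(w'u,wu))=X(w',w)$ in $\Cc$, for all $w,w'\in M$.

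For the implication (a)+(b)$\Rightarrow$ supported on $A$, I would verify these two equalities directly. On objects, (b) forces $X(wu)=X(w)$ since $wu$ and $w$ agree on $A$, and (a) combined with $u|_A=\Id_A$ gives $u_*(X(w))=X(w)$. On morphisms, I would first factor $(w',w)=(w',w'u)\circ(w'u,wu)\circ(wu,w)$ in $\M$: applying $X$ and using (b) (which makes $X(w',w'u)$ and $X(wu,w)$ identities) yields $X(w',w)=X(w'u,wu)$. Proposition~\ref{prop:finite support}~(iv), applied to a morphism whose source and target are both supported on $A$ by (a), then gives $u_*(X(w'u,wu))=X(w'u,wu)$, completing this direction.

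For the reverse implication, I would first prove (a) and then use it to deduce (b). The proof of (a) exploits tameness of $\Cc$: the object $X(v)$ is finitely supported, so Proposition~\ref{prop:finite support}~(iii) applied to $X(v)=u_*(X(vu))$ yields $\supp(X(v))=u(\supp(X(vu)))$ for every $u$ with $u|_A=\Id_A$; iterating with the same $u$ gives $\supp(X(v))\subseteq u^n(\omega)$ for every $n\geq 0$. The key trick is to choose a specific $u$ identity on $A$ whose iterates shrink to $A$: enumerating $\omega\setminus A=\{a_1<a_2<\cdots\}$ and setting $u(a_i)=a_{i+1}$ yields $u^n(\omega)=A\cup\{a_{n+1},a_{n+2},\ldots\}$, whose intersection over $n$ equals $A$, forcing $\supp(X(v))\subseteq A$.

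To deduce (b) from (a), given $v,w\in M$ that agree on $A$, I would extend their common restriction $v|_A=w|_A$ to a bijection $\bar w:\omega\to\omega$ and write $v=\bar w u$, $w=\bar w u'$ with $u=\bar w^{-1}v$ and $u'=\bar w^{-1}w$ identity on $A$. The object equality $X(v)=X(w)=X(\bar w)$ then drops out of the support equation combined with (a). For the morphism $X(w,v)=X(\bar w u',\bar w u)$, I would apply Proposition~\ref{prop:finite support}~(ii) to the object $X$ inside $\cat(\M,\Cc)$, which yields $[u',u]^X=[u,u]^X=\Id_X$; its component at $\bar w$, expanded via equation~\eqref{eq:[u',u]_at_w}, becomes $u'_*(X(\bar w u',\bar w u))\circ[u',u]^{X(\bar w u)}=\Id_{X(\bar w)}$. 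Using (a) together with parts (ii) and (iv) of Proposition~\ref{prop:finite support} applied inside $\Cc$, the second factor reduces to the identity and $u'_*$ acts trivially on the first, yielding $X(w,v)=\Id_{X(\bar w)}$. The main obstacle is the proof of (a): a direct attack on $\tilde u_*(X(v))=X(v)$ gets stuck because the rearrangement $\tilde u_*(X(v))=(\tilde u u)_*(X(vu))$ only transfers the problem to the support of $X(vu)$; tameness of $\Cc$ is what breaks this circularity by letting us argue geometrically about $\bigcap_n u^n(\omega)$.
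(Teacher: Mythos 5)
Your proof is correct and the overall strategy matches the paper's: the forward implication is handled by the same factorizations, and for the converse both you and the paper first establish (a) — using tameness via Proposition~\ref{prop:finite support}~(iii) to obtain $\supp(X(v))=u(\supp(X(vu)))$ — and then derive (b) from (a) by applying Proposition~\ref{prop:finite support}~(ii) to the object $X$ of $\cat(\M,\Cc)$ together with equation~\eqref{eq:[u',u]_at_w}. The choices of injections differ slightly. For (a), rather than iterating a single shift $u$ and intersecting $u^n(\omega)$, the paper simply notes that the identity $\supp(X(v))=u(\supp(X(vu)))\subseteq u(\omega)$ holds for \emph{every} $u$ fixing $A$ pointwise, and for each $j\notin A$ one can choose such a $u$ with $j\notin u(\omega)$; this avoids iteration and I'd argue is slightly cleaner, though your version is equally valid, so "the key trick" is not really required. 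For (b), the paper picks $u$ with $uv|_A=uw|_A=\Id_A$ (postcomposition) and uses $v_*(X)=w_*(X)$, whereas you factor $v=\bar w u$, $w=\bar w u'$ through a bijective extension $\bar w$ of $v|_A$ and use $u_*(X)=X$; again both routes are correct and of similar length.
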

\begin{proof}
  We start with a functor $X:\M\to\Cc$ that satisfies conditions (a) and (b).
  We let $u\in M$ be an injection that is the identity on the set $A$.
  Then for every injection $v\in M$,
  we have $(u_*(X))(v)=u_*(X(v u))=X(v u)$,
  because $X(v u)$ is supported on $A$, by hypothesis (a).
  Also, $v u$ and $v$ agree on $A$, so $X(v u)=X(v)$ and $X(v u, v)=\Id_{X(v)}$, by hypothesis (b).
  In particular, $(u_*(X))(v)=X(v)$, i.e., the functors $u_*(X)$ and $X$ agree on objects.
  To see that  $u_*(X)$ and $X$ agree on morphisms, we consider another injection $w\in M$.
  Then
  \begin{align*}
    (u_*(X))(v,w)\
    &= \ u_*(X(v u,w u))\ = \ X(v u,w u) \\
    &= \ X(v u,v)\circ X(v,w)\circ X(w,w u)\  = \       X(v,w) \ .
  \end{align*}
  The second equation is an application of Proposition \ref{prop:finite support} (iv),
  exploiting that $X(v u)$ and $X(w u)$ are supported on $A$.
  Altogether, this proves that $X$ is supported on the set $A$.

  Now we suppose that $X:\M\to\Cc$ is supported on the set $A$,
  and we establish conditions (a) and (b).
  We consider an injection $u\in M$ that is the identity on $A$.
  Then $u_*(X)=X$, and so the relation
  \[   X(v) \ = \ (u_*(X))(v)\ = \ u_*(X(v u))\]
  holds for all $v\in M$. Because $\Cc$ is tame, the object $X(v u)$
  is finitely supported, and hence
  \[ \supp(X(v))\ =\ \supp(u_*(X(v u)))\ =\  u(\supp(X(v u))) \]
  by Proposition \ref{prop:finite support} (iii). 
  Now suppose, by contradiction, that $\supp(X(v))$ were not contained in $A$.
  Then we could choose an element $m\in\supp(X(v))\setminus A$
  and an injection $u\in M$ that is the identity on $A$, and such that $m$
  is not in the image of $u$.
  This contradicts the relation $\supp(X(v))= u(\supp(X(v u)))$.
  So we conclude that $\supp(X(v))\subset A$, i.e., condition (a) holds.

  Now we suppose that $v,w\in M$ are two injections that agree on $A$;
  we let $u\in M$ be any injection such that
  $u v$ and $u w$ are the identity on $A$.
  Because $X$ is supported on $A$, we then have $v_*(X)=w_*(X)$.
  Because $X(v)$ and $X(w)$ are supported on $A$ by the previous paragraph, 
  we conclude that
  \[    
    X(v)\ = \ (u v)_*(X(v)) \ = \  u_*(v_*(X)(1))\ = \ u_*(w_*(X)(1))\ = \ (u w)_*(X(w))
    \ = \ X(w)\ .
  \]
  Because $X(v)$ is supported on $A$, we have $[w,v]^{X(v)}=\Id_{v_*(X(v))}$
  by Proposition \ref{prop:finite support} (ii). 
  Moreover, $[w,v]^X=\Id_{v_*(X)}$ as endo-transformations of the functor $X$,
  because $X$ is supported on $A$.
  Evaluating this equality at the object $1$ of $\M$ yields
  \[ w_*(X(w,v))\ = \ w_*(X(w,v))\circ [w,v]^{X(v)}\ _\eqref{eq:[u',u]_at_w} = \
    [w,v]^X(1)\ = \ (\Id_{v_*(X)})(1)\ = \ \Id_{v_*(X(v))}\ .\]
  We choose an injection $u\in M$ such that $u v$ and $u w$ are
  the identity on $A$. 
  We conclude that
  \begin{align*}
    X(w,v)\
    &= \  (u w)_*(X(w,v))\ = \
      u_*(\Id_{v_*(X(v))})\ = \  \Id_{ (u v)_*(X(v))}\ = \ \Id_{X(v)}\ .
  \end{align*}
  The first relation is Proposition \ref{prop:finite support} (iv).
\end{proof}

Like any $\M$-category, $\cat(\M,\Cc)$ has a maximal tame $\M$-subcategory $\cat(\M,\Cc)^\tau$,
the full subcategory spanned by the finitely supported objects.

\begin{defn}
  The {\em saturation} of a tame $\M$-category $\Cc$
  is the tame $\M$-category $\Cc^{\sat}=\cat(\M,\Cc)^\tau$.
\end{defn}

The `constant functor'
\[ s\ :\ \Cc\ \to\ \cat(\M,\Cc)\  . \]
sends a $\Cc$-object $x$ to the functor $s(x):\M\to\Cc$
that is given on objects and morphisms by
\[ s(x)(w) \ = \ x\text{\qquad and\qquad} s(x)(w,v) \ = \ 1_x\ ,\]
for $v,w\in M$.
A morphism $f:x\to y$ in $\Cc$ is taken to the natural transformation
$s(f):s(x)\to s(y)$ whose value at $w\in M$ is $s(f)(w)=f$.

\begin{theorem}\label{thm:saturation M-version} 
  Let $\Cc$ be an $\M$-category.
  \begin{enumerate}[\em (i)]
  \item The functor $s:\Cc \to \cat(\M,\Cc)$ is a morphism of $\M$-categories.
  \item  The $\M$-category $\cat(\M,\Cc)$ is saturated.
  \item If $\Cc$ is tame, then the inclusion
    \[  \Cc^{\sat}\ = \ \cat(\M,\Cc)^\tau \ \to \ \cat(\M,\Cc)\]
    is a global equivalence of $\M$-categories.
 \item If $\Cc$ is tame, then the $\M$-category $\Cc^{\sat}$ is saturated,
   the functor $s:\Cc\to\cat(\M,\Cc)$ takes values in  $\Cc^{\sat}$, and the restriction
    \[ s\ : \ \Cc \ \to \ \Cc^{\sat} \]
    is an equivalence of underlying categories.
  \item If $\Cc$ is tame and saturated, then 
    $s: \Cc \to\Cc^{\sat}$ is a global equivalence of $\M$-categories.
  \end{enumerate}
\end{theorem}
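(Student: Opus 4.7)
The plan is to derive (v) as an immediate corollary of part (iv) combined with Proposition \ref{prop:tame2global M-version}. Part (iv) has already done essentially all of the work: it asserts that the functor $s:\Cc\to\cat(\M,\Cc)$ factors through the full $\M$-subcategory $\Cc^{\sat}=\cat(\M,\Cc)^\tau$, so that, combined with part (i), the restricted functor $s:\Cc\to\Cc^{\sat}$ is a morphism of $\M$-categories; and it further asserts that this restricted $s$ is an equivalence of underlying categories. No re-examination of the explicit formulas defining the $\M$-action on $\cat(\M,\Cc)$ or the functor $s$ should therefore be necessary.

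With those two facts in hand, the hypotheses of Proposition \ref{prop:tame2global M-version} are satisfied for $\Phi=s:\Cc\to\Cc^{\sat}$, using the saturation of $\Cc$ from the hypothesis of (v). Conclusion (ii) of that proposition then yields directly that $s:\Cc\to\Cc^{\sat}$ is a global equivalence of $\M$-categories, completing the proof. As a side remark, conclusion (i) of the same proposition also reproves that $\Cc^{\sat}$ is saturated, a fact already stated as part of (iv).

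I do not expect any real obstacle here: the entire content of (v) is packaged into the combination of (iv) and Proposition \ref{prop:tame2global M-version}, and the argument consists of verifying that the hypotheses of the latter line up with the conclusions of the former. The only point that deserves a brief mention in the write-up is that $s:\Cc\to\Cc^{\sat}$ is $\M$-equivariant as a morphism into the subcategory $\Cc^{\sat}$, which is automatic because $\Cc^{\sat}\hookrightarrow\cat(\M,\Cc)$ is the inclusion of a full $\M$-subcategory and $s:\Cc\to\cat(\M,\Cc)$ is already known to be $\M$-equivariant by part (i).
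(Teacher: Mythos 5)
Your argument for part (v) is correct and matches the paper's own proof exactly: both derive (v) from part (iv) by applying Proposition~\ref{prop:tame2global M-version}~(ii) to the morphism $s:\Cc\to\Cc^{\sat}$, using the saturation hypothesis on $\Cc$. Your side remark about the $\M$-equivariance of the restricted $s$ being automatic from fullness of the subcategory $\Cc^{\sat}\subset\cat(\M,\Cc)$ is accurate and consistent with how part (iv) is established in the paper.
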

\begin{proof}
  (i) We must show the equality
  \[ s\circ \diamond^\Cc\ = \ \diamond^{\cat(\M,\Cc)}\circ(\M\times s)\ : \
  \M\times\Cc \ \to \ \cat(\M,\Cc) \ .\]
  We check this on objects:
  given $\varphi\in M$ and an object $x$ of $\Cc$, we have
  \begin{align*}
    \varphi_*(s(x))(v,u) \
    &= \  \varphi_*(1_x)\ = \ 1_{\varphi_*(x)} \
    = \ s(\varphi_*(x))(v,u)\ .
  \end{align*}
  The verification for morphisms is similar, and we omit it.

  (ii)
  The $\M$-action on $\cat(\M,\Cc)$ was defined diagonally
  from the right translation action on the source, and the given action on the target.
  So
  \[ \cat(\M,\Cc)[\omega^G]\ = \ \cat(E I(\omega,\omega),\Cc)[\omega^G]\ = \
    \cat(E I(\omega^G,\omega),\Cc[\omega^G]) \ .
  \]
  Moreover, a group element $g\in G$ acts on a functor
  $X:E I(\omega^G,\omega)\to\Cc[\omega^G]$ by
 \[ g\cdot X\ = \ l^g_*\circ X\circ E I(l^g,\omega)\ , \]
 and similarly for morphisms in $\cat(E I(\omega^G,\omega),\Cc[\omega^G])$,
 i.e., natural transformations.
  
  Since the left $G$-action on $\omega^G$ is faithful,
  the induced right $G$-action on the set $I(\omega^G,\omega)$ is free.
  So we can choose a right $G$-equivariant map
  \[ q \ : \ I(\omega^G,\omega) \ \to \ G\ .\]
  Passing to the associated contractible groupoids
  and applying $\cat^G(-,\Cc[\omega^G])$ yields a functor
  \begin{align*}
     \cat^G(E q,\Cc[\omega^G])\ : \
    F^{h G}\Cc\ = \ &\cat^G(E G,\Cc[\omega^G])\\
    \to \ &\cat^G(E I(\omega^G,\omega),\Cc[\omega^G])\ = \ F^G(\cat(\M,\Cc)) \ .
  \end{align*}
  Now we let $\lambda:\omega^G\to \omega$ be any injection such that $q(\lambda)=1$.
  We write $\varepsilon:\cat(\M,\Cc)\to\Cc$ for the functor that evaluates
  functors and natural transformations at the object 1 of $\M$.
  The composite
  \[ F^{h G}\Cc\ \xra{\cat^G(E q,\Cc[\omega^G])} \
    F^G(\cat(\M,\Cc)) \ \xra{\ \lambda_\flat^{\cat(\M,\Cc)}}\ G\cat(\M,\Cc)
    \ \xra{G\varepsilon}\ G\Cc \]
  equals the functor $\lambda_\sharp$ defined in \eqref{eq:define_c_sharp}.
  The functor $\lambda_\sharp$ is an equivalence
  of categories by Proposition \ref{prop:h-fixed versus G-objects};
  the evaluation functor $\varepsilon:\cat(\M,\Cc)\to\Cc$ is an equivalence of categories,
  hence so is the induced functor $G\varepsilon$ on the associated
  categories of $G$-objects.
  So the functor
  \[
    \lambda_\flat^{\cat(\M,\Cc)}\circ\cat^G(E q,\Cc[\omega^G]) \ : \ 
    F^{h G}\Cc\ \to\  G\cat(\M,\Cc)
  \]
  is an equivalence. Hence the fully faithful (by Proposition \ref{prop:lambda_sharp})
  functor
  $\lambda_\flat^{\cat(\M,\Cc)}: F^G(\cat(\M,\Cc))\to G\cat(\M,\Cc)$
  is essentially surjective, and thus an equivalence.
  Since $G$ was any finite group, Corollary \ref{cor:saturation characterizations}
  shows that the $\M$-category $\cat(\M,\Cc)$ is saturated.
  
  (iii)
  We let $G$ be a finite group. We must show that the inclusion
  \[  F^G (\cat(\M,\Cc)^\tau)\ \to\ F^G (\cat(\M,\Cc))  \]
  is an equivalence of categories.
  As we discussed in part (ii), the objects of the category $F^G (\cat(\M,\Cc))$
  are the $G$-equivariant functors $X:E I(\omega^G,\omega)\to\Cc[\omega^G]$.
  We claim that $X$ is naturally isomorphic
  to a functor in the subcategory $F^G (\cat(\M,\Cc)^\tau)$.
  To this end we choose a finite faithful $G$-invariant subset $S$ of the universal $G$-set $\omega^G$.
  Then the action of $G$ on the set $I(S,\omega)$ of injections from $S$ to $\omega$
  is free, so we can choose a $G$-equivariant map
  $r:I(S,\omega)\to I(\omega^G,\omega)$ with finite image $J\subset I(\omega^G,\omega)$.
  We define 
  \[ A \ = \ S\ \cup \ \bigcup_{\lambda\in J} \supp(X(\lambda))\ , \]
  which is a finite subset of $\omega^G$.
  We define a $G$-equivariant functor $Y:E I(\omega^G,\omega)\to \Cc[\omega^G]$
  as the composite
  \[ E I(\omega^G,\omega)\ \xra{E(r\circ \res_S)}\
    E I(\omega^G,\omega)\ \xra{\ X\ }\  \Cc[\omega^G]\ , \]
  where $\res_S:I(\omega^G,\omega)\to I(S,\omega)$ is the restriction of a function to $S$.
  Proposition \ref{prop:support in cat(M,C)} then shows that the functor
  $Y$ is supported on the finite subset $A$ of $\omega^G$.
  So $Y$ belongs to the subcategory $F^G(\cat(\M,\Cc)^\tau)$.
  Moreover, for varying $\lambda\in I(\omega^G,\omega)$,
  the isomorphisms
  \[ X(r(\res_S(\lambda)),\lambda)\ : \ X(\lambda)\ \to \ X(r(\res_S(\lambda)))\ = \ Y(\lambda) \]
  form a $G$-equivariant natural isomorphism from $X$ to $Y$.
  This concludes the proof.
  
  (iv)
  Every morphism of $\M$-categories takes finitely supported objects
  to finitely supported objects. Since all objects of $\Cc$ are finitely supported,
  the functor $s:\Cc\to\cat(\M,\Cc)$ lands in the full subcategory $\Cc^{\sat}=\cat(\M,\Cc)^\tau$.
  Since the $\M$-category $\cat(\M,\Cc)$ is saturated by part (ii)
  and the inclusion $\Cc^{\sat}\to \cat(\M,\Cc)$ is a global equivalence by part (iii),
  the $\M$-category $\Cc^{\sat}$ is also saturated.
  Since the inclusion $\Cc^{\sat}\to \cat(\M,\Cc)$
  and the functor $s:\Cc\to\cat(\M,\Cc)$ are equivalences,
  so is the restricted functor $s:\Cc\to\Cc^{\sat}$.
  
  Given part (iv), the final part (v) is now an application of 
  Proposition \ref{prop:tame2global M-version} (ii). 
\end{proof}

As we shall now explain, the saturation has a natural extension
from $\M$-categories to parsummable categories.

\begin{con}[(Saturation for parsummable categories)]\label{con:saturation_parsummable}
  Above we introduced the saturation functor
  \[ (-)^{\sat}\ = \ \cat(\M,-)^\tau \ : \ \M\cat^\tau \ \to \ \M\cat^\tau \]
  for tame $\M$-categories.
  We let $\Cc$ and $\Dc$ be tame $\M$-categories.
  We will now define a natural $\M$-equivariant functor
  \begin{equation}\label{eq:saturation_monoidal}
    \Cc^{\sat}\boxtimes \Dc^{\sat} \ \to\ (\Cc\boxtimes \Dc)^{\sat}
  \end{equation}
  that makes saturation into a lax symmetric monoidal functor for the box
  product of tame $\M$-categories.

  We consider functors $X:\M\to\Cc$ and $Y:\M\to\Dc$ that are supported,
  as objects of the $\M$-categories $\cat(\M,\Cc)$ and $\cat(\M,\Dc)$, 
  on disjoint finite subset $A$ and $B$ of $\omega$.
  Proposition \ref{prop:support in cat(M,C)} shows
  that then the values of $X$ and $Y$ are in particular objectwise disjointly supported. 
  So the functor
  \[ (X,Y)\ : \ \M \ \to\ \Cc\times \Dc \]
  takes values in the full subcategory $\Cc\boxtimes\Dc$.
  Moreover, as an object of the $\M$-category $\cat(\M,X\boxtimes Y)$,
  the functor $(X,Y)$ is supported on the finite set $A\cup B$,
  again by Proposition \ref{prop:support in cat(M,C)}.
  So the corresponding functor 
  \[  \Cc^{\sat}\boxtimes \Dc^{\sat}\ = \
    \cat(\M,\Cc)^\tau \boxtimes\cat(\M,\Dc)^\tau \ \to\ \cat(\M,\Cc\times \Dc)   \]
  takes values in the full subcategory $\cat(\M,\Cc\boxtimes \Dc)^\tau=(\Cc\boxtimes\Dc)^{\sat}$.
  This defines the natural functor \eqref{eq:saturation_monoidal},
  which is clearly $\M$-equivariant, associative, commutative and unital.

  Now we suppose that $\Cc$ is a parsummable category.
  Then the saturation $\Cc^{\sat}$ of the underlying $\M$-category
  inherits the structure of a parsummable category, with addition functor
  defined as the composite
  \[
    \Cc^{\sat}\boxtimes \Cc^{\sat} \ \xra{\eqref{eq:saturation_monoidal}} \
    (\Cc\boxtimes\Cc)^{\sat}\ \ \xra{\ +^{\sat}\ }\  \Cc^{\sat}\ .
  \]
  In more down-to-earth terms, this means
  that the parsummable structure is entirely defined `objectwise'.
  For example, if $X,Y:\M\to\Cc$ are functors that have disjoint finite supports
  with respect to the $\M$-action on $\cat(\M,\Cc)$,
  then the functor $X+Y:\M\to\Cc$ is defined on objects and morphisms by
  \[ (X+Y)(u)\ = \ X(u)+Y(u)\text{\qquad and\qquad}  (X+Y)(v,u)\ = \ X(v,u)+Y(v,u)\ .\]
\end{con}

\begin{theorem}\label{thm:saturation} 
  Let $\Cc$ be a parsummable category.
  \begin{enumerate}[\em (i)]
  \item The parsummable category $\Cc^{\sat}$ is saturated.
  \item The functor $s:\Cc \to \Cc^{\sat}$ is a morphism of parsummable categories
    and an equivalence of underlying categories.
  \item If $\Cc$ is saturated, then $s:\Cc\to\Cc^{\sat}$ is a global equivalence of parsummable categories.
  \end{enumerate}
\end{theorem}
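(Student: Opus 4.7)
The plan is to reduce all three statements to the corresponding statements at the level of $\M$-categories, which are already established in Theorem \ref{thm:saturation M-version}. Both ``saturated'' and ``global equivalence'' are defined for parsummable categories purely in terms of the underlying $\M$-categorical structure (Definition \ref{def:saturated}, and the definition of global equivalence given before Theorem \ref{thm:equivalence2equivalence}), so the only genuinely new content compared to Theorem \ref{thm:saturation M-version} is verifying that $s:\Cc\to\Cc^{\sat}$ is a \emph{morphism of parsummable categories}, i.e.~that it is strictly compatible with the zero object and with the partial sum functor.

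First I would treat part (ii). Theorem \ref{thm:saturation M-version} (i) and (iv) already give that $s$ is $\M$-equivariant, lands in $\Cc^{\sat}$, and is an equivalence of underlying categories. To upgrade this to a morphism of parsummable categories, I would check the two remaining structural conditions. For the unit, the constant functor $s(0):\M\to\Cc$ has value $0$ and only identity morphisms, which is precisely the zero object of $\Cc^{\sat}$. For additivity, I would unwind the objectwise description of the parsummable structure on $\Cc^{\sat}$ given in Construction \ref{con:I-sat}: for disjointly supported objects $x,y$ of $\Cc$, the constant functors $s(x),s(y)$ are disjointly supported in $\Cc^{\sat}$ by Proposition \ref{prop:support in cat(M,C)}, and then on every $u\in M$ we have
\[
 s(x+y)(u)\ =\ x+y\ =\ s(x)(u)+s(y)(u)\ =\ (s(x)+s(y))(u),
\]
with the analogous identity on morphisms. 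This is the main (albeit easy) point of the argument.

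Part (i) is then immediate: by Definition \ref{def:saturated}, saturation of $\Cc^{\sat}$ as a parsummable category is a condition on its underlying $\M$-category, and Theorem \ref{thm:saturation M-version} (iv) states exactly that this underlying $\M$-category is saturated. Similarly, for part (iii), suppose $\Cc$ is a saturated parsummable category; then its underlying $\M$-category is tame and saturated, so Theorem \ref{thm:saturation M-version} (v) shows that the underlying functor of $s:\Cc\to\Cc^{\sat}$ is a global equivalence of $\M$-categories, which by definition means that $s$ is a global equivalence of parsummable categories.

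I do not expect any serious obstacle: the entire proof is a bookkeeping exercise, and the only nontrivial input, namely that the $\M$-category $\cat(\M,\Cc)^{\tau}$ is saturated with the constant-functor embedding being an underlying equivalence, has already been handled. The objectwise nature of the parsummable structure on $\Cc^{\sat}$, built into Construction \ref{con:I-sat}, is specifically engineered so that $s$ is strictly additive on the nose, which is what makes the reduction to Theorem \ref{thm:saturation M-version} completely formal.
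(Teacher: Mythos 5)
Your proposal is correct and follows essentially the same route as the paper's proof: both verify that $s$ is unital and strictly additive (the paper phrases this via the lax monoidal comparison map \eqref{eq:saturation_monoidal}, you do it objectwise, but it is the same computation), and both then observe that saturation and global equivalence are properties of the underlying $\M$-categories so that parts (i) and (iii) are immediate from Theorem \ref{thm:saturation M-version}. No gaps.
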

\begin{proof}
  The constant functor $\M\to\Cc$ with value 0 is the distinguished zero object
  of the parsummable category $\cat(\M,\Cc)^\tau$, so the constant functor $s:\Cc\to\cat(\M,\Cc)^\tau$ is unital.
  For all tame $\M$-categories $\Cc$ and $\Dc$, the composite
  \[   \Cc\boxtimes \Dc\ \xra{s\boxtimes s} \
    \cat(\M,\Cc)^\tau \boxtimes\cat(\M,\Dc)^\tau \ \xra{\eqref{eq:saturation_monoidal}}\
    \cat(\M,\Cc\boxtimes \Dc)^\tau       
  \]
  coincides with the constant functor $s:\Cc\boxtimes \Dc\to \cat(\M,\Cc\boxtimes \Dc)^\tau$.       
  So for every parsummable category $\Cc$, the functor
  $s:\Cc\to \cat(\M,\Cc)^\tau$ respects the addition functors.       
  `Saturation' and `global equivalence' are properties of the underlying $\M$-categories,
  so the remaining claims are special cases of Theorem \ref{thm:saturation M-version}.   
\end{proof}

A consequence of Theorem \ref{thm:saturation} is that the $G$-fixed category
$F^G(\Cc^{\sat})$ of the saturation of a parsummable category $\Cc$
is equivalent to the category of $G$-objects in $\Cc$:
because $s:\Cc\to\Cc^{\sat}$ is an equivalence of categories and
because $\Cc^{\sat}$ is saturated, the three functors
\[ G \Cc \ \xra[\iso]{\ G s\ } \ G\Cc^{\sat}\ \xra[\iso]{\ c_\sharp\ }\ F^{h G}(\Cc^{\sat})
  \ \xla[\iso]{\ \kappa\ }\ F^G(\Cc^{\sat}) \]
are equivalences of categories.

\section{Global K-theory of free parsummable categories}
\label{sec:K of free I}

As we explained in Example \ref{eg:free_parsummable}, the forgetful functor from
parsummable categories to tame $\M$-categories has a left adjoint,
yielding {\em free parsummable categories}.
In this section we identify the global K-theory spectrum of
the free parsummable category generated by a tame $\M$-category as a suspension spectrum,
see Theorem \ref{thm:global BPQ}.
Our result can be interpreted as a global equivariant generalization
of the Barratt-Priddy-Quillen theorem;
so before going into details, we review the Barratt-Priddy-Quillen theorem
and its variations and generalizations,
in order to put our result into context.

The original result of Barratt-Priddy and Quillen 
states that a specific map
\[ \mZ\times B\Sigma_\infty \ \to \ \colim_n \Omega^n S^n \ = \ Q(S^0) \]
is a homology isomorphism;
here $B\Sigma_\infty$ is the classifying space of the infinite symmetric group
$\Sigma_\infty=\bigcup_{n\geq 1}\Sigma_n$, and $Q(S^0)$
is the infinite loop space of the sphere spectrum.
The result was announced by Barratt in \cite{barratt:free_group}
and by Priddy \cite{priddy:Omega_infty_S_infty},
who jointly published a detailed proof in \cite{barratt-priddy:homology}.
Barratt and Priddy acknowledge that Quillen had also proved
the same result, but to my knowledge, Quillen never circulated or published his proof in writing.
Sometimes the result is stated in an equivalent formulation,
as a weak homotopy equivalence between $\mZ\times (B\Sigma_\infty)^+$ and $Q(S^0)$,
where $(-)^+$ is the plus construction.
Another interpretation of the Barratt-Priddy-Quillen theorem is to say that
$Q(S^0)$ is the group completion of $\coprod_{m\geq 0}B\Sigma_m$,
the free $E_\infty$-space generated by a point.
The $E_\infty$-space $\coprod_{m\geq 0}B\Sigma_m$, in turn, is weakly equivalent
to the nerve of the category of finite sets and bijections,
with the $E_\infty$-structure arising from disjoint union.
So yet another way to view the Barratt-Priddy-Quillen theorem is
as a stable equivalence between the sphere spectrum and the K-theory spectrum
of the category of finite sets under disjoint union.
We offer a global equivariant refinement of this results in 
Theorem \ref{thm:global F}, saying that the global K-theory of the
parsummable category $\Fc$ of finite sets is globally equivalent to the global sphere spectrum.

A generalization of the Barratt-Priddy-Quillen theorem
is the statement that the group completion
of the free $E_\infty$-space generated by a space $X$ is weakly
equivalent to $Q(X)=\colim_{n\geq 0}\Omega^n(\Sigma^n X_+)$, the infinite loop space
of the unreduced suspension spectrum of $X$.
Equivalently, the spectrum made from the K-theoretic deloopings of
the free $E_\infty$-space on $X$ is equivalent to the unreduced suspension spectrum of $X$.
Two versions of this result for different $E_\infty$-operads
were proved by Barratt-Eccles \cite[Theorem A]{barratt-eccles}
and May \cite[Theorem 2.2]{may:E_infty_spaces};
in the framework of $\Gamma$-spaces, Segal stated the corresponding result in
\cite[Proposition 3.6]{segal:cat coho}.
For a fixed finite group $G$, equivariant versions of the Barratt-Priddy-Quillen theorem
have been established by
Hauschild \cite[Theorem  III.4]{hauschild:konfigurationsraeume},
Carlsson-Douglas-Dundas \cite[Section 5.2]{carlsson-douglas-dundas},
Guillou-May \cite[Section 6]{guillou-may},
Barwick-Glasman-Shah \cite[Theorem 10.6]{barwick-glasman-shah:spectral_mackey_II},
and possibly others that I am not aware of.
Our Theorem \ref{thm:global BPQ} below is a global equivariant version
of the Barratt-Priddy-Quillen theorem.

If every object of the tame $\M$-category $\Bc$ has a non-empty support,
then for every $m\geq 0$, the permutation action
of the symmetric group $\Sigma_m$ on the box power $\Bc^{\boxtimes m}$ is free.
So in this situation, the nerve of the underlying category
of the free parsummable category $\mP\Bc$
is weakly equivalent to the free $E_\infty$-space generated by the nerve
of  the underlying category of $\Bc$.
The Barratt-Priddy-Quillen theorem thus suggest that non-equivariantly,
the K-theory of $\mP\Bc$ ought to be the unreduced suspension spectrum
of the nerve of the underlying category of $\Bc$.
We will show that this is indeed the case, even in a global equivariant form.

\begin{con}[(Word length filtration)]\label{con:word length}
  We let $\Bc$ be a tame $\M$-category.
  As we discussed in Example \ref{eg:free_parsummable}, the free parsummable category generated by
  $\Bc$ has underlying category
  \[ \mP\Bc \ = \ \coprod_{m\geq 0} \, (\Bc^{\boxtimes m})/\Sigma_m\ .\]
  We filter the associated $\Gamma$-$\M$-category $\gamma(\mP\Bc)$ by `word length', as follows.
  Since the free functor $\mP:\M\cat^\tau\to\parsumcat$ is left adjoint
  to the forgetful functor, it takes disjoint unions of tame $\M$-categories
  to coproducts of parsummable categories, which are given by the box product, see
  Example \ref{eg:coproduct_parsumcat}.
  So the value of $\gamma(\mP\Bc)$ at an object $n_+$ of $\Gamma$ can be
  rewritten as
  \[ \gamma(\mP\Bc)(n_+)\ = \ (\mP\Bc)^{\boxtimes n}\ \iso \
    \mP(\Bc\times\mathbf n) \ = \ \coprod_{m\geq 0} \, ((\Bc\times\mathbf n)^{\boxtimes m})/\Sigma_m \ ,  \]
  where $\mathbf n=\{1,\dots,n\}$.
  We write $\gamma_k(\mP\Bc)$ for the $\Gamma$-$\M$-subcategory of
  $\gamma(\mP\Bc)$ whose value at $n_+$ is
  \[ \gamma_k(\mP\Bc)(n_+)\ = \ \coprod_{0\leq m\leq k} \, ((\Bc\times\mathbf n)^{\boxtimes m})/\Sigma_m \ ,  \]
  i.e., the disjoint union runs only up to $k$.
  Passing to the associated symmetric spectra (see Construction \ref{con:spectrum from Gamma-M})
  provides an exhaustive filtration
  \[  \gamma_1(\mP\Bc)\td{\mS}\ \subseteq\
    \gamma_2(\mP\Bc)\td{\mS}\ \subseteq\ \dots \ \subseteq
    \gamma_k(\mP\Bc)\td{\mS}\ \subseteq\ \dots
  \]
  of the symmetric spectrum $\gamma(\mP\Bc)\td{\mS}=\bK_{\gl}(\mP\Bc)$.
\end{con}

\begin{con}[(From $\M$-categories to $\bI$-spaces)]\label{con:M2bI}
  We let $\bI$ denote the category of finite sets and injective maps;
  an {\em $\bI$-space} is a functor from $\bI$ to the category of spaces. 
  As indicated in \cite[Section 6.1]{hausmann:global_finite}
  and explained in detail in \cite[Section 1.4]{lenz:G-global},
  the category of $\bI$-spaces is a model for unstable global homotopy theory
  based on finite groups. 

  We will now associate an $\bI$-space $\rho(\Bc)$ to every $\M$-category $\Bc$.
  As before, we denote by $\omega^A$ the set of maps from $A$ to $\omega$.
  The value of $\rho(\Bc)$ at a non-empty finite set~$A$ is
  \[ \rho(\Bc)(A) \ = \ |\Bc[\omega^A]| \ ,\]
  the geometric realization of the category $\Bc[\omega^A]$.
  For the empty set we set
  $ \rho(\Bc)(\emptyset)  =  |\Bc^{\supp=\emptyset}|$,
  the realization of the full subcategory of $\Bc$
  on the objects supported on the empty set.
  The structure map associated with an injection $i:A\to B$ is the map
  \[ \rho(\Bc)(i)\ = \ |\Bc[i_!]| \ : \  |\Bc[\omega^A]|\ \to\ |\Bc[\omega^B]|\ ,\]
  where $i_!:\omega^A\to\omega^B$
  is extension by zero, see \eqref{eq:extension_by_zero}.
  In the special case where $A=\emptyset$ is empty,
  the map $\rho(\Bc)[i_!]:\rho(\Bc)[\omega^A]\to \rho(\Bc)[\omega^B]$ is to be interpreted as
  the inclusion $|\Bc^{\supp=\emptyset}|\to |\Bc[\omega^B]|$.
\end{con}

\begin{defn}[(Suspension spectrum of an $\bI$-space)]
    The {\em unreduced suspension spectrum} $\Sigma^\infty_+ X$
    of an $\bI$-space $X$ is defined by
    \[  (\Sigma^\infty_+ X)(A)\ = \ X(A)_+\sm S^A\ .\]
    The structure map $i_*:(\Sigma^\infty_+ X)(A)\sm S^{B\setminus i(A)}\to(\Sigma^\infty_+ X)(B)$
    associated to an injection $i:A\to B$ between finite sets is
    the smash product of the structure map $X(i)_+:X(A)_+\to X(B)_+$
    of $X$ with the preferred homeomorphism $S^A\sm S^{B\setminus i(A)}\iso S^B$
    that is given by $i$ on the $A$-coordinates.
\end{defn}

\begin{eg}\label{eg:gamma_1=suspension}
  Let $\Bc$ be a tame $\M$-category.
  We claim that the symmetric spectrum $\gamma_1(\mP\Bc)\td{\mS}$,
  the first term in the word length filtration of $\bK_{\gl}(\mP\Bc)$ introduced
  in Construction \ref{con:word length},
  is isomorphic to the unreduced suspension spectrum of the $\bI$-space $\rho(\Bc)$.

  Indeed, by definition we have
  \[ |\gamma_1(\mP\Bc)|(n_+)\
    = \ |\ast \amalg (\Bc\times\mathbf n)| \ \iso \  |\Bc|_+\sm n_+\ .\]
  The prolongation of this $\Gamma$-space is the functor $|\Bc|_+\sm -:\bT_*\to\bT_*$.
  So
  \[   \gamma_1(\mP\Bc)\td{\mS}(A) \ = \ |\gamma_1(\mP\Bc)|\ = \  |\Bc[\omega^A]|_+\sm S^A
    \ = \ (\Sigma^\infty_+\rho(\Bc))(A)\ .
  \]
  We omit the straightforward verification that the structure maps of the symmetric spectra
  $\gamma_1(\mP\Bc)\td{\mS}$ and $\Sigma^\infty_+\rho(\Bc)$ correspond to each other under this identification.
\end{eg}

Our proof of Theorem \ref{thm:global BPQ} below
is based on connectivity estimates of the subquotients in the word-length filtration;
this kind of connectivity argument goes back,
at least, to Barratt and Eccles \cite[Section 6]{barratt-eccles}.
The following proposition is the key technical ingredient;
we write $A/G$ for the set of $G$-orbits
of a $G$-set $A$, and we write $|A/G|$ for its cardinality.

\begin{prop}\label{prop:connectivity}
  Let $G$ be a finite group and let $X$ be a $(G\times \Sigma_k)$-simplicial set
  such that the $\Sigma_k$-action is free, for $k\geq 2$.
  Let $A$ be a finite $G$-set with $q$ free $G$-orbits.
  Then the $G$-fixed simplicial set
  \[ ((X_+\sm (S^A)^{\sm k})/\Sigma_k)^G \]
  is $( |A/G|+q-1)$-connected.
\end{prop}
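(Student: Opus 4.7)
The plan is to decompose the $G$-fixed point space according to $\Sigma_k$-conjugacy classes of homomorphisms $\phi\colon G\to\Sigma_k$, identify each resulting summand as a Thom space of a real vector bundle of rank at least $|A/G|+q$, and conclude via the classical connectivity bound for Thom spaces. Since all constructions in sight commute with geometric realization, I pass to topology throughout.

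Set $Z=X_+\sm(S^A)^{\sm k}=X_+\sm S^{A\times\mathbf k}$. Because $\Sigma_k$ acts freely on $X$, it acts freely on $Z\setminus\{*\}$, and this yields the standard orbit-type decomposition
\[ (Z/\Sigma_k)^G \ \iso \ \bigvee_{[\phi]}\, Z^{\Gamma_\phi}/C_\phi,\]
indexed by $\Sigma_k$-conjugacy classes of homomorphisms $\phi\colon G\to\Sigma_k$, where $\Gamma_\phi=\{(g,\phi(g))\}\subset G\times\Sigma_k$ and $C_\phi$ is the centralizer of $\phi(G)$ in $\Sigma_k$. Each $Z^{\Gamma_\phi}$ splits as $(X^{\Gamma_\phi})_+\sm S^{V_\phi}$ with $V_\phi=\mR[A\times\mathbf k]^{\Gamma_\phi}$, and $\dim V_\phi$ equals the number of $G$-orbits of $A\times\mathbf k$ under the diagonal action, where $G$ acts on $\mathbf k$ through $\phi$.

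I then count these orbits. Writing $A\iso\coprod_{j=1}^{r}G/H_j$ with $H_1=\cdots=H_q=1$ for the $q$ free $G$-orbits, the standard bijection $(G/H\times B)/G\iso B/H$ gives
\[ \dim V_\phi \ = \ \sum_{j=1}^r |\mathbf k/H_j|\ \geq\ qk+(r-q)\ \geq\ 2q+(r-q)\ = \ r+q\ = \ |A/G|+q,\]
where the middle inequality uses the hypothesis $k\geq 2$ and the fact that $|\mathbf k/H_j|\geq 1$ for every~$j$. Hence $\dim V_\phi\geq |A/G|+q$ for every $\phi$.

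Finally, because $C_\phi\leq\Sigma_k$ acts freely on $X^{\Gamma_\phi}$, the quotient $((X^{\Gamma_\phi})_+\sm S^{V_\phi})/C_\phi$ is canonically homeomorphic to the Thom space of the rank-$\dim V_\phi$ real vector bundle $X^{\Gamma_\phi}\times_{C_\phi}V_\phi$ over $X^{\Gamma_\phi}/C_\phi$ (or to a point if $X^{\Gamma_\phi}$ is empty, in which case the summand is trivial). Any Thom space of a rank-$m$ real bundle over a non-empty base is at least $(m-1)$-connected, since it admits a CW-structure whose bottom cells live in dimension $m$. Combining this with the orbit count shows that every wedge summand, and hence the whole wedge, is $(|A/G|+q-1)$-connected. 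The delicate point that I expect to require the most care is precisely this last one: naively taking the $C_\phi$-orbit space of a highly-connected space by a free action would introduce a fundamental group isomorphic to $C_\phi$, but the Thom-space interpretation circumvents this because the quotient is formed diagonally with a high-dimensional representation sphere, effectively suppressing the low-dimensional homotopy of $BC_\phi$.
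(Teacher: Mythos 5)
Your proof is correct and follows the same overall strategy as the paper's: decompose $((X_+\sm(S^A)^{\sm k})/\Sigma_k)^G$ into wedge summands indexed by $\Sigma_k$-conjugacy classes of homomorphisms $\phi\colon G\to\Sigma_k$, compute the dimension of the fixed representation sphere $S^{V_\phi}$, bound that dimension below by $|A/G|+q$, and conclude that each summand is $(|A/G|+q-1)$-connected. Two details differ, both to your advantage in presentation. First, in counting $\dim V_\phi = |(A\times\mathbf k)/G|$ you decompose $A$ into $G$-orbits and get $\sum_{j=1}^r |\mathbf k/H_j|\geq qk+(r-q)\geq r+q$ in a single inequality; the paper instead decomposes $\mathbf k$ into $\phi(G)$-orbits and case-splits on whether the action on $\mathbf k$ is transitive, using either $m\geq 2$ (non-transitive case) or $[G:H_1]=k\geq 2$ (transitive case). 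Both count the same thing and give the same bound. Second, you justify the passage from the free $C_\phi$-action to the connectivity of the quotient via the Thom-space interpretation of $((X^{\Gamma_\phi})_+\sm S^{V_\phi})/C_\phi$; the paper states this step more tersely. Your version makes explicit the point (which you correctly flag as subtle) that a free finite-group quotient of a highly connected space is \emph{not} in general highly connected, and that the specific smash-with-a-large-sphere structure is what saves the day -- the quotient acquires a CW-structure with no non-basepoint cells below dimension $\dim V_\phi$.
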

\begin{proof}
  Because the $\Sigma_k$-action on $X$ is free,
  the $\Sigma_k$-action on $X_+\sm (S^A)^{\sm k}$ is free away from the basepoint.
  The $G$-fixed points of the $\Sigma_k$-orbits can thus be identified as
  \begin{align*}
    \left((X_+\sm (S^A)^{\sm k})/\Sigma_k\right)^G\ 
    \iso \ \bigvee_{[\alpha:G\to\Sigma_k]}\ (X_+\sm (S^A)^{\sm k})^{\Gamma(\alpha)}/C(\alpha)\ .
  \end{align*}
  The wedge is indexed by conjugacy classes of homomorphisms $\alpha:G\to\Sigma_k$,
  $\Gamma(\alpha)$ denotes the graph of $\alpha$, and $C(\alpha)$ denotes
  the centralizer in $\Sigma_k$ of the image of $\alpha$.
  So to prove the claim, it suffices to show that each of the wedge summands
  is $( |A/G|+q-1)$-connected.
  
  For the rest of the proof we thus fix a particular homomorphism $\alpha:G\to\Sigma_k$.
  We let $i_1,\dots, i_m\in \{1,\dots,k\}$
  be representatives of the $G$-orbits for the action through $\alpha$.
  We let $H_j$ be the stabilizer of $i_j$, a subgroup of $G$.
  Then the $\Gamma(\alpha)$-fixed points of $(S^A)^{\sm k}$ are given by
  \begin{align*}
    ((S^A)^{\sm k})^{\Gamma(\alpha)}\ \iso \ \bigwedge_{j=1,\dots,m} \, (S^A)^{H_j}\ ,
  \end{align*}
  the smash product of the $H_j$-fixed simplicial sets of $S^A$.
  The connectivity of these fixed points is one less than
  the dimension of the sphere $\bigwedge_{j=1,\dots, m}  (S^A)^{H_j}$,
  which is
  \[ \dim(\bigwedge_{j=1,\dots, m}  (S^A)^{H_j})\ = \  \sum_{j=1,\dots, m} |A/H_j|  \ .\]
  Now we distinguish two cases.
  If the $G$-action on $\{1,\dots,k\}$ through $\alpha:G\to\Sigma_k$
  is not transitive, then $m\geq 2$, and hence 
  \[  \sum_{j=1,\dots, m} |A/H_j|\ \geq \ 2\cdot |A/G| \ \geq \ |A/G| + q \ .\]
  If the $G$-action on $\{1,\dots,k\}$ is transitive, then $m=1$ and
  the stabilizer group $H_1$ is a proper subgroup of index $k\geq 2$.
  Because $A$ has $q$ free $G$-orbits, it is $G$-isomorphic to $B\amalg (\mathbf q\times G)$
  for some finite $G$-set $B$. So
  \[ |A/H_1| \ = \ |B/H_1| + q\cdot [G:H_1] \ \geq \ |B/G|+ 2 q \ = \ |A/G|+ q \ . \]
  So in either case, the simplicial set
  $((S^A)^{\sm k})^{\Gamma(\alpha)}$ is $(|A/G|+q-1)$-connected. Hence the simplicial set
  \[   (X_+\sm (S^A)^{\sm k})^{\Gamma(\alpha)}
    \ = \  X^{\Gamma(\alpha)}_+\sm ((S^A)^{\sm k})^{\Gamma(\alpha)}  \]
  is also $(|A/G|+q-1)$-connected.
  Since the $\Sigma_k$-action on $X$ is free,
  the action of its subgroup $C(\alpha)$ on $X^{\Gamma(\alpha)}$ is also free.
  So the $C(\alpha)$-action on $(X_+\sm (S^A)^{\sm k})^{\Gamma(\alpha)}$
  is free away from the base point.
  Thus the $C(\alpha)$-orbits $(X_+\sm (S^A)^{\sm k})^{\Gamma(\alpha)}/C(\alpha)$
  are also $(|A/G|+q-1)$-connected. This concludes the proof.
\end{proof}

Now we come to the main result of this section.
The identification provided by Example \ref{eg:gamma_1=suspension},
followed by the inclusion   $\gamma_1(\mP\Bc)\td{\mS}\to \gamma(\mP\Bc)\td{\mS}=\bK_{\gl}(\mP\Bc)$
is a morphism of symmetric spectra
\begin{equation}\label{eq:eq:define_w}
 w\ : \ \Sigma^\infty_+ \rho(\Bc) \  \to \ \bK_{\gl}(\mP\Bc)  \ .  
\end{equation}

\begin{theorem}\label{thm:global BPQ}
  Let $\Bc$ be a tame $\M$-category without objects with empty support.
  Then the morphism  $w:\Sigma^\infty_+ \rho(\Bc)  \to \bK_{\gl}(\mP\Bc)$
  is a global equivalence of symmetric spectra.
\end{theorem}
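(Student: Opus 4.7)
I will prove that $w$ is a global $\upi_*$-isomorphism, which then forces it to be a global equivalence by \cite[Proposition 4.5]{hausmann:global_finite}. The key tools are the word-length filtration of Construction~\ref{con:word length} and the equivariant connectivity estimate of Proposition~\ref{prop:connectivity}.

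By Example~\ref{eg:gamma_1=suspension}, the morphism $w$ is the filtration inclusion $\gamma_1(\mP\Bc)\td{\mS} \hookrightarrow \bK_{\gl}(\mP\Bc) = \colim_n \gamma_n(\mP\Bc)\td{\mS}$. For every $n \geq 2$, the inclusion $\gamma_{n-1}(\mP\Bc)(m_+) \hookrightarrow \gamma_n(\mP\Bc)(m_+)$ is the inclusion of a disjoint summand with complement the $\Gamma$-$\M$-category $Q_n$ given by $Q_n(m_+) = ((\Bc\times\mathbf m)^{\boxtimes n}/\Sigma_n)_+$. Passing to symmetric spectra produces a cofiber sequence $\gamma_{n-1}\td{\mS} \to \gamma_n\td{\mS} \to Q_n\td{\mS}$; the associated long exact sequence on equivariant homotopy groups, together with the fact that sequential colimits along such levelwise-cofibrant inclusions of simplicial-set-based symmetric spectra commute with $\pi_k^G$, reduces the task to showing that $\pi_k^G(Q_n\td{\mS}) = 0$ for every finite group $G$, every integer $k$, and every $n \geq 2$.

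To estimate the connectivity of $Q_n\td{\mS}(A)^G$, I exploit the hypothesis that $\Bc$ has no objects with empty support. Any $n$-tuple of nonzero disjointly supported objects of $\Bc\times\mathbf m$ has pairwise distinct entries, so $\Sigma_n$ acts freely on the set of non-basepoint objects of $Q_n(m_+)$ and on their nerve. Unwinding the coend presentation of $|Q_n|[\omega^A](S^A)$ exhibits $Q_n\td{\mS}(A)$ in the form $(X_n(A)_+\sm(S^A)^{\sm n})/\Sigma_n$ for a $(G\times\Sigma_n)$-simplicial set $X_n(A)$ on which $\Sigma_n$ acts freely; the $n$ smash-copies of $S^A$ arise from the $n$ coordinates in $\mathbf m^n$ contributing to a length-$n$ word, while $X_n(A)$ encodes the $\Bc$-labels after reparameterization by $\omega^A$. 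Proposition~\ref{prop:connectivity} then shows that $Q_n\td{\mS}(A)^G$ is $(|A/G|+q-1)$-connected whenever $A$ is a finite $G$-set with $q$ free $G$-orbits.

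Given any integer $k$, choose a finite $G$-subset $A\subseteq\omega^G$ with at least $k+2$ free $G$-orbits. Every free $G$-orbit decomposes into $|G|/|H|$ free $H$-orbits for every subgroup $H \leq G$, so $A$ has at least $k+2$ free $H$-orbits as well; applying the previous paragraph to each such $H$ shows that $Q_n\td{\mS}(A)^H$ is at least $(k+|A/H|)$-connected. Since $(S^{k+A})^H$ is a sphere of dimension $k+|A/H|$, equivariant cellular approximation (as in \cite[II Proposition 2.7]{tomDieck:transformation_groups}) forces $[S^{k+A}, Q_n\td{\mS}(A)]^G$ to be trivial. Finite $G$-subsets of $\omega^G$ with at least $k+2$ free orbits are cofinal, so passing to the colimit gives $\pi_k^G(Q_n\td{\mS}) = 0$, completing the argument. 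The principal technical obstacle is the third paragraph: rigorously identifying $Q_n\td{\mS}(A)$ in the form $(X_n(A)_+\sm(S^A)^{\sm n})/\Sigma_n$ required by Proposition~\ref{prop:connectivity}. Here the reparameterization of the $\M$-action by $\omega^A$ interacts nontrivially with the box product (compare the discussion surrounding~\eqref{eq:fix and box before G-fix}), and one must carefully verify that the freeness of the $\Sigma_n$-action established on the objects of $Q_n(m_+)$ persists on $X_n(A)$ after coend evaluation.
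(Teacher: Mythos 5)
Your plan is the right one and it coincides with the paper's argument: reduce to vanishing of the equivariant homotopy groups of the subquotients of the word-length filtration via the long exact sequence, identify the subquotient as an orbit space of a free $\Sigma_n$-action on a smash product, and apply Proposition~\ref{prop:connectivity} plus a cofinality argument. However, the step you flag at the end as the ``principal technical obstacle'' is a genuine missing piece of the proof, and the specific concern you voice there is somewhat misdirected. The identification you need is established before prolongation and reparameterization enter the picture, as an isomorphism of $\Gamma$-$\M$-$\Sigma_k$-spaces, via the elementary categorical observation that
\[
  (\Bc\times\mathbf n)^{\boxtimes k}\ \iso\ \Bc^{\boxtimes k}\times \mathbf n^k
\]
compatibly with the $\M$-, $\Gamma$- and $\Sigma_k$-structures. (Here the $\M$-action on $\Bc\times\mathbf n$ affects only the $\Bc$-factor, so a $k$-tuple of disjointly supported objects of $\Bc\times\mathbf n$ is the same thing as a $k$-tuple of disjointly supported objects of $\Bc$ together with an arbitrary function $\{1,\dots,k\}\to\mathbf n$.) Passing to nerves and using the freeness of the $\Sigma_k$-action to commute orbits with geometric realization gives an isomorphism of $\Gamma$-$\M$-spaces between the $k$-th subquotient and $\mu_k/\Sigma_k$, where $\mu_k(n_+)=|\Bc^{\boxtimes k}|_+\sm(n_+)^{\sm k}$.

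Once this isomorphism of $\Gamma$-$\M$-spaces is in hand, prolongation and reparameterization by $\omega^A$ are applied to the entire object and commute with the $\Sigma_k$-orbits (prolongation is an enriched colimit); the result is exactly $(|\Bc^{\boxtimes k}|[\omega^A]_+\sm(S^A)^{\sm k})/\Sigma_k$, which is in the form required by Proposition~\ref{prop:connectivity}. The subtlety you raise about $(\Cc\boxtimes\Dc)[\omega^G]$ versus $\Cc[\omega^G]\boxtimes\Dc[\omega^G]$ (the discussion around \eqref{eq:fix and box before G-fix}) never arises: reparameterization is applied to $\Bc^{\boxtimes k}$ as a single $\M$-category, not to a box-product decomposition of it, and it leaves the underlying $(G\times\Sigma_k)$-simplicial set (in particular the freeness of the $\Sigma_k$-action) unchanged. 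So the gap is real but filling it is much more routine than you anticipate; spelling out the displayed categorical isomorphism completes the argument.
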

\begin{proof}
  Since the word length filtration of $\bK_{\gl}(\mP\Bc)$
  arises from a filtration by monomorphisms of symmetric spectra
  of simplicial sets, the $G$-equivariant homotopy groups of
  $\gamma_{k-1}(\mP\Bc)\td{\mS}$, $\gamma_k(\mP\Bc)\td{\mS}$
  and the $k$-th subquotient participate in a long exact sequence.  
  So it suffices to show that the $k$-th subquotient
  of the word-length filtration of $\bK_{\gl}(\mP\Bc)$
  has trivial $G$-equivariant stable homotopy groups
  for every finite group $G$ and all $k\geq 2$.
  We write $\mu_k$ for the $\Gamma$-$\Sigma_k$-space defined by
  \[ \mu_k(n_+)\ = \ |\Bc^{\boxtimes k}|_+\sm  (n_+)^{\sm k}\ . \]
  The functoriality in $\Gamma$ is only through $n_+$;
  the $\Sigma_k$-action is diagonal, through the permutation actions on
  $\Bc^{\boxtimes k}$ and $(n_+)^{\sm k}$.
  Then 
  \begin{align*}
    |\gamma_k(\mP\Bc)|(n_+) / |\gamma_{k-1}(\mP\Bc)|(n_+)\
    &\iso \ | (\Bc\times\mathbf n)^{\boxtimes k}/\Sigma_k|_+\\
    &\iso \ (| (\Bc\times\mathbf n)^{\boxtimes k}|/\Sigma_k)_+\  \iso \ \mu_k(n_+)/\Sigma_k\ . 
  \end{align*}
  The second homeomorphism exploits the hypothesis that $\Bc$ has no objects with empty support:
  this condition guarantees that the permutation action of $\Sigma_k$ on
  the category $(\Bc\times\mathbf n)^{\boxtimes k}$ is free, and so passing to
  $\Sigma_k$-orbits commutes with taking nerves.
  These homeomorphisms are natural for morphisms in $\Gamma$, so they constitute
  an isomorphism of $\Gamma$-spaces
  \[  |\gamma_k(\mP\Bc)|/|\gamma_{k-1}(\mP\Bc)|\ \iso \ \mu_k/\Sigma_k\ .  \]
  Prolongation of $\Gamma$-spaces is an enriched colimit, so it commutes with colimits
  of $\Gamma$-spaces; in particular, prolongation commutes with quotients,
  and with orbits by group actions.
  So the value of the $k$-subquotient of the word length filtration of
  $\bK_{\gl}(\mP\Bc)$ is isomorphic to
  \[ \left( \mu_k(S^A)/\Sigma_k\right)[\omega^A]\ = \
    (|\Bc^{\boxtimes k}[\omega^A]|_+\sm (S^A)^{\sm k})/\Sigma_k    \ . \]
  Now we fix a finite group $G$, and we let $A$ be a finite $G$-set
  with $q$ free $G$-orbits, for some $q\geq 1$.
  Since the $\Sigma_k$-action on the category $\Bc^{\boxtimes k}$ is free,
  so is the action on its nerve.
  So Proposition \ref{prop:connectivity} applies
  to the nerve of the $(G\times\Sigma_k)$-category $\Bc^{\boxtimes k}[\omega^A]$,
  and shows that the fixed point space 
  \[ \left( \left( \mu_k(S^A)/\Sigma_k\right)[\omega^A]\right)^G \]
  is $(|A/G|+q-1)$-connected.
  
  For every subgroup $H$ of $G$, the underlying $H$-set of $A$ has
  at least $q$ free $H$-orbits. So 
  the fixed point space
  $\left( \left( \mu_k(S^A)/\Sigma_k\right)[\omega^A] \right)^H$
  is $(|A/H|+q-1)$-connected.
  On the other hand, the dimension of the $H$-fixed point sphere $(S^A)^H$ is $|A/H|$.
  So as long as $m$ is smaller than the number of free $G$-orbits of $A$,
  the dimension of the $H$-fixed points of $S^{m+A}$
  is smaller than the connectivity
  of the space $\left( \left( \mu_k(S^A)/\Sigma_k\right)[\omega^A] \right)^H$.
  Since this holds for all subgroups $H$ of $G$, every
  based continuous $G$-map $S^{m+A}\to \left( \mu_k(S^A)/\Sigma_k\right)[\omega^A]$
  is equivariantly null-homotopic, i.e., the set
  \[ [S^{m+ A},\left( \mu_k(S^A)/\Sigma_k\right)[\omega^A]]^G \]
  has only one element.
  The $G$-sets with at least $m+1$ free orbits are cofinal in the
  poset of finite $G$-subsets of a universal $G$-set $\Uc_G$.
  So the homotopy group $\pi_m^{G,\Uc_G}( (\mu_k/\Sigma_k)\td{\mS} )$ is trivial.
  Since $G$ was any finite group, we have shown that for every $k\geq 2$,
  the $k$-th subquotient of the word length filtration of $\bK_{\gl}(\mP\Bc)$
  has trivial equivariant homotopy groups. This concludes the proof.
\end{proof}

\begin{rk}
  A hypothesis of Theorem \ref{thm:global BPQ} is that
  no object of the tame $\M$-category $\Bc$ has empty support.
  The example of the terminal $\M$-category shows that the condition is really necessary,
  and not just an artifact of our proof.
  Indeed, the $\bI$-space $\rho(\ast)$ associated with the terminal $\M$-category
  $\ast$ is constant with values a one-point space,
  so its unreduced suspension spectrum $\Sigma^\infty_+\rho(\ast)$ is isomorphic to the sphere spectrum.
  The free parsummable category $\mP(\ast)$ generated by the terminal $\M$-category is discrete
  and isomorphic to the parsummable category associated to the abelian monoid $\mN$ of
  natural numbers, compare Example \ref{eg:abelian monoid}.
  Its global K-theory spectrum refines the Eilenberg-MacLane spectrum of the integers;
  so the morphism $w:\Sigma^\infty_+\rho(\ast)\to\bK_{\gl}(\mP(\ast))$ is not even
  a non-equivariant stable equivalence.
\end{rk}

The parsummable category $\Fc$ of finite sets was introduced in Example \ref{eg:Fc};
the objects of $\Fc$ are the finite subsets of $\omega=\{0,1,2,3,\dots\}$,
and morphisms in $\Fc$ are all bijections.
The {\em global K-theory of finite sets} is the symmetric spectrum
\[ \bK_{\gl}\Fc \ = \ \gamma(\Fc)\td{\mS} \]
associated with the  parsummable category $\Fc$.
As we shall now explain, our Theorem \ref{thm:global BPQ}
applies to the parsummable category $\Fc$,
and it yields an identification of the global K-theory of finite sets with
the global sphere spectrum.

We write $\Bc$ for the full $\M$-subcategory of $\Fc$ whose objects
are the one-element subsets of $\omega$.
We write $t:\Bc\to\ast$ for the unique functor to the terminal $\M$-category
with one object and its identity morphism.
The $\bI$-space $\rho(\ast)$ is then constant with value the one point space $|\ast|$;
so its suspension spectrum $\Sigma^\infty_+\rho(\ast)$ is uniquely isomorphic
to the symmetric sphere spectrum $\mS$.

\begin{theorem}\label{thm:global F}
  The morphisms of symmetric spectra 
  \[ \mS \ \iso \ \Sigma^\infty_+ \rho(\ast) \ \xla{\Sigma^\infty_+\rho(t)}\
    \Sigma^\infty_+\rho(\Bc)\  \xra{\ w\ } \ \bK_{\gl}\Fc\]
  are global equivalences.
\end{theorem}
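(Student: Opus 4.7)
My plan is to deduce both equivalences from the global Barratt--Priddy--Quillen theorem (Theorem \ref{thm:global BPQ}), applied to the $\M$-category $\Bc$, combined with an isomorphism $\mP\Bc\cong\Fc$ of parsummable categories and a direct contractibility computation for the $\bI$-space $\rho(\Bc)$.

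For the map $w$, I first verify the hypotheses of Theorem \ref{thm:global BPQ}: the $\M$-category $\Bc$ is tame because every singleton $\{i\}$ is supported on $\{i\}\subset\omega$, and no object of $\Bc$ has empty support. The theorem then gives that $w\colon\Sigma^\infty_+\rho(\Bc)\to\bK_{\gl}(\mP\Bc)$ is a global equivalence. Next, the universal property of the free parsummable category extends the inclusion $\Bc\hookrightarrow\Fc$ to a canonical morphism $\varphi\colon\mP\Bc\to\Fc$, and I would check that $\varphi$ is an isomorphism by direct inspection of $\mP\Bc=\coprod_{m\geq 0}(\Bc^{\boxtimes m})/\Sigma_m$: an object of the $m$-th summand is an unordered tuple of distinct singletons, equivalently a finite subset of $\omega$ of cardinality $m$, and the morphisms, $\M$-action, and box-product sum match the corresponding structure on $\Fc$ exactly. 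Under this identification $\bK_{\gl}(\mP\Bc)\cong\bK_{\gl}\Fc$, so the map $w\colon\Sigma^\infty_+\rho(\Bc)\to\bK_{\gl}\Fc$ of the statement is a global equivalence.

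For the map $\Sigma^\infty\rho(t)$, the key observation is that the underlying category of $\Bc$ is chaotic with object set in bijection with $\omega$, so after reparameterization $\Bc[\omega^A]$ is the chaotic category on $\omega^A$ for every finite set $A$. For any finite group $G$ acting on $A$ and any subgroup $H\leq G$, the $H$-fixed subcategory of $\Bc[\omega^A]$ is the chaotic category on $(\omega^A)^H$, which is non-empty for $A\neq\emptyset$ since it contains the constant maps $A\to\omega$. The nerve of a chaotic category on a non-empty set is contractible, so $|\Bc[\omega^A]|^H$ is contractible for every such $H$. Consequently, for every non-empty finite $G$-set $A$ the map $(\Sigma^\infty\rho(t))(A)\colon|\Bc[\omega^A]|_+\sm S^A\to S^A$ is a $G$-weak equivalence. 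Since $G$-sets with a free orbit are cofinal in the poset of finite $G$-subsets of a universal $G$-set, this shows that $\Sigma^\infty\rho(t)$ is a global $\upi_*$-isomorphism, hence a global equivalence by \cite[Proposition 4.5]{hausmann:global_finite}.

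The main obstacle, mild as it is, will be the isomorphism $\mP\Bc\cong\Fc$: although essentially definitional, matching the parsummable structures through the coproduct-and-symmetric-quotient description of $\mP$ requires some bookkeeping. One can sidestep this by instead invoking the universal property of $\Fc$ from Example \ref{eg:Fc} to produce an inverse $\Fc\to\mP\Bc$ sending the generating singleton $\{0\}\in\Fc$ to the image of $\{0\}\in\Bc\subset\mP\Bc$, and then verifying mutual inversion on the single generator. A separate minor point is that at $A=\emptyset$ the map $\rho(t)$ is $\emptyset\to\ast$, so $(\Sigma^\infty\rho(t))(\emptyset)$ is not a weak equivalence; but this is invisible stably, since the empty set is not cofinal in the colimit systems defining $\pi_*^G$.
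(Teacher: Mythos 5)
Your proposal follows essentially the same route as the paper: both use Theorem \ref{thm:global BPQ} together with the identification $\mP\Bc\cong\Fc$ for $w$, and both reduce $\Sigma^\infty\rho(t)$ to the observation that the fixed-point categories $(\Bc[\omega^A])^H$ are chaotic on the non-empty sets $(\omega^A)^H$, hence have contractible nerves. The only minor variation is that the paper asserts a global level equivalence while you (more carefully) note that the level-$\emptyset$ map fails to be an equivalence and instead argue directly for a global $\upi_*$-isomorphism — both conclusions give the desired global equivalence.
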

\begin{proof}
  The inclusion $\Bc \to \Fc$ is a morphism of $\M$-categories,
  so it extends uniquely to a morphism of parsummable categories $\mP\Bc\to \Fc$
  from the free parsummable category generated by $\Bc$.
  This morphism is in fact an isomorphism of parsummable categories, by direct inspection
  -- or as a special case of Theorem \ref{thm:K of free G} (i) below.
  Theorem \ref{thm:global BPQ} thus shows that the morphism
  $w:\Sigma^\infty_+ \rho(\Bc) \to \bK_{\gl}\Fc$ is a global equivalence of symmetric spectra.

  To complete the proof we show that the morphism
  $\Sigma^\infty_+\rho(t):\Sigma^\infty_+\rho(\Bc)\to \mS$
  is even a global level equivalence in the sense of \cite[Definition 2.2]{hausmann:global_finite}.
  We let $G$ be a finite group and $A$ a finite $G$-set.
  The category $(\Bc[\omega^A])^G$ consists of $G$-invariant subsets of $\omega^A$
  with one element, and their isomorphisms. This category is a connected groupoid
  with trivial automorphism groups, so its nerve is contractible.
  Hence the map
  \[ \left(\Sigma^\infty_+\rho(t)(A)\right)^G\ :\
    \left(\Sigma^\infty_+\rho(\Bc)(A)\right)^G \ = \
    |\Bc[\omega^A]|^G_+\sm (S^A)^G\  \to \ (S^A)^G \ = \ (\mS(A))^G  \]
  is a weak equivalence. The morphism  $\Sigma^\infty_+\rho(t)$
  is thus a global level equivalence by the criterion \cite[Lemma.\,2.3]{hausmann:global_finite},
  and hence a global equivalence by \cite[Example 2.10]{hausmann:global_finite}.
\end{proof}
  
\begin{rk}
  Hausmann and Ostermayr \cite{hausmann-ostermayr}
  establish a result of a very similar flavor as our Theorem \ref{thm:global F}.
  They study an orthogonal spectrum $k\Fin$ that also deserves
  to be called the `global K-theory of finite sets'.
  While similar in spirit, the two constructions are different:
  $k\Fin$ is a commutative orthogonal ring spectrum made from configuration spaces of points in spheres
  labeled by orthonormal systems of vectors.
  Hausmann and Ostermayr show in \cite[Corollary 4.2]{hausmann-ostermayr}
  that the unit morphism $\mS \to k\Fin$ is a global equivalence of
  orthogonal spectra.
  So their theorem lives in the full-fledged global homotopy theory
  of orthogonal spectra, i.e., it also has homotopical content for compact Lie groups,
  while our Theorem \ref{thm:global F} only refers to $\Fin$-global homotopy types.  
  The argument of Hausmann and Ostermayr is based on an analysis
  of the cardinality filtration of  $k\Fin$,
  a filtration analogous to our word length filtration.
\end{rk}

\section{Global K-theory of \texorpdfstring{$G$}{G}-sets}
\label{sec:G-sets}

This section is devoted to the global K-theory of finite $G$-sets,
where $G$ is a discrete group, possibly infinite.
The parsummable category $G\Fc$ of finite $G$-sets is,
by definition, the category of $G$-objects in the parsummable category $\Fc$
of finite sets, see Example \ref{eg:G Fc} below.
The global K-theory spectrum $\bK_{\gl}(G\Fc)$ of finite $G$-sets
can be completely described in terms of global classifying spaces of finite groups.
Indeed, by Theorem \ref{thm:split_K(GF)},
the symmetric spectrum $\bK_{\gl}(G\Fc)$ splits,
up to global equivalence, into summands indexed
by the conjugacy classes of finite index subgroup of $G$.
The summand indexed by a finite index subgroup $H$ of $G$
can be identified with the global K-theory of free $W_G H$-sets,
where $W_G H=(N_G H)/H$ is the Weyl group of $H$ in $G$,
see Proposition \ref{prop:H2W}.
An application of our global Barratt-Priddy-Quillen theorem
then identifies the global K-theory of free $W_G H$-sets
with the unreduced suspension spectrum of the global classifying space of $W_G H$,
see Theorem \ref{thm:K of free G}.

For finite groups $G$, the essential mathematical content of Theorem \ref{thm:split_K(GF)}
could also be obtained by combining the identification of the global K-theory of
finite sets (Theorem \ref{thm:global F}) with Corollary \ref{cor:saturated implies global},
the tom Dieck splitting and the Adams isomorphism.
Our approach below, based on the global Barratt-Priddy-Quillen theorem
(Theorem \ref{thm:global BPQ}), does not rely on the tom Dieck splitting or the Adams isomorphism,
and it works for arbitrary discrete groups.

\begin{eg}[(Global K-theory of finite $G$-sets)]\label{eg:G Fc}
  We let $G$ be a discrete group, possibly infinite.
  For every parsummable category $\Cc$,
  the category $G\Cc$ of $G$-objects in $\Cc$ inherits
  a parsummable structure as discussed in Example \ref{eg:objects with action}.
  In essence, the $\M$-action and the sum functor on $G\Cc$ are the given
  structure on underlying objects, with $G$-actions carried along by functoriality.
  The support of a $G$-object coincides with the support of the underlying $\Cc$-object.

  The parsummable category $\Fc$ of finite sets was introduced in Example \ref{eg:Fc};
  so the category $G\Fc$ of $G$-objects in $\Fc$ forms a parsummable category,
  the {\em parsummable category of finite $G$-sets}.
  By definition, $G\Fc$ is the full subcategory of the category of finite $G$-sets,
  with objects those $G$-sets whose underlying set is a subset of $\omega=\{0,1,2,\dots\}$.
  Since $\Fc$ is saturated by Example \ref{eg:F is saturated}, the parsummable category $G\Fc$
  is saturated by Example \ref{eg:GC inherits saturation}.
  We refer to the symmetric spectrum $\bK_{\gl}(G\Fc)$ as the {\em global K-theory of finite $G$-sets}.
\end{eg}

We will now argue that the parsummable category $G\Fc$ decomposes as a box product,
and its global K-theory decompose as a wedge,
both indexed by conjugacy classes of finite index subgroups.
For a subgroup $H$ of $G$ we denote by $(G\Fc)_{(H)}$ the full subcategory of $G\Fc$
whose objects are finite subsets of $\omega$ equipped with a $G$-action such that all isotropy
groups are conjugate to $H$.
Morphisms in $(G\Fc)_{(H)}$ are the $G$-equivariant bijections.
The category $(G\Fc)_{(H)}$ is closed under the $\M$-action and the addition in $G\Fc$,
hence $(G\Fc)_{(H)}$ is a parsummable category in its own right.
If $H$ has infinite index in $G$, then the $G$-orbit of any point with isotropy group $H$
is infinite; so in this case, the category $(G\Fc)_{(H)}$ has only one
object, the empty set. Hence the parsummable category $(G\Fc)_{(H)}$ is only interesting
if $H$ has finite index in $G$.

The box product is the coproduct of parsummable categories,
see Examples \ref{eg:coproduct_parsumcat} and \ref{eg:infinite box}.
So there is a unique morphism of parsummable categories
\begin{equation}\label{eq:box_splitting_GF}
    \boxtimes_{(H)}\  (G\Fc)_{(H)}\ \to \ G\Fc 
\end{equation}
whose restriction to $(G\Fc)_{(H)}$ is the inclusion.

\begin{theorem}\label{thm:split_K(GF)}
Let $G$ be a group.   
As $H$ runs over a set of representatives of the conjugacy classes of finite index subgroups of $G$,
the morphism \eqref{eq:box_splitting_GF} is an isomorphism of parsummable categories,
and the canonical morphism
\[    {\bigvee}_{(H)} \ \bK_{\gl}((G\Fc)_{(H)})\ \xra{\ \simeq\ } \ \bK_{\gl}(G\Fc)\ .   \]
is a global equivalence of symmetric spectra.
\end{theorem}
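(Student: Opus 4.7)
The plan is to prove the categorical statement first, and then deduce the spectrum-level statement as an immediate consequence of Theorem \ref{thm:K infinite additive}.

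For the isomorphism of parsummable categories, the key input is the orbit-type decomposition of a finite $G$-set. Every finite $G$-set $X$ has all isotropy groups of finite index in $G$, since each orbit $G/G_x$ embeds into $X$. So $X$ decomposes canonically as a disjoint union
\[
X \ = \ \coprod_{(H)} X_{(H)}
\]
indexed by conjugacy classes of finite index subgroups of $G$, where $X_{(H)}$ consists of the points whose isotropy group lies in the class $(H)$. If $X$ is an object of $G\Fc$, i.e., a finite subset of $\omega$ equipped with a $G$-action, then each $X_{(H)}$ is a finite $G$-invariant subset of $\omega$ lying in $(G\Fc)_{(H)}$, and the different pieces $X_{(H)}$ are literally disjoint subsets of $\omega$, hence disjointly supported in the sense of parsummable categories. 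Since $X$ is finite, only finitely many $X_{(H)}$ are nonempty, so $(X_{(H)})_{(H)}$ lies in the infinite box product as described in Example \ref{eg:infinite box}. Because $G$-equivariant bijections preserve isotropy, they decompose uniquely as collections of $G$-equivariant bijections of the strata; this defines a functor inverse to \eqref{eq:box_splitting_GF} on morphisms. Compatibility with the $\M$-action and with the addition is then routine, since both operations are `pointwise' under the orbit-type decomposition.

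For the spectrum-level statement, I would apply Theorem \ref{thm:K infinite additive} to the family $\{(G\Fc)_{(H)}\}$ indexed by the conjugacy classes of finite index subgroups of $G$. This yields a global equivalence
\[
{\bigvee}_{(H)}\ \bK_{\gl}((G\Fc)_{(H)})\ \xra{\ \simeq\ } \ \bK_{\gl}(\boxtimes_{(H)}\,(G\Fc)_{(H)})\ .
\]
Composing with the isomorphism $\bK_{\gl}(\boxtimes_{(H)}\,(G\Fc)_{(H)})\iso \bK_{\gl}(G\Fc)$ induced by the first part, and using naturality of the canonical comparison morphism from a wedge into a box-product image, gives the asserted global equivalence.

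The only real obstacle is verifying that the orbit-type decomposition is compatible with all of the structure: one must check that `disjoint as subsets of $\omega$' is precisely the disjoint-support condition in $\Fc$ (which is immediate from Example \ref{eg:Fc as M-category}, where $\supp(P)=P$), that finiteness of $X$ makes the tuple $(X_{(H)})_{(H)}$ almost-everywhere zero, and that the inverse functor commutes strictly with the $\M$-action, the unit object, and the sum. All three checks are direct once the orbit-type splitting is in place, so no serious difficulty is expected.
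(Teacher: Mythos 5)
Your proof is correct and follows the same approach as the paper's: decompose each finite $G$-set into its $(H)$-isotypical summands, observe these are literally disjoint subsets of $\omega$ and hence disjointly supported, conclude that \eqref{eq:box_splitting_GF} is an isomorphism of parsummable categories, and then apply Theorem \ref{thm:K infinite additive} for the spectrum-level conclusion. The extra detail you give (that isotropy groups automatically have finite index, that isomorphisms preserve isotropy type, that $\supp(P)=P$ in $\Fc$) is all correct and simply makes explicit what the paper's terser proof leaves implicit.
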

\begin{proof}
Every finite subset of $\omega$ equipped with a $G$-action
is the disjoint union of its $(H)$-isotypical summands,
i.e., the $G$-invariant subset of those elements whose isotropy group is conjugate to $H$.
Moreover, isomorphisms of $G$-sets must preserve the isotypical decomposition.
So the morphism \eqref{eq:box_splitting_GF} is an isomorphism of parsummable categories.
Additivity of global K-theory (Theorem \ref{thm:K infinite additive}) then proves the
second claim.
\end{proof}

We will now investigate the summands $\bK_{\gl}((G\Fc)_{(H)})$
appearing in the wedge decomposition of Theorem \ref{thm:split_K(GF)} in more detail.
We can handle the extreme case $H=G$ right away:
$(G\Fc)_{(G)}$ is the category of finite subsets of $\omega$ equipped with
the trivial $G$-action, and the $G$-equivariant bijections between these.
So endowing a set with the trivial $G$-action is an isomorphism
of parsummable categories $\Fc\iso (G\Fc)_{(G)}$.
Hence the summand $\bK_{\gl}((G\Fc)_{(G)})$ is isomorphic to $\bK_{\gl}\Fc$,
and thus globally equivalent to the global sphere spectrum, by Theorem \ref{thm:global F}.

Our next step is to look at the other extreme $H=\{e\}$ of the trivial subgroup,
i.e., to study the global K-theory of free $G$-sets.
Since the isotropy subgroup must have finite index in $G$,
we must now restrict to finite groups.
For every group $G$, the nerve of the category of finitely generated
free $G$-sets is equivalent to the free $E_\infty$-space generated
by $B G$, the classifying space of $G$.
So the generalization of the Barratt-Priddy-Quillen theorem
mentioned in the introduction of Section \ref{sec:K of free I}
says that the K-theory of finitely generated free $G$-sets is stably equivalent
to the unreduced suspension spectrum of $B G$.
In Theorem \ref{thm:K of free G} below
we provide a global equivariant generalization of this result
for finite groups.

\begin{defn}
  We let $G$ be a finite group.
  We denote by $\Bc_{\gl}G$ the full subcategory of $G\Fc$
  on those objects for which the $G$-action is free and transitive.  
\end{defn}

The category $\Bc_{\gl}G$ is thus equivalent to the category with a single object
with $G$ as its endomorphisms.
The category $\Bc_{\gl}G$ is invariant under the $\M$-action on $G\Fc$,
and we write
\[ B_{\gl}G \ = \ \rho(\Bc_{\gl}G) \]
for the $\bI$-space associated with the $\M$-category $\Bc_{\gl}G$,
see Construction \ref{con:M2bI}.

The following proposition shows that
$\Bc_{\gl}G$ is an incarnation in the world of $\M$-categories
of the global classifying space of $G$, as defined in \cite[Definition 1.1.27]{schwede:global};
this also justifies the notation.
To properly compare things, we have to recall orthogonal spaces and
explain how to pass from orthogonal spaces to $\bI$-spaces.
We let $\bL$ denote the topological category whose objects are
all finite-dimensional real inner product spaces, and with morphisms
the linear isometric embeddings, topologized as Stiefel manifolds.
An {\em orthogonal space} in the sense of \cite[Definition 1.1.1]{schwede:global}
is a continuous functor from $\bL$ to the category of spaces.
Orthogonal spaces are also called $\mathscr I$-functors,
$\mathscr I$-spaces or $\mathcal I$-spaces by other authors.
The linearization functor
\[ \mR[-]\ : \ \bI \ \to \ \bL \]
takes a finite set $A$ to the free $\mR$-vector space $\mR[A]$
with $A$ as orthonormal basis; injections between finite sets
are $\mR$-linearly extended to linear isometric embeddings.
The {\em underlying $\bI$-space} of an orthogonal space $X$
is simply the composite functor
\[ \bI \ \xra{\ \mR[-]\ } \ \bL \ \xra{\ X \ }\ \bT\ .\]

\begin{prop}
  For every finite group $G$, the $\bI$-space $\rho(\Bc_{\gl}G)$
  is globally equivalent to the underlying $\bI$-space
  of the global classifying space $B_{\gl}G$ as defined in \cite[Definition 1.1.27]{schwede:global}.
\end{prop}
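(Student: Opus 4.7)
The plan is to verify the claim by computing the $K$-equivariant homotopy type of $\rho(\Bc_{\gl}G)$ at every finite group $K$, matching it with the known equivariant homotopy type of Schwede's $B_{\gl}G$, and realizing the identification by a chain of natural $\bI$-space morphisms that is a global equivalence. The first step is to analyze the $K$-fixed groupoid $(\Bc_{\gl}G[\omega^A])^K$ for a finite group $K$ and a finite $K$-set $A$ with a free $K$-orbit. An object of $\Bc_{\gl}G[\omega^A]$ is a subset $T\subset\omega^A$ of cardinality $|G|$ equipped with a free transitive $G$-action; a $K$-fixed object is one for which $T$ is $K$-invariant and the $G$-action commutes with the induced $K$-action, making $T$ a $(K\times G)$-equivariant subset of $\omega^A$ on which $G$ acts freely and transitively. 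Choosing a basepoint in $T$ identifies $T$ with the underlying set of $G$ as a left $G$-set and records the commuting $K$-action as a homomorphism $\alpha\colon K\to G$, well-defined up to $G$-conjugation, and the automorphisms of such an object are precisely the right translations of $G$ by elements of the centralizer $C_G(\alpha(K))$. Since $\omega^A$ is a universal $K$-set for such $A$ (Proposition \ref{prop:universal G-sets}), every conjugacy class of homomorphism $K\to G$ is realized, so one obtains an equivalence of groupoids
\[ (\Bc_{\gl}G[\omega^A])^K \ \simeq \ \coprod_{[\alpha]\in\Hom(K,G)/G} B C_G(\alpha(K))\ . \]

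Next, I would model $B_{\gl}G$ as the semifree orthogonal space $L_{G,\rho_G}$ for the regular representation $\rho_G$, whose underlying $\bI$-space sends $A$ to $L(\rho_G,\mR[A])/G$, and introduce the auxiliary $\bI$-space $A\mapsto L(\rho_G,\mR[\omega^A])/G$ together with two natural comparison maps
\[ \rho(\Bc_{\gl}G)(A)\ =\ |\Bc_{\gl}G[\omega^A]|\ \xra{\ \kappa_A\ } \ L(\rho_G,\mR[\omega^A])/G\ \xla{\ \iota_A\ } \ L(\rho_G,\mR[A])/G\ . \]
Here $\kappa_A$ is the realization of the functor sending $(T,\mu)\in\Bc_{\gl}G[\omega^A]$ to the $G$-orbit of the $G$-equivariant linear isometry $\rho_G\iso \mR[T]\hookrightarrow\mR[\omega^A]$ obtained by linearizing the $G$-labeling of $T$, and $\iota_A$ is induced by postcomposition with the $K$-equivariant linear isometric embedding $\mR[A]\hookrightarrow\mR[\omega^A]$ coming from the injection $a\mapsto\chi_a\in\omega^A$. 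Both target $\bI$-spaces have the same equivariant homotopy groups for every finite group (both yield $\coprod_{[\alpha]}B C_G(\alpha(K))$ once one has passed to the colimit along $A$), and a direct check confirms that $\iota$ is a global equivalence.

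What remains is to verify that $\kappa_A$ is a $K$-weak equivalence for every finite group $K$ and every $K$-set $A$ with a free $K$-orbit. Both $K$-fixed sides decompose over conjugacy classes of homomorphisms $\alpha\colon K\to G$; on the $\alpha$-component, $\kappa_A$ takes the form
\[ B C_G(\alpha(K))\ \longrightarrow\ L^K(\rho_G|_\alpha,\mR[\omega^A])/C_G(\alpha(K))\ , \]
where $L^K(-,-)$ denotes the space of $K$-equivariant linear isometric embeddings and $\rho_G|_\alpha$ is $\rho_G$ with $K$-action pulled back along $\alpha$. Since $\mR[\omega^A]$ is a complete $K$-universe, $L^K(\rho_G|_\alpha,\mR[\omega^A])$ is contractible, and the free action of $C_G(\alpha(K))$ presents the quotient as a classifying space of $C_G(\alpha(K))$. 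The main obstacle will be the careful matching of the component decomposition on both sides under $\kappa_A$ on the level of homomorphisms $K\to G$, which amounts to consistent basepoint bookkeeping when passing between a $(K\times G)$-equivariant subset of $\omega^A$ and the data of a homomorphism $\alpha\colon K\to G$; once that is in place, the remaining verifications reduce to standard equivariant Stiefel-manifold connectivity arguments.
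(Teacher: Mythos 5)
Your fixed-point analysis of $(\Bc_{\gl}G[\omega^A])^K$ and the target-side decomposition over conjugacy classes of homomorphisms $K\to G$ are correct, but the comparison map $\kappa_A$ as you describe it has a genuine gap. An object $T$ of $\Bc_{\gl}G[\omega^A]$ carries only a free transitive $G$-action, not a chosen identification with $G$; to produce a $G$-equivariant linear isometry $\mR[G]\iso\mR[T]\hookrightarrow\mR[\omega^A]$ you must pick a basepoint $t_0\in T$, and changing $t_0$ to $g_0 t_0$ changes that isometry by precomposition with the \emph{right} translation by $g_0$. The $G$-action on $\bL(\mR[G],-)$ that one quotients by to obtain $B_{\gl}G$ is precomposition with the \emph{left} regular action on $\mR[G]$, so the $G$-orbit does not resolve the basepoint ambiguity. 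Even after fixing this convention (the two regular actions are conjugate by inversion, so that much is repairable), a deeper problem remains: you propose to realize a functor from the discrete groupoid $\Bc_{\gl}G[\omega^A]$ into the \emph{space} $\bL(\mR[G],\mR[\omega^A])/G$. Objects go to points, so a morphism $f\colon T\to T'$ (a $G$-equivariant bijection with $T\ne T'$ in general) must go to a \emph{path} between two distinct points of the orbit space; no such path is specified, and choosing one -- say by extending $f$ to a permutation of $\omega^A$ and moving through the orthogonal group -- requires coherence data for higher simplices that your sketch omits. Without that, $\kappa_A$ is not constructed, and the component-wise fixed-point count is a sanity check rather than a substitute for an actual morphism of $\bI$-spaces.

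The standard repair -- and the route the paper takes -- is to lift the comparison to a level where $G$ acts freely, so the ambiguity disappears before the quotient is taken. Introduce the $G$-$\M$-category $\Ec_{\gl}G = E I(G,\omega)$, the chaotic groupoid of injections $G\to\omega$ with $\M$ acting by postcomposition and $G$ by translation on the source; its objects carry the $G$-framing as part of the data. Because the $G$-action on this chaotic groupoid is free, nerve and realization commute with the $G$-quotient and $\rho(\Ec_{\gl}G)/G\iso\rho(\Bc_{\gl}G)$. The paper then shows that for every finite group $K$ and every finite $K$-set $A$ with a free orbit, both $\rho(\Ec_{\gl}G)(A)$ and the space $\bL(\mR[G],\Sym(\mR[A]))$ are universal $(K\times G)$-spaces for the family of graph subgroups, and compares them by projecting off their product before passing to $G$-orbits. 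Your auxiliary target $\bL(\mR[G],\mR[\omega^A])$ plays exactly the same role ($\Sym(\mR[A])$ and $\mR[\omega^A]$ are both complete $K$-universes when $A$ has a free $K$-orbit), and is arguably better adapted to the $\bI$-space formalism; the substantive change you need is to insert $\Ec_{\gl}G$ as the source of the comparison instead of attempting to map directly out of $\Bc_{\gl}G[\omega^A]$.
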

\begin{proof}
  For the course of the proof we write $u(X)=X\circ\mR[-]$
  for the underlying $\bI$-space of an orthogonal space $X$.
  The $G$-action on $\mR[G]$ induces a $G$-action
  on the represented orthogonal space $\bL_{\mR[G]}=\bL(\mR[G],-)$, and the quotient
  \[ B_{\gl}G\ = \ \bL_{\mR[G]}/G \]
  is a global classifying space of $G$ in the sense of \cite[Definition 1.1.27]{schwede:global}.
  We exhibit a chain of three global equivalences of $\bI$-spaces
  \begin{align}\label{eq:chain of I-equivalences}
    \rho(\Bc_{\gl}G)\
    \xla{\ \simeq \ } \ 
      (\rho(\Ec_{\gl}G)&\times u(\bL_{\mR[G]}\circ\Sym))/G\\
    &\xra{\ \simeq\ } \  u(\bL_{\mR[G]}\circ\Sym)/G\ \xla{\ \simeq\ } \
    u(\bL_{\mR[G]})/G\ = \ u(B_{\gl}G)\ .    \nonumber
  \end{align}
  We start by defining the relevant objects that appear in this chain.
  We define a $G$-$\M$-category by
  \[ \Ec_{\gl} \ = \ E I(G,\omega)\ , \]
  the contractible groupoid whose objects are all injections from $G$ to $\omega$.
  The monoidal category $\M$ acts by postcomposition;
  the group $G$ acts by translation on the source of the injections.
  Applying the functor $\rho:\M\cat\to\bI\bT$ yields a 
  $G$-$\bI$-space $\rho(\Ec_{\gl})$.
  Since this $G$-action on the category $\Ec_{\gl}$ is free,
  taking $G$-orbits commutes with the formation of nerves, so
  \begin{align*}
    \left( \rho(\Ec_{\gl}G)(A)\right)/G\
    &= \ | E I(G,\omega^A )| / G\
    \iso \ | ( E I(G,\omega^A )) /G| \ \iso\ | (\Bc_{\gl}G)[\omega^A]|\ = \ \rho(\Bc_{\gl}G)(A)\ .
  \end{align*}
  These isomorphisms are compatible with the structure maps, so they form
  an isomorphism of $\bI$-spaces
  \[ \rho(\Ec_{\gl}G)/G \ \iso \ \rho(\Bc_{\gl}(G))\ . \]

  We write $\Sym^n(V)=V^{\tensor n}/\Sigma_n$ for $n$-th symmetric power of a real inner product space $V$,
  and we write $\Sym(V)=\bigoplus_{n\geq 0}\Sym^n(V)$ for the symmetric algebra.
  As explained in \cite[Proposition 6.3.8]{schwede:global} (or rather its real analog),
  the symmetric algebra $\Sym(V)$ inherits a specific euclidean inner product from $V$,
  such that the canonical algebra isomorphism $\Sym(V)\tensor\Sym(W)\iso \Sym(V\oplus W)$
  becomes an isometry. Moreover, the inner product on $\Sym(V)$ is natural for
  linear isometric embeddings in $V$. So we obtain a $G$-orthogonal space by
  precomposing the $G$-orthogonal space $\bL_{\mR[G]}$ with the symmetric algebra functor,
  i.e.,
  \[ (\bL_{\mR[G]}\circ\Sym)(V)\ = \ \bL(\mR[G],\Sym(V))\ . \]
  This concludes the definition of the $\bI$-spaces
  that occur in the chain \eqref{eq:chain of I-equivalences}.
  
  Now we let $K$ be another finite group, and we let $A$ be a finite $K$-set with a free $K$-orbit.
  We claim that then 
  \[ \rho(\Ec_{\gl}G)(A) \text{\qquad and\qquad}  u(\bL_{\mR[G]}\circ\Sym)(A)\]
  are universal $(K\times G)$-spaces for the family of graph subgroups of $K\times G$.
  In other words, both are cofibrant as $(K\times G)$-spaces, the $G$-actions are free,
  and the fixed point spaces are contractible for the graphs of all
  homomorphisms $\alpha:L\to G$ defined on some subgroup $L$ of $K$.
  On the one hand,  $\rho(\Ec_{\gl}G)(A)$ is $(K\times G)$-cofibrant as the realization
  of a $(K\times G)$-simplicial set, and the $G$-action is free.
  We let $\alpha:L\to G$ be a homomorphism defined on a subgroup of $K$,
  and we let $\Gamma(\alpha)$ be its graph.
  The $\Gamma(\alpha)$-fixed points of $\rho(\Ec_{\gl}G)(A)$ are then given by
  \[ \left( \rho(\Ec_{\gl}G)(A) \right)^{\Gamma(\alpha)} \ = \
    |E I(G,\omega^A) |^{\Gamma(\alpha)} \ = \
    |E ( I(G,\omega^A)^{\Gamma(\alpha)})| \ = \
    |E ( I^L(\alpha^*(G),\omega^A)) | \ .
  \]
  Because $A$ contains a free $L$-orbit, $\omega^A$ is a universal $L$-set.
  So the set $I^L(\alpha^*(G),\omega^A)$ of $L$-equivariant injections from
  $\alpha^*(G)$ to $\omega^A$ is non-empty, and the above fixed point space is contractible.
  This completes the proof that $\rho(\Ec_{\gl}G)(A)$
  is a universal $(K\times G)$-space for the family of graph subgroups.

  On the other hand, the $(K\times G)$-space $u(\bL_{\mR[G]}\circ\Sym)(A)=\bL(\mR[G],\Sym(\mR[A]))$
  is $(K\times G)$-cofibrant by \cite[Proposition 1.1.19]{schwede:global}, and the $G$-action is free.
  The $\Gamma(\alpha)$-fixed points of $u(\bL_{\mR[G]}\circ\Sym)(A)$ are given by
  \[  \bL(\mR[G],\Sym(\mR[A]))^{\Gamma(\alpha)} \ = \  \bL^L(\alpha^*(\mR[G]),\Sym(\mR[A])) \ ,  \]
  the space of $L$-equivariant linear isometric embeddings from $\alpha^*(\mR[G])$
  into the symmetric algebra of $\mR[A]$.
  Since $A$ has a free $L$-orbit, the $L$-action on $A$ is faithful,
  and $\Sym(\mR[A])$ is a complete $L$-universe by \cite[Remark 6.3.22]{schwede:global}.
  So the space $\bL^L(\alpha^*(\mR[G]),\Sym(\mR[A]))$ is contractible by
  \cite[Proposition 1.1.21]{schwede:global}.
  This completes the proof that $u(\bL_{\mR[G]}\circ\Sym)(A)$
  is a universal $(K\times G)$-space for the family of graph subgroups.

  Because $\rho(\Ec_{\gl}G)(A)$ and $u(\bL_{\mR[G]}\circ\Sym)(A)$
  are universal $(K\times G)$-spaces for the same family of subgroups,
  the two projections from $\rho(\Ec_{\gl}G)(A)\times u(\bL_{\mR[G]}\circ\Sym)(A)$
  to each factor are $(K\times G)$-homotopy equivalences.
  Passing to $G$-orbit spaces thus yields two $K$-homotopy equivalences
  \begin{align*}
    \rho(\Bc_{\gl}G)(A)\ \iso \ \rho(\Ec_{\gl}G)(A)/G\
    &\xla{\ \simeq\ } \
      \left( \rho(\Ec_{\gl}G)(A)\times u(\bL_{\mR[G]}\circ\Sym)(A)\right)/G\\
    &\xra{\ \simeq\ } \  u(\bL_{\mR[G]}\circ\Sym)(A)/G \ .
  \end{align*}
  Since this holds for all finite groups $K$ and all finite $K$-sets
  with a free orbit,
  the left and middle morphism in \eqref{eq:chain of I-equivalences} are global
  equivalences of $\bI$-spaces.

  The right morphism $u(\bL_{\mR[G]})/G\to u(\bL_{\mR[G]}\circ\Sym)/G$
  in \eqref{eq:chain of I-equivalences}
  is induced by the natural linear isometric embedding $V\to \Sym(V)$ as
  the linear summand in the symmetric algebra.
  Because $\bL_{\mR[G]}(\Sym(V))/G$ is the colimit, along closed embeddings,
  of $\bL_{\mR[G]}(\bigoplus\Sym^{\leq n}(V))/G$,
  the embedding $B_{\gl}G=\bL_{\mR[G]}/G\to (\bL_{\mR[G]}\circ\Sym))/G$
  is a global equivalence of orthogonal spaces by Proposition 1.1.9 (ix)
  and Theorem 1.1.10 of \cite{schwede:global}.
\end{proof}

The category $\Bc_{\gl}G$ is contained in the full parsummable subcategory  
\[ (G\Fc)_{\free}\ = \ (G\Fc)_{(e)} \]
of $G\Fc$ consisting of the free $G$-sets.
Since the inclusion $\Bc_{\gl}G\to (G\Fc)_{\free}$
is a morphism of $\M$-categories, it freely extends to a morphism of parsummable categories
\[ \iota^\sharp \ : \ \mP(\Bc_{\gl}G)\ \to \ (G\Fc)_{\free} \ .\]
The morphism of symmetric spectra
\[ w \ :\ \Sigma^\infty_+ B_{\gl}G\ = \ \Sigma^\infty_+ \rho(\Bc_{\gl}G)\
  \to\  \bK_{\gl}(\mP(\Bc_{\gl}G)) \]
was defined in \eqref{eq:eq:define_w}.

\begin{theorem}\label{thm:K of free G}
  Let $G$ be a finite group.
  \begin{enumerate}[\em (i)]
  \item The morphism of parsummable categories $\iota^\sharp:\mP(\Bc_{\gl}G)\to (G\Fc)_{\free}$
    is an isomorphism.
  \item The morphism of symmetric spectra
    \[  \bK_{\gl}(\iota^\sharp)\circ w\ : \ \Sigma^\infty_+ B_{\gl}G\ \to \ \bK_{\gl}((G\Fc)_{\free}) \]
    is a global equivalence.
  \end{enumerate}
\end{theorem}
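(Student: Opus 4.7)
\medskip

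\noindent\textbf{Proof plan.}
For part (i), the key observation is that a finite free $G$-set decomposes uniquely into its $G$-orbits, each of which is a free transitive $G$-set. First I would use the universal property of the free parsummable category $\mP(\Bc_{\gl}G)$ (Example \ref{eg:free I}) together with the coproduct description of Example \ref{eg:coproduct Icat} and its infinite extension in Example \ref{eg:infinite box} to describe $\mP(\Bc_{\gl}G)$ as an explicit disjoint union $\coprod_{m\geq 0}((\Bc_{\gl}G)^{\boxtimes m})/\Sigma_m$. Objects on the source side are unordered tuples $[O_1,\dots,O_m]$ of pairwise disjointly-supported free transitive $G$-subsets of $\omega$, and the morphism $\iota^\sharp$ sends such a tuple to the free $G$-subset $O_1\cup\dots\cup O_m$ of $\omega$. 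That $\iota^\sharp$ is a bijection on objects is then just the statement that every finite free $G$-subset $T$ of $\omega$ has a unique decomposition into $G$-orbits, each of which is a free transitive $G$-subset of $\omega$. On morphisms, any $G$-equivariant bijection between two free $G$-sets must permute orbits and restrict to $G$-equivariant bijections on each orbit; this is exactly what a morphism in $(\Bc_{\gl}G)^{\boxtimes m}/\Sigma_m$ encodes, so $\iota^\sharp$ is bijective on morphisms as well. Finally, $\iota^\sharp$ is already a morphism of parsummable categories by construction, so this bijection of categories is an isomorphism of parsummable categories.

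For part (ii), the plan is to apply the global Barratt-Priddy-Quillen theorem (Theorem \ref{thm:global BPQ}) to the $\M$-category $\Bc_{\gl}G$. To invoke that result I need to check that $\Bc_{\gl}G$ is tame and has no objects with empty support. Both conditions are immediate: every object of $\Bc_{\gl}G$ is a finite $G$-invariant subset of $\omega$ of cardinality $|G|\geq 1$, and the support of such an object is the set itself (as in Example \ref{eg:Fc as M-category}), which is therefore finite and non-empty. Theorem \ref{thm:global BPQ} then yields that
\[
 w\ :\ \Sigma^\infty_+\rho(\Bc_{\gl}G) \ \to \ \bK_{\gl}(\mP(\Bc_{\gl}G))
\]
is a global equivalence of symmetric spectra.

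It remains to combine the two parts. By part (i), the morphism $\iota^\sharp:\mP(\Bc_{\gl}G)\to (G\Fc)_{\free}$ is an isomorphism of parsummable categories; hence $\bK_{\gl}(\iota^\sharp):\bK_{\gl}(\mP(\Bc_{\gl}G))\to \bK_{\gl}((G\Fc)_{\free})$ is an isomorphism of symmetric spectra, and in particular a global equivalence. Composing with the global equivalence $w$ from the Barratt-Priddy-Quillen step, and using that global equivalences are closed under composition, gives the desired result.

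The only non-formal step is part (i), whose content is the orbit decomposition of a free $G$-set; this is the piece that requires a short but honest combinatorial verification. I do not anticipate a serious obstacle, since once the coproduct description of $\mP(\Bc_{\gl}G)$ is in place, the bijection on objects and morphisms is immediate from uniqueness of the orbit decomposition.
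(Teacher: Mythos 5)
Your proposal is correct and follows essentially the same route as the paper: part (i) via the orbit decomposition of a free $G$-set (with the paper being a bit more explicit that the automorphism groups on both sides are wreath products $\Sigma_m\wr G$, whereas you phrase the morphism comparison more informally), and part (ii) by combining the global Barratt--Priddy--Quillen theorem for $\Bc_{\gl}G$ with the isomorphism from part (i). Your added verification that $\Bc_{\gl}G$ is tame with no objects of empty support is a sensible explicit check of a hypothesis the paper leaves implicit.
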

\begin{proof}
  (i) 
  The support of every object of $\Bc_{\gl}G$ is non-empty, so
  the $\Sigma_m$-action on $(\Bc_{\gl}G)^{\boxtimes m}$ is free.
  Hence the objects of $(\Bc_{\gl}G)^{\boxtimes m}/\Sigma_m$
  are $\Sigma_m$-equivalence classes of objects in $(\Bc_{\gl}G)^{\boxtimes m}$,
  i.e., unordered $m$-tuples of pairwise disjoint subsets of $\omega$,
  each equipped with a free and transitive $G$-action.
  The functor $\iota^\sharp$ sends such an equivalence class $(A_1,\dots,A_m)\cdot\Sigma_m$
  to the set $A_1\cup\dots\cup A_n$, endowed with the induced free $G$-action.
  Every finite free $G$-set is the disjoint union of its $G$-orbits,
  so the functor $i^\sharp$ is bijective on objects.
  
  The source and target of the functor $\iota^\sharp$ are groupoids, so it remains
  to show that $\iota^\sharp$ is an isomorphism of automorphism groups.
  Since the $\Sigma_m$-action on $(\Bc_{\gl}G)^{\boxtimes m}$ is free,
  endomorphisms of $(A_1,\dots,A_m)\cdot\Sigma_m$ in the orbit category
  $(\Bc_{\gl}G)^{\boxtimes m}/\Sigma_m$ are pairs $(\sigma,f)$ consisting of
  a permutation $\sigma\in\Sigma_m$ and a morphism $f:(A_1,\dots,A_m)\to
  (A_{\sigma^{-1}(a)},\dots,A_{\sigma^{-1}(m)})$ in the category
  $(\Bc_{\gl}G)^{\boxtimes m}$.
  Morphisms between any two objects of $\Bc_{\gl}G$ identify with elements of the  group $G$,
  and composition works out in such a way that the automorphism group
  of $(A_1,\dots,A_m)\cdot\Sigma_m$ in   $(\Bc_{\gl}G)^{\boxtimes m}/\Sigma_m$
  is isomorphic to the wreath product $\Sigma_m\wr G$.
  Hence the functor  $\iota^\sharp$ is also fully faithful, and thus an isomorphism of categories.
  This concludes the proof of claim (i).
  
  The morphism of symmetric spectra
  $\bK_{\gl}(\iota^\sharp):\bK_{\gl}(\mP(\Bc_{\gl}B))\to \bK_{\gl}((G\Fc)_{\free})$
  is an isomorphism by part (i).
  The morphism
  $w: \Sigma^\infty_+ B_{\gl}G\to \bK_{\gl}(\mP(\Bc_{\gl}B))$
  is a global equivalence of symmetric spectra by Theorem \ref{thm:global BPQ}.
  Together, this proves claim (ii).
\end{proof}

Now we return to the more general situation of a finite index subgroup $H$
of a group $G$ (which can be infinite).
We will now argue that the parsummable category $(G\Fc)_{(H)}$
of finite $G$-sets with $H$-isotropy is globally equivalent
to the parsummable category  $((W_G H)\Fc)_{\free}$
of free $W_G H$-sets, where $W_G H=(N_G H)/H$ is the Weyl group of $H$.
Since $H$ has finite index in $G$, the Weyl group $W_G H$ is finite,
so Theorem \ref{thm:K of free G} lets us identify the global K-theory
of free $W_G H$-sets with the suspension spectrum of $B_{\gl}(W_G H)$.

\begin{con}
  We let $H$ be a finite index subgroup of a group $G$.
  We define the `$H$-fixed point' functor
  \begin{equation}\label{eq:H-fix}
  (-)^H\ : \ (G\Fc)_{(H)}  \ \to \ ((W_G H)\Fc)_{\free} \ .   
  \end{equation}
  Objects of $(G\Fc)_{(H)}$ are finite subset $A$ of $\omega$ equipped with
  a $G$-action with isotropy groups conjugate to $H$.
  The functor takes such a $G$-set $A$ to the $H$-fixed set $A^H$,
  which is a finite subset of $\omega$ equipped with a free action of the Weyl group $W_G H$.
  On morphisms, the functor restricts a $G$-equivariant map $f:A\to B$
  to the $H$-fixed points $f^H:A^H\to B^H$.
  The functor \eqref{eq:H-fix} is clearly a morphism of parsummable categories.
\end{con}

\begin{prop}\label{prop:H2W}
  Let $H$ be a finite index subgroup of a group $G$.
  The $H$-fixed point functor \eqref{eq:H-fix} is a global equivalence of parsummable categories.
  So the induced morphism of symmetric spectra
  \[ \bK_{\gl}((-)^H)\ : \ \bK_{\gl}((G\Fc)_{(H)})  \ \to \ \bK_{\gl}(((W_G H)\Fc)_{\free}) \]
  is a global equivalence.
\end{prop}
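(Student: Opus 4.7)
The second assertion follows from the first via Theorem~\ref{thm:equivalence2equivalence}, so I focus on showing that $(-)^H$ is a global equivalence of the underlying $\M$-categories. I plan to prove the stronger statement that for every finite group $K$, the induced functor
\[ F^K((-)^H)\colon F^K((G\Fc)_{(H)}) \longrightarrow F^K(((W_G H)\Fc)_{\free}) \]
is an equivalence of categories.

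First, I will unpack the $K$-fixed categories. Because the $\M$-action on $G\Fc$ and on its parsummable subcategories is defined pointwise (Example~\ref{eg:objects with action}), the category $F^K((G\Fc)_{(H)})$ identifies with the category whose objects are finite $K$-invariant subsets $S\subset\omega^K$ equipped with a $G$-action commuting with the $K$-action on $\omega^K$ and having all point isotropy conjugate to $H$, and whose morphisms are the $(G\times K)$-equivariant bijections. Analogously, $F^K(((W_G H)\Fc)_{\free})$ identifies with the category of finite $K$-invariant subsets of $\omega^K$ equipped with a commuting \emph{free} $W_G H$-action. Under these identifications, $F^K((-)^H)$ is simply the functor $S\mapsto S^H$.

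Next, I will verify full faithfulness. Since every point of such an $S$ has $G$-isotropy conjugate to $H$, every $G$-orbit in $S$ meets $S^H$; hence a $(G\times K)$-equivariant bijection with source $S$ is determined by its restriction to $S^H$, which gives injectivity of $F^K((-)^H)$ on morphism sets. For surjectivity, given a $(W_G H\times K)$-equivariant bijection $\bar f\colon S^H\to T^H$, I extend it to $S$ by the formula $f(g\cdot a) := g\cdot\bar f(a)$ for $a\in S^H$ and $g\in G$; well-definedness is the classical check that if $g_1 a_1 = g_2 a_2$ with $a_i\in S^H$, then $g_2^{-1}g_1\in N_G H$ and its image $\overline{g_2^{-1}g_1}\in W_G H$ satisfies $a_2 = \overline{g_2^{-1}g_1}\cdot a_1$, so that $W_G H$-equivariance of $\bar f$ closes the argument.

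Then, I will verify essential surjectivity. Given $T$ in the target, I form the abstract $(G\times K)$-set
\[ A \ := \ G\times_{N_G H} T \ , \]
where $N_G H$ acts on $T$ through the quotient $N_G H\twoheadrightarrow W_G H$ and on $G$ by right translation, while $G$ acts on $A$ by left translation on the first factor and $K$ through its action on $T$. Since $H$ has finite index in $G$, a direct computation shows that $A$ has all $G$-isotropy conjugate to $H$ and that the map $t\mapsto [e,t]$ is a $(W_G H\times K)$-equivariant bijection $T\to A^H$. Since $\omega^K$ is a universal $K$-set by Proposition~\ref{prop:universal G-sets}, there exists a $K$-equivariant injection $j\colon A\hookrightarrow\omega^K$; transporting the $G$-action along $j$ promotes $j(A)$ to an object of the source whose image under $F^K((-)^H)$ is $j(A^H)$, itself $(W_G H\times K)$-equivariantly isomorphic to $T$ via $j|_{A^H}$.

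The main obstacle is the simultaneous bookkeeping of two commuting actions in the presence of the ambient universal $K$-set $\omega^K$; once the $K$-fixed categories are correctly identified, the proof reduces to the classical equivalence between finite $G$-sets with $(H)$-isotropy and finite free $W_G H$-sets, enriched by a commuting $K$-action. The only ingredient beyond the classical case is the universality of $\omega^K$ as a $K$-set, which allows the abstractly constructed $(G\times K)$-set $G\times_{N_G H}T$ to be realized as a subset of $\omega^K$.
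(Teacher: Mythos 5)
Your proof is correct and follows essentially the same strategy as the paper: identify the $K$-fixed categories as finite $K$-invariant subsets of $\omega^K$ carrying a commuting $G$- (resp.\ $W_GH$-) action, and then show that the $H$-fixed point functor is an equivalence. The paper's proof ends with the bare assertion that this functor is an equivalence of categories; your write-up fills in exactly that gap — full faithfulness via the observation that $S=G\cdot S^H$ together with the classical well-definedness check, and essential surjectivity via the induced $(G\times K)$-set $G\times_{N_GH}T$ realized inside $\omega^K$ using universality. The only point worth flagging explicitly (you use it implicitly) is that for $a\in S^H$, the $G$-isotropy of $a$ is exactly $H$ rather than a proper overgroup: this uses that $H$ has \emph{finite} index, so $gHg^{-1}\supseteq H$ forces equality.
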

\begin{proof}
  If $K$ is another finite group, then objects of $F^K((G\Fc)_{(H)})$
  are finite $K$-invariant subsets of $\omega^K$, equipped with a commuting $G$-action
  with $(H)$-isotropy; morphisms are $(K\times G)$-equivariant bijections.
  Objects of $F^K(((W_G H)\Fc)_{\free})$ are finite $K$-invariant subsets of $\omega^K$,
  equipped with a commuting free $W_G H$-action; morphisms are $(K\times W_G H)$-equivariant bijections.
  The functor
  \[ F^K((-)^H)\ :\ F^K((G\Fc)_{(H)})\ \to\  F^K(((W_G H)\Fc)_{\free}) \]
  takes $H$-fixed points, and it is an equivalence of categories.
  So $(-)^H$ is a global equivalence of parsummable categories.
  The induced functor of global K-theory spectra is then a global equivalence by
  Theorem \ref{thm:equivalence2equivalence}.
\end{proof}

Combining the splitting of Theorem \ref{thm:split_K(GF)},
the global equivalence of Proposition \ref{prop:H2W}
and the global equivalence of Theorem \ref{thm:K of free G}
yields the following corollary.

\begin{cor}\label{cor:K(GF) final}
  For every group $G$, the global K-theory of finite $G$-sets $\bK_{\gl}(G\Fc)$
  is globally equivalent to the wedge, indexed over conjugacy classes of
  finite index subgroups $H$ of $G$, of the symmetric spectra $\Sigma^\infty_+ B_{\gl}(W_G H)$,
  where $W_G H=(N_G H)/H$ is the Weyl group of $H$.
\end{cor}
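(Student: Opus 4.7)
The plan is to assemble the corollary by chaining together the three main results that have just been established, being careful that each step gives a global equivalence and that the constructions are compatible with taking wedges.

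First, I would invoke Theorem \ref{thm:split_K(GF)} to produce a global equivalence
\[
{\bigvee}_{(H)} \, \bK_{\gl}((G\Fc)_{(H)}) \ \xra{\ \simeq\ } \ \bK_{\gl}(G\Fc),
\]
where $(H)$ runs over the conjugacy classes of finite index subgroups of $G$; the summands indexed by conjugacy classes of infinite index subgroups contribute only the terminal parsummable category (whose only object is $\emptyset$), hence a trivial K-theory spectrum, and so may be discarded without changing the global homotopy type. This step converts the problem into an identification of the individual summands $\bK_{\gl}((G\Fc)_{(H)})$.

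Next, for each finite index subgroup $H\leq G$, I would apply Proposition \ref{prop:H2W} to obtain a global equivalence of symmetric spectra
\[
\bK_{\gl}((-)^H) \ : \ \bK_{\gl}((G\Fc)_{(H)}) \ \xra{\ \simeq \ }\ \bK_{\gl}(((W_G H)\Fc)_{\free}),
\]
where $W_G H = (N_G H)/H$ is the (necessarily finite) Weyl group. Then I would use Theorem \ref{thm:K of free G}, applied to the finite group $W_G H$, which supplies a global equivalence
\[
\Sigma^\infty_+ \rho(B_{\gl}(W_G H)) \ \xra{\ \simeq\ } \ \bK_{\gl}( (W_G H)\Fc)_{\free}) \ ,
\]
where $\rho(B_{\gl}(W_G H))$ is the $\bI$-space model of the global classifying space of $W_G H$. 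Composing these two global equivalences and taking the wedge over $(H)$ yields the claimed identification.

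The only step that requires any genuine verification is that forming a wedge preserves global equivalences, so that the wedge of the pointwise global equivalences assembled above is itself a global equivalence; this is standard (e.g.\ by \cite[Proposition 4.6]{hausmann:global_finite}, already invoked in the proof of Theorem \ref{thm:K_gl additive}, together with Theorem \ref{thm:K infinite additive} for the infinite case). There is no real obstacle here beyond this bookkeeping: the corollary is essentially a statement of the form ``apply Theorem \ref{thm:split_K(GF)}, then Proposition \ref{prop:H2W}, then Theorem \ref{thm:K of free G} summand by summand.''
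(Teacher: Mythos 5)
Your proposal is correct and matches the paper's argument exactly: the corollary is stated in the paper as an immediate consequence of chaining Theorem \ref{thm:split_K(GF)}, Proposition \ref{prop:H2W}, and Theorem \ref{thm:K of free G}, precisely as you describe. (One small remark: \cite[Proposition 4.6]{hausmann:global_finite} is about the coproduct-to-product comparison, not directly about wedges preserving global equivalences; the fact you actually need is that a wedge of global equivalences between levelwise well-pointed spectra arising from simplicial sets is again a global equivalence, which is standard but uses a slightly different justification.)
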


Proposition \ref{prop:H2W} implies that for every finite index subgroup
$H$ of a group $G$, the morphism
\[ \pi_0(\bK_{\gl}((-)^H)) \ : \ \upi_0(\bK_{\gl}((G\Fc)_{(H)}))  \ \to \
  \upi_0(\bK_{\gl}(((W_G H)\Fc)_{\free})) \]
is an isomorphism of global functors.
This reduces the calculation of the Swan K-groups of $G$-sets with $H$-isotropy
to the special case of free actions of finite groups.
In this special case, the next theorem gives a purely algebraic description of this global functor.
\medskip

Now we exhibit an isomorphism, for finite groups $G$,
between the homotopy group global functor
$\upi_0(\bK_{\gl}((G\Fc)_{\free}))$ and the free global functor $\bA_G$ represented by $G$,
introduced in Example \ref{eg:free global functor}.
One line of approach would be to exploit the global equivalence
of Theorem \ref{thm:K of free G} between $\bK_{\gl}((G\Fc)_{\free})$
and the unreduced suspension spectrum of the global classifying space of $G$,
and to quote the more general calculation \cite[Proposition 4.2.5]{schwede:global}
of the homotopy group global functor $\upi_0(\Sigma^\infty_+ B_{\gl}G)$
for compact Lie groups $G$.
However, a complete argument would require us to translate the statements
of \cite{schwede:global} from the world of orthogonal spaces and orthogonal spectra 
into statements in the world of $\bI$-spaces and symmetric spectra used here,
using Hausmann's equivalence \cite[Theorem 5.3]{hausmann:global_finite}.
While this can certainly be done,
we opt for a more direct approach, using the results of Section \ref{sec:Swan}.
  
We define a specific equivariant homotopy class $u_G\in \pi_0^G(\bK_{\gl}((G\Fc)_{\free}))$.
For $g\in G$, we let 
\[ \chi_g\ : \ G \ \to \omega \]
by the indicator function, i.e,
\[ \chi_g(h)\ = \
  \begin{cases}
    1 & \text{ if $g=h$, and}\\
    0 & \text{ if $g\neq h$.}
  \end{cases}
\]
We endow the set of all indicator functions
\[ I\ = \ \{\chi_g\colon g\in G\} \ \subset \ \omega^G\]
with a free and transitive $G$-action by $\gamma\cdot \chi_g=\chi_{\gamma g}$.
This makes $I$ an object of the category $(G\Fc)_{\free}[\omega^G]=(G\Fc[\omega^G])_{\free}$.
Moreover, $I$ is also invariant under the $G$-action on 
$(G\Fc)_{\free}[\omega^G]$ induced by the $G$-action on $\omega^G$.
So $I$ is in fact an object of the  $G$-fixed category
\[ F^G((G\Fc)_{\free})\ = \ \left( (G\Fc)_{\free}[\omega^G]\right)^G\ .\]
We let $u_G\in\pi_0^G(\bK_{\gl}\Fc[G])$ be the image
of the class $[I]$ under the homomorphism
\[ \beta(G) \ : \ \pi_0(F^G ((G\Fc)_{\free}))\ \to \ \pi_0^G(\bK_{\gl}((G\Fc)_{\free})) \]
defined in \eqref{eq:F^G_to_K}. This means concretely that $u_G$ is the
homotopy class represented by the based $G$-map defined as the composite
\[ S^G \ \xra{I\sm -} \ |\gamma((G\Fc)_{\free})[\omega^G]|\sm S^G\  \xra{\text{assembly}} 
|\gamma((G\Fc)_{\free})[\omega^G]|(S^G)\ = \ (\bK_{\gl}((G\Fc)_{\free}))(G) \ . \]
As we explained in Example \ref{eg:free global functor},
the global functor $\bA_G$ represents evaluation at $G$,
with the universal element being the class of the $(G\times G)$-action
on $G$ by two-sided translation, for which we write $_G G_G$.

\begin{theorem}\label{thm:A_G and K GF}
For every finite group $G$, the morphism of global functors
\[ u \ : \ \bA_G \ \to \  \upi_0(\bK_{\gl}((G\Fc)_{\free})) \]
that sends the class $[_G G_G]$ in $\bA_G(G)$ to the class $u_G$
is an isomorphism of global functors.
\end{theorem}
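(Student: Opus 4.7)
The plan is to identify the pre-global functor $\upi_0((G\Fc)_{\free})$ with the representable pre-global functor $\bA^+_G$ from Example \ref{eg:free global functor}, and then deduce the theorem by group completion via Theorem \ref{thm:upi_0 tilde beta}.

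First, for every finite group $K$ I would construct a bijection
\[
 \Phi_K \ : \ \bA^+_G(K) \ = \ \bA^+(G, K) \ \xra{\ \iso\ } \ \pi_0(F^K((G\Fc)_{\free})).
\]
As unwound in the proof of Proposition \ref{prop:H2W}, objects of $F^K((G\Fc)_{\free})$ are finite $K$-invariant subsets of $\omega^K$ equipped with a commuting free $G$-action, with morphisms the $(K\times G)$-equivariant bijections. Reading a commuting free left $G$-action as a free right $G$-action, these are exactly the finite $K$-$G$-bisets $S$ (with $G$ acting freely on the right) realized as $K$-invariant subsets of $\omega^K$. Since $\omega^K$ is a universal $K$-set (Proposition \ref{prop:universal G-sets}), every such biset admits such a realization, and any two realizations are $(K\times G)$-equivariantly isomorphic, which yields $\Phi_K$. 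Both sides carry their abelian monoid structure from disjoint union, so $\Phi_K$ is additive.

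Next I would show that the collection $\Phi=\{\Phi_K\}$ is a morphism of pre-global functors, i.e., for every $L$-$K$-biset $T$ with free right $K$-action, the operation $\upi_0((G\Fc)_{\free})[T]$ corresponds under $\Phi$ to balanced product $T \times_K -$. Unpacking formula \eqref{eq:define_td(S)}: choosing an $L$-equivariant injection $\psi \colon T \times_K \omega^K \to \omega^L$, the object $\sum_{tK \in T/K} \psi^t_*(x)$ realizes in $\omega^L$ precisely the $(L\times G)$-set $T \times_K S$, where $S$ is the biset underlying $x$. Since every morphism in the effective Burnside category is a sum of compositions of restrictions and transfers (Remark \ref{rk:res and tr}), it suffices to verify this naturality for those two basic operations, which can be checked by direct inspection from the definitions.

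With $\Phi : \bA^+_G \iso \upi_0((G\Fc)_{\free})$ in hand, the remaining step is to group-complete and match with the definition of $u$. Theorem \ref{thm:upi_0 tilde beta} provides a natural isomorphism $\tilde\beta \colon \bK((G\Fc)_{\free}) \iso \upi_0(\bK_{\gl}((G\Fc)_{\free}))$ between the group completion of $\upi_0((G\Fc)_{\free})$ and the equivariant homotopy group global functor. Group completing $\Phi$ and composing with $\tilde\beta$ yields an isomorphism $\bA_G \iso \upi_0(\bK_{\gl}((G\Fc)_{\free}))$ of global functors. Direct inspection shows $\Phi_G[_G G_G] = [I]$: the indicator functions $\chi_g$ realize the regular $G$-biset $_G G_G$ inside $\omega^G$, so the composite sends $[_G G_G]$ to $\tilde\beta(G)(\beta(G)[I]) = u_G$. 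By the universal property of $\bA_G$ in the category of global functors, this composite coincides with $u$, so $u$ is an isomorphism.

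The main obstacle I anticipate is the naturality verification in the second step, particularly for transfer-type operations. The sum-over-$K$-orbit-representatives in \eqref{eq:define_td(S)} needs to be matched carefully with the coset-decomposition underlying $_K K_H \times_H -$, keeping track of the $L$- and $G$-actions on the images $\psi^t(x)$ as $t$ ranges over orbit representatives; the $L$-equivariance of $\psi$ is what causes the $L$-action on the disjoint union to correctly twist into the combined $L$-action on $T \times_K S$.
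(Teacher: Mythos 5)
Your proposal is correct and follows essentially the same strategy as the paper: identify the pre-global functor $\upi_0((G\Fc)_{\free})$ with the representable $\bA^+_G$ (you construct $\Phi_K$, the paper constructs its inverse $v(K)$), appeal to Theorem \ref{thm:upi_0 tilde beta}/\ref{thm:upi_0 beta} for the group-completion comparison, match $[_G G_G]$ with $[I]$, and conclude via the universal property of $\bA_G$. The paper, like you, omits the detailed verification that the identification respects transfers, restrictions and additivity, so your flagged "main obstacle" is precisely the part the authors also treat as routine.
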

\begin{proof}
  We let $K$ be another finite group.
  The objects of the category $F^K((G\Fc)_{\free})=((G\Fc)_{\free}[\omega^K])^K$
  are finite $K$-invariant subsets of the universal $K$-set $\omega^K$
  that are equipped with a commuting free $G$-action.
  Sending such an object to its isomorphism class induces a bijection
  \[ v(K)\ : \ \pi_0(F^K((G\Fc)_{\free})) \ \xra{\ \iso \ }\ \bA^+_G(K)\ .\]
  We omit the straightforward (but somewhat tedious) verification 
  that these maps are additive, and compatible with transfers and restrictions
  as the group $K$ varies.
  So the maps $v(K)$ form an isomorphism of pre-global functors
  $v:\upi_0( (G\Fc)_{\free})\iso \bA^+_G$.
  Moreover, the map  $v(G):\pi_0(F^G(G\Fc)_{\free})\iso \bA^+_G(G)$
  sends the class $[I]$ to the universal class $[_G G_G]$.
  So the pair $(\upi_0( (G\Fc)_{\free}),[I])$
  also represents evaluation at $G$ on the category of pre-global functors.
  The relation
  \[ u(i(v[I]))\ = \ u(i[_G G_G])\ = \  \beta(G)[I] \]
  in $\pi_0^G(\bK_{\gl}((G\Fc)_{\free}))$
  thus shows that the composite morphism of global functors
  \[ \upi_0( (G\Fc)_{\free})\ \xra[\iso]{\ v\ }\  \bA^+_G\ \xra{\ i \ }\ \bA_G
    \ \xra{\ u \ }\ \upi_0(\bK_{\gl}((G\Fc)_{\free}))\]
  coincides with $\beta: \upi_0( (G\Fc)_{\free})\to \upi_0(\bK_{\gl}((G\Fc)_{\free}))$.
  Since $\beta$ is a group completion of pre-global functors by Theorem \ref{thm:upi_0 beta}
  and $v$ is an isomorphism,
  the composite $u\circ i:\bA_G^+\to \upi_0(\bK_{\gl}((G\Fc)_{\free}))$
  is also a group completion of pre-global functors.
  So the extension $u:\bA_G\to \upi_0(\bK_{\gl}((G\Fc)_{\free}))$
  is an isomorphism of global functors.
\end{proof}

\begin{eg}
  The same method of proof as in Theorem \ref{thm:A_G and K GF}
  can also be used to give an algebraic description of
  the homotopy group global functor $\upi_0(\bK_{\gl}(G\Fc))$
  of the global K-theory of finite $G$-sets, for every group $G$ (possibly infinite).
  We sketch the argument, leaving some details to interested readers.

  We introduce a global functor $\bB_G$.  
  For a finite group $K$, we let $\bB_G(K)$ be the
  group completion (Grothendieck group) of the  abelian monoid,
  under disjoint union, of isomorphism classes of finite $(K\times G)$-sets.
  A restriction homomorphism $\alpha^*:\bB_G(H)\to\bB_G(K)$
  along a homomorphism $\alpha:K\to H$ between finite groups
  is induced by restriction of the $H$-action along $\alpha$.
  A transfer homomorphism $\tr_L^K:\bB_G(L)\to\bB_G(K)$
  for a subgroup inclusion $L\leq K$ is given by inducing up the $L$-action to a $K$-action.
  One more time we omit the straightforward verification of the fact
  that this indeed defines a global functor. We now sketch the construction
  of an isomorphism of global functors
  \[  \upi_0(\bK_{\gl}(G\Fc))\ \iso \ \bB_G\ .\]
  Theorem \ref{thm:upi_0 tilde beta} provides an isomorphism
  between the global functor $\upi_0(\bK_{\gl}(G\Fc))$
  and the global functor $\bK(G\Fc)$, the group completion of the pre-global functor $\upi_0(G\Fc)$.
  For a finite group $K$, the abelian monoid $\pi_0(F^K(G\Fc))$
  consists of isomorphism classes of finite $K$-invariant subsets of $\omega^K$
  endowed with a commuting $G$-action.
  So $\pi_0(F^K(G\Fc))$ is isomorphic to the monoid of isomorphism classes
  of finite $(K\times G)$-sets, and $\bB_G(K)$ is isomorphic to $\pi_0^K(\bK_{\gl}(G\Fc))$.
  Modulo the verification that the transfer and restriction homomorphisms
  in $\bB_G$ and $\pi_0(F^K(G\Fc))$ correspond under the previous identification,
  this constructs the desired isomorphism between
  $\upi_0(\bK_{\gl}(G\Fc))$ and $\bB_G$.
\end{eg}

\section{Global K-theory of rings}\label{sec:K(R)}

In this section we introduce and discuss the
global algebraic K-theory spectrum of a ring $R$,
always assumed to be associative and unital, but not necessarily commutative.
We want to advertise the global algebraic K-theory spectrum
as a compact and very rigid way of packaging the information
that is contained in the `representation K-theory' of $R$, i.e.,
in the various K-theory spectra of $R G$-lattices, as $G$ varies through all
finite groups. 

The construction is via a certain saturated parsummable category $\Pc(R)$
of finitely generated projective $R$-modules, introduced in Construction \ref{con:P(R)}.
The underlying non-equivariant homotopy type of $\bK_{\gl}R=\bK_{\gl}\Pc(R)$
is that of the direct sum K-theory 
of finitely generated projective $R$-modules, i.e., $\bK_{\gl}R$ refines
the classical algebraic K-theory spectrum.
More generally, for every finite group $G$, 
the $G$-fixed point spectrum $F^G(\bK_{\gl}R)$
has the homotopy type of the direct sum K-theory spectrum
of the category of $R G$-modules whose underlying $R$-modules
are finitely generated projective, see Theorem \ref{thm:fix of KR} (ii).
 The global functor $\upi_0(\bK_{\gl}R)$
that assigns to a finite group $G$ the 0-th equivariant homotopy group
$\pi_0^G(\bK_{\gl}R)$ is naturally isomorphic to the `Swan K-group` global functor
of $R$, whose value at $G$ is the Grothendieck group, with respect to direct sum, 
of $R G$-modules whose underlying $R$-modules
are finitely generated projective.
  
If the ring $R$ is commutative
then $\bK_{\gl}R$ admits the structure of a commutative symmetric ring spectrum,
but we will not show this here.
The strict commutativity of the symmetric ring spectrum $\bK_{\gl}R$
provides power operations on the global Green functor $\upi_0(\bK_{\gl}R)$;
under the isomorphism with the Swan K-group global functor,
these homotopical power operations coincide with the algebraic power operations
obtained by raising modules to a tensor power (over $R$).
So if $R$ is commutative, then the commutative multiplication on $\bK_{\gl}R$
is a compact and rigid way of packaging the 
multiplicative structure, power operations and norm maps that relate the K-theory
spectra  of $R G$-lattices as $G$ varies.

\begin{con}\label{con:P(R)}
  Given a ring $R$, we define a parsummable category $\Pc(R)$ whose underlying
  category is equivalent to the category of finitely generated projective $R$-modules.
  Given any set $U$ we denote by $R\{U\}$ the set of functions $f:U\to R$
  that are almost always zero, and we give $R\{U\}$ the pointwise $R$-module structure;
  we confuse elements of $U$ with their characteristic functions to view
  $U$ as a subset of $R\{U\}$; then $R\{U\}$ is a free $R$-module with basis $U$.
  
  The objects of the category~$\Pc(R)$ are all finitely generated $R$-submodules 
  $P$ of $R\{\omega\}$ such that the inclusion $P\to R\{\omega\}$
  is $R$-linearly splittable;
  morphisms in $\Pc(R)$ are $R$-module isomorphism.
  Since $R\{\omega\}$ is a free $R$-module,
  all the objects of $\Pc(R)$ are projective $R$-modules.
  
  Given an injection $u\in M$, we define the functor
  $u_*: \Pc(R)\to \Pc(R)$ on objects by
  \[ u_*(P) \ = \ R\{u\}(P)\ , \]
  the image of $P$ under the homomorphism $R\{u\}:R\{\omega\}\to R\{\omega\}$.
  Since $R\{u\}$ is $R$-linearly splittable, the module $R\{u\}(P)$
  is again splittable inside $R\{\omega\}$.
  This clearly defines an action of the injection monoid $M$ on the object set of $\Pc(R)$.
  We define an isomorphism of $R$-modules by
  \[ u_\circ^P\ = \ [u,1]^P = \ R\{u\}|_P \ : \ P \ \to \ R\{u\}(P)\ , \]
  the restriction of the monomorphism $R\{u\}$ to the submodule $P$.
  Then the relation \eqref{eq:basic_relation} holds,
  so there is a unique extension of this data to an $\M$-action on the
  category $\Pc(R)$, compare Proposition \ref{prop:minimal M-axioms}.

  Every object $P$ of $\Pc(R)$ is finitely generated,
  so it is contained in $R\{A\}$ for some finite subset $A$ of $\omega$.
  The object $P$ is then supported on the set $A$.
  So all objects of $\Pc(R)$ are finitely supported,
  and the $\M$-category $\Pc(R)$ is tame.
  
  The addition functor
  \[ + \ : \ \Pc(R)\boxtimes \Pc(R)\ \to \ \Pc(R) \]
  is the internal direct sum.
  Indeed, if two objects $P$ and $Q$ of $\Pc(R)$ are disjointly supported,
  then their internal sum inside $R\{\omega\}$ is direct, and the inclusion
  $P\oplus Q\to R\{\omega\}$ is again splittable.
\end{con}

\begin{defn}\label{def:K(R)}
  The {\em global algebraic K-theory spectrum}  $\bK_{\gl}R$ of a ring $R$ 
  is the global K-theory spectrum of the parsummable category $\Pc(R)$, i.e.,
  \[ \bK_{\gl}R\ = \ \bK_{\gl}\Pc(R)\ . \]
\end{defn}

As a special case of Theorem~\ref{thm:K_gl C is global Omega},
the symmetric spectrum $\bK_{\gl}R$ is a restricted global $\Omega$-spectrum. 

\begin{theorem}\label{thm:fix of KR} 
  Let $R$ be a ring.
  \begin{enumerate}[\em (i)]
  \item
    The parsummable category $\Pc(R)$ is saturated.
  \item
    For every finite group $G$, the $G$-fixed point spectrum
    $F^G(\bK_{\gl}R)$ has the stable homotopy type of the 
    direct sum K-theory spectrum of the category of finitely generated
    $R$-projective $R G$-modules.
  \item
    The homotopy group global functor $\upi_0(\bK_{\gl} R)$
    is isomorphic to the Swan K-group global functor.
\end{enumerate}
\end{theorem}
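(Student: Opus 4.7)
For part (i), my plan is to appeal to Corollary \ref{cor:saturation characterizations} and verify that for every finite group $G$ and every injection $\lambda\colon\omega^G\to\omega$, the functor $\lambda_\flat\colon F^G\Pc(R)\to G\Pc(R)$ is an equivalence of categories. Since $\lambda_\flat$ is always fully faithful by Proposition \ref{prop:lambda_sharp}, everything reduces to essential surjectivity. Given a $G$-object $P\in G\Pc(R)$, i.e., a finitely generated projective splittable $R$-submodule of $R\{\omega\}$ equipped with an $R$-linear $G$-action, I would forget about the embedding in $R\{\omega\}$ and simply view $P$ as a finitely generated $R$-projective $RG$-module. Such a module is an $RG$-direct summand of a free $RG$-module $(RG)^n$ of finite rank, and $(RG)^n\iso R\{G\times\mathbf{n}\}$ embeds as an $RG$-direct summand of $R\{\omega^G\}$ through any $G$-equivariant injection $G\times\mathbf{n}\to\omega^G$, using that $\omega^G$ is a universal $G$-set with infinitely many free orbits. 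Composing these maps produces an $RG$-embedding $P\to R\{\omega^G\}$ whose image is $R$-linearly split, and is therefore an object $\tilde P$ of $F^G\Pc(R)=\Pc(R)[\omega^G]^G$ with $\lambda_\flat(\tilde P)\iso P$.

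For part (ii), saturation provided by (i) lets me invoke Corollary \ref{cor:saturated implies global}\,(ii): for any injection $\varphi\colon\mathbf 2\times\omega\to\omega$, the fixed point spectrum $F^G(\bK_{\gl}R)$ is non-equivariantly stably equivalent to the K-theory spectrum of the symmetric monoidal category $\varphi^*(G\Pc(R))$. The remaining task is to identify this symmetric monoidal category with the usual symmetric monoidal category of finitely generated $R$-projective $RG$-modules under direct sum. This is almost immediate from the definitions: for two $G$-objects $P,Q$ in $\Pc(R)$, the product $\varphi_*(P,Q)=\varphi^1_*(P)+\varphi^2_*(Q)$ is the internal sum inside $R\{\omega\}$ of two disjointly supported submodules, and the canonical comparison map $P\oplus Q\to\varphi_*(P,Q)$ is an isomorphism of $RG$-modules, naturally in $P$ and $Q$. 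Combined with essential surjectivity of $G\Pc(R)$ onto (iso classes of) finitely generated $R$-projective $RG$-modules, this identifies $\varphi^*(G\Pc(R))$ up to symmetric monoidal equivalence with the direct-sum symmetric monoidal category in question.

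For part (iii), Theorem \ref{thm:upi_0 tilde beta} identifies $\upi_0(\bK_{\gl}R)$ with the group completion of the pre-global functor $\upi_0(\Pc(R))$. At a finite group $G$, the equivalence $\lambda_\flat\colon F^G\Pc(R)\xra{\simeq}G\Pc(R)$ from (i), together with the observation in (ii) that the parsummable addition on $F^G\Pc(R)$ corresponds under $\lambda_\flat$ to direct sum of $RG$-modules, yields an isomorphism of abelian monoids between $\pi_0(F^G\Pc(R))$ and the monoid of isomorphism classes of finitely generated $R$-projective $RG$-modules under $\oplus$. Group completion then produces the Swan K-group $\bK_0(RG,R)$. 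To upgrade this to an isomorphism of global functors, I need to check that the categorical restriction and transfer maps of $\upi_0(\Pc(R))$, as defined in~\eqref{eq:define_td(S)} for the bisets $_\alpha G_G$ and $_G G_H$, correspond under these identifications to restriction of scalars along $\alpha\colon K\to G$ and to induction $RG\otimes_{RH}-$, respectively. For restriction this is essentially a tautology, since the chosen $K$-equivariant injection $\omega^{\alpha^*G}\to\omega^K$ in Remark \ref{rk:res and tr} just reinterprets an $RG$-invariant submodule of $R\{\omega^G\}$ as an $RK$-invariant submodule. For transfers one uses a $G$-equivariant injection $\psi\colon G\times_H\omega^H\to\omega^G$, and the formula ${\sum}_{sH\in G/H}\psi^s_*(-)$ explicitly realizes $RG\otimes_{RH}(-)$ as a $G$-invariant submodule of $R\{\omega^G\}$.

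The main obstacle is the last check in part (iii): matching the combinatorially defined biset operations of $\upi_0(\Pc(R))$ with the algebraic induction and restriction in Swan K-theory. The restriction case is essentially formal, but the identification of the transfer $\upi_0(\Pc(R))[_G G_H]$ with tensoring up from $RH$ to $RG$ requires writing out the sum $\sum_{sH\in G/H}\psi^s_*(x)$ and recognizing it, together with the $G$-action inherited from the left translation on $\omega^G$, as the standard model of $RG\otimes_{RH}x$. Once this matching is in place, additivity and functoriality of the construction, which are guaranteed by Theorem \ref{thm:F^bullet pre-GF}, take care of the remaining biset operations.
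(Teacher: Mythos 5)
There is a genuine gap in your proof of part~(i). You claim that a finitely generated $R$-projective $RG$-module $P$ ``is an $RG$-direct summand of a free $RG$-module $(RG)^n$ of finite rank.'' This is precisely the assertion that $P$ is $RG$-projective, which is not part of the hypothesis and is false in general: for example $R=\mZ$, $G=\mZ/p$, and $P=\mZ$ with trivial $G$-action is free over $R$ but not projective over $RG$. The objects of $\Pc(R)[\omega^G]^G$ are $RG$-submodules of $R\{\omega^G\}$ that are merely required to split $R$-linearly, and the whole subtlety of the proof is that you must produce an $RG$-linear embedding $P\to F$ into a finitely generated free $RG$-module that admits an $R$-linear (not $RG$-linear) retraction, even though no $RG$-linear retraction exists.

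The paper achieves this by a duality argument: set $P^*=\Hom_R(P,R)$, which is a right $RG$-module, finitely generated projective over $R$; choose a surjection of right $RG$-modules $\epsilon\colon F\to P^*$ from a finitely generated free module, which is $R$-split because $P^*$ is $R$-projective; dualize over $R$ to get $\epsilon^*\colon (P^*)^*\to F^*$, an $RG$-linear map with an $R$-linear retraction, with $F^*$ finitely generated free over $RG$; and finally use that $P\iso(P^*)^*$ because $P$ is finitely generated projective over $R$. You should replace your unjustified (and false) direct-summand claim with this argument. The rest of part~(i) (using universality of $\omega^G$ to embed a finitely generated free $RG$-module as an $RG$-direct summand of $R\{\omega^G\}$), as well as parts~(ii) and~(iii), follows the paper's route and is fine.
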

\begin{proof}
  (i) 
  We let $G$ be a finite group.
  The objects of the category $F^G(\Pc(R))=(\Pc(R)[\omega^G])^G$
  are finitely generated $R G$-sub\-modules $P$ of $R\{\omega^G\}$
  such that the inclusion $P\to R\{\omega^G\}$
  is $R$-linearly splittable. Such modules are projective over $R$,
  but {\em not} generally projective as $R G$-modules.
  Morphisms in $F^G(\Pc(R))$ are $R G$-linear isomorphisms.

  We claim that conversely, every finitely generated and $R$-projective $R G$-module
  $P$ is isomorphic to an object of $F^G(\Pc(R))$.
  Since $\omega^G$ is a universal $G$-set, it contains arbitrarily many distinct free $G$-orbits.
  So $R\{\omega^G\}$ contains a free $R G$-module of arbitrarily high rank as a direct summand.
  So it suffices to show that there is 
  an $R G$-linear map $P\to F$ to a finitely generated free $R G$-module
  that has an $R$-linear retraction.
  We let $P^*=\Hom_R(P,R)$ be the left $R$-dual of~$P$.
  Then $P^*$ is a right $R G$-module whose underlying $R$-module is
  finitely generated projective. So there is an epimorphism
  of right $R G$-modules $\epsilon:F\to P^*$ from a finitely generated free
  $R G$-module~$F$. Since $P^*$ is $R$-projective, $\epsilon$ admits an
  $R$-linear section.
  Taking right $R$-duals again, we arrive at a morphism of left $R G$-modules
  \[ \epsilon^* \ : \ (P^*)^*\ \to \ F^* \]
  that admits an $R$-linear retraction.
  Since $P$ is finitely generated projective over $R$, it is 
  $R G$-linearly isomorphic to its double dual $(P^*)^*$.
  Moreover, $F^*$ is a finitely generated free left $R G$-module.
  So we can embed any $P$ as above into a finitely generated free $R G$-module.
  This completes the proof that the category $F^G(\Pc(R))$ is equivalent
  to the category of finitely generated $R$-projective $R G$-modules
  and $R G$-linear isomorphisms. The latter is equivalent to
  the category $G\Pc(R)$ of $G$-object in $\Pc(R)$,
  so we have shown that the parsummable category $\Pc(R)$ is saturated.

  (ii)
  We let $\varphi:\mathbf 2\times\omega\to\omega$ be an injection.
  The parsummable category $\Pc(R)$ is saturated by part (i);
  so Corollary \ref{cor:saturated implies global} (ii)
  provides a chain of non-equivariant stable equivalences between
  the $G$-fixed symmetric spectrum $F^G(\bK_{\gl}R)$
  and the K-theory spectrum of the symmetric monoidal category
  $\varphi^*(G\Pc(R))=G(\varphi^*(\Pc(R)))$.
  Because $\Pc(R)$ is equivalent to the category of finite generated
  projective $R$-modules and $R$-linear isomorphisms,
  the category $G\Pc(R)$ is equivalent to the category of finite generated
  $R$-projective $R G$-modules and $R G$-linear isomorphisms.
  Under this forgetful equivalence,
  the symmetric monoidal structure $\varphi^*(G\Pc(R))$
  provided by Proposition \ref{prop:symmon_from_parsumcat}
  becomes the direct sum of $R G$-modules.
  This proves the claim.

  (iii) We let $G$ be a finite group.
  As we argued in part (ii), every object of $F^G(\Pc(R))$
  is a finitely generated $R$-projective $R G$-module,
  and conversely every such $R G$-module is isomorphic to an object in $F^G(\Pc(R))$.
  So the forgetful map from $\pi_0(F^G(\Pc(R)))$ to the abelian monoid
  of isomorphism classes of finitely generated $R$-projective $R G$-modules
  is an isomorphism. We omit the verification that the restriction and transfer maps
  in the pre-global functor $\upi_0(\Pc(R))$ correspond to the
  module-theoretic restriction and transfer maps.
  Passing from the pre-global functor $\upi_0(\Pc(R))$ to its group completion
  $\bK(\Pc(R))$ then proves part (iii).
\end{proof}

\begin{rk}
  We let $G$ be a finite group, which we let act trivially on the ring $R$.
  Merling \cite[Definition 5.23]{merling} defined the
  {\em equivariant algebraic K-theory spectrum}
  $\mathbf K_G(R)$ of the $G$-ring $R$ (with trivial $G$-action),
  which is an orthogonal $G$-spectrum.
  I expect that the underlying $G$-symmetric spectrum $(\bK_{\gl}R)_G$
  of our global K-theory spectrum is $G$-stably equivalent
  to the underlying $G$-symmetric spectrum of Merling's $\bK_G(R)$.
\end{rk}

\begin{rk}[(Global algebraic K-theory of free modules)]
The previous discussion admits a variation with free modules instead of 
projective modules. Indeed, for a ring $R$ we let $\Pc^\text{fr}(R)$ 
denote the full subcategory of~$\Pc(R)$ consisting of those finitely generated
splittable $R$-submodules of $R\{\omega\}$ that are free. This subcategory
is preserved by the action of the monoidal category $\M$ and closed
under the sum functor;
so it inherits a parsummable category structure such that the inclusion $\Pc^\text{fr}(R)\to \Pc(R)$
is a morphism of parsummable categories. We let 
\[ \bK_{\gl}^\text{fr}R\  = \ \bK_{\gl}(\Pc^\text{fr}(R))\]
be the associated restricted global $\Omega$-spectrum.
The difference between free and projective global K-theory is invisible 
to higher equivariant homotopy groups. Indeed, for every finite group $G$ the
inclusion $F^G(\Pc^\text{fr}(R))\to F^G(\Pc(R))$
of $G$-fixed parsummable categories is fully faithful and
cofinal in the sense of \cite[Section 2]{staffeldt-fundamental}.
This implies that the morphism of K-theory spectra
$\bK_{\gl}(F^G(\Pc^\text{fr}(R)))\to \bK_{\gl}(F^G(\Pc(R)))$
induces an isomorphism of non-equivariant homotopy groups in positive dimensions,
see for example \cite[Theorem 2.1]{staffeldt-fundamental}.
Corollary \ref{cor:G-fixed} implies that then the map
\[ \pi_k^G(\bK_{\gl}^\text{fr}R)\ \to \  \pi_k^G(\bK_{\gl}R ) \]
is an isomorphism for $k>0$.
\end{rk}

\begin{rk} Given a ring $R$ we can consider the exact sequence K-theory spectra
(as opposed to the {\em direct sum}  K-theory spectra)
of $R$-projective finitely generated 
$R G$-modules. As $G$ varies, these spectra also have contravariant
functoriality in arbitrary group homomorphisms and covariant functoriality
for subgroup inclusions. So they are candidates for the fixed point spectra
of a global homotopy type.
However, our approach relies on Segal's $\Gamma$-space machinery,
which cannot handle input of the kind of Waldhausen's
{\em categories with cofibrations and weak equivalences} \cite{waldhausen}.
I do not know if there is a symmetric spectrum $\bK^\text{ex}_{\gl}R$ (preferably 
a restricted global $\Omega$-spectrum)
such that $F^G(\bK^\text{ex}_{\gl}R)$ has the stable homotopy type
of the above exact sequence K-theory spectrum.
\end{rk}

\begin{rk}[(K-theory of group rings)]
  As we explained, the global algebraic K-theory spectrum $\bK_{\gl}R$   
  keeps track of the algebraic K-theory spectra of the categories
  of $R$-projective finitely generated $R G$-modules for all finite groups $G$.
  There is another prominent family of K-theory spectra associated with a ring $R$:
  the K-theory of the group rings $R G$, i.e.,  finitely generated $R G$-modules 
  that are projective over $R G$ (and not only over $R$).
  However, these K-theory spectra have the wrong kind of functoriality in 
  the group $G$ to form a global homotopy type. 
  
  The issue can already be seen on the level of the 0-th equivariant homotopy groups,
  i.e., for the Grothendieck groups $K_0(R G)$.
  As we explained in Example \ref{eg:homotopy global functor},
  a global homotopy type represented by a restricted global $\Omega$-spectrum $X$
  gives rise to a global functor $\upi_0(X)$,
  consisting of the abelian groups $\pi_0^G (X)$, indexed by finite groups $G$, 
  equipped with restriction maps $\alpha^*:\pi_0^G (X)\to\pi_0^K (X)$ for all group homomorphisms
  $\alpha:K\to G$, and transfer maps  $\tr_H^G:\pi_0^H (X)\to\pi_0^G (X)$ 
  for subgroup inclusions $H\leq G$.
  The collection of Grothendieck groups $K_0(R G)$ does have transfers and
  restrictions, but for {\em different} kinds of homomorphisms.
  Indeed, the assignment $G\mapsto K_0(R G)$ has restriction maps along monomorphisms,
  but {\em not} along arbitrary group homomorphisms.
  The issue is that if $\alpha:K\to G$ has a non-trivial kernel, 
  then restriction of scalars
  along $R\alpha:R K\to R G$ may not preserve projective modules.
  On the other hand,  $G\mapsto K_0(R G)$ has covariant functoriality
  for all group homomorphisms (and not just for monomorphisms).
  So for general rings $R$, there cannot be a restricted global $\Omega$-spectrum 
  realizing the system of Grothendieck groups $K_0(R G)$.
  If $R$ happens to be an algebra over the rational numbers, then `projective over
  $R G$' is equivalent to being projective as an underlying $R$-module,
  and in this special case our construction $\bK_{\gl}R$ realizes the 
  K-theory spectra of the group rings.
\end{rk}

\section{Global K-theory of permutative categories}
\label{sec:permutative category}

We recall that a {\em permutative category}
is a symmetric monoidal category in which the monoidal product is strictly
associative and unital (and the unit and associativity isomorphisms
are identity maps), see \cite[Definition 3.1]{elmendorf-mandell:permutative}.
The monoidal product induces an action of the Barratt-Eccles operad
on the geometric realization of the nerve of the permutative category,
giving it the structure of an $E_\infty$-space.
On the other hand, Segal's Construction \ref{con:Gamma from symmon}
assigns to every permutative category 
a special $\Gamma$-category, and hence a (non-equivariant) K-theory spectrum.
The link between the two constructions
is that the infinite loop space of the K-theory spectrum
is a group completion of the $E_\infty$-space.

In this section we offer a construction that turns a permutative category $\Ec$
into a parsummable category $\Phi(\Ec)$;
our construction is a variation of the `rectification'
of the permutative category in the sense of 
of Schlichtkrull and Solberg \cite[Section 7]{schlichtkrull-solberg},
see Remark \ref{rk:rectify}.
Moreover, the global K-theory spectrum $\bK_{\gl}(\Phi(\Ec)^{\sat})$
rigidifies the K-theory spectra of the permutative categories
of $G$-objects in $\Ec$ into a single global object,
where $\Phi(\Ec)^{\sat}$ is the saturation of $\Phi(\Ec)$
in the sense of Construction \ref{con:saturation_parsummable}.
In particular, the underlying non-equivariant stable homotopy type of 
$\bK_{\gl}(\Phi(\Ec)^{\sat})$ agrees with Segal's K-theory of $\Ec$.

\begin{con}\label{con:permutative2parsummable}
  We let~$\Ec$ be a permutative category with monoidal functor~$\oplus$
  and symmetry isomorphism~$\tau$.
  We denote by 0 the strict unit object, which is uniquely determined. 
  We construct a parsummable category
  \[ \Phi(\Ec)\ = \ \Phi(\Ec,\oplus,\tau)\ . \]
  Objects of the category $\Phi(\Ec)$ are all infinite sequences
  $\un{a}=(a_0,a_1,a_2,\dots)$ of objects of~$\Ec$ such that $a_i=0$
  (the unit object) for almost all $i\geq 0$. For every such sequence~$\un{a}$
  we define an~$\Ec$-object $\Sigma(\un{a})$ by 
  \begin{equation}\label{eq:define sum}
   \Sigma(\un{a})\ = \ a_0\oplus\dots\oplus a_m \ ,  
  \end{equation}
  where~$m$ is chosen large enough so that $a_i=0$ for all $i>m$.
  Since~$0$ is a strict unit object, this definition does not depend on the particular
  choice of~$m$.
  The morphisms in the category~$\Phi(\Ec)$ are then defined by
  \[ \Phi(\Ec)(\un{a},\un{b}) \ = \ 
    \Ec(\Sigma(\un{a}),\Sigma(\un{b}))\ .\]
  Composition in $\Phi(\Ec)$ is given by composition in $\Ec$.
  This finishes the definition of the category~$\Phi(\Ec)$.
  
  Now we define the~$\M$-action on~$\Phi(\Ec)$. Given $u\in M$
  and an object $\un{a}$ of~$\Phi(\Ec)$,
  we define the object $u_*(\un{a})$  of~$\Phi(\Ec)$ by
  \[
    u_*(\un{a})_k \ = \  
    \begin{cases}
      a_j & \text{\ if $k=u(j)$, and}\\
      0 & \text{\ if $k$ is not in the image of $u$.}
    \end{cases}
  \]
  This clearly defines an action of the injection monoid $M$ on the object set of $\Phi(\Ec)$.
  The non-unit objects that occur in the sequence~$u_*(\un{a})$  
  are the same, with multiplicities,
  as the non-unit objects that occur in the sequence $\un{a}$;
  so the objects
  $\Sigma(u_*(\un{a}))$ and $\Sigma(\un{a})$ are isomorphic.
  Even better, there is a specific isomorphism
  \[ u_\circ^{\un{a}} 
    : \  \Sigma(\un{a}) \ \to\ \Sigma(u_*(\un{a}))  \]
  obtained by iterated use of the symmetry isomorphism~$\tau$ in order
  to `shuffle' the letters $a_j$ into the appropriate places.
  In more detail, we choose a number $n\geq 0$ such that $a_i=0=(u_*(\un{a}))_i$
  for all $i>n$. Then we choose a permutation $\sigma$ of the set $\{0,1,\dots,n\}$
  such that $\sigma(i)=u(i)$ for all $0\leq i\leq n$.
  With these choices,
  \[ \Sigma(u_*(\un{a}))\ = \ \Sigma(\sigma_*(\un{a}))\ = \
    a_{\sigma^{-1}(0)}\oplus a_{\sigma^{-1}(1)}\oplus\dots\oplus a_{\sigma^{-1}(n)}\ . \]
  We define
  \[ u_\circ^{\un{a}} \ = \ \sigma_\circ^{\un{a}}\ : \
    a_0\oplus a_1\oplus\dots\oplus a_n\ \xra{\ \iso \ }\
    a_{\sigma^{-1}(0)}\oplus a_{\sigma^{-1}(1)}\oplus\dots\oplus a_{\sigma^{-1}(n)} \]
  as the symmetry isomorphism of the permutative structure
  corresponding to the permutation $\sigma$.
  It is routine -- but somewhat tedious -- to check that the isomorphisms $u_\circ^{\un{a}}$
  satisfy the relation \eqref{eq:basic_relation};
  so Proposition \ref{prop:minimal M-axioms} provides
  a unique extension of this data to an $\M$-action on the category $\Phi(\Ec)$.
  Every object $\un{a}$ of $\Phi(\Ec)$ is supported on 
  \[ \supp(\un{a}) \ = \ \{ i\in\omega \colon a_i\ne 0\}\ ,\]
  the set of non-zero coordinates.
  By definition, this set is finite, so the $\M$-category $\Phi(\Ec)$ is tame.
  
  Now we introduce the sum functor
  \[ + \ : \ \Phi(\Ec)\boxtimes\Phi(\Ec)\ \to \ \Phi(\Ec)\ . \]
  If $\un{a}$ and $\un{a}'$ are disjointly supported objects of $\Ec$,
  we set
  \[ (\un{a}+\un{a}')_i\ = \
    \begin{cases}
      \  a_i & \text{ if $i\in \supp(\un{a})$,}\\
      \  a'_i & \text{ if $i\in \supp(\un{a}')$, and}\\
      \  0 & \text{ otherwise.}
    \end{cases}
  \]
  The sum of two morphisms $f:\un{a}\to\un{b}$ and $f':\un{a}'\to\un{b}'$
  is the composite
  \[ \Sigma(\un{a}+\un{a}')\ \xra[\iso]{\tau_{\un{a},\un{a}'}} \
    \Sigma(\un{a})\oplus\Sigma(\un{a}')\ \xra{f\oplus f'}\
    \Sigma(\un{b})\oplus\Sigma(\un{b}')\ \xra[\iso]{\tau_{\un{b},\un{b}'}^{-1}}\
    \Sigma(\un{b}+\un{b}')\ ;\]
  here $\tau_{\un{a},\un{a}'}$ is the shuffle isomorphism that reorders the summands.
  The distinguished object of~$\Phi(\Ec)$ is the sequence
  $\un{0}=(0,0,0,\dots)$ consisting only of the unit object~$0$.
  The verification that the sum functor is associative and commutative
  uses the coherence properties of the symmetry isomorphism $\tau$.
  More precisely, we must use that the following squares
  of shuffle isomorphisms commute for all
  disjointly supported objects:
  \[ \xymatrix@C=6mm{ 
      \Sigma(\un{a}+\un{a}'+\un{a}'')\ar[rr]^-{\tau_{\un{a},\un{a}'+\un{a}''}}
      \ar[d]_{\tau_{\un{a}+\un{a}',\un{a}''}} &&
      \Sigma(\un{a})\oplus\Sigma(\un{a}'+\un{a}'')\ar[d]^{\Sigma(\un{a})\oplus\tau_{\un{a}',\un{a}''}}&
      \Sigma(\un{a}+\un{a}')\ar@{=}[rr]\ar[d]_{\tau_{\un{a},\un{a}'}} &&
      \Sigma(\un{a}'+\un{a})\ar[d]^-{\tau_{\un{a}',\un{a}}}  \\
      \Sigma(\un{a}+\un{a}')\oplus\Sigma(\un{a}'')\ar[rr]_-{\tau_{\un{a},\un{a}'}\oplus\Sigma(\un{a}'')} &&
      \Sigma(\un{a})\oplus\Sigma(\un{a}')\oplus\Sigma(\un{a}'')&
      \Sigma(\un{a})\oplus\Sigma(\un{a}')\ar[rr]_-{\tau_{\Sigma(\un{a}),\Sigma(\un{a}')}} &&
      \Sigma(\un{a}')\oplus\Sigma(\un{a})
    } \]
  This concludes the definition of the parsummable category  $\Phi(\Ec)$
  associated with a permutative category $(\Ec,\oplus,\tau)$.
\end{con}

\begin{rk}[(Relation to the Schlichtkrull-Solberg rectification)]\label{rk:rectify}
  In \cite[Section 7]{schlichtkrull-solberg},
  Schlichtkrull and Solberg introduce the {\em rectification} of a permutative category.
  This rectification is a functor from the category $\bI$ of finite sets and injections to
  the category of small categories, equipped with a strictly commutative multiplication
  for the Day convolution product.
  The term `rectification' refers to the fact that the `coherently commutative'
  product in the permutative category is turned into a strictly commutative multiplication,
  at the expense of enlarging the category to an $\bI$-category.

  Every $\bI$-category can be turned into an $M$-category by forming the
  colimit over the non-full subcategory of $\bI$ consisting of the
  inclusions $\{1,\dots,n\}\to \{1,\dots,m\}$ for all $0\leq n\leq m$.
  When applied to the rectification of $\Ec$,
  this yields the underlying $M$-category of the $\M$-category $\Phi(\Ec)$.
  Moreover, the colimit over the inclusions turns
  a commutative multiplication for the Day convolution product
  into a partially defined sum functor (i.e., defined on the box product).
  In this sense, our parsummable category $\Phi(\Ec)$
  is the result of taking the colimit over the inclusions
  of the Schlichtkrull-Solberg rectification of $\Ec$.
\end{rk}

\begin{rk}
  For a permutative category~$(\Ec,\oplus,\tau)$,
  we define two functors
  \[ c \ : \ \Ec \ \to\ \Phi(\Ec) \text{\qquad and\qquad}
    \Sigma \ : \ \Phi(\Ec) \ \to\ \Ec \ .\]
  The first functor is given on objects by $c(a)= (a,0,0,\dots)$; then $\Sigma(c(a))=a$.
  So we can define the effect of the functor~$c$ on a morphism $f:a\to b$ by $c(f)=f$.
  The second functor is given on objects by the sum \eqref{eq:define sum},
  and by the identity on morphisms.
  The composite $\Sigma\circ c$ is the identity functor of $\Ec$.
  The composite $c\circ \Sigma$ is naturally isomorphic to the identity of $\Phi(\Ec)$.
  So $c$ and $\Sigma$ are mutually inverse equivalences of categories.
  The square of functors
  \[ \xymatrix@C=10mm{
      \Phi(\Ec)\boxtimes \Phi(\Ec) \ar[d]_-+\ar[r]^-{\Sigma\times\Sigma} &\Ec\times\Ec\ar[d]^-{\oplus}\\
      \Phi(\Ec) \ar[r]_-{\Sigma}  &\Ec
    } \]
  does {\em not} commutes, but the shuffle isomorphisms
  $\tau_{\un{a},\un{a}'}:\Sigma(\un{a}+\un{a}')\iso \Sigma(\un{a})\oplus\Sigma(\un{a}')$
  form a natural isomorphism between the two composites.

  We explained in Proposition \ref{prop:symmon_from_parsumcat} 
  how a parsummable category $\Cc$ gives rise to a symmetric monoidal category $\varphi^*(\Cc)$,
  depending on a choice of injection $\varphi:\mathbf 2\times\omega\to\omega$.
  We invite the reader to perform a reality check, namely that the composite
  \[ \text{(permutative categories)} \ \xra{\ \Phi\ }\
    \parsumcat\ \xra{\ \varphi^*}\
    \ \text{(symmetric monoidal categories)} \]
  gives back the original permutative category, up to a strong symmetric monoidal equivalence.
  Granted this, Theorem \ref{thm:compare 2K} and the invariance of K-theory
  under strong symmetric monoidal equivalences show that the
  underlying non-equivariant stable homotopy type of $\bK_{\gl}\Phi(\Ec)$
  agrees with the K-theory of the permutative category $\Ec$.

  In more detail, the equivalence of categories
  $c : \Ec \to\Phi(\Ec)$ defined above can be extended to a strong symmetric monoidal functor
  from the permutative category $\Ec$ to the symmetric monoidal category $\varphi^*(\Phi(\Ec))$,
  as follows.
  For objects $a$ and $b$ of~$\Ec$, the object $\varphi_*(c(a),c(b))$
  of $\Phi(\Ec)$ is the sequence
  with value $a$ at $\varphi(1,0)$,
  with value $b$ at $\varphi(2,0)$,
  and with value 0 everywhere else.
  So the morphism
  \[  \beta_{a,b}\ : \  \Sigma(\varphi_*(c(a),c(b))) \ \to \   a\oplus b\ = \ \Sigma(c(a\oplus b))   \]
  defined by 
  \[ \beta_{a,b}\ =\
    \begin{cases}
      \Id_{a\oplus b} & \text{ if $\varphi(1,0)<\varphi(2,0)$, and}\\
      \ \tau_{b,a} & \text{ if $\varphi(1,0)>\varphi(2,0)$.}
    \end{cases}
  \]
  is an isomorphism from $\varphi_*(c(a),c(b))$ to $c(a\oplus b)$
  in the category~$\Phi(\Ec)$.
  We omit the verification that the isomorphisms $\beta_{a,b}$
  satisfy the associativity, commutativity and unit constraints for a
  strong symmetric monoidal functor.
\end{rk}

\begin{eg}\label{eg:permutative fin}
  We recall the permutative category~$\bSigma$
  with object set $\omega=\{0,1,2,\dots\}$, the set of natural numbers.
  There are no morphisms between different numbers and the endomorphism monoid of~$n$
  is the symmetric group $\Sigma_n$.
  The monoidal functor $+:\bSigma\times\bSigma\to\bSigma$
  is given by addition on objects and by `concatenation' on morphisms, i.e.,
  for $\sigma\in\Sigma_m$ and $\kappa\in\Sigma_n$ we set
  \[ (\sigma +\kappa)(i)\ = \ 
    \begin{cases}
      \ \sigma(i) & \text{ for $1\leq i\leq m$, and}\\  
      \kappa(i-m) +m & \text{ for $m+1\leq i\leq m+n$.}
    \end{cases} \]
  This monoidal structure is strictly associative and unital with unit object~0,
  and it becomes a permutative structure with respect to the
  symmetry isomorphism $\tau_{m,n}\in\Sigma_{m+n}$ defined by
  \[ \tau_{m,n}(i)\ = \ 
    \begin{cases}
      i+n & \text{ for $1\leq i\leq m$, and}\\  
      i-m & \text{ for $m+1\leq i\leq m+n$.}
    \end{cases} \]
  The category~$\bSigma$ is a skeleton of the category of finite sets
  and bijections, with monoidal structure corresponding to disjoint union. 
  So it should not come as a surprise that the associated~parsummable category
  $\Phi(\bSigma)$ is equivalent, as a parsummable category,
  to the~parsummable category~$\Fc$ of finite sets, introduced in Example \ref{eg:Fc}.
  An explicit equivalence of~parsummable categories
  \[ \epsilon \ : \ \Fc \ \to \ \Phi(\bSigma) \]
  is given as follows. On objects, $\epsilon$ sends a finite subset~$A$ of~$\omega$
  to its characteristic function $\chi_A:\omega\to\omega$ defined by
  \[ \chi_A(i)\ = \
    \begin{cases}
      0 & \text{ for $i\not\in A$, and}\\
      1 & \text{ for $i\in A$.}
    \end{cases}\]
  Then $\bSigma(\chi_A)$ is equal to the cardinality of~$A$; so there is a unique
  order preserving bijection
  \[ \lambda_A \ : \ \mathbf n \ \xra{\ \iso\ } \ A \ .\]
  On morphisms, the functor~$\epsilon$ sends a bijection~$f:A\to B$ between
  finite subsets of~$\omega$ of cardinality~$n$
  to the permutation
  \[  \epsilon(f)\ = \ \lambda_B^{-1}\circ f\circ \lambda_A \ \in\ \Sigma_n\ .\]
  The functor~$\epsilon$ is clearly an equivalence of categories,
  and we omit the verification that it is also a morphism of~parsummable categories.
  Granted this, Proposition \ref{prop:tame2global M-version}
  shows that the parsummable category $\Phi(\bSigma)$ is saturated,
  and that the morphism $\epsilon$ is a global equivalence.
  Theorem \ref{thm:equivalence2equivalence} shows
  that the induced morphism of symmetric spectra
  \[ \bK_{\gl}\epsilon \ : \  \bK_{\gl}\Fc \ \to \  \bK_{\gl}\Phi(\bSigma) \]
  is a global equivalence. Hence $\bK_{\gl}\Phi(\bSigma)$ is also globally
  equivalent to the global sphere spectrum, by Theorem \ref{thm:global F}.
\end{eg}

\begin{con}[(Functoriality for strong monoidal functors)]
  We discuss the functoriality of the assignment $\Ec\mapsto\Phi(\Ec)$.
  We let $F:\Dc\to\Ec$ be a strong monoidal functor between permutative categories,
  and we also require that $F$ strictly preserves the zero object.
  `Strong monoidal' is extra data (and not a property), namely
  an isomorphism $\eta:0\iso F(0)$ and a natural isomorphism
  \[ \mu_{d,d'} \ : \ F(d)\oplus F(d')\ \iso \ F(d\oplus d') \]
  of functors $\Dc\times\Dc\to\Ec$; moreover, these isomorphisms are required to
  be unital, associative, and commutative,
  in the sense spelled out in \cite[Section XI.2]{maclane-working}.
  We say that $F$ {\em strictly preserves the zero object}
  if  $F(0)$ is {\em equal} to $0$, and moreover $\eta$ is the identity.

  We will usually follow the common abuse 
  and drop the natural isomorphism $\mu$ from the notation.
  As we shall now explain, the strong monoidal functor $F$
  that strictly preserves the zero object
  induces a morphism of parsummable categories
  \[ \Phi(F)\ = \ \Phi(F,\mu)\ : \ \Phi(\Dc)\ \to \ \Phi(\Ec) \ .\]
  On objects, this functor is given by
  \[ (\Phi(F)(\un{d}))_i\ = \ F(d_i)\ . \]
  The coherence properties of the natural isomorphism $\mu$
  imply that it extends uniquely to natural isomorphisms
  \[ \mu_{\un{d}} \ : \ \Sigma(\Phi(F)(\un{d}))\ = \
    {\bigoplus}_{i\geq 0} F(d_i)\ \xra{\ \iso \ }\
    F\left( {\bigoplus}_{i\geq 0} d_i \right)\ = \ F(\Sigma(\un{d}))\ .\]
  A morphism $f:\un{d}\to\un{d}'$ in $\Phi(\Dc)$ is a
  $\Dc$-morphism $\Sigma(\un{d})\to\Sigma(\un{d}')$;
  the functor $\Phi(F)$ sends this to the morphism $\Phi(F)(f):\Phi(F)(\un{d})\to \Phi(F)(\un{d}')$
  defined as the following composite
  \begin{align*}
    \Sigma(\Phi(F)(\un{d}))\ \xra{\mu_{\un{d}}}\  F(\Sigma(\un{d}))\ \xra{F(f)}\
    F(\Sigma(\un{d}'))\ \xra{\mu_{\un{d}'}^{-1}}\ \Sigma(\Phi(F)(\un{d}'))\ .
  \end{align*}
  These definitions clearly make $\Phi(F)$ into a functor.
  We omit the detailed verification that $\Phi(F)$ is a morphism of parsummable categories,
  and just give a few hints about the facts that enter.
  For example, the property that $\Phi(F)$ is compatible with the $\M$-action amounts
  to the relations
  \begin{itemize}
  \item $\Phi(F)(u_*(\un{a}))=u_*(\Phi(F)(\un{a}))$ (obvious from the definitions), and
  \item $\Phi(F)(u_\circ^{\un{a}})=u_\circ^{\Phi(F)(\un{a})}$, which is consequence of
    the commutativity of the natural isomorphism $\mu$.
  \end{itemize}
  The fact that $\Phi(F)$ is additive on disjointly supported objects
  is straightforward from the definitions.
  The most involved step is probably to check that
  $\Phi(F)$ is additive on morphisms between disjointly supported objects.
  After unraveling all definitions, this comes down to the naturality
  of the isomorphism $\mu:F\oplus F\iso F\circ\oplus$ and
  the commutativity of the following diagram: 
  \[ \xymatrix@C=25mm{
      \Sigma(\Phi(F)(\un{a})+\Phi(F)(\un{a}'))\ar[r]^-{\tau_{\Phi(F)(\un{a}),\Phi(F)(\un{a}')}}\ar@{=}[d] &
      \Sigma(\Phi(F)(\un{a}))\oplus\Sigma(\Phi(F)(\un{a}')) \ar[d]^{\mu_{\un{a}}\oplus\mu_{\un{a}'}}\\
      \Sigma(\Phi(F)(\un{a}+\un{a}')) \ar[d]_{\mu_{\un{a}+\un{a}'}} &
      F(\Sigma(\un{a}))\oplus F(\Sigma(\un{a}')) \ar[d]^{\mu_{\Sigma(\un{a}),\Sigma(\un{a}')}}  \\
      F(\Sigma(\un{a}+\un{a}'))\ar[r]_-{F(\tau_{\un{a},\un{a}'})}&
      F(\Sigma(\un{a})\oplus\Sigma(\un{a}')) 
    } \]
  This diagram, finally, commutes because the natural isomorphism $\mu$
  is commutative.
\end{con}

\begin{prop}
  Let $F:\Dc\to\Ec$ be a strong symmetric monoidal functor between permutative categories
  that strictly preserves the zero object.
  If $F$ is an equivalence of underlying categories, then the functor
  $\Phi(F):\Phi(\Dc)\to\Phi(\Ec)$ is a global equivalence of parsummable categories.
\end{prop}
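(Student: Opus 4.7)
My plan is to show directly that for every finite group $G$, the induced functor $F^G(\Phi(F)): F^G(\Phi(\Dc)) \to F^G(\Phi(\Ec))$ is an equivalence of categories, which is strictly stronger than the weak equivalence required for $\Phi(F)$ to be a global equivalence (Definition \ref{def:global equiv Gamma-M}). I will not try to use the general criterion of Proposition \ref{prop:tame2global M-version} (ii), because $\Phi(\Ec)$ need not be saturated in general: a general $G$-object of $\Phi(\Ec)$ has an arbitrary $G$-action on the endomorphism monoid of $\Sigma(\un{a})$, whereas a $G$-fixed object of $\Phi(\Ec)[\omega^G]$ yields only a $G$-action that permutes summands via symmetry isomorphisms.

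To analyze $F^G(\Phi(\Dc))$, I unravel that an object is an almost-zero sequence $\un{a} = (a_k)_{k\in\omega^G}$ satisfying $a_{g^{-1}k}=a_k$ for all $g\in G$, and hence determined by a finitely supported function from the $G$-orbits of $\omega^G$ to the objects of $\Dc$; a morphism $\un{a}\to\un{b}$ is a $\Dc$-morphism $f:\Sigma(\un{a})\to\Sigma(\un{b})$ that is equivariant for the $G$-actions on source and target induced by the symmetry isomorphisms $l^g_\circ$. Fully faithfulness of $F^G(\Phi(F))$ follows because $F$ is fully faithful on underlying categories, so induces a bijection $\Dc(\Sigma(\un{a}),\Sigma(\un{b}))\to\Ec(F(\Sigma(\un{a})),F(\Sigma(\un{b})))$, and this bijection intertwines the two relevant $G$-actions by the coherence relation $F(l^g_\circ^{\un{a}})\circ\mu_{\un{a}}=\mu_{\un{a}}\circ l^g_\circ^{\Phi(F)(\un{a})}$, which is a consequence of $F$ being strong \emph{symmetric} monoidal. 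Passing to $G$-fixed morphism sets then yields a bijection.

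For essential surjectivity, given a $G$-fixed object $\un{e}$ of $\Phi(\Ec)[\omega^G]$, I choose for each $G$-orbit $O\subset\omega^G$ an object $d_O\in\Dc$ and an isomorphism $\phi_O:F(d_O)\xra{\iso} e_O$ in $\Ec$ (using essential surjectivity of $F$; set $d_O=0$ whenever $e_O=0$, which is possible since $F$ strictly preserves the zero object). Then $\un{d}$ defined by $d_k=d_{[k]}$ is a $G$-fixed object of $\Phi(\Dc)[\omega^G]$, and the direct sum $\bigoplus_k \phi_{[k]}:\Sigma(\Phi(F)(\un{d}))\to\Sigma(\un{e})$ is a $G$-equivariant isomorphism in $\Ec$: the same $\phi_O$ is used at every index in the orbit $O$, so by naturality of the symmetry isomorphisms of $\Ec$ this direct sum commutes with the permutation actions $l^g_\circ^{\Phi(F)(\un{d})}$ and $l^g_\circ^{\un{e}}$. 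Viewed as a morphism in $F^G(\Phi(\Ec))$, it exhibits $\Phi(F)(\un{d})\cong\un{e}$. The main obstacle, and the crux of the argument, will be carefully tracking the two different $G$-actions on each morphism space (the external one via $\omega^G$ and the internal one via permutation of summands by symmetry isomorphisms), and verifying that the strong symmetric monoidal coherence of $F$ intertwines them; this is essentially formal but is the one place where the \emph{symmetric} part of the monoidal structure on $F$ is genuinely used.
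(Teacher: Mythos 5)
Your proposal is correct and takes essentially the same approach as the paper: both show that $F^G(\Phi(F))$ is an equivalence of categories for each finite group $G$, establishing fully faithfulness from the fully faithfulness of $F$ together with $G$-equivariance of the induced bijection on morphism sets, and establishing essential surjectivity by choosing orbit-wise preimages $d_O$ with isomorphisms $F(d_O)\iso e_O$ (zero when $e_O=0$) and checking that their direct sum is a $G$-fixed isomorphism. The paper deduces the $G$-equivariance of the morphism-set bijection a bit more abstractly, from the fact that $\Phi(F)$ is a morphism of $\M$-categories, whereas you trace it explicitly to the symmetric monoidal coherence of $F$; these are the same fact, since the verification that $\Phi(F)$ respects the $\M$-action already rests on that coherence.
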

\begin{proof}
  We consider the commutative square of categories and functors
  \[ \xymatrix@C=12mm{
      \Dc \ar[r]^-F\ar[d]_-c &\Ec\ar[d]^c\\
      \Phi(\Dc) \ar[r]_-{\Phi(F)} &\Phi(\Ec)
    } \]
  The two vertical functors are equivalences by the above, and $F$ is an equivalence by
  hypothesis. So the functor $\Phi(F)$ is an equivalence of underlying categories.
  Now we let $G$ be a finite group and $\Uc$ a universal $G$-set.
  Because $\Phi(F)$ is fully faithful and $G$-equivariant, the restriction
  \[  (\Phi(F)[\Uc])^G \ : \  (\Phi(\Dc)[\Uc])^G  \ \to\ (\Phi(\Ec)[\Uc])^G \]
  to $G$-fixed subcategories is fully faithful.
  To see that $(\Phi(F)[\Uc])^G$ is also essentially surjective, we consider
  a $G$-fixed object $\un{a}$ of  $(\Phi(\Ec)[\Uc])^G$.
  Being $G$-fixed means that the coordinates of $\un{a}$ are constant on
  the $G$-orbits of $\Uc$.
  For every $G$-orbit $G i$ we choose an object $b_{G i}$ of $\Dc$ and an isomorphism
  $f_{G i}:a_{G i}\to F(b_{G i})$ in $\Ec$; we insists that $b_{G i}=0$ whenever $a_{G i}=0$.
  This is possible because $F$ is an equivalence of categories.
  We let $\un{b}$ be the object of $\Phi(\Dc)[\Uc]$ that is constant with value $b_{G i}$
  on the orbit $G i$. Then $\un{b}$ is a $G$-fixed object of $\Phi(\Dc)[\Uc]$,
  and
  \[ \bigoplus_{k\geq 0} f_k \ : \ \Sigma(\un{a})\ \to \ \Sigma(\un{b}) \]
  is a $G$-fixed isomorphism from $\un{a}$ to $\Phi(F)[\Uc](\un{b})$.
  This proves that $(\Phi(F)[\Uc])^G$ is essentially surjective, and hence an equivalence of categories.
\end{proof}

While the parsummable category $\Phi(\bSigma)$ discussed in
Example \ref{eg:permutative fin} is saturated, this is rather untypical.
To understand this better, we let $G$ be a finite group and
$\lambda:\omega^G\to\omega$ an injection.
We investigate which kinds of $G$-objects in $\Ec$
are in the image of the fully faithful functor $\lambda_\flat:F^G(\Phi(\Ec))\to G\Phi(\Ec)$
defined in \eqref{eq:lambda sharp}. The functor  $\Sigma:\Phi(\Ec)\to \Ec$
that sums up the objects in a tuple is an equivalence of categories, so we may equivalently
study the essential image of the composite functor
\begin{equation}\label{eq:non-sat}
  F^G(\Phi(\Ec))\ \xra{\ \lambda_\flat\ }\ G\Phi(\Ec) \ \xra{\ G\Sigma\ } \ G\Ec\ .
\end{equation}

\begin{prop}
  Let $\Ec$ be a permutative category, $G$ a finite group
  and $\lambda:\omega^G\to\omega$ and injection.
  Then the essential image of the fully faithful functor
  \[    (G\Sigma)\circ \lambda_\flat\ : \  F^G(\Phi(\Ec))\ \to\ G\Ec \]
  consists of all $G$-objects that are isomorphic to a finite monoidal product
  of $G$-objects of the form $\bigoplus_{G/H} x$ for subgroups $H$ of $G$ and
  objects $x$ of $\Ec$.
\end{prop}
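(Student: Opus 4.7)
The plan is to first describe $G$-fixed objects of $\Phi(\Ec)[\omega^G]$ explicitly, using the reparameterization to view objects as sequences indexed by $\omega^G$. Under the bijection $\kappa_{\omega^G}:\omega\to\omega^G$, an object of $\Phi(\Ec)[\omega^G]$ becomes a finitely supported family $\un{a}=(a_f)_{f\in\omega^G}$ of objects of $\Ec$, and the $G$-action induced from left translation on $\omega^G$ sends $\un{a}$ to a sequence whose value at $f$ is $a_{g^{-1}f}$. Hence $\un{a}$ is $G$-fixed precisely when $a_f = a_{f'}$ whenever $f,f'\in\omega^G$ lie in the same $G$-orbit; equivalently the support $S = \{f:a_f\ne 0\}$ is a finite $G$-invariant subset of $\omega^G$ and the assignment $f\mapsto a_f$ is constant on orbits.

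Second, I would identify the image. Since morphisms in $\Phi(\Ec)$ from $\un{a}$ to $\un{b}$ are by definition morphisms in $\Ec$ from $\Sigma(\un{a})$ to $\Sigma(\un{b})$, the functor $\Sigma:\Phi(\Ec)\to\Ec$ is literally the identity on morphisms. Unraveling the definition of $\lambda_\flat$, the underlying $\Ec$-object of $(G\Sigma\circ\lambda_\flat)(\un{a})$ is $\Sigma(\lambda_*\un{a})=\bigoplus_{f\in S} a_f$, and the action of $g\in G$ is obtained by applying $\Sigma$ to the morphism $[\lambda l^g,\lambda]^{\un{a}}$. The key computation is that, by the explicit construction of $u_\circ^{\un{a}}=\sigma_\circ^{\un{a}}$ in Construction \ref{con:permutative2I} as the symmetry isomorphism of $\Ec$ realizing a shuffle permutation, this action is precisely the permutation-of-summands action induced by the $G$-action on $S$, realized by the symmetry $\tau$ of $\Ec$. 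Decomposing $S$ into its $G$-orbits $O_1,\ldots,O_n$ with representatives $f_i\in O_i$ of stabilizer $H_i$ and setting $x_i = a_{f_i}$, this identifies $\Sigma(\lambda_*\un{a})$ as a $G$-object with the monoidal product $\bigoplus_{i=1}^n \bigoplus_{G/H_i} x_i$, proving that every object in the essential image has the stated form.

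Third, I would prove the converse inclusion. Given a finite list of subgroups $H_1,\ldots,H_n$ of $G$ and objects $x_1,\ldots,x_n$ of $\Ec$, I need to exhibit a $G$-fixed $\un{a}\in\Phi(\Ec)[\omega^G]$ whose image under $(G\Sigma)\circ\lambda_\flat$ is $G$-isomorphic to $\bigoplus_i \bigoplus_{G/H_i} x_i$. Because $\omega^G$ is a universal $G$-set by Proposition \ref{prop:universal G-sets}, it contains infinitely many elements with stabilizer $H_i$ for each $i$, so one can choose pairwise disjoint $G$-orbits $O_i\subset\omega^G$ with $O_i\cong G/H_i$ and distinguished representatives $f_i\in O_i$. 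Define $\un{a}$ by $a_f = x_i$ for $f\in O_i$ and $a_f=0$ otherwise. Then $\un{a}$ is $G$-fixed and, by the second step, $(G\Sigma\circ\lambda_\flat)(\un{a})$ is $G$-isomorphic to the prescribed product. Finally, since the essential image is closed under isomorphism, it contains all $G$-objects isomorphic to such monoidal products.

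The main obstacle is the bookkeeping in the second step: one has to verify carefully that applying $\Sigma$ to $[\lambda l^g,\lambda]^{\un{a}}$, an isomorphism a priori living between $\lambda_*(\un{a})$ and $(\lambda l^g)_*(\un{a})$, yields on underlying $\Ec$-objects (after using the identity $l^g_*\un{a}=\un{a}$ that holds because $\un{a}$ is $G$-fixed) exactly the symmetry isomorphism of $\Ec$ that permutes the summands of $\bigoplus_{f\in S}a_f$ according to the action of $g$ on $S$. This requires tracing through the formula $u_\circ^{\un{a}}=\sigma_\circ^{\un{a}}$ and keeping conventions straight for how $\lambda$ orders the summands before and after applying $l^g$; once this is done, the identification with $\bigoplus_{G/H_i} x_i$ on each orbit is immediate.
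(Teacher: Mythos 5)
Your proposal is correct and follows essentially the same strategy as the paper: describe $G$-fixed objects of $\Phi(\Ec)[\omega^G]$ as $G$-orbitwise-constant finitely supported families $(a_f)_{f\in\omega^G}$, identify the image over each orbit $O\cong G/H$ with $\bigoplus_{G/H}a_f$ (via the observation that the isomorphisms $u_\circ^{\un{a}}=\sigma_\circ^{\un{a}}$ are implemented by the symmetry $\tau$ of $\Ec$, so the $G$-action becomes summand-permutation), and decompose the support into orbits to get the general monoidal product. You are in fact somewhat more careful than the paper's terse proof: you explicitly verify the converse inclusion (that any prescribed list of subgroups $H_i$ and objects $x_i$ can be realized, because the universal $G$-set $\omega^G$ contains disjoint orbits of every isotropy type $G/H_i$) and note closure of the essential image under isomorphism, whereas the paper's argument only exhibits the forward direction and leaves the realization step implicit.
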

\begin{proof}
  We let $\un{a}=(a_0,a_1,\dots)$ be a $G$-fixed object of $\Phi(\Ec)[\omega^G]$.
  Then $a_f=a_{l^g(f)}$ for all $g\in G$ and $f\in\omega^G$, i.e.,
  the coordinate objects must be constant on the $G$-orbits of $\omega^G$.
  We suppose first that $\un{a}$ is concentrated on a single $G$-orbit $G\cdot f$
  for some $f\in\omega^G$. If $H$ is the $G$-stabilizer group of $f$, then
  $\Sigma(\lambda_\flat(\un{a}))$ is $G$-equivariantly isomorphic to the object $\bigoplus_{G/H} a_i$.
  If $\un{a}$ is concentrated on several $G$-orbits, then the functor
  \eqref{eq:non-sat} takes $\un{a}$ to the monoidal product of $G$-objects
  coming from the different $G$-orbits. This proves the claim.
\end{proof}

As the previous proposition illustrates, the parsummable category $\Phi(\Ec)$
associated with a permutative category $\Ec$
is typically not saturated.
We can fix this by applying the saturation procedure of Theorem \ref{thm:saturation} 
and obtain a saturated parsummable category
\[ \Phi(\Ec)^{\sat} \ = \ \cat(\M,\Phi(\Ec))^\tau \]
and a strict morphism of parsummable categories
\[ s \ : \ \Phi(\Ec) \ \to \  \Phi(\Ec)^{\sat} \ .\]
Moreover, the morphism $s$ is an equivalence of underlying categories.

\oneappendix

\section{Transfers for equivariant \texorpdfstring{$\Gamma$}{Gamma}-spaces}

In this appendix we compare two different transfers that arise from a special equivariant
$\Gamma$-space: the `unstable' transfers arising from the specialness hypothesis,
and the homotopy theoretic transfer for the associated orthogonal $G$-spectrum.
The precise formulation can be found in Proposition \ref{prop:Gamma compatibilities}.
The results in this appendix ought to be well-known to experts, but I do not
know of a reference.
While we only care about {\em finite} groups in the body of this paper,
the arguments of this appendix apply just as well for {\em compact Lie groups},
and we work in this generality.
Also, in contrast to the main part of the paper, we now work with
{\em orthogonal spectra} (as opposed to symmetric spectra).\medskip

We recall that $\Gamma$ denotes the category with objects the finite based sets 
$n_+=\{0,1,\dots,n\}$, with basepoint 0; morphisms in $\Gamma$ are all based maps.
We let $G$ be a compact Lie group.
A {\em $\Gamma$-$G$-space} is a functor $F:\Gamma\to G\bT$ from $\Gamma$
to the category of $G$-spaces which is reduced,
i.e., $F(0_+)$ is a one-point $G$-space.
We may then view $F$ as a functor to {\em based} $G$-spaces,
where $F(n_+)$ is pointed by the image of the map $F(0_+)\to F(n_+)$
induced by the unique morphism $0_+\to n_+$ in $\Gamma$.

We let $S$ be a finite set.
Given $s\in S$ we denote by $p_s:S_+\to 1_+=\{0,1\}$ 
the based map with $p_s^{-1}(1)=\{s\}$.  For a $\Gamma$-$G$-space $F$, we denote by
\[ P_S \ : \ F(S_+) \ \to \ \map(S,F(1_+))\]
the map whose $s$-component is $F(p_s):F(S_+)\to F(1_+)$.

Now we let the Lie group $G$ also act on the finite set $S$.
It goes without saying that actions of compact Lie groups are required to
be continuous and that the use of the term `set' (as opposed to `space')
implies the discrete topology on the set; so the identity path component of $G$
must act trivially on every G-set.
In this situation, the spaces $F(S_+)$ and $\map(S,F(1_+))$ have two commuting $G$-actions:
the `external' action is the value at $S_+$ and at $1_+$ of the $G$-action on $F$;
the `internal' action is induced by the $G$-action on~$S$.
In this situation the map $P_S:F(S_+)\to\map(S,F(1_+))$
is $(G\times G)$-equivariant.
We endow $F(S_+)$ and $\map(S,F(1_+))$ with the diagonal $G$-action
and the conjugation action, respectively; 
then the map $P_S:F(S_+)\to\map(S,F(1_+))$ is $G$-equivariant.

\begin{defn}
  Let $G$ be a compact Lie group. 
  A $\Gamma$-$G$-space $F$ is {\em special}
  if for every closed subgroup $H$ of $G$ and every finite $H$-set $S$ the map
  \[ (P_S)^H \ : \ F(S_+)^H\ \to \ \map^H(S,F(1_+)) \]
  is a weak equivalence.
\end{defn}

\begin{con}\label{con:addition on Gamma-space}
We recall the algebraic structure on the path components of the fixed point spaces
of a special $\Gamma$-$G$-space $F$.
We let $p_1,p_2:2_+\to 1_+$ denote the two projections. The map
\[ (F(p_1)^G, F(p_2)^G) \ :\  F(2_+)^G\ \to \ F(1_+)^G \times F(1_+)^G \]
is a weak equivalence by specialness for the trivial $G$-set $S=\{1,2\}$.
We let $\nabla:2_+\to 1_+$ denote the fold map. We obtain a diagram of set maps
\begin{align*}
  \pi_0(F(1_+)^G)\times\pi_0 F(1_+)^G)
  \ \xla[\iso]{( \pi_0 (F(p_1 )^G),\pi_0(F(p_2)^G))}\
  \pi_0(F(2_+)^G)\ \xra{\pi_0(F(\nabla)^G)}\ \pi_0(F(1_+)^G)  
\end{align*}
the left of which is bijective. So the map
\begin{align*}
+ \ = \ \pi_0(F(\nabla)^G)\circ(\pi_0(F(p_1)^G),&\pi_0(F(p_2)^G))^{-1} \ :\\
&\pi_0(F(1_+)^G)\times\pi_0(F(1_+)^G)\to\pi_0(F(1_+)^G)  
\end{align*}
is a binary operation on the set $\pi_0(F(1_+)^G)$.
If $\tau:2_+\to 2_+$ is the involution that interchanges 1 and 2, then composition
with $\tau$ fixes $\nabla$ and interchanges $p_1$ and $p_2$; this implies that the operation + is
commutative. Contemplating the different ways to fold and project from the
based set $3_ +$ leads to the proof that the operation is also associative, and hence
an abelian monoid structure on the set $\pi_0(F(1_+)^G)$.
For every closed subgroup $H$ of $G$, the underlying $\Gamma$-$H$-space
is again special. So the same argument provides an abelian monoid structure on $\pi_0(F(1_+)^H)$.

If $K\leq H\leq G$ are nested closed subgroups of $G$, then the $H$-fixed points $F(1_+)^H$
are contained in the $K$-fixed points $F(1_+)^K$. The inclusion induces a
{\em restriction map}
\[ \res^H_K\ : \ \pi_0(F(1_+)^H)\ \to \ \pi_0(F(1_+)^K)  \]
which is clearly a monoid homomorphism for the previously defined additions.

For every closed subgroup $H$ of $G$ and every $g\in G$, we write
$H^g=g^{-1}H g$ for the conjugate subgroup.
Left multiplication by $g$ is then a homeomorphism
\[ l^g \ : \ F(1_+)^{H^g} \ \xra{\ \iso \ }\ F(1_+)^H\ ,\quad l^g(x)\ = \ g x\ .\]
The {\em conjugation homomorphism}
\[ g_\star\ = \ \pi_0(l^g) \ :\
\pi_0(F(1_+)^{H^g}) \ \to\ \pi_0(F(1_+)^H) \]
is the induced map on path components. We omit the verification that
the conjugation homomorphism is indeed additive.

Now we consider nested closed subgroups $K\leq H\leq G$, and we also assume
that $K$ has finite index in $H$.
In this situation, there is also a transfer homomorphism from $\pi_0(F(1_+)^K)$ to $\pi_0(F(1_+)^H)$,
defined as follows. Specialness for the finite $H$-set $H/K$ shows that the map
\[ (P_{H/K})^H \ : \ F(H/K_+)^H\ \to \ \map^H(H/K,F(1_+)) \]
is a weak equivalence. Evaluation at the preferred coset $e K$ identifies
the target with the space $F(1_+)^K$. Hence the map $\Psi=p_{e K}:H/K_+\to 1_+$
induces a weak equivalence
\[ F(\Psi)^H\ : \  F(H/K_+)^H\ \xra{\ \simeq \ } \ F(1_+)^K\ . \]
We let $\nabla:H/K_+\to 1_+$ denote the fold map.
We obtain a diagram of set maps
\begin{align*}
  \pi_0(F(1_+)^K)  \ \xla[\iso]{\pi_0(F(\Psi)^H)}\ \pi_0(F(H/K_+)^H)\ \xra{\pi_0(F(\nabla)^H)}\ \pi_0(F(1_+)^H)  
\end{align*}
the left of which is bijective. The {\em transfer} is the map
\[ \tr_K^H \ = \ \pi_0(F(\nabla)^H)\circ\pi_0(F(\Psi)^H)^{-1} \ :\ \pi_0(F(1_+)^K)\ \to\ \pi_0(F(1_+)^H)  \ . \]
We omit the verification that these finite index transfer maps are homomorphisms of abelian monoids,
that they are transitive, compatible with conjugation,
and that they satisfy a double coset formula with respect to restriction maps.
\end{con}

We let $G$ be a compact Lie group and $F$ a $\Gamma$-$G$-space.
We write $F(\mS)$ for the orthogonal $G$-spectrum
obtained by evaluating the prolongation of $F$ on spheres.
So the value of $F(\mS)$ at an inner product space $V$ is the space $F(S^V)$,
the value of $F$ at the sphere $S^V$.
The structure map $\sigma_{V,W}:F(\mS)(V)\sm S^W\to F(\mS)(V\oplus W)$
is the composite of the assembly map \eqref{eq:assembly}
and the effect of the canonical homeomorphism $S^V\sm S^W\iso S^{V\oplus W}$.

The collection of equivariant homotopy groups of an orthogonal $G$-spectrum
also support restriction, transfer and conjugation maps,
see for example Construction 3.1.5, Construction 3.2.22 and Remark 3.1.7 of \cite{schwede:global},
respectively.
We wish to compare these homotopy operations for the orthogonal $G$-spectrum $F(\mS)$
with the previous operations for the $\Gamma$-$G$-space $F$.
We let $H$ be a closed subgroup of $G$.
We identify $1_+\iso S^0$ by the unique isomorphism of based sets.
We write $\beta(H):\pi_0(F(1_+)^H)\to\pi_0^H(F(\mS))$ for the canonical map
\[ \pi_0(F(1_+)^H)\ = \ [S^0,F(S^0)]^H \ \to \ \colim_V\,[S^V,F(S^V)]^H \ = \
  \pi_0^H(F(\mS)) \ ;\]
the colimit is over the poset of finite-dimensional $G$-subrepresentations
of a complete $G$-universe.

In order to gain homotopical control over the values of a prolonged $\Gamma$-$G$-space,
we impose a mild non-degeneracy condition, following \cite[Definition B.33]{schwede:global}.
We say that a $\Gamma$-$G$-space $F$ is {\em $G$-cofibrant}
if for every $n\geq 1$ the latching map
\[  \colim_{U\subsetneq \{1,\dots,n\}} \, F(U_+)\ \to \  F(n_+)  \]
is a $(G\times\Sigma_n)$-cofibration.
Here the colimit is formed over the poset of proper subsets of the set $\{1,\dots,n\}$,
i.e., over an $n$-cube without the terminal vertex.

The following proposition should not be surprising, and must be well-known to the experts;
however, I am unaware of a published reference, so I include a proof here.

\begin{prop}\label{prop:Gamma compatibilities}
  Let $G$ be a compact Lie group, and let $F$ be a $G$-cofibrant, special $\Gamma$-$G$-space.
  \begin{enumerate}[\em (i)]
  \item For every closed subgroup $H$ of $G$,
    the map $\beta(H):\pi_0( F(1_+)^H)\to \pi_0^H( F(\mS))$ is
    additive and a group completion of abelian monoids.
  \item Let $K\leq H$ be nested closed subgroups of $G$.
    Then the following diagram of monoid homomorphisms commutes:
    \[ \xymatrix@C=15mm{
        \pi_0( F(1_+)^H)\ar[r]^-{\beta(H)}\ar[d]_{\res^H_K}&
        \pi_0^H( F(\mS))\ar[d]^{\res^H_K}\\
        \pi_0( F(1_+)^K)\ar[r]_-{\beta(K)}& \pi_0^K( F(\mS)) } \]
      \item Let $K\leq H$ be nested closed subgroups of $G$, such that moreover $K$ has finite index in $H$.
    Then the following diagram of monoid homomorphisms commutes:
    \[ \xymatrix@C=15mm{
        \pi_0( F(1_+)^K)\ar[r]^-{\beta(K)}\ar[d]_{\tr^H_K}& \pi_0^K( F(\mS))\ar[d]^{\tr^H_K} \\
        \pi_0( F(1_+)^H)\ar[r]_-{\beta(H)} &        \pi_0^H( F(\mS))            } \]
  \item  For every closed subgroup $H$ of $G$ and every element $g\in G$,
    the following diagram of monoid homomorphisms
    commutes:
    \[ \xymatrix@C=15mm{
        \pi_0( F(1_+)^{H^g})\ar[r]^-{\beta(H^g)}\ar[d]_{g_\star}& \pi_0^{H^g}( F(\mS))\ar[d]^{g_\star} \\
        \pi_0( F(1_+)^H)\ar[r]_-{\beta(H)} &
        \pi_0^H( F(\mS))            } \]

  \end{enumerate}
\end{prop}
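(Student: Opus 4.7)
The proof splits into the group-completion statement (i), routine naturality for (ii) and (iv), and the genuinely non-formal transfer compatibility (iii).

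For (i), my plan is to reduce to Segal's non-equivariant group completion theorem applied to the fixed-point $\Gamma$-space $F^H$. The $G$-cofibrancy of $F$ guarantees that the canonical map $F^H(K)\to F(K)^H$ is a homeomorphism for every based CW complex $K$ with trivial $H$-action, so in particular $F^H(S^n)=F(S^n)^H$, and $F^H$ inherits specialness from $F$. Shimakawa's theorem, available in this paper as Theorem \ref{thm:Y(S) is global Omega} for finite groups with the evident extension to compact Lie groups used in \cite{schwede:global}, says that the adjoint assembly map $F(S^V)^H\to\map_\ast(S^W,F(S^{V\oplus W})^H)$ is a weak equivalence whenever $V$ contains a faithful $H$-summand. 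Hence $\pi_0^H(F(\mS))$ agrees with $\pi_0$ of the Segal spectrum of $F^H$. Segal's group completion theorem then identifies this with the group completion of $\pi_0(F^H(1_+))=\pi_0(F(1_+)^H)$, and unraveling the identifications shows the comparison map is $\beta(H)$. Additivity is automatic, since both monoid structures come from applying $F(\nabla)$.

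Parts (ii) and (iv) are immediate from the explicit formula for $\beta$: sending $[x]$ for $x\in F(1_+)^H$ to the stable class of the $H$-map $S^V\xrightarrow{x\wedge-}F(1_+)\wedge S^V\to F(S^V)$ visibly commutes with restriction of structure along $K\le H$ and with conjugation by $g\in G$.

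The real work is part (iii). My plan is to build a commutative diagram
\[
\begin{array}{ccccc}
\pi_0(F(1_+)^K) & \xleftarrow[\simeq]{\pi_0(F(\Psi)^H)} & \pi_0(F(H/K_+)^H) & \xrightarrow{\pi_0(F(\nabla)^H)} & \pi_0(F(1_+)^H) \\
\downarrow\,\beta(K) & & \downarrow\,\gamma & & \downarrow\,\beta(H) \\
\pi_0^K(F(\mS)) & \xleftarrow[\simeq]{\Wirth_K^H} & \pi_0^H(F(\mS)\wedge H/K_+) & \xrightarrow{(\mathrm{id}\wedge p)_*} & \pi_0^H(F(\mS))
\end{array}
\]
whose top row composes (after inverting the left equivalence) to the $\Gamma$-space transfer of Construction \ref{con:addition on Gamma-space}, whose bottom row gives the Wirthm\"uller-style transfer, and whose middle vertical $\gamma$ is induced by the assembly maps $F(H/K_+)\wedge S^V\to F(H/K_+\wedge S^V)$ stabilized over $H$-representations $V$. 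The right-hand square reduces to the observation that $p=\nabla$ under $S^0\iso 1_+$ together with the naturality of assembly for the fold on $H/K_+$. The left-hand square is the essential content: by specialness, $F(\Psi)^H$ computes the $K$-fixed-point component of $F(H/K_+)^H\simeq\map^H(H/K,F(1_+))$, and this projection matches the Wirthm\"uller composite of restriction and $(\mathrm{id}\wedge\Psi)_*$ on the spectrum level.

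The hard part will be the rigorous verification of the left-hand square. The subtlety is that $F(\Psi)\colon F(H/K_+)\to F(1_+)$ fails to be $H$-equivariant and is only $K$-equivariant, which mirrors the fact that $\Wirth_K^H$ requires first restricting the $H$-action to $K$ before projecting via $\Psi$. To make this precise I would either produce an explicit model for the Wirthm\"uller map in terms of the assembly of the $\Gamma$-$G$-space, or take a detour through the auxiliary $\Gamma$-$H$-space $n_+\mapsto F(n_+\wedge H/K_+)$ and its associated spectrum, to which both the upper and lower rows of the diagram compare naturally via assembly. The detour through the auxiliary $\Gamma$-$H$-space is the route I expect to work most cleanly, since both transfers then arise from the same $\Gamma$-space construction applied to different presentations of the same underlying data.
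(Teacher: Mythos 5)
Your approach matches the paper's in all essential respects: part (i) is reduced to Segal's non-equivariant group completion theorem applied to the fixed-point $\Gamma$-space $F^H$, with $G$-cofibrancy used to identify $F^H(K)$ with $F(K)^H$ for trivial-$H$-action $K$; parts (ii) and (iv) are formal; and part (iii) proceeds via the Wirthm\"uller isomorphism and an auxiliary spectrum. One imprecision: the diagram you display for (iii) does not typecheck in the middle column. The map $\gamma$ you describe, induced by the assembly maps $F(H/K_+)\wedge S^V\to F(H/K_+\wedge S^V)$, lands in $\pi_0^H$ of the spectrum $V\mapsto F(S^V\wedge H/K_+)$ (the prolongation of your auxiliary $\Gamma$-$H$-space $n_+\mapsto F(n_+\wedge H/K_+)$), not in $\pi_0^H(F(\mS)\wedge H/K_+)$; the two are linked by the assembly morphism $a:F(\mS)\wedge H/K_+\to F(\mS;H/K_+)$ rather than being equal. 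The paper inserts an extra commuting square (\ref{eq:wirth diagram}) along $a_*$ to pass between them. You explicitly flag the need to route through the auxiliary $\Gamma$-$H$-space, so you have the fix in hand, but your displayed diagram does not commute square-by-square as written. You also leave implicit the key nonformal ingredient: the unstable Wirthm\"uller map, the $G$-equivariant adjoint $F(S^V\wedge G/K_+)\to\map^K(G,F(S^V))$ of $F(S^V\wedge\Psi)$, is a $G$-weak equivalence (the paper cites \cite[Proposition B.54 (ii)]{schwede:global}); this is what makes $F(\mS;\Psi)_*\circ\res^G_K$ an isomorphism and therefore identifies the spectrum-level transfer with the unstable one.
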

\begin{proof}
  For every closed subgroup $H$ of $G$,
  the underlying $\Gamma$-$H$-space of $F$ is $H$-cofibrant and special,
  compare Proposition B.35 and B.50 of \cite{schwede:global}.
  Moreover, the underlying orthogonal $H$-spectrum of $F(\mS)$ is equal to $(\res^G_H F)(\mS_H)$.
  So for the proof of parts (i), (ii) and (iii), it is no loss of generality to assume that $H=G$.

  (i)
  As a preparation we recall some facts about non-equivariant special $\Gamma$-spaces.
  We let $X:\Gamma\to\bT$ be a cofibrant special $\Gamma$-space.
  The adjoint assembly map
  \[  X(1_+) \ \to \  \Omega X(S^1)     \]
  induces a map
  \begin{equation}\label{eq:group_completion_monoids}
  \pi_0 ( X(1_+) ) \ \to \ \pi_0( \Omega X(S^1) )\ = \ [S^1, X(S^1) ] \ .     
  \end{equation}
  Because $X$ is special, the source carries an abelian monoid structure
  as explained in Construction \ref{con:addition on Gamma-space}.
  The target carries a group structure by concatenation of loops;
  this group structure is abelian because  $X(S^1)$
  is itself a loop space (it is weakly equivalent to $\Omega X(S^2)$).
  The fact that the map \eqref{eq:group_completion_monoids} is
  an algebraic group completion of abelian monoids
  essentially goes back to Segal \cite[Proposition 4.1]{segal:cat coho},
  who required that $\pi_0(X(1_+))$ has a cofinal free abelian submonoid;
  the general case is implicit in many references, and an explicit reference is
  \cite[Proposition 2.12]{may-merling-osorno}.
  
  For showing that the map $\beta(G)$ is a group completion
  of abelian monoids we apply the previous paragraph to
  the (non-equivariant) special $\Gamma$-space $X^G$, defined as the composite
  \[ \Gamma \ \xra{\ F\ }\ G\bT \ \xra{\ (-)^G\ } \ \bT\ .\]
  We obtain that the canonical map
  \[  \pi_0( F(1_+)^G)\ \to \ [S^1, F^G(S^1)] \]
  is an algebraic group completion of abelian monoids.

  Now we claim that for every based space $K$ with trivial $G$-action, the canonical map
  $F^G(K) \to  F(K)^G$  is a homeomorphism.
  Since $F(n_+)^G$ is closed inside $F(n_+)$, 
  Proposition B.26 (ii) of \cite{schwede:global}
  shows that the inclusion $F^G\to F$ induces a closed embedding $(F^G)(K)\to F(K)$.
  The image of this map is contained in $F(K)^G$, 
  so it only remains to show that every $G$-fixed point of $F(K)$
  is the image of a point in $(F^G)(K)$.
  We consider a point of $F(K)$ represented by
  a tuple $(x;k_1,\dots,k_n)$ in $F(n_+)\times K^n$.
  We assume that the number $n$ has been chosen minimally, so that
  $x$ is non-degenerate and the entries $k_i$ are pairwise distinct 
  and different from the basepoint of $K$.
  If the point  $[x;k_1,\dots,k_n]$ of $F(K)$ is $G$-fixed, 
  then for every group element $g$
  the tuple $(g x;k_1,\dots,k_n)$ is equivalent to the original tuple.
  Proposition B.24 (iii) of \cite{schwede:global}
  provides a permutation $\sigma\in \Sigma_n$ such that
  \[ (g x;  k_1,\dots, k_n)\ = \ 
    (F(\sigma^{-1})(x); k_{\sigma(1)},\dots,k_{\sigma(n)} )\ . \]
  Since the $k_i$ are pairwise distinct, this forces $\sigma$
  to be the identity permutation, and hence $g x=x$.
  In other words, the point $x$ is $G$-fixed. This proves the claim.

  By the previous paragraph, the space $F^G(S^1)$ maps homeomorphically onto $F(S^1)^G$;
  so the group of non-equivariant homotopy classes 
  $[S^1, F^G(S^1)]$ is the same as the group of equivariant homotopy classes $[S^1, F(S^1)]^G$.
    Since the $\Gamma$-$G$-space is special and $G$-cofibrant,
  the Segal-Shimakawa equivariant $\Gamma$-space machine thus shows that
  $F(\mS)$ is a positive $G$-$\Omega$-spectrum, compare \cite[Theorem B]{shimakawa}; 
  a proof in the form adapted to our purposes 
  can be found in \cite[Theorem B.65]{schwede:global}.
  So the canonical map
  \[   [S^1,F(S^1) ]^G \ \to \ \pi_0^G( F(\mS) )      \]
  is an isomorphism of abelian groups. By combining all the above we conclude that
  the homomorphism of abelian monoids $\beta(G):\pi_0(F(1_+)^G)\to \pi_0^G(F(\mS))$
  is an algebraic group completion.

  (ii) As we argued at the start of the proof, we may assume that $H=G$.
  The diagram involving the restriction maps decomposes
  into two parts as follows:
  \[ \xymatrix@C=25mm{
      \pi_0( F(1_+)^G)\ar@/^1pc/[rr]^(.3){\beta(G)}\ar[r]_-{\text{assembly}\circ(-\sm S^1)}
      \ar[d]_{\pi_0(\text{incl})}&
      [S^1,F(S^1)]^G\ar[r]\ar[d]_{\res^G_K}&
      \pi_0^G( F(\mS))\ar[d]^{\res^G_K}\\
      \pi_0( F(1_+)^K)\ar@/_1pc/[rr]_(.3){\beta(K)}\ar[r]^-{\text{assembly}\circ(-\sm S^1)}
      & [S^1,F(S^1)]^K\ar[r]&
      \pi_0^K( F(\mS)) } \]
  The two right horizontal maps are the canonical maps to the colimits.
  The left and right squares each commute, hence so does the composite diagram.
  
  (iii)
  Again we may assume without loss of generality that $H=G$.
  As a preparation for the proof, we define a version of `evaluation of $F$ on spheres'
  with coefficients in a finite based $G$-set $T$.
  We define an orthogonal $G$-spectrum $F(\mS;T)$ at an inner product space $V$ by
  \[ F(\mS;T)(V) \ = \ F(S^V\!\!\sm T)\ . \]
  The structure maps are defined in the same way as for $F(\mS)$, with the
  extra $T$ acting as a dummy. 
  For varying $V$, the assembly maps \eqref{eq:assembly}
  \[  F(S^V)\sm T \ \to \  F(S^V\!\!\sm T)\ = \ F(\mS;T)(V)  \]
  provide a morphism of orthogonal $G$-spectra 
  \[  a\ : \  F(\mS)\sm T\  \to \ F(\mS;T)  \]
  that is natural for based $G$-maps in the variable $T$.
  
  Now we let $K$ be a closed subgroup of $G$ of finite index;
  then $G/K$ is a finite $G$-set.
  We let $\Psi:G/K_+\to S^0$ denote the $K$-equivariant
  projection to the distinguished coset, compare \eqref{eq:define Psi}.
  The left vertical composite in the following commutative diagram 
  is the Wirthm{\"u}ller isomorphism:
  \begin{equation}
    \begin{aligned}\label{eq:wirth diagram}
      \xymatrix@C=15mm{ 
        \pi_0^G(F(\mS)\sm G/K_+)\ar[r]^-{a_*} \ar[d]^{\res^G_K} \ar@<-8ex>@/_2pc/[dd]_{\Wirth_K^G}&
        \pi_0^G(F(\mS;G/K_+)) \ar[d]^{\res^G_K}  \\
        \pi_0^K(F(\mS)\sm G/K_+)\ar[r]^-{a_*}\ar[d]^{(F(\mS)\sm\Psi)_*}&
        \pi_0^K(F(\mS;G/K_+))  \ar@/^1pc/[ld]^{F(\mS;\Psi)_*}  \\
        \pi_0^K(F(\mS))\ &
      }    
    \end{aligned}
  \end{equation}
  We claim that the composite
  \[ F(\mS;\Psi)_*\circ \res^G_K \ : \ \pi_0^G(F(\mS;G/K_+)) \ \to \ \pi_0^K(F(\mS)) \]
  is also an isomorphism.
  To see this we let $V$ be a $G$-representation.
  The Wirthm{\"u}ller map 
  \[ \omega_{S^V}\ : \ F(\mS;G/K_+)(V) \ = \ F(S^V\!\!\sm G/K_+)\ \to \  \map^K(G,F(S^V)) \]
  is the $G$-equivariant adjoint of the continuous based $K$-map
  \[  F(S^V\!\!\sm \Psi)\ : \ F(S^V\!\!\sm G/K_+)\ \to \ F(S^V)\ ; \]
  this Wirthm{\"u}ller map is a $G$-weak equivalence 
  by \cite[Proposition B.54 (ii)]{schwede:global}.
  In particular, the Wirthm{\"u}ller map induces a bijection
  \begin{align*}
    [S^V,F(\mS;G/K_+)(V)]^G\ \xra{\ \iso \ }  \  &[S^V,\map^K(G_+,F(S^V))]^G \ 
    \iso \  [S^V,F(S^V)]^K \ .
  \end{align*}
  Upon passage to colimits over finite-dimensional $G$-subrepresentations of
  a complete $G$-universe, this gives an isomorphism
  \begin{align*}
    \pi_0^G(F(\mS;G/K_+))\ = \ \colim_V\, & [S^V,F(\mS;G/K_+)(V)]^G\\
    \xra{\ \iso \ } & \ \colim_V\, [S^V,F(S^V)]^K\ \iso \ \pi_0^K(F(\mS))\ .  
  \end{align*}
  The last isomorphism exploits that the $K$-representations that underlie $G$-representations
  are cofinal in all $K$-representations. The isomorphism agrees with
  the composite $F(\mS;\Psi)_*\circ \res^G_K$, by inspection.
  
  For an orthogonal $G$-spectrum $X$,
  the transfer $\tr_K^G:\pi_0^K(X)\to\pi_0^G(X)$ is the composite 
  \[ \pi_0^K(X)\ \xra[\iso]{(\Wirth_K^G)^{-1}}\ \pi_0^G(X\sm G/K_+)
    \ \xra{(X\sm \nabla)_*}\ \pi_0^G(X)\ , \]
  where $\nabla:G/K_+\to S^0$ is the fold map that takes all of $G/K$
  to the non-basepoint.
  In the special case $X=F(\mS)$, the second map $(F(\mS)\sm\nabla)_*$
  in this composite in turn factors as the composite
  \[ 
    \pi_0^G(F(\mS)\sm G/K_+)\ \xra{\ a_*\ }\  
    \pi_0^G(F(\mS;G/K_+)) \ \xra{F(\mS;\nabla)_*}  \ \pi_0^G(F(\mS)) \ .
  \]
  The commutativity of the diagram \eqref{eq:wirth diagram} lets us thus express
  the transfer as the composite
  \[ \pi_0^K(F(\mS))\ \xra[\iso]{(F(\mS;\Psi)\circ\res^G_K)^{-1}}\ 
    \pi_0^G(F(\mS;G/K_+))\ \xra{ F(\mS;\nabla)_*}\ \pi_0^G(F(\mS))\ . \]
  Now we contemplate the following commutative diagram:
  \[ \xymatrix@C=20mm{
      \pi_0(F(1_+)^K)\ar[d]\ar@/^2pc/[rr]^(.3){\tr_K^G} & \pi_0(F(G/K_+)^G)\ar[d]\ar[l]^-{(F(\Psi)^K)_*\circ\res^G_K}_(.4)\iso\ar[r]_--{\pi_0(F(\nabla))} &
      \pi_0(F(1_+)^G)\ar[d]\\
      \pi_0^K(F(\mS))\ar@/_2pc/[rr]_(.3){\tr^G_K} &
      \pi_0^G(F(\mS;G/K_+)) \ar[l]_-{F(\mS;\Psi)_*\circ\res^G_K}^(.4)\iso  
      \ar[r]^-{F(\mS;\nabla)_*}&\pi_0^G(F(\mS)) } \]
  All vertical maps are stabilization maps.
  The two squares commute by naturality. So the entire diagram commutes,
  and we have shown the compatibility of the stabilization maps with transfers.
  
  The compatibility (iv) of the $\beta$-maps with conjugation 
  is straightforward from the definitions.
\end{proof}

\begin{acknowledgements}
I would like to thank Tobias Lenz for a host of helpful comments and suggestions, 
and for pointing out problems with earlier versions of some statements;
also, the saturation construction for parsummable categories (Construction \ref{con:saturation_parsummable})
is due to him. 
I would like to thank Markus Hausmann, Lars Hesselholt and Mona Merling
for several inspiring discussions about the topics of this paper.
A substantial part of the work was done in the spring of 2018, 
while the author was a long term guest at the
Centre for Symmetry and Deformation at K{\o}benhavns Universitet;
I would like to thank the Center for Symmetry and Deformation 
for the hospitality and stimulating atmosphere during this visit.
Finally, I am grateful to the anonymous referee for an extremely careful reading of the paper,
resulting in various suggestions for improvements.

\end{acknowledgements}

\affiliationone{
  Stefan Schwede\\
  Mathematisches Institut\\
  Universit{\"a}t Bonn\\
  Endenicher Allee 60\\
  D-53115 Bonn\\
  Germany
   \email{schwede@math.uni-bonn.de}}
% Important: Do not put any empty line here.
%

\begin{thebibliography}{99}

\bibitem{barratt:free_group}
  {\bibname M G Barratt},
  `A free group functor for stable homotopy',
  Algebraic topology (Proc. Sympos. Pure Math., Vol. XXII, Univ. Wisconsin, Madison, Wis., 1970),
  pp. 31--35. Amer. Math. Soc., Providence, R. 1971. 
  
\bibitem{barratt-eccles}
  {\bibname M G Barratt \and P J Eccles},
  `$\Gamma^+$-structures--I: A free group functor for stable homotopy theory',
  {\em Topology} 13 (1974), 25--45.
    
\bibitem{barratt-priddy:homology}
  {\bibname M G Barratt \and S B Priddy}, 
  `On the homology of non-connected monoids and their associated groups',
  {\em Comment. Math. Helv.} 47 (1972), 1--14. 
  
\bibitem{barwick:spectral_mackey_I}
  {\bibname C Barwick},
  `Spectral Mackey functors and equivariant algebraic K-theory (I)',
  {\em Adv. Math.} 304 (2017), 646--727. 

\bibitem{barwick-glasman-shah:spectral_mackey_II}
  {\bibname C Barwick, S Glasman \and J Shah}, 
  ` Spectral Mackey functors and equivariant algebraic K-theory (II)',
  {\em Tunis. J. Math.} 2 (2020), pp.\ 97--146.

\bibitem{bouc:foncteurs}
  {\bibname S Bouc},
  `Foncteurs d'ensembles munis d'une double action',
  {\em J. Algebra} 183 (1996), 664--736.
  
\bibitem{carlsson-douglas-dundas}
  {\bibname G Carlsson, C Douglas \and B Dundas},
  `Higher topological cyclic homology and the Segal conjecture for tori',
  {\em Adv. Math.} 226 (2011), 1823--1874.

\bibitem{dress-kuku}
  {\bibname A Dress \and A O Kuku},
  `A convenient setting for equivariant higher algebraic K-theory',
  Algebraic K-theory, Part I (Oberwolfach, 1980), pp. 59--68,
  Lecture Notes in Math., 966, Springer, Berlin-New York, 1982
  
\bibitem{elmendorf-mandell:permutative}
  {\bibname A D Elmendorf \and M A Mandell}, 
  `Permutative categories, multicategories and algebraic K-theory',
  {\em Algebr. Geom. Topol.} 9 (2009), 2391--2441.

\bibitem{fiedorowicz-hauschild-may}
  {\bibname Z Fiedorowicz, H Hauschild \and May},
  `Equivariant algebraic K-theory',
  Algebraic K-theory, Part II (Oberwolfach, 1980), pp. 23--80,
  Lecture Notes in Math., 967, Springer, Berlin-New York, 1982.

\bibitem{grayson:quillens_work}
  {\bibname D R Grayson},
  `Quillen's work in algebraic $K$-theory',
  {\em J. K-Theory} 11 (2013), 527--547.

\bibitem{guillou-may}
  {\bibname B J Guillou \and J P May}, 
  `Equivariant iterated loop space theory and permutative $G$-categories', 
  {\em Algebr. Geom. Topol.} 17 (2017), 3259--3339.

\bibitem{hauschild:konfigurationsraeume}
  {\bibname H Hauschild},
  `{\"A}quivariante Konfigurationsr{\"a}ume und Abbildungsr{\"a}ume',
  Topology Symposium, Siegen 1979
  (Proc. Sympos., Univ. Siegen, Siegen, 1979), pp.\ 281--315,
  Lecture Notes in Math., 788, Springer, Berlin-New York, 1980. 
  
\bibitem{hausmann:G-symmetric}
  {\bibname M Hausmann},
  `$G$-symmetric spectra, semistability and the multiplicative norm',
  {\em J. Pure Appl. Algebra} 221 (2017), 2582--2632.

\bibitem{hausmann:global_finite}
  {\bibname M Hausmann},
  `Symmetric spectra model global homotopy theory of finite groups',
  {\em Algebr. Geom. Topol.} 19 (2019), 1413--1452.

\bibitem{hausmann-ostermayr}
  {\bibname M Hausmann \and D Ostermayr},
  `Filtrations of global equivariant $K$-theory',
  {\em Math. Z.} 295 (2020), 161--210. 
  
\bibitem{HHR-Kervaire}
  {\bibname M Hill, M Hopkins \and D Ravenel},
  `On the nonexistence of elements of Kervaire invariant one',
  {\em Ann. of Math.} 184 (2016), 1--262. 

\bibitem{HSS}
  {\bibname M Hovey, B Shipley \and J Smith},
  `Symmetric spectra',
  {\em J. Amer. Math. Soc.} 13 (2000), 149--208.

\bibitem{lenz:G-global}
  {\bibname T Lenz},
  '$G$-Global homotopy theory and algebraic K-Theory',
  \verb!arXiv:2012.12676!
  
\bibitem{lewis-projective not flat}
  {\bibname L G Lewis, Jr.},
  `When projective does not imply flat, and other homological anomalies',
  {\em Theory Appl. Categ.} 5 (1999), 202--250. 

\bibitem{maclane-working}
  {\bibname S Mac\,Lane},
  `Categories for the working mathematician', Second edition. 
  Graduate Texts in Mathematics, 5. Springer-Verlag, New York, 1998. xii+314 pp. 

\bibitem{mandell-may}
  {\bibname M A Mandell \and J P May},
  `Equivariant orthogonal spectra and $S$-modules', 
  {\em Mem. Amer. Math. Soc.} 159 (2002), no. 755, x+108 pp.

\bibitem{mandell:equivariant symmetric}
  {\bibname M A Mandell},
  'Equivariant symmetric spectra',
  Homotopy theory: relations with algebraic geometry, group cohomology, and algebraic K-theory,
  pp. 399--452, Contemp. Math., 346, Amer. Math. Soc., Providence, RI, 2004. 
  
\bibitem{mandell:inverse}
  {\bibname M A Mandell},
  `An inverse K-theory functor',
  {\em Doc. Math.} 15 (2010), 765--791. 
  
\bibitem{mccord}
  {\bibname M C McCord},
  `Classifying spaces and infinite symmetric products',
  {\em Trans. Amer. Math. Soc.} 146 (1969), 273--298. 

\bibitem{merling}
  {\bibname M Merling},
  `Equivariant algebraic K-theory of $G$-rings',
  {\em Math. Z.} 285 (2017), 1205--1248. 

\bibitem{may:E_infty_spaces}
  {\bibname J P May},
  `$E_\infty$ spaces, group completions, and permutative categories',
  New developments in topology (Proc. Sympos. Algebraic Topology, Oxford, 1972), pp.\ 61--93.
  London Math. Soc. Lecture Note Ser., No. 11, Cambridge Univ. Press, London, 1974. 

\bibitem{may-merling-osorno}
  {\bibname J P May, M Merling \and A M Osorno},
  ` Equivariant infinite loop space theory, I. The space level story',
\verb!arXiv:1704.03413!

\bibitem{ostermayr}
  {\bibname D Ostermayr},
  `Equivariant $\Gamma$-spaces',
  {\em Homology Homotopy Appl.} 18 (2016), 295--324.
  
\bibitem{priddy:Omega_infty_S_infty}
  {\bibname S B Priddy},
  `On $\Omega^\infty S^\infty$ and the infinite symmetric group',
  Algebraic topology (Proc. Sympos. Pure Math., Vol. XXII,
  Univ. Wisconsin, Madison, Wis., 1970), pp.\ 217--220. Amer. Math. Soc., Providence, R.I., 1971. 

\bibitem{quillen:cohomology_groups}
  {\bibname D Quillen},
    `Cohomology of groups',
  Actes du Congr{\`e}s International des Math{\'e}maticiens (Nice, 1970),
  Tome 2, pp. 47--51. Gauthier-Villars, Paris, 1971. 
  
\bibitem{sagave-schwede}
  {\bibname S Sagave \and S Schwede},
  `Homotopy invariance of convolution products',
  {\em Int. Math. Res. Not. IMRN} 2021, no. 8, 6246--6292. 
  
\bibitem{schlichtkrull-solberg}
  {\bibname C Schlichtkrull \and M Solberg},
  `Braided injections and double loop spaces',
  {\em Trans. Amer. Math. Soc.} 368 (2016), 7305--7338.
  
  \bibitem{schwede:global}
  {\bibname S Schwede},
  `Global homotopy theory', 
  New Mathematical Monographs 34. Cambridge University Press, Cambridge, 2018.
  xviii+828 pp.
  
\bibitem{segal:cat coho}
  {\bibname G Segal},
  `Categories and cohomology theories',
  {\em Topology} 13 (1974),  293--312.

\bibitem{shimada-shimakawa:delooping}
  {\bibname N Shimada \and K Shimakawa},
  `Delooping symmetric monoidal categories'.
  {\em Hiroshima Math. J.} 9 (1979), 627--645. 

\bibitem{shimakawa}
  {\bibname K Shimakawa}, 
  `Infinite loop $G$-spaces associated to monoidal $G$-graded categories',
  {\em Publ. Res. Inst. Math. Sci.} 25 (1989),  239--262. 

\bibitem{staffeldt-fundamental}
  {\bibname R E Staffeldt},
  `On fundamental theorems of algebraic $K$-theory',
  {\em $K$-Theory} 2 (1989), 511--532. 

\bibitem{stolz-thesis}
  {\bibname M Stolz},
  `Equivariant structure on smash powers of commutative ring spectra',
  PhD thesis, University of Bergen, 2011.

\bibitem{thomason:symmetric_monoidal}
  {\bibname R W Thomason},
  `Symmetric monoidal categories model all connective spectra',
  {\em Theory Appl. Categ.} 1 (1995), 78--118.

\bibitem{tomDieck:transformation_groups}
  {\bibname T tom Dieck},
  `Transformation groups',
  De Gruyter Studies in Mathematics, 8. Walter de Gruyter \& Co., Berlin, 1987. x+312 pp. 

\bibitem{waldhausen}
  {\bibname F Waldhausen},
  `Algebraic K-theory of spaces',
  Algebraic and geometric topology (New Brunswick, N.J., 1983), 318--419,
  Lecture Notes in Math., 1126, Springer, Berlin, 1985. 

\bibitem{webb}
  {\bibname P Webb},
  `Two classifications of simple Mackey functors with applications
    to group cohomology and the decomposition of classifying spaces', 
  {\em J. Pure Appl. Algebra} 88 (1993), 265--304. 

\bibitem{weibel:early_K}
  {\bibname C Weibel},
  `The development of algebraic K-theory before 1980',
  Algebra, K-theory, groups, and education (New York, 1997), 211--238,
  Contemp. Math., 243, Amer. Math. Soc., Providence, RI, 1999.

\end{thebibliography}
\end{document}